\documentclass[12pt]{article}

\usepackage[T1]{fontenc}
\usepackage{amsmath,amssymb,amsthm}
\usepackage{mathrsfs}
\usepackage{bm}
\usepackage[colorlinks,citecolor=blue,urlcolor=blue,linkcolor=blue]{hyperref}
\usepackage[numbers]{natbib}
\usepackage{graphicx}
\usepackage{booktabs}
\usepackage{float}      
\usepackage{caption}
\usepackage{placeins}
\usepackage{authblk}


\numberwithin{equation}{section}

\newtheorem{theorem}{Theorem}[section]
\newtheorem{lemma}[theorem]{Lemma}

\newtheorem{remark}[theorem]{Remark}

\theoremstyle{definition}
\newtheorem{definition}[theorem]{Definition}
\newtheorem{assumption}[theorem]{Assumption}

\usepackage{algorithm}
\usepackage{algpseudocode}
\usepackage{xcolor}

\title{Strategic Index Reconstitution: Differential Games, Closed-Loop Equilibria and Mean-Field Dynamics}
\author[1]{Lukas-Benedikt Fiechtner}
\author[1,2]{Jose Blanchet}
\affil[1]{Institute for Computational and Mathematical Engineering, Stanford University}
\affil[2]{Department of Management Science and Engineering, Stanford University}
\date{\today}

\begin{document}

\vspace{-0.5cm}
\maketitle

\begin{abstract}
We study strategic trading around index reconstitution in a continuous-time,
multiasset game with transient cross-asset price impact and heterogeneous
beliefs about future index membership. Opportunistic traders position before
a public announcement, adjust to the revealed composition, and trade around
an indexer following a prescribed execution schedule. Under a
no-price-manipulation condition, we construct a subgame-perfect
Nash equilibrium on every finite horizon. The affine feedback policies are
computed from a non-standard matrix Riccati equation and linear ordinary differential
equations whose number and dimensions do not grow with the number of traders.
Aggregate inventories and price impact depend on beliefs only through the
population-average belief, while differences in beliefs affect individual
inventory positions. Under mean-field scaling, we construct a
mean-field equilibrium and obtain quantitative convergence and
approximate-Nash bounds. Numerical illustrations show how competition and impact
decay determine the balance between adverse price displacement from
anticipatory trading and savings in the indexer's execution costs when
opportunists trade against its orders during implementation.
\end{abstract}

\begin{description}
\item[Mathematics Subject Classification (2020):] 91A15, 91A16, 91G15, 93E20
\item[JEL Classification:] C73, C61, G11, G14
\item[Keywords:] index rebalancing, stochastic differential games, closed-loop Nash equilibrium, mean-field games, transient market impact, heterogeneous beliefs
\end{description}

\bigskip

\section{Introduction}\label{sec:introduction}

When an index changes its constituents, index funds must adjust their holdings
to track the new portfolio. The changes become public on the announcement date and index funds typically execute the required trades in the closing auction on
the later implementation date, immediately before the new index composition
takes effect. To profit from these required trades, opportunists may build
anticipatory positions before the announcement, revise them once the changes
are announced, and unwind them as index funds rebalance. Their problem is
dynamic: each trade moves prices with impact persisting for some time, and other
opportunists can respond to the resulting changes in prices and inventories.
Press accounts illustrate the financial stakes of anticipating index trades:
The Economist has called index rebalancing ``the biggest event in markets.''\footnote{The Economist,
\href{https://www.economist.com/finance-and-economics/2026/05/12/index-rebalancing-is-now-the-biggest-event-in-markets}
{\emph{Index Rebalancing Is Now the Biggest Event in Markets}}, May 12, 2026.}
Bloomberg reported that two Millennium
index-rebalancing teams
earned about \$3.7 billion in June 2026 and that the same teams had lost about
\$900 million in early 2025.\footnote{Nishant Kumar,
\href{https://www.bloomberg.com/news/articles/2026-07-06/two-millennium-trading-pods-made-about-3-7-billion-last-month}
{\emph{Two Millennium Trading Pods Made About \$3.7 Billion Last Month}},
Bloomberg, July 6, 2026. The report covers several June rebalancing events,
including quarter-end multi-asset rebalancing. Bei Hu, Katherine Burton, and
Nishant Kumar,
\href{https://www.bloomberg.com/news/articles/2025-03-08/millennium-loses-900-million-on-strategy-roiled-by-market-chaos}
{\emph{Millennium Loses \$900 Million on Strategy Roiled by Market Chaos}},
Bloomberg, March 7, 2025.}

The empirical record reinforces the importance of understanding how
opportunists trade around index rebalancing.
Studies of S\&P 500 additions in the late 1970s and early 1980s document
significant abnormal returns of roughly \(3\%\) on the first trading day
after announcement
\citep{Shleifer1986,HarrisGurel1986}, with subsequent reversals documented by
\citet{HarrisGurel1986}. However, average abnormal returns around announcement and
implementation have declined in recent years: comparing the 1990s with 2010--2020,
\citet{GreenwoodSammon2025} report a fall from \(4.94\%\) to \(1.02\%\)
around announcement and from \(3.55\%\) to \(0.26\%\) around
implementation.\footnote{For these event windows, abnormal returns are measured
net of the S\&P 500 return, from the preceding trading day's close to the
following trading day's close.}
On the other hand, abnormal returns over the 100 trading days \emph{before} announcement
increased from \(9.60\%\) to \(18.68\%\) over the same periods
\citep[Table~IV]{GreenwoodSammon2025}, suggesting increased anticipatory trading
even before the announcement.
\footnote{The increase may not only reflect trading in anticipation of
index inclusion, but also selection of past winners, or both \citep{GreenwoodSammon2025}.}

Despite the decline in abnormal returns around announcement and implementation,
S\&P 500 reconstitution volume relative to normal daily trading has risen
\citep{ChincoSammon2024}. For additions from outside the S\&P index family in
2010--2024, \citet{PegoraroSammonShim2026} report implementation-day turnover
near \(20\%\) of shares outstanding despite an average abnormal return of
\(-0.09\%\). Interestingly, implementation-day volume substantially exceeds the trades
implied by labeled index funds alone, consistent with additional trading by
institutions managing their own index portfolios and active managers closely
tracking benchmarks \citep{ChincoSammon2024}. Using equal-weight rebalances,
\citet{Nathan2026} finds substantial additional displayed buying and selling
interest in the closing auction.

Furthermore, for S\&P 500 additions from outside the index family,
\citet{PegoraroSammonShim2026} find short interest rising around implementation
and declining afterward, consistent with opportunists going short to supply
inventory to index funds in addition to selling the inventory they built up
before implementation.
\citet{PegoraroSammonShim2026} also relate the timing of inventory buildup to
predictability: less predictable Russell migrations exhibit more trading
between the rank and implementation dates, which they interpret as consistent
with earlier positioning when membership changes are easier to anticipate.

This empirical evidence motivates us to ask who holds the required inventory
before index funds arrive, how that inventory is transferred, and how
competition changes the cost of the transfer. We study how opportunists build
positions before announcement using their probabilities over possible index
reconstitutions, revise their positions when membership is revealed, and
transfer inventory as index funds rebalance.

\paragraph{Model.}
We briefly outline the model here and give the full specification later in
Section~\ref{sec:model}.
We consider \(n\geq2\) opportunists trading \(d\) assets over a finite
horizon \([0,T]\), with announcement at \(T_{\mathrm{ann}}\) and
implementation at \(T_{\mathrm{impl}}\). Before \(T_{\mathrm{ann}}\),
speculators consider \(m\) possible scenarios for the new index constitution.
Each scenario \(k\) determines the indexer's required trades, including
purchases of added assets and sales of deleted assets. The corresponding
trading rate \(g^k\) approximates execution in the closing auction by
spreading these purchases and sales uniformly over
\([T_{\mathrm{TWAP}},T_{\mathrm{impl}}]\), with
\(T_{\mathrm{ann}}<T_{\mathrm{TWAP}}<T_{\mathrm{impl}}\).
The realized scenario is announced publicly at \(T_{\mathrm{ann}}\) and determines the realized
trading rate \(g\) followed by the indexer.
The remaining interval from \(T_{\mathrm{impl}}\) to \(T\) allows opportunists to unwind
positions after implementation.
We allow traders to hold different beliefs about the possible index
constitutions: before the announcement, trader \(i\)'s subjective probability
measure \(\mathbb P_n^i\) assigns probabilities \(\pi_n^i\in\Delta_m\)
to the possible scenarios.
Each trader knows its own belief vector (which we also call prior) and the sample-average belief
\(\bar\pi_n=\frac1n\sum_{j=1}^n\pi_n^j\),
but need not know the other traders' beliefs. We assume that no further
information about index membership arrives before \(T_{\mathrm{ann}}\), so
beliefs remain fixed until the public announcement.

Trader \(i\)'s inventory \(X_n^i\) evolves as \(dX^i(t)=u^i(t)dt\) with trading rate \(u_n^i\).
The midprice is \(S=S^0+I\), where \(S^0\) is the fundamental price which is a square-integrable c\`adl\`ag
martingale independent of the realized index scenario under each \(\mathbb P_n^i\) and
\(I\) is the transient impact satisfisfying
\begin{equation}\label{eq:intro_dynamics}
dI(t)=\left[-RI(t)+\Gamma\left(\varpi_n\sum_{j=1}^n u_n^j(t)+g(t)\right)\right]dt.
\end{equation}
Here \(R\) governs the decay 
of existing impact, \(\Gamma\) maps new trades into direct and
cross-asset impact, and \(\varpi_n>0\) scales each trader's contribution to the impact.
Under the fixed-market-size scaling \(\varpi_n=1\), increasing \(n\)
adds opportunists of fixed size to the same market. Under the mean-field
scaling \(\varpi_n=\varpi/n\), total opportunist mass \(\varpi>0\)
stays fixed as \(n\) increases and individual traders become smaller.
Trader \(i\)'s execution price is
\[
\widehat S^i(t)=S(t)+\Lambda\left(u_n^i(t)
+\chi_u\varpi_n\sum_{j\ne i}u_n^j(t)+\chi_g g(t)\right),
\]
where \(\Lambda\succ0\) and \(\chi_u,\chi_g\geq0\).
Trader \(i\) minimizes expected trading costs, deducting the change in
inventory value at fundamental prices and adding quadratic running and
terminal inventory penalties:
\[
\begin{aligned}
\mathbb E_n^i\Bigg[
&\int_0^T \big(\widehat S^i(t)\big)^\top u_n^i(t)\,dt
-\left((S^0(T))^\top X_n^i(T)-(S^0(0))^\top X_n^i(0)\right)\\
&+\frac{1}{2}\int_0^T (X_n^i(t))^\top Q X_n^i(t)\,dt
+\frac{1}{2}(X_n^i(T))^\top Q_T X_n^i(T)
\Bigg],
\end{aligned}
\]
where \(\mathbb E_n^i\) denotes expectation under \(\mathbb P_n^i\)
and \(Q,Q_T\succeq0\) penalize running and terminal inventory, respectively.

Traders choose their admissible trading rates using \emph{closed-loop} strategies,
that is, feedback functions of the
histories of its own inventory, average inventory, the midprice, and the
public announcement up to the current time (and also of the trader's own belief, the population-average belief). Under a unilateral deviation,
trader \(i\) changes its feedback function while the other traders keep
theirs fixed. The deviation can change the realized inventory and price
histories entering the other traders' feedback functions. Their
\emph{realized trading rates} can therefore change despite their fixed
feedback functions. In the \emph{open-loop} equilibria commonly studied in
execution games, the other traders' trading-rate processes instead remain
fixed under a deviation by trader \(i\)
\citep{strehle_2017,NeumanVoss2023}.
We seek a \emph{subgame-perfect} Nash equilibrium: a closed-loop Nash equilibrium
in which, from every starting time and admissible state, including off the
equilibrium path, no trader can lower its remaining expected cost by
unilaterally switching to another admissible feedback function.

\paragraph{Main Results and Contributions.}
\begin{enumerate}
\item \textbf{Construction and Global Existence of a Feedback Equilibrium.}
In Theorem~\ref{thm:stitched_hjb_verification}, we prove a verification
result: sufficiently smooth solutions of a system of
coupled Hamilton--Jacobi--Bellman (HJB) equations satisfying
terminal and announcement conditions yield a subgame-perfect Nash
equilibrium through their admissible minimizing feedbacks that respect the information 
structure restrictions we impose.
The result covers all admissible history-dependent unilateral deviations.

We construct solutions of the HJB equations that are quadratic in the
reduced state
\(
z_n^i=\bigl(X_n^i-\bar X_n,\,\bar X_n,\,I\bigr)\in\mathbb R^{3d}
\),
where \(\bar X_n=\frac1n\sum_{j=1}^n X_n^j\) is average inventory.
The inventory deviation \(X_n^i-\bar X_n\) is private to trader \(i\),
while \(\bar X_n\) and \(I\) are common to all traders.
For a generic reduced state \(z^i\), we use the quadratic ansatz
\begin{equation}\label{eq:intro_quadratic_value_ansatz}
\begin{aligned}
V_n^{i,0}(t,z^i)
&=
\frac{1}{2}(z^i)^\top H_n(t)z^i
+(r_n^{i,0}(t))^\top z^i+c_n^{i,0}(t),\\
V_n^{i,k}(t,z^i)
&=
\frac{1}{2}(z^i)^\top H_n(t)z^i
+(r_n^k(t))^\top z^i+c_n^k(t),
\qquad k=1,\ldots,m,
\end{aligned}
\end{equation}
before announcement and in scenario \(k\) afterward, respectively.
Substitution into the HJB equations yields a nonstandard
\(3d\times3d\) matrix Riccati ordinary differential equation (ODE) for
the common quadratic coefficient \(H_n\). The remaining coefficients are
obtained by solving linear ODEs sequentially. For fixed \(d\) and \(m\),
only a fixed number of \(3d\)-dimensional linear ODEs is needed to determine
the feedback. The complexity of computing the Nash equilibrium, measured
by the number and dimensions of the ODEs, therefore does not increase with
\(n\). Theorem~\ref{thm:full_horizon_closed_loop_equilibrium} shows that,
whenever the coefficient equations have a solution on the full horizon,
the quadratic value functions and associated affine feedbacks satisfy
the verification theorem.

Standard ODE theory guarantees only local solvability of the Riccati
equation. The cost contains indefinite cross terms between trading rates
and outstanding impact, so there is no immediate bound ruling out
finite-time blow-up before the backward solution reaches time zero.
Yet, under the economically reasonable conditions ruling out price manipulation
stated in Assumption~\ref{ass:standing_model},
Theorem~\ref{thm:global_riccati_solvability} establishes upper and lower bounds
\begin{equation}\label{eq:intro_riccati_bounds}
-\underline H(t)\preceq H_n(t)\preceq\overline H(t),
\qquad 0\leq t\leq T,
\end{equation}
uniformly over \(n\geq2\). The bounds prevent finite-time blow-up and
yield a unique Riccati solution on every finite horizon and hence a
subgame-perfect Nash equilibrium. No short-horizon restriction or additional
smallness condition on transient impact is required.
To our knowledge, this is the first globally solvable continuous-time,
finite-player, closed-loop execution game with transient price impact.
Most related finite-player execution games instead study open-loop equilibria
\citep{strehle_2017,schied_zhang_2017,schied_strehle_zhang_2017,
schied_zhang_2019,luo_schied_2019,NeumanVoss2023,
campbell2026optimalexecutionntraders} for which existence of a Nash equilibrium is
usually easier to prove.

\item \textbf{Beliefs and Public Revelation.}
Our solution shows how the sample-average belief and individual belief
deviations affect inventories, price impact, and trading rates.
In Remark~\ref{rem:full_procedure_decomposition}, we decompose the equilibrium
feedbacks and state dynamics into common and individual components.
Before announcement, the common state \((\bar X_n,I)\) and the inventory
deviations \(X_n^i-\bar X_n\) satisfy decoupled linear systems, with forcing
terms linear in the sample-average belief \(\bar\pi_n\) and player \(i\)'s belief deviation
\(\pi_n^i-\bar\pi_n\), respectively.
After announcement, the realized indexer schedule affects only the common
system, while inventory deviations evolve identically across scenarios
from their announcement-time values, preserving the effect of pre-announcement
disagreement.

\item \textbf{Mean-Field Approximation.}
The finite-player feedbacks use current average inventory, which may need
not always be observable in practice. Additionally, in practice the number of players
may be high, but not known exactly. For these reasons, we
construct mean-field policies using aggregate paths determined by the
mean-field equilibrium, without requiring real-time observation of
aggregate inventory or the exact number of competitors.

Under the mean-field scaling, we define a game whose player types are
belief vectors \(\pi\in\Delta_m\) over membership scenarios. The mean field
averages trading across a distribution of belief types, which may be
continuous. Under the conditions ruling out price
manipulation in Assumption~\ref{ass:standing_model}, we construct an explicit
equilibrium and prove that the finite-dimensional linear system determining average
inventory and impact is nonsingular, yielding uniqueness within the affine
construction. A trader implements its policy using its own belief and
inventory, the population-average belief, and equilibrium paths for average
inventory and impact for each announcement scenario
(Theorems~\ref{thm:mf_solution} and \ref{thm:mf_implementability}).

Let \(\bar\pi\) denote the average belief in the limiting population and set
\begin{equation}\label{eq:intro_epsilon_n}
\varepsilon_n=\frac1n+\|\bar\pi_n-\bar\pi\|.
\end{equation}
For zero initial inventories and impact and \(\bar\pi_n\to\bar\pi\),
the constructed finite-player Nash equilibria converge to the mean-field
equilibrium as \(n\to\infty\). The errors in trading rates, inventories,
and impact are \(O(\varepsilon_n)\), uniformly over time, scenario, and
trader, comparing each trader with a mean-field trader with the same belief vector
(Theorem~\ref{thm:finite_to_mf_convergence}).

When all traders use the mean-field policies in the finite game, the
resulting profile is an approximate Nash equilibrium from the prescribed
initial state of zero initial inventory and impact: no trader can lower its expected cost by more than
\(C\varepsilon_n^2\) through an admissible
unilateral deviation, with \(C\) independent of \(n\) and the finite
collection of beliefs (Theorem~\ref{thm:mf_approximate_nash}).
For i.i.d.\ belief types, the path and Nash errors are \(O_p(n^{-1/2})\)
and \(O_p(n^{-1})\), respectively. When a deterministic population matches
the limiting average belief at rate \(1/n\), the errors are \(O(n^{-1})\)
and \(O(n^{-2})\), respectively.

\item \textbf{Numerical Illustrations.}
We use a stylized five-asset index rebalance to investigate qualitative
properties of the equilibrium. In the atomic player setting \(\varpi=1\)
we obatin the following insights:

\begin{itemize}
\item \textbf{Aggregate Inventory:}
Before announcement, opportunists build aggregate inventory according to
the average belief: long in assets the indexer is expected to buy and short
in those it is expected to sell. After announcement, aggregate positions grow in
correctly anticipated additions and deletions and reverse where the
revealed indexer order has the opposite sign.
During implementation, opportunists trade against the indexer, taking
aggregate inventory in added and deleted assets through zero before
unwinding afterward. In added stocks, they thus sell more than their
accumulated inventory and subsequently cover the resulting aggregate short
positions, consistent with the rise and subsequent decline in short
interest documented by \citet{PegoraroSammonShim2026}.
Adding opportunists of fixed size generally increases aggregate inventory
magnitudes at announcement and the start of implementation, with the largest
increases at small \(n\).

\item \textbf{Belief Dispersion and Trading Volume:}
While aggregate inventory follows expected indexer trading under the
population-average belief, individual positions also reflect a trader's
expectations relative to those of other opportunists. A trader can therefore
enter the announcement short in a stock it expects to be added if its
expected indexer purchases are sufficiently below the population average.
With the average belief fixed,
greater dispersion of traders' beliefs increases gross trading through
offsetting purchases and sales among opportunists, leaving net aggregate
inventory and the impact trajectory unchanged. Additionally, at a given belief dispersion, the additional
gross volume per opportunist is larger with more players.

\item \textbf{Price Impact:}
Price impact follows the direction of expected indexer trading before
announcement and adjusts to the realized order afterward.
Adding opportunists of fixed size generally raises impact magnitudes at
announcement but reduces them at implementation.

\item \textbf{Indexer Costs and Opportunist Wealth:}
Adding opportunists of fixed size lowers the indexer's implementation
shortfall with diminishing marginal savings. Both average and total
terminal trading wealth of opportunists decline.

\item \textbf{Parameter Sensitivity:}
We investigate how equilibrium inventories, price impact, indexer costs,
and opportunist wealth respond to changes in trading windows, execution
costs, impact strength, and resilience, the rate of impact decay.
Among these parameters,
only resilience produces a non-monotonic response in expected indexer cost
savings relative to the no-opportunist benchmark at the same parameter
values: savings peak at intermediate resilience.
With slow decay, adverse impact from opportunists' earlier
trades persists into implementation and can outweigh the benefits of their
opposing trades. Intermediate resilience allows much of this earlier impact
to decay while opposing trades still substantially reduce impact costs
during implementation. With fast decay, the indexer's own impact dissipates
quickly, reducing the benefit of offsetting its trades.
\end{itemize}
Finally, under the mean-field scaling \(\varpi_n=\varpi/n\),
the illustrations also show convergence to the mean-field inventory and
impact paths.
\end{enumerate}

\paragraph{Related Literature.}
The closest economic model is
\citet{PegoraroSammonShim2026}.  They study a five-period, two-stock model with
three trading dates (with an extension to arbitrarily many trading
dates). Index membership is public before trading begins (i.e. there is no pre-announcement trading), and an
indexer chooses its schedule against multiple identical Cournot
speculators, either without commitment or as a Stackelberg leader.  Purchases
of the incoming stock and sales of the outgoing stock use the same schedule,
and impact is asset-separable, with permanent and instantaneous components.
Their model answers a question that ours leaves exogenous: when should the
index fund trade? We fix that schedule without optimizing the indexer's tradeoff
between tracking error and execution cost, and study how several traders respond
before and after a public announcement when beliefs differ, impact propagates
across assets, and impact decays over time.  \citet{Tasitsiomi2025} studies a
complementary one-stock model with one manager and one anticipatory trader
after an index addition has been announced. Both papers study inventory
acquisition \emph{after} announcement and liquidity provision at implementation
using \emph{open-loop} trading schedules. Our \emph{closed-loop} game
with transient price impact includes inventory buildup \emph{before} announcement
under differing beliefs about index membership and position adjustments
after membership is revealed. We construct equilibrium policies that react
to the observed state and prove equilibrium existence on every finite
horizon.

The optimal-execution literature provides the foundations for our
transient-impact dynamics and no-manipulation assumption.
The resilient limit-order-book model of \citet{OBIZHAEVA20131} provides the
basic dynamics for the impact state. The no-manipulation restrictions for one- and multiasset
transient impact developed by
\citet{gatheral_2012,alfonsi2016,Schneider02012019,jaber2026} motivate the
nonnegative self-impact-cost condition in Assumption~\ref{ass:standing_model},
which underpins the global Riccati bounds discussed above.

Market-impact games model strategic interaction among traders whose orders
affect both their own and their rivals' execution prices.
In discrete time, \citet[Chapter~9]{schoeneborn2008trade} studies a
two-player transient-impact game in both open-loop and closed-loop
formulations. The open-loop two-player model of \citet{schied_zhang_2019}
is extended to arbitrary numbers of risk-averse traders by
\citet{luo_schied_2019} and to multiple assets by \citet{CordoniLillo2024}.
\citet{schied_strehle_zhang_2017} study the high-frequency limit of the
Schied--Zhang game with exponentially decaying impact. \citet{NutzProsperi2025}
extend the high-frequency analysis to \(n\) traders and recover a
continuous-time open-loop equilibrium with costs on the initial and terminal
block trades.

In continuous-time open-loop market-impact games, rivals' trading processes
remain fixed under a unilateral deviation. For transient impact, \citet{strehle_2017}
proves existence and uniqueness for any finite number of traders under a
general positive-type impact kernel, a positive quadratic trading cost, and
hard liquidation.
\citet{campbell2026optimalexecutionntraders} give explicit equilibria for
\(N\) traders in a one-asset Obizhaeva--Wang model, both with finite terminal
penalties and under hard liquidation.
In a companion paper, \citet{CampbellNutzRandomization} show that private randomization cannot
restore existence and prove that there is at most one equilibrium in a broad
class of transient-impact games.
\citet{CarteaJaimungalSanchezBetancourt2026} characterize a one-asset game
between a broker and an informed trader by coupled forward--backward stochastic
differential equations, with a horizon bound in their general solvability
result for genuinely resilient impact. The July 2026 revision of
\citet{AbiJaberNeumanVoss2023} treats a broad class of linear-quadratic
games over trading paths, including a transient-impact liquidation problem,
through open-loop variational methods. With permanent impact,
\citet{schied_zhang_2017} and \citet{DrapeauLuoSchiedXiong2021} obtain
finite-horizon open-loop equilibria under terminal constraints or monotonicity
conditions.

The literature on closed-loop market-impact games is much smaller than its
open-loop counterpart. Here, rivals retain their feedback
functions under a deviation, so their realized trading rates can respond to
the changed state. In continuous time, \citet{carlin2007} derive a feedback
HJB/Riccati system with permanent impact and instantaneous execution costs.
\citet{CarmonaYang2008PredatoryTrading} formulate a two-player feedback
game with permanent impact and instantaneous execution costs and analyze its
degenerate HJB system numerically. \citet{micheli2023} give a
continuous-time, finite-player closed-loop equilibrium with a common signal,
but their one-asset infinite-horizon model has no transient-impact state and
requires price impact to be sufficiently small. With transient price impact,
\citet{Ohnishi02102020,Ohnishi03072024} obtain Markov-perfect equilibria with
two players in discrete time. The missing case is a continuous-time,
finite-player transient-impact game with
subgame-perfect strategies and full-horizon solvability.  No prior result
supplies this conjunction even for one asset. Our theorem allows matrix-valued
cross-impact across \(d\) assets and differing beliefs.

In closed-loop trading models with prices determined by market clearing rather
than by prescribed transient-impact dynamics,
\citet{ChoiLarsenSeppi2019,ChoiLarsenSeppi2021Benchmarks}
study private trading targets and intraday benchmarks.
\citet{ChenChoiLarsenSeppi2022} introduce learning about latent demand and
construct a finite-horizon subgame-perfect market equilibrium.
\citet{ChenChoiLarsenSeppi2023} prove global existence for arbitrary finite
horizons in a one-stock consumption and asset-pricing model with endogenous
inventory-dependent price impact.

Mean-field convergence results connect finite-player market-impact games
to limiting games with a continuum of traders.
\citet{NeumanVoss2023} construct finite-player and
mean-field open-loop equilibria under transient impact and a common predictive
signal, and prove convergence and approximate-Nash bounds.  The
linear--quadratic game over trading paths of
\citet{AbiJaberNeumanVoss2023} also yields open-loop finite-player convergence
and approximate equilibria. Our analysis instead establishes convergence of
explicitly constructed finite-player \emph{closed-loop} equilibria and approximate-Nash
bounds for the mean-field policies. Our estimates also separate the finite-player
term \(1/n\) from the discrepancy between the finite-player and limiting
average beliefs. These model-specific results complement general closed-loop
mean-field convergence theory \citep{Lacker2020ClosedLoop,LackerLeFlem2023}
and quantitative convergence results for games with interaction through
controls \citep{LauriereTangpi2022}.

Mean-field models of trading and portfolio choice allow heterogeneity in
beliefs, preferences, and trading horizons. \citet{Casgrain2020} allow traders to have different
objectives and subjective probability measures.
In portfolio games without price impact, \citet{Lacker2019,Lacker2020}
allow heterogeneous risk tolerances and relative-performance concerns.
\citet{LeclereRosenbaum2026} study a continuum
of heterogeneous horizon types. Our types are instead beliefs about a common event
that is publicly revealed.

Our paper also connects to the literature on major--minor mean-field games.
Under our mean-field scaling, the indexer plays the role of a major agent
because its exogenously prescribed trading flow remains unscaled, whereas
the opportunists are minor players whose individual market impact vanishes
as the population grows.
\citet{Huang04032019} study anticipatory trading in a major--minor game in
which both the institutional trader and HFTs trade continuously.
\citet{ChengWangXu2024} introduce a trading-speed asymmetry: the large trader
trades at discrete dates and HFTs trade continuously, with both prescribed
and optimized large-trader schedules considered.
\citet{chen2024periodictradingactivitiesfinancial} study an open-loop
major--minor liquidation game in which the major trader balances execution
costs against inventory deviations from a prescribed periodic trading
benchmark.
These papers construct mean-field equilibria and establish finite-population
approximate-Nash properties. We explicitly solve the finite-player feedback
game, define and solve the corresponding mean-field game, and then prove
that the constructed finite-player equilibria converge to the mean-field
equilibrium.

Finally, the roles of predation and liquidity provision around a predictable
order have a long history.  Strategic traders may amplify a forced order or
supply inventory against it
\citep{brunnermeier_2005,carlin2007,schoeneborn2009liquidation,strehle_2017}.
\citet{BessembinderCarrionTuttleVenkataraman2016} show, theoretically and
empirically, how strategic trading can improve execution around predictable
ETF roll orders when impact is sufficiently resilient.
\citet{Herzing2026} studies a two-player continuous-time open-loop game with
uncertainty about opponents' required trading volumes and no belief updating.
Further evidence on execution costs,
predictability, and crowding around index events appears in
\citep{Petajisto2011,Li2021,MicheliNeuman2022,JiangWuYao2025}.  Our
contribution is to place these mechanisms inside one finite-player feedback
game that spans uncertainty, public revelation, implementation, and subsequent
unwinding, and to prove that the game is solvable on the entire horizon.

The remainder of the paper is organized as follows.
Section~\ref{sec:model} defines the scenario-dependent indexer flow, the
traders' objectives, and the admissible feedback policies.
Section~\ref{sec:solution} constructs the finite-player equilibrium and proves
global solvability.
Section~\ref{sec:mean_field_limit} develops the mean-field equilibrium,
finite-player convergence, and approximate-Nash bounds.
Section~\ref{sec:experiments} presents numerical illustrations and Section~\ref{sec:conclusion} concludes.
The proofs are collected in the appendices.

\paragraph{Notation.}
The symbol \(\odot\) denotes componentwise (Hadamard) multiplication.
We write \(\Delta_m:=\{p\in[0,1]^m:\sum_{k=1}^m p^k=1\}\) for the
probability simplex.
For any dimension \(r\), \(\mathbb S^r\) denotes the real symmetric
\(r\times r\) matrices, with \(\mathbb S_+^r\) and \(\mathbb S_{++}^r\)
their positive semidefinite and positive definite subsets.
For symmetric matrices, \(\mathsf M\preceq\mathsf N\)
(respectively, \(\mathsf M\prec\mathsf N\)) means that
\(\mathsf N-\mathsf M\) is positive semidefinite (respectively, positive
definite).
We use \(\|\cdot\|\) for the Euclidean norm of vectors and any fixed matrix
norm of matrices.

\section{Model}\label{sec:model}

In this section we describe our model for trading around an index reconstitution involving an
\emph{indexer} and $n$ \emph{opportunistic} traders. The indexer must rebalance
its portfolio to match the new index composition and, once that composition is
announced, follows a deterministic trading rule. The opportunists anticipate
the resulting order flow and trade strategically around the event. Thus, the
strategic interaction is entirely among the opportunists.

\subsection{Index Reconstitution Mechanics}\label{sec:index-mechanics}

The rebalance unfolds over a deterministic horizon $[0,T]$ with relevant dates $0<T_{\mathrm{ann}}<T_{\mathrm{impl}}<T$.
At the \emph{announcement time} $T_{\mathrm{ann}}$, the new index constitution becomes
public information. At the \emph{implementation time} $T_{\mathrm{impl}}$, the indexer
must have completed the rebalance and hold the new index portfolio.

We work on a finite eligible universe of $d$
tradable stocks. Here, $d$ denotes the size of the eligible universe, which
contains the current index constituents and a small set of plausible entrants. For
example, an S\&P 500 application might use $d\approx 510$.
The new index reconstitution is characterized by two quantities, both of which
are made public at \(T_{\mathrm{ann}}\).

The first announced quantity is a vector of \emph{post-announcement membership
indicators}. Since this vector is uncertain before the announcement, we model
it as a random variable \(\Xi\) and denote a generic realization of \(\Xi\) by
\(\xi\in\{0,1\}^d\). Here, $\xi_a=1$ means that asset $a$ is included in the
index after the implementation time and $\xi_a=0$ means that it is not. We consider
\(m\) possible reconstitution scenarios \(\{\xi^1,\dots,\xi^m\}\subset\{0,1\}^d\) until
the true index constitution is revealed. Before the announcement, trader \(i\in\{1,\dots,n\}\) evaluates membership
uncertainty under a \emph{subjective probability} \(\mathbb P_n^i\), with
\[
\mathbb P_n^i(\Xi=\xi^k)=\pi_n^{i,k},
\qquad k=1,\dots,m.
\]
We write \(\pi_n^i=(\pi_n^{i,1},\dots,\pi_n^{i,m})\in\Delta_m\) for trader \(i\)'s
prior vector, which may differ across traders, and define the \emph{finite-population
average prior} by
\begin{equation}\label{eq:finite_population_average_prior}
\bar\pi_n:=\frac{1}{n}\sum_{j=1}^n\pi_n^j.
\end{equation}
We assume that \(\bar\pi_n\) is common knowledge. Trader \(i\) knows its own
prior \(\pi_n^i\), but not the individual priors of the other traders.
Traders receive no additional information about
\(\Xi\) before \(T_{\mathrm{ann}}\) and stick to their individual beliefs about
scenario probabilities. At
\(T_{\mathrm{ann}}\), the realized membership vector is publicly revealed, so
all traders then know which scenario has occurred.
We encode this information arrival through the public announcement process
\begin{equation}\label{eq:announcement_process}
A(t):=\Xi\mathbf 1_{\{t\geq T_{\mathrm{ann}}\}},
\qquad 0\leq t\leq T.
\end{equation}

The second announced quantity is a vector of \emph{floating share counts}
$f\in\mathbb{R}_+^d$, where $f_a$ is the number of floating shares of stock
$a$ used in float-adjusted market-cap index calculations.\footnote{Floating
shares are shares available for public trading. Unlike total shares
outstanding, they exclude restricted shares and strategic holdings, such as
those of insiders or controlling shareholders.}
In our model, we treat \(f\) as fixed and known to all players, so \(\Xi\) is the only
uncertain component of the announced index constitution. Accordingly, the
notation for the scenario-dependent indexer quantities below displays only
their dependence on the generic realization \(\xi\) while dependence on fixed model
inputs \(f\) is suppressed.

\begin{remark}[Predictability of Index Membership]\label{rem:membership_predictability}
Russell index membership is largely determined by published eligibility and
market-capitalization ranking rules. Observed capitalizations therefore
provide a direct basis for updating membership forecasts before the ranking
date \citep{PegoraroSammonShim2026}. Although unexpected price movements can
still change outcomes near a ranking cutoff, membership is determined once
all relevant ranking-date inputs are known. As a modeling assumption, one
could let market capitalizations evolve according to stochastic differential
equations with stock-specific volatilities, starting from their current
values. Membership would then be determined from their rankings at the
ranking date. This would admit a
dynamic-programming formulation with additional capitalization state
variables. However, the ranking-based membership rule would generally
destroy the linear--quadratic structure developed later in
Section~\ref{sec:solution}.

By contrast, S\&P 500 selection also involves committee discretion. Public
information can improve forecasts, but observing capitalizations and
eligibility need not resolve the selection decision before the
announcement.\footnote{For example, Datadog's July 2025 addition to the
S\&P 500 surprised some investors who had expected Robinhood or AppLovin
instead. See Silin Chen,
\href{https://www.thestreet.com/investing/analyst-resets-datadog-stock-price-target-after-surprise-addition-to-s-p-500}
{\emph{Analyst Resets Datadog Stock Price Target After Surprise Addition to
S\&P 500}}, TheStreet, July 4, 2025.}
Committee discretion also provides a natural motivation for heterogeneous
membership beliefs: traders observing the same public information may differ
in their assessments of the committee's eventual choice. Our trader-specific
scenario priors represent these differences in judgment.
\end{remark}

Once the realized membership vector \(\xi\) and the vector of floating shares \(f\) have been announced,
the required trade and resulting flow of an indexer replicating the
index can be computed. To do so, let
$h^0\in\mathbb{R}^d$ denote the indexer's pre-announcement holdings
\emph{measured in shares}, so that $h_a^0=0$ for names not held
before the event. After the announcement, the indexer's target stock holdings
are proportional to $f\odot \xi$, i.e.,
\begin{equation}\label{eq:target_holdings_proportional}
h(\xi)=\kappa(\xi)\,(f\odot \xi)
\end{equation}
for some positive proportionality factor $\kappa(\xi)$. In our model this factor is chosen using a
fixed reference-price approximation: Fix
\(s^{\mathrm{ref}}\in\mathbb{R}_{++}^d\), which we usually choose as the initial
midprice \(S(0)\), with \(S(t)\) defined later in \eqref{eq:midprice}, and impose a
self-financing rebalance at this reference price.
Writing \(\Delta h(\xi):=h(\xi)-h^0\) for the indexer's total
rebalance quantity, the self-financing condition is
\begin{equation}\label{eq:cash_neutral_condition}
0 = (s^{\mathrm{ref}})^\top \Delta h(\xi)
  = (s^{\mathrm{ref}})^\top\big(\kappa(\xi)(f\odot \xi)-h^0\big).
\end{equation}
Solving for $\kappa(\xi)$ gives
\begin{equation}
\kappa(\xi)
:=\frac{(s^{\mathrm{ref}})^\top h^0}
{(s^{\mathrm{ref}})^\top(f\odot \xi)}.
\label{eq:kappa_ref}
\end{equation}
Substituting this expression into \(\Delta h(\xi)\) gives the indexer's required
rebalance quantity:
\begin{equation}\label{eq:order_quantity}
\Delta h(\xi)
=\frac{(s^{\mathrm{ref}})^\top h^0}
{(s^{\mathrm{ref}})^\top(f\odot \xi)}(f\odot \xi)-h^0.
\end{equation}
As mentioned before,
we do not model the indexer as a player choosing how to execute this order.
Instead, we assume that it follows a deterministic TWAP that executes the
quantity \(\Delta h(\xi)\) uniformly over
\([T_{\mathrm{TWAP}},T_{\mathrm{impl}}]\), where
\(T_{\mathrm{ann}}<T_{\mathrm{TWAP}}\). Its trading rate is
\begin{equation}\label{eq:twap_rate}
g(t,\xi)
:=
\frac{\Delta h(\xi)}{\Delta_{\mathrm{TWAP}}}
\mathbf 1_{\{T_{\mathrm{TWAP}}\le t\le T_{\mathrm{impl}}\}},
\end{equation}
where \(\Delta_{\mathrm{TWAP}}:=T_{\mathrm{impl}}-T_{\mathrm{TWAP}}\).
For scenario \(k\), define the target holdings and required order by
\begin{equation}\label{eq:scenario_indexer_quantities}
h^k:=h(\xi^k),
\qquad
\Delta h^k:=\Delta h(\xi^k),
\qquad k=1,\dots,m,
\end{equation}
and the corresponding indexer flow by
\begin{equation}\label{eq:scenario_indexer_flows}
g^k(t):=g(t,\xi^k),
\qquad k=1,\dots,m.
\end{equation}
The realized indexer-flow process is
\begin{equation}\label{eq:realized_indexer_flow}
g(t)
:=
\sum_{k=1}^m
\mathbf 1_{\{\Xi=\xi^k\}}\,g^k(t).
\end{equation}

\begin{remark}[TWAP Approximation of the Closing Auction]%
\label{rem:twap_approximation}
We interpret \(T_{\mathrm{impl}}\) as the closing auction on the trading day
before the new index constitution becomes effective. Index funds seeking to
minimize tracking error concentrate their rebalancing trades near the market
close, and many public-index-tracking ETFs execute entirely at the closing-auction price
\citep{Li2021,JiangWuYao2025}. Closing-auction execution aligns the indexer's
transaction prices with the closing prices used to calculate index returns,
avoiding tracking deviations caused by execution at different prices. We
approximate this concentrated auction trade by spreading the required quantity
uniformly over the short interval \([T_{\mathrm{TWAP}},T_{\mathrm{impl}}]\). If
\(\Delta_{\mathrm{TWAP}}\leq1\) and time is measured in trading days, this
interval begins no earlier than the previous close. The indexer therefore
still holds the pre-reconstitution portfolio at the previous close and
completes the rebalance by \(T_{\mathrm{impl}}\).
We discuss a discrete closing-auction extension later in
Remark~\ref{rem:discrete_closing_auction}.
\end{remark}

\begin{remark}[Fixed Reference-Price Approximation]
In practice, transaction prices during the implementation window
\([T_{\mathrm{TWAP}},T_{\mathrm{impl}}]\) fluctuate over time. They are also
affected by the indexer's own trading and by the trades of other market
participants, including the opportunists described in the next subsection.
Using the realized transaction prices rather than the fixed reference price
\(s^{\mathrm{ref}}\) to determine \(\kappa\) would cause the indexer's total
order to depend on prices generated within the model. The indexer's flow would
then have to be determined jointly with the strategic equilibrium. Using
\(s^{\mathrm{ref}}\) is therefore a tractability approximation. It replaces
the realized, endogenous transaction prices with a fixed valuation point, so
that \(\kappa(\xi)\), and hence the indexer flow \(g(t,\xi)\), is known once
the index constitution is announced. This preserves the linear--quadratic
structure used in Section~\ref{sec:solution}.
\end{remark}

\begin{remark}[Simultaneous Rebalancing Across Indices]
In practice, a stock added to one index may be deleted from another at the same
time which is called a migration. The indexer tracking the first index then buys the stock, while the
indexer tracking the second sells it, so their orders partly offset. The same
mechanism applies when a stock is deleted from the index studied here and added
to another index. Our specification abstracts from such simultaneous
rebalancing. Accordingly, \(\Delta h(\xi)\) denotes only the trade required by
the single indexer modeled above and excludes orders from indexers tracking
other indices. Simultaneous rebalancing can be incorporated without changing
the strategic analysis by defining \(g(t,\xi)\) as the net deterministic flow
obtained by summing the orders of all affected indexers.
\end{remark}

\subsection{Trading Dynamics and Objectives}\label{sec:opportunists}

We now specify the opportunists' inventory and price dynamics and their
trading objectives.

\paragraph{Price and Inventory Dynamics.} Let \(S^0=(S^0(t))_{0\le t\le T}\) be an exogenous
\(\mathbb R^d\)-valued process, which we call the \emph{fundamental price}.
Under each trader's subjective probability \(\mathbb P_n^i\), we assume that
\(S^0\) is a square-integrable càdlàg martingale with respect to its natural
filtration and is independent of \(\Xi\), and hence of the announcement process
\(A\).

Trader \(i\in\{1,\dots,n\}\) chooses a trading rate
\(u_n^i(t)\in\mathbb R^d\). Its inventory \(X_n^i(t)\in\mathbb R^d\) evolves as
\begin{equation}\label{eq:inventory_dynamics}
    dX_n^i(t)=u_n^i(t)\,dt,
    \qquad
    X_n^i(0)=x_0^i
\end{equation}
where we usually choose \(x_0^i=0\) so that the opportunists start flat.
We denote the average opportunist inventory by
\begin{equation}\label{eq:average_opportunist_inventory}
\bar X_n(t):=\frac{1}{n}\sum_{j=1}^n X_n^j(t)
\end{equation}
and the aggregate opportunist trading rate and the aggregate
rate of all opportunists other than trader \(i\) by
\begin{equation}\label{eq:aggregate_opportunist_rates}
U_n(t):=\sum_{j=1}^n u_n^j(t),
\qquad
U_n^{-i}(t):=\sum_{j\ne i}u_n^j(t),
\end{equation}
respectively. The \emph{midprice} \(S(t)\in\mathbb{R}^d\) is given by
\begin{equation}\label{eq:midprice}
S(t)=S^0(t)+I(t).
\end{equation}
Here, \(I(t)\in\mathbb R^d\) is the \emph{transient impact} state with dynamics
\begin{equation}\label{eq:transient_impact}
dI(t)
=
\left[
-R I(t)
+\Gamma\left(\varpi_n U_n(t)+g(t)\right)
\right]dt,
\qquad
I(0)=\iota_0
\end{equation}
where \(R, \Gamma\in\mathbb{R}^{d\times d}\) and \(\varpi_n>0\).
We usually choose \(\iota_0=0\), so that the initial impact is zero.
The matrix \(R\) governs the decay of transient impact, while \(\Gamma\) maps
the net trading rate \(\varpi_nU_n+g\) into its contribution to the rate of
change of \(I\). Thus, \(\Gamma_{ab}\) measures the change in the impact rate
of asset \(a\) per unit of net trading rate in asset \(b\). The population
weight \(\varpi_n\) determines the contribution of each opportunist's rate
to transient impact. Typical choices are \(\varpi_n=\varpi\) for a finite-player
game with fixed player size and \(\varpi_n=\varpi/n\) for a model with mean-field scaling,
with \(\varpi>0\) independent of \(n\). We discuss population scaling further in
Subsection~\ref{subsec:parameter_choices_and_restrictions}.

For trader \(i\), trading generates the \emph{instantaneous impact}
\(\Lambda\bigl(u_n^i(t)+\chi_u\varpi_n U_n^{-i}(t)+\chi_g g(t)\bigr)\) where \(\chi_u,\chi_g\geq0\), so
the \emph{execution price} for player \(i\) is
\begin{equation}\label{eq:execution_price}
\widehat S^i(t)
=
S(t)+\Lambda\left(
u_n^i(t)+\chi_u\varpi_n U_n^{-i}(t)+\chi_g g(t)
\right).
\end{equation}
The parameters \(\chi_u\) and \(\chi_g\)
measure the execution-price loadings on the contemporaneous flows of the
other opportunists and the indexer, respectively.
The choice \(\varpi_n=\chi_u=\chi_g=1\) gives all opportunist and indexer
flows equal weight in the execution price, so opposing buy and sell rates
offset. The execution price then is
\[
\widehat S^i(t)
=
S(t)+\Lambda\bigl(U_n(t)+g(t)\bigr),
\]
which is the same for every trader. In the scalar case
\(\Lambda=\lambda>0\) and for a single trader, this specification can be interpreted as walking
through a flat order book of constant height \(\lambda^{-1}\), measured in
shares per unit price. A net contemporaneous flow \(q\) then moves the
execution price by \(\lambda q\). The scaling and interpretation of instantaneous impact are discussed further
later in Subsection~\ref{subsec:parameter_choices_and_restrictions}.

\begin{remark}[Symmetry and Diagonality of the Execution-Cost Matrix]
The symmetry assumption on \(\Lambda\) can be relaxed: the analysis in
Sections~\ref{sec:solution} and \ref{sec:mean_field_limit} extends to
nonsymmetric \(\Lambda\in\mathbb R^{d\times d}\) whose symmetric part
\(\Lambda_{\mathrm{s}}:=\frac{1}{2}(\Lambda+\Lambda^\top)\) is positive
definite.
However, the symmetric specification is also empirically motivated. In particular, as
summarized in Remark~2.1 of \citet{jaber2026}, the findings of
\citet{capponi_cont_2020}, \citet{cont_cucuringu_zhang_2023}, and
\citet{lecoz_mastromatteo_challet_benzaquen_2024} motivate the more restrictive
choice of a diagonal \(\Lambda\).
\end{remark}

\paragraph{Objective.} For a trading-rate profile
\(u_n=(u_n^1,\ldots,u_n^n)\), write
\(u_n^{-i}:=(u_n^j)_{j\ne i}\) and define trader \(i\)'s cost by
\begin{equation}\label{eq:overview_player_objective}
\begin{aligned}
J_n^i(u_n^i;u_n^{-i})
:=
\mathbb E_n^i\Bigg[
&\int_0^T \big(\widehat S^i(t)\big)^\top u_n^i(t)\,dt
-\left((S^0(T))^\top X_n^i(T)-(S^0(0))^\top x_0^i\right)\\
&+\int_0^T \frac{1}{2}(X_n^i(t))^\top Q X_n^i(t)\,dt
+\frac{1}{2}(X_n^i(T))^\top Q_T X_n^i(T)
\Bigg],
\end{aligned}
\end{equation}
where \(Q,Q_T\in\mathbb S_+^d\), and \(\mathbb E_n^i\) denotes expectation
under \(\mathbb P_n^i\). The first integral is the cumulative cash paid for
trading. From this amount we subtract the change in the fundamental value of
the trader's inventory. The final two terms penalize running and terminal
inventory, respectively. We typically choose the terminal penalty sufficiently
large that it effectively forces terminal inventories to be very close to
zero.

\begin{remark}[Fundamental-Price Terminal Valuation]
We value terminal inventory at \(S^0(T)\), rather than at the impacted midprice
\(S(T)\). This avoids crediting a trader with self-generated terminal price
impact that cannot be realized without selling and thereby reversing it. The
convention also underlies the a priori Riccati bounds in
Theorem~\ref{thm:global_riccati_solvability}, which guarantee full-horizon
existence of the closed-loop Nash equilibrium. Our proof of these bounds does
not extend to midprice marking, as explained later in
Remark~\ref{rem:impacted_terminal_mark_solvability}. In the related open-loop
games of \citet{NeumanVoss2023} and
\citet{campbell2026optimalexecutionntraders},
fundamental-price
marking preserves convexity of costs
making first-order conditions sufficient and supporting uniqueness. Additionally, since
\(Q_T\) is typically large, terminal inventories are close to zero and the
choice has little quantitative effect.
\end{remark}

\subsubsection{Parameter Choices and Restrictions}%
\label{subsec:parameter_choices_and_restrictions}

In this subsection, we describe some assumptions we make on the market impact parameters and
also comment on the population weight \(\varpi_n\) in our model.

\paragraph{No Dynamic Arbitrage.} We choose the impact parameters \(\Gamma\) and \(R\) to rule out price
manipulation in the sense of \citet{huberman_stanzl}. Such manipulation means
profiting from price movements generated solely by one's own trades while
ending with the initial portfolio. For this test, all other opportunists and
the indexer are inactive, and the initial transient impact is zero.
Formally, a deterministic trading rate
\(v\in L^2([0,T],\mathbb R^d)\) is a \emph{round trip} if
\(\int_0^T v(t)\,dt=0\). With \(\Lambda\in\mathbb S_{++}^d\) denoting the
execution-cost matrix in \eqref{eq:execution_price}, its total self-induced
cost is
\begin{align*}
\mathcal C_{\mathrm{tot}}(v)
:=
\int_0^T v(t)^\top\Lambda v(t)\,dt+\varpi_n\mathcal C(v),\quad\mathcal C(v)
:=
\int_0^T v(t)^\top
\left(\int_0^t e^{-R(t-s)}\Gamma v(s)\,ds\right)dt.
\end{align*}
Absence of price manipulation requires \(\mathcal C_{\mathrm{tot}}(v)\geq 0\)
for every round trip. We impose the stronger condition
\(\mathcal C(v)\geq 0\) for every
\(v\in L^2([0,T],\mathbb R^d)\). Sufficient conditions for this requirement are
\begin{equation}\label{eq:transient_kernel_parameter_conditions}
R\in\mathbb S_+^d,
\qquad
\Gamma\in\mathbb S_+^d,
\qquad
R\Gamma=\Gamma R.
\end{equation}
Indeed, these conditions give for every
\(v\in L^2([0,T],\mathbb R^d)\) that
\begin{equation}\label{eq:transient_kernel_psd}
\mathcal C(v)
=
\frac{1}{2}\int_0^T\int_0^T
v(t)^\top\Gamma^{1/2}e^{-R\lvert t-s\rvert}
\Gamma^{1/2}v(s)\,ds\,dt
\geq 0.
\end{equation}

See \citet{jaber2026}, \citet{alfonsi2016}, and
\citet{gatheral_2012} for conditions that preclude price
manipulation in more general transient-impact models.

\paragraph{Interpretation of the Population Scaling.}
The weight $\varpi_n$ represents each opportunist's mass. The model variables
$X_n^i$, $u_n^i$, and $J_n^i$ are quantities per unit of trader mass. Their
physical counterparts are
\(X_{n,\mathrm{phys}}^i:=\varpi_nX_n^i\),
\(u_{n,\mathrm{phys}}^i:=\varpi_nu_n^i\), and
\(J_{n,\mathrm{phys}}^i:=\varpi_nJ_n^i\).
Consequently, $dX_{n,\mathrm{phys}}^i(t)
=u_{n,\mathrm{phys}}^i(t)\,dt$ and
$\varpi_nU_n=\sum_{j=1}^n u_{n,\mathrm{phys}}^j$. The transient-impact
dynamics can therefore be written in physical units as
\[
dI(t)
=
\left[
-RI(t)
+
\Gamma\left(
\sum_{j=1}^n u_{n,\mathrm{phys}}^j(t)+g(t)
\right)
\right]dt.
\]
Thus, for every $\varpi_n>0$, $R$ and $\Gamma$ act on aggregate physical order
flow and retain the same interpretation.

The interpretation of the private instantaneous and inventory costs is
different. In physical units, the execution price becomes
\[
\widehat S^i(t)
=
S(t)
+
\Lambda\left(
\frac{1}{\varpi_n}u_{n,\mathrm{phys}}^i(t)
+
\chi_u\sum_{j\ne i}u_{n,\mathrm{phys}}^j(t)
+
\chi_g g(t)
\right).
\]
Moreover, multiplying the objective by $\varpi_n$ and substituting
$X_n^i=X_{n,\mathrm{phys}}^i/\varpi_n$ and
$u_n^i=u_{n,\mathrm{phys}}^i/\varpi_n$ gives
\begin{align*}
J_{n,\mathrm{phys}}^i
=
\mathbb E_n^i\Bigg[
&
\int_0^T
\bigl(\widehat S^i(t)\bigr)^\top
u_{n,\mathrm{phys}}^i(t)\,dt
-
\left(
(S^0(T))^\top X_{n,\mathrm{phys}}^i(T)
-(S^0(0))^\top X_{n,\mathrm{phys}}^i(0)
\right)
\\
&
+
\frac{1}{2\varpi_n}\int_0^T
\bigl(X_{n,\mathrm{phys}}^i(t)\bigr)^\top
QX_{n,\mathrm{phys}}^i(t)\,dt
\\
&
+
\frac{1}{2\varpi_n}
\bigl(X_{n,\mathrm{phys}}^i(T)\bigr)^\top
Q_TX_{n,\mathrm{phys}}^i(T)
\Bigg].
\end{align*}
Hence a trader of mass $\varpi_n$ faces coefficients
$\Lambda/\varpi_n$, $Q/\varpi_n$, and $Q_T/\varpi_n$
on its physical trading rate and inventory, so the fixed parameters $\Lambda$,
$Q$, and $Q_T$ are best interpreted per unit of trader mass. For the trading
term, $\Lambda/\varpi_n$ means that a fixed absolute rate becomes more expensive as the
trader becomes smaller, as would be the case if execution capacity were
proportional to trader mass. For the inventory terms, $Q/\varpi_n$ and $Q_T/\varpi_n$ likewise
make a fixed absolute position more expensive. This amounts to greater
effective risk aversion in physical units and is reasonable under the assumption
that absolute risk-bearing capacity is proportional to trader mass.

If instead one requires the coefficients on absolute physical rates and
inventories to remain constant, one must take
$\Lambda_n=\varpi_n\Lambda_{\mathrm{phys}}$,
$Q_n=\varpi_nQ_{\mathrm{phys}}$, and
$Q_{T,n}=\varpi_nQ_{T,\mathrm{phys}}$. Under $\varpi_n=\varpi/n$,
these coefficients vanish as $n\to\infty$. In particular, the representative-agent Hamiltonian loses its
strictly convex quadratic term in the trading rate, while the running and
terminal inventory-restoring terms disappear. The limiting mean-field control
problem is therefore generally singular.

As discussed in Remark~\ref{rem:twap_approximation}, modeling the indexer's execution as a TWAP
is a simplification. In practice, index funds concentrate much of their
rebalance trading in the closing auction, where opposing buy and sell orders
offset in the auction imbalance. We therefore typically set $\chi_u=\chi_g=1$
to approximate the offsetting of buy and sell orders in the closing auction
within our continuous-trading game. With this choice, the trader's own rate
$u_n^i$, other opportunists' aggregate physical flow $\varpi_nU_n^{-i}$, and
the indexer's flow $g$ enter the execution price through the same matrix
$\Lambda$, allowing opposing contributions to offset in the execution-cost term.

In the atomic specification \(\varpi_n=1\), this choice $\chi_u=\chi_g=1$ gives
$\widehat S^i=S+\Lambda(U_n+g)$, so every trader faces the same execution price.
Opposite physical trading rates in the same asset cancel
in $U_n+g$ at each time.
For fixed $\varpi_n=\varpi\ne1$, the trader's own physical rate retains the
coefficient $\Lambda/\varpi$, so execution prices generally differ across traders.

Under the mean-field scaling \(\varpi_n=\varpi/n\), setting
$\chi_u=\chi_g=1$ gives the limiting execution price
\[
\widehat S^i(t)
=
S(t)
+
\Lambda\bigl(
u^i(t)+\varpi\bar u(t)+g(t)
\bigr),
\qquad
\bar u(t)
=
\lim_{n\to\infty}\frac{1}{n}U_n(t).
\]
Unlike under \(\varpi_n=1\), this expression does not represent exact
cancellation of physical flows even when \(\varpi=1\): $u_n^i$ enters
unscaled, whereas the other opportunists' aggregate rate enters as
$\varpi U_n^{-i}/n$. Thus, $u^i$ is the trader's intensive own rate, while
$\varpi\bar u+g$ is aggregate physical flow. This asymmetry is necessary to avoid the
singular mean-field limit discussed earlier in this subsection. Letting
$u^i$, $\varpi\bar u$, and $g$ enter through the same matrix $\Lambda$ is our best
approximation of auction netting in the mean-field model: $\varpi\bar u$ and $g$ can
still offset each other, and opposing aggregate flow reduces the execution-price
displacement faced by an individual trader, even though the trader's own
physical flow does not cancel one-for-one as it does under \(\varpi_n=1\).
In the mean-field model, $\varpi$ serves as a knob for offsetting: increasing
it gives greater weight to aggregate opportunist flow.

\begin{remark}[Discrete Closing-Auction Extension]
\label{rem:discrete_closing_auction}
A more literal treatment of the implementation-date auction would let the indexer
trade only in a discrete closing auction. Trader \(i\) would submit an auction
quantity \(u_{\mathrm A}^i\in\mathbb R^d\), and
\(U_{\mathrm A}:=\sum_{j=1}^n u_{\mathrm A}^j\) would denote the
opportunists' aggregate auction quantity. For example, one could specify
\[
P_{\mathrm{auction}}
=
S(T_{\mathrm{impl}}-)
+
H_{\mathrm A}
\bigl(
U_{\mathrm A}+g_{\mathrm A}+\xi_{\mathrm A}
\bigr),
\qquad
\mathbb E[\xi_{\mathrm A}]=0,
\]
where \(H_{\mathrm A}\) is the auction-impact matrix,
\(g_{\mathrm A}\) is the indexer's auction quantity, and
\(\xi_{\mathrm A}\) is exogenous auction imbalance. Since
\(U_{\mathrm A}=u_{\mathrm A}^i+\sum_{j\ne i}u_{\mathrm A}^j\), trader
\(i\)'s auction payment \(P_{\mathrm{auction}}^\top u_{\mathrm A}^i\) is
quadratic in the submitted quantities. The linear--quadratic structure would
therefore largely be preserved, but the solution would have to stitch together
three continuous-time LQ games---before the announcement, between the
announcement and the implementation-date auction, and after the auction---with the
discrete LQ auction game at \(T_{\mathrm{impl}}\). Working backward, the
post-auction continuation value enters the auction game. The auction equilibrium
value then supplies the quadratic terminal condition for the continuous game
preceding the auction, which must in turn be linked to the pre-announcement
game. The main difficulty is proving an analogue of
Theorem~\ref{thm:global_riccati_solvability}: the auction-induced terminal
matrix must generate a Riccati solution that remains finite throughout the
earlier intervals. We expect such a solvability result to hold, but proving it
would be substantially more cumbersome. We therefore retain the continuous
trading approximation of the auction-dominated environment around the
implementation date.
\end{remark}

\paragraph{Relative Weights in the Execution Price.}
For each finite-player game, we impose $\chi_u\varpi_n\leq1$, so each other
opportunist's rate enters \eqref{eq:execution_price} with no greater weight
than the trader's own rate. Together with the remaining conditions collected
in the following assumption, this restriction is sufficient to prevent
blow-up of the equilibrium coefficients, as shown later in
Theorem~\ref{thm:global_riccati_solvability}. Usually one chooses \(\chi_u\in\{0, 1\}\)
and either \(\varpi_n=1\) or \(\varpi=\varpi/n\). The restriction \(\chi_u\varpi_n\leq1\)
thus only becomes relevant in the mean-field scaling setting with \(\chi_u=1\) where it is equivalent to
\(n\geq\varpi\).

\begin{assumption}[Standing Assumptions]\label{ass:standing_model}
Under every \(\mathbb P_n^i\), \(S^0\) is a square-integrable càdlàg martingale
independent of \(\Xi\). Moreover, \(\Lambda\in\mathbb S_{++}^d\),
\(Q,Q_T\in\mathbb S_+^d\), \(\mathcal C(v)\geq0\) for every deterministic
\(v\in L^2([0,T],\mathbb R^d)\), \(\chi_u,\chi_g\geq0\), and
\(\varpi_n>0\). In each finite-player game, assume
\(\chi_u\varpi_n\leq1\).
\end{assumption}
\subsection{Admissible Closed-Loop Policies}

We next define the admissible controls, focusing on \emph{closed-loop policies}. In
general, a closed-loop policy maps the history of the processes observed by a
player up to time \(t\) to that player's current control. In our model, trader
\(i\) observes their own inventory \(X_n^i\), the average inventory \(\bar X_n\),
the midprice \(S\), and the announcement process \(A\). Importantly, traders do
not observe the individual inventories \(X_n^j\) of the other traders
\(j\ne i\). The formal definition is as follows.

\begin{definition}[Admissible Closed-Loop Policies]
\label{def:admissible_feedback_policies}
A profile \(\Phi_n=(\Phi_n^1,\ldots,\Phi_n^n)\) determines the induced
trading rates
\begin{equation}\label{eq:finite_feedback_induced_rate}
u_n^i(t)
:=
\Phi_n^i\left(
t,X_n^i,\bar X_n,S,A;\bar\pi_n,\pi_n^i
\right),
\qquad i=1,\ldots,n,
\end{equation}
where each \(\Phi_n^i\) is a Borel measurable map
\begin{equation}\label{eq:finite_feedback_policy_maps}
\Phi_n^i:
[0,T]\times D([0,T];\mathbb R^d)^4\times\Delta_m^2
\longrightarrow\mathbb R^d,
\qquad i=1,\ldots,n,
\end{equation}
where \(D([0,T];\mathbb R^d)\) denotes the space of \(\mathbb R^d\)-valued
càdlàg paths, equipped with the Skorokhod topology. The four path arguments
are ordered as \(\mathsf h=(\mathbf{x}^i,\bar{\mathbf{x}},\mathbf{s},\mathbf{a})\), corresponding
to the histories of \((X_n^i,\bar X_n,S,A)\), and the final two arguments are
ordered as \((\bar p,p^i)\), corresponding to the average prior and trader
\(i\)'s own prior. Each map \(\Phi_n^i\) is non-anticipative in its path
arguments: for all \(t\in[0,T]\), all paths \(\mathsf h,\widetilde{\mathsf h}\), and all
\((\bar p,p^i)\in\Delta_m^2\),
\begin{equation}\label{eq:finite_feedback_nonanticipativity}
\mathsf h|_{[0,t]}=\widetilde{\mathsf h}|_{[0,t]}
\quad\Longrightarrow\quad
\Phi_n^i(t,\mathsf h;\bar p,p^i)
=\Phi_n^i(t,\widetilde{\mathsf h};\bar p,p^i).
\end{equation}
The profile \(\Phi_n\) is admissible if the system formed by
\eqref{eq:inventory_dynamics}, \eqref{eq:midprice},
\eqref{eq:transient_impact}, and
\eqref{eq:finite_feedback_induced_rate} admits a unique strong solution and the
induced rates are square-integrable under every trader's subjective
probability:
\[
\mathbb E_n^i\left[
\int_0^T \sum_{j=1}^n \|u_n^j(t)\|^2\,dt
\right]
<\infty,
\qquad i=1,\dots,n.
\]
The set of admissible policy profiles is denoted by \(\mathcal U_n\).
\end{definition}

Above we defined policies as functions of the histories of
\((X_n^i,\bar X_n,S,A)\), because this is economically natural
as \(S\) is the actual midprice observed by traders. However,
from \eqref{eq:midprice} and \eqref{eq:transient_impact}, it is easy to see that
observing the histories of \((X_n^i,\bar X_n,S,A)\) is equivalent to observing
those of \((X_n^i,\bar X_n,I,A,S^0)\). We could therefore equivalently work
with policies of the form
\begin{equation}\label{eq:equivalent_history_policy_parametrization}
\Phi_n^i(t,\mathbf{x}^i,\bar{\mathbf{x}},\boldsymbol{\iota},\mathbf{a},\mathbf{s}^0;\bar p,p^i).
\end{equation}
The Nash equilibrium derived
in the next section will in fact consist of Markov policies in this alternative
history parametrization that ignore \(S^0\):
For current values \(x^i\), \(\bar x\), and \(\iota\), define trader
\(i\)'s \emph{deviation state} by \(\delta^i:=x^i-\bar x\) and call
\((\bar x,\iota)\) the \emph{common state}. We define the corresponding \emph{reduced state} by
\begin{equation}\label{eq:reduced_state}
z^i(x^i,\bar x,\iota)
:=
\bigl((\delta^i)^\top,\bar x^\top,\iota^\top\bigr)^\top
\in\mathbb R^{3d}
\end{equation}
and consider scenario-indexed feedback policies
\begin{equation}\label{eq:markov_feedback}
\phi_n^{i,k}(t,z^i;\bar\pi_n,\pi_n^i),
\qquad k=0,\ldots,m.
\end{equation}
For \(x_n=(x_n^1,\ldots,x_n^n)\), write
\(\bar x_n:=n^{-1}\sum_{j=1}^n x_n^j\).
Trader \(i\) uses \(\phi_n^{i,0}\) before the announcement and
\(\phi_n^{i,k}\) after scenario \(k\) is announced. The policy map on
histories of \((X_n^i,\bar X_n,I,A,S^0)\) induced by this feedback family is
\begin{equation}\label{eq:markov_feedback_lift}
\begin{aligned}
&\Phi_n^i(t,\mathbf{x}^i,\bar{\mathbf{x}},\boldsymbol{\iota},\mathbf{a},\mathbf{s}^0;
\bar p,p^i)
\\
&\quad:=
\begin{cases}
\phi_n^{i,0}
\bigl(t,z^i(\mathbf{x}^i(t),\bar{\mathbf{x}}(t),\boldsymbol{\iota}(t));\bar\pi_n,\pi_n^i\bigr),
&t<T_{\mathrm{ann}},\\[2pt]
\phi_n^{i,k}
\bigl(t,z^i(\mathbf{x}^i(t),\bar{\mathbf{x}}(t),\boldsymbol{\iota}(t));\bar\pi_n,\pi_n^i\bigr),
&t\geq T_{\mathrm{ann}},\ \mathbf{a}(t)=\xi^k.
\end{cases}
\end{aligned}
\end{equation}
Write \(\phi_n^i:=(\phi_n^{i,k})_{k=0}^m\) and
\(\phi_n:=(\phi_n^1,\ldots,\phi_n^n)\). Henceforth, when the Markov feedback
family \(\phi_n\) appears where a history-dependent policy profile is
required, it refers to the profile induced by
\eqref{eq:markov_feedback_lift}.
Because these Markov
feedbacks ignore \(S^0\), the induced states and rates are deterministic
conditional on each membership scenario. Thus, their square-integrability condition for membership
in \(\mathcal U_n\) reduces to deterministic \(L^2\)-integrability along the
finitely many induced paths for the \(m\) different scenarios.

For an admissible profile \(\Phi_n\), we write
\(J_n^i(\Phi_n^i;\Phi_n^{-i})\) for the objective in
\eqref{eq:overview_player_objective} evaluated at its induced rates and states.
An admissible deviation by trader \(i\) is a policy \(\Psi_n^i\) such
that \((\Psi_n^i,\Phi_n^{-i})\in\mathcal U_n\).

\begin{definition}[Closed-Loop Nash Equilibrium]
\label{def:nash_equilibrium}
An admissible policy profile \(\Phi_n^*\in\mathcal U_n\) is a closed-loop
Nash equilibrium if, for every trader \(i\) and every admissible deviation
\(\Psi_n^i\),
\begin{equation}\label{eq:ne_definition}
J_n^i(\Phi_n^{i,*};\Phi_n^{-i,*})
\leq
J_n^i(\Psi_n^i;\Phi_n^{-i,*}).
\end{equation}
\end{definition}

\begin{remark}[Closed-Loop vs Open-Loop Controls]
A comment on our choice of closed-loop rather than \emph{open-loop controls} is in
order. Open-loop controls, commonly used in the stochastic differential game literature
on market impact games, are
trading-rate processes adapted to the filtration generated by \(S^0\) and
\(A\). Since neither process is affected by the traders' actions, once an
open-loop profile has been chosen, the realized trading rates of the
nondeviating traders remain unchanged when another trader deviates. In our
closed-loop formulation, by contrast, policies are functions of observable
histories. A unilateral deviation changes the histories fed into the other
traders' policy functions. Those functions remain fixed, but their different
inputs generally produce different realized trading rates. This may be more
realistic because traders observe the resulting changes in the average
inventory and the midprice and can adjust their subsequent trading accordingly.
\end{remark}

\begin{remark}[A Market-Data Proxy for Average Inventory]
\label{rem:average_inventory_proxy}
In the definition of the admissible policies we treated \(\bar X_n\)
as observed by traders. In practice, however, \(\bar X_n\) is not directly
reported. Nevertheless, a reasonable proxy can often still be
constructed from market data if one is willing to make certain assumptions.
We briefly describe a crude procedure to compute such a proxy:
Suppose that the opportunists trade exclusively through liquidity-taking
market orders. Their aggregate inventory change can then be approximated by cumulative signed
liquidity-taking order flow. This flow can be estimated from consolidated
transaction and quotation data using the heuristic of \citet{lee_ready_1991},
which classifies trades above the prevailing quote midpoint as buyer-initiated,
trades below it as seller-initiated, and midpoint trades using a tick test.
Summing the signed quantities and subtracting the known cumulative indexer flow
would give a proxy for cumulative opportunist flow. Adding \(1/n\) times this
quantity to the initial estimate \(\bar X_n(0)\) would then yield a proxy for
\(\bar X_n\). Related empirical work estimates the flows of particular investor
groups by combining signed
transactions with observable transaction characteristics or lower-frequency
holdings data
\citep{campbell_ramadorai_schwartz_2009,boehmer_jones_zhang_zhang_2021}.
Possible limitations of this proxy are that (i) opportunists may provide
liquidity through resting limit orders, so their trades are signed according to
their counterparties' aggressive orders; (ii) aggressive orders from other
market participants are included; and (iii) auctions, crosses, and some
off-exchange or midpoint transactions have no reliably identifiable
liquidity-taking side. The model abstracts from these measurement errors and
treats \(\bar X_n\) as observed.
\end{remark}

\section{Solution of the Trading Game}\label{sec:solution}

In this section, we construct a subgame-perfect closed-loop Nash equilibrium
consisting of Markovian policies of the form \eqref{eq:markov_feedback}.

\subsection{Equilibrium Verification}
\label{sec:fundamental_price_reduction}

We start by removing the direct dependence of the objective in
\eqref{eq:overview_player_objective} on the fundamental price.
Fix an admissible profile \(\Phi_n\in\mathcal U_n\). By
\eqref{eq:inventory_dynamics}, \(X_n^i\) is continuous, adapted, and of finite
variation, and its quadratic covariation with \(S^0\) vanishes. Integration by parts therefore gives
\begin{equation}\label{eq:fundamental_price_integration_by_parts}
\begin{aligned}
&\int_0^T (S^0(t))^\top u_n^i(t)\,dt
-\left((S^0(T))^\top X_n^i(T)-(S^0(0))^\top x_0^i\right)
\\
&\qquad
=\int_{(0,T]}(S^0(t-))^\top\,dX_n^i(t)
-\left((S^0(T))^\top X_n^i(T)-(S^0(0))^\top x_0^i\right)
\\
&\qquad
=-\int_{(0,T]}(X_n^i(t-))^\top\,dS^0(t).
\end{aligned}
\end{equation}
By Lemma~\ref{lem:fundamental_price_stochastic_integral}, the last integral in
\eqref{eq:fundamental_price_integration_by_parts} is the terminal value of a
true martingale starting from zero and therefore has zero expectation.
Substituting this into \eqref{eq:overview_player_objective} gives
\begin{equation}\label{eq:reformulated_objective}
\begin{aligned}
J_n^i(\Phi_n^i;\Phi_n^{-i})
={}
\mathbb E_n^i\Bigg[&
\int_0^T
\mathcal L\left(
X_n^i(t),u_n^i(t);
I(t),\varpi_nU_n^{-i}(t),g(t)
\right)\,dt
\\
&+\frac{1}{2}(X_n^i(T))^\top Q_TX_n^i(T)
\Bigg]
\end{aligned}
\end{equation}
where
\begin{equation}\label{eq:reduced_running_cost}
\mathcal L(x,u;\iota,\mu,g)
:=
\bigl(\iota+\Lambda(u+\chi_u\mu+\chi_g g)\bigr)^\top u
+\frac{1}{2}x^\top Q x.
\end{equation}
Neither \eqref{eq:reformulated_objective} nor the dynamics of \(X_n\) and
\(I\) depend explicitly on \(S^0\). However, an admissible policy may still
use the observed history of \(S^0\) as an external randomization device. The
verification theorem \ref{thm:stitched_hjb_verification} below shows that sufficiently
smooth solutions of a coupled HJB system generate equilibrium policies that
ignore \(S^0\) as an argument
and remain robust against unilateral deviations from the full class
\(\mathcal U_n\) which is allowed to depend on \(S^0\).

Since we aim to solve for subgame-perfect Nash equilibria using dynamic
programming, we introduce continuation games from intermediate times and
states. A continuation game is specified by a tuple
\((t,x_n,\iota,a,\mathbf{s}^0_{[0,t]})\), consisting of the current time \(t\), the
inventory profile \(X_n(t)=x_n\), the impact state \(I(t)=\iota\), the
announcement state \(A(t)=a\), and the \emph{full history} \(\mathbf{s}^0_{[0,t]}\) of
the fundamental price. The full history of \(S^0\) is retained because its
conditional future law may depend on its past as \(S^0\) is not assumed to be Markov. The running cost and controlled
state equations have no direct dependence on the pre-\(t\) histories of
\(X_n\) and \(I\). Once these histories are fixed, any policy dependence on
them can be absorbed into the continuation policies, so only
\((x_n,\iota)\) must be recorded. The announcement process is
Markov, so its history is summarized by \((t,a)\).

At each \(\tau\in[t,T]\), an admissible continuation policy of trader \(i\)
is evaluated nonanticipatively on the histories of \(X_n^i\), \(\bar X_n\),
\(I\), and \(A\) over \([t,\tau]\), together with the history of \(S^0\) over
the larger interval \([0,\tau]\). Trader \(i\)'s continuation objective is
\begin{equation}\label{eq:reduced_continuation_objective}
\begin{aligned}
&J_n^i\left(
\Phi_n^i;\Phi_n^{-i}
\mid t,x_n,\iota,a,\mathbf{s}^0_{[0,t]}
\right)
\\
:={}&\mathbb E_n^i\Bigg[
\int_t^T
\mathcal L\left(
X_n^i(\tau),u_n^i(\tau);
I(\tau),\varpi_nU_n^{-i}(\tau),g(\tau)
\right)\,d\tau
\\
&\hspace{72pt}
+\frac{1}{2}(X_n^i(T))^\top Q_TX_n^i(T)
\,\Bigm|\,
S^0|_{[0,t]}=\mathbf{s}^0_{[0,t]},\ A(t)=a
\Bigg]
\end{aligned}
\end{equation}
where on \([t,T]\) the induced state processes satisfy
\begin{equation}\label{eq:continuation_state_dynamics}
\begin{aligned}
dX_n^i(\tau)
&=u_n^i(\tau)\,d\tau,
&X_n^i(t)&=x_n^i,
&i&=1,\ldots,n,\\
dI(\tau)
&=\left[-RI(\tau)
+\Gamma\left(
\varpi_n\sum_{j=1}^n u_n^j(\tau)+g(\tau)
\right)\right]d\tau,
&I(t)&=\iota.
\end{aligned}
\end{equation}
The announcement process continues according to
\eqref{eq:announcement_process}.

\begin{definition}[Subgame-Perfect Nash Equilibrium]
\label{def:subgame_perfect_nash_equilibrium}
An admissible profile \(\Phi_n^*\in\mathcal U_n\) is a subgame-perfect Nash
equilibrium if, in every continuation game starting from
\((t,x_n,\iota,a,\mathbf{s}^0_{[0,t]})\), for every trader
\(i\) and every admissible unilateral continuation
deviation \(\Psi_n^i\),
\begin{equation}\label{eq:subgame_perfect_ne}
J_n^i\left(
\Phi_n^{i,*};\Phi_n^{-i,*}
\mid t,x_n,\iota,a,\mathbf{s}^0_{[0,t]}
\right)
\leq
J_n^i\left(
\Psi_n^i;\Phi_n^{-i,*}
\mid t,x_n,\iota,a,\mathbf{s}^0_{[0,t]}
\right).
\end{equation}
The left-hand side of \eqref{eq:subgame_perfect_ne} is called trader \(i\)'s
value of the continuation game and is denoted by
\(V_n^i(t,x_n,\iota,a,\mathbf{s}^0_{[0,t]})\).
\end{definition}

We next motivate a verification theorem tailored to the announcement structure
of the index-rebalancing game. The candidate Markov feedbacks respect the
traders' observation structure and induce deterministic trajectories
conditional on the membership scenario.

For the Markov feedbacks in \eqref{eq:markov_feedback}, write
\(\phi_n^{-i,k}:=(\phi_n^{j,k})_{j\ne i}\). Define the aggregate feedback rate of trader
\(i\)'s opponents in each post-announcement scenario \(k=1,\ldots,m\) by
\[
U_n^{-i,k}(t,x_n,\iota)
:=\sum_{j\ne i}\phi_n^{j,k}
\left(t,z^j(x_n^j,\bar x_n,\iota);\bar\pi_n,\pi_n^j\right),
\]
and define \(U_n^{-i,0}\) analogously using the pre-announcement feedbacks
\(\phi_n^{j,0}\).
Let \(Z_n^i(t):=z^i(X_n^i(t),\bar X_n(t),I(t))\) denote trader \(i\)'s
reduced-state process. A full inventory vector \(x_n\) is \emph{compatible}
with a generic reduced state
\(z^i=((\delta^i)^\top,\bar x^\top,\iota^\top)^\top\) if
\(x_n^i=\delta^i+\bar x\) and \(\bar x_n=\bar x\). In each post-announcement
scenario \(k=1,\ldots,m\), the dynamics of \(Z_n^i\) are
\[
\dot Z_n^i(t)
=\widehat b_n^{i,k}\left(t,X_n(t),I(t),u_n^i(t)\right),
\]
where, for a full inventory vector \(x_n\), an impact state \(\iota\), and a
candidate trading rate \(v\in\mathbb R^d\), the drift is
\[
\widehat b_n^{i,k}(t,x_n,\iota,v)
:=\begin{bmatrix}
\frac{n-1}{n}v-\frac{1}{n}U_n^{-i,k}(t,x_n,\iota)\\[2pt]
\frac{1}{n}\bigl(v+U_n^{-i,k}(t,x_n,\iota)\bigr)\\[2pt]
-R\iota+\Gamma\bigl(
\varpi_n(v+U_n^{-i,k}(t,x_n,\iota))+g^k(t)
\bigr)
\end{bmatrix}.
\]
The same dynamics hold before the announcement with \(k=0\), under the
convention \(g^0\equiv0\).
For \(p\in\mathbb R^{3d}\), the costate conjugate to the reduced state, the
corresponding Hamiltonian is
\begin{equation}\label{eq:stitched_hamiltonian}
\begin{aligned}
\widehat{\mathscr H}_n^{i,k}(t,x_n,\iota,v,p;\phi_n^{-i,k})
:={}&
\mathcal L\left(
x_n^i,v;\iota,
\varpi_nU_n^{-i,k}(t,x_n,\iota),
g^k(t)
\right)
\\
&+p^\top\widehat b_n^{i,k}(t,x_n,\iota,v).
\end{aligned}
\end{equation}
The dependence on \(\phi_n^{-i,k}\) enters through \(U_n^{-i,k}\) in both
the running cost and the reduced-state drift.

Fix trader \(i\) and the opponents' feedbacks
\((\phi_n^{-i,k})_{k=0}^m\). We seek smooth candidate value functions
\(V_n^{i,k}(t,z^i)\) satisfying the following HJB equation away from the
announcement and switching times, for each
\(k=0,\ldots,m\) and every \(x_n\) compatible with \(z^i\),
\begin{equation}\label{eq:single_player_best_response_hjb}
0
=
\partial_tV_n^{i,k}(t,z^i)
+\min_{v\in\mathbb R^d}
\widehat{\mathscr H}_n^{i,k}\left(
t,x_n,\iota,v,
\nabla V_n^{i,k}(t,z^i);\phi_n^{-i,k}
\right)
\end{equation}
with terminal and announcement-stitching conditions
\[
\begin{aligned}
V_n^{i,k}(T,z^i)
&=\frac{1}{2}(\delta^i+\bar x)^\top Q_T(\delta^i+\bar x),
&&k=1,\ldots,m,\\
V_n^{i,0}(T_{\mathrm{ann}},z^i)
&=\sum_{k=1}^m\pi_n^{i,k}V_n^{i,k}(T_{\mathrm{ann}},z^i).
\end{aligned}
\]
If the minimizer of \eqref{eq:single_player_best_response_hjb} depends on
\(x_n\) only through \(z^i\) and defines an admissible feedback policy
\(\phi_n^i\) of the form \eqref{eq:markov_feedback}, verification shows that
this policy is an optimal response to the fixed opponents' feedbacks.
Requiring the HJB equations and minimizing-feedback conditions to hold
simultaneously for all traders yields a coupled system. We first solve the
coupled post-announcement HJBs separately for each scenario. Trader \(i\)'s
\(\pi_n^i\)-weighted average of the post-announcement value functions at
\(T_{\mathrm{ann}}\) then supplies the terminal condition for its
pre-announcement HJB.

\begin{theorem}[HJB Verification]
\label{thm:stitched_hjb_verification}
Fix a Markov feedback profile
\(\phi_n\) in the parametrization of \eqref{eq:markov_feedback} that is
admissible in every continuation game. Suppose that, for every trader \(i\),
there exist candidate functions
\(V_n^{i,0}:[0,T_{\mathrm{ann}}]\times\mathbb R^{3d}\to\mathbb R\) and
\(V_n^{i,k}:[T_{\mathrm{ann}},T]\times\mathbb R^{3d}\to\mathbb R\),
\(k=1,\ldots,m\), such that every \(V_n^{i,k}\), \(k=0,\ldots,m\), is
continuous and \(C^1\) between the switching times of the corresponding
\(g^k\). For every trader \(i\), assume the HJB and minimizing-feedback
conditions below hold at
every time between switching times, for every
\(z^i=((\delta^i)^\top,\bar x^\top,\iota^\top)^\top\in\mathbb R^{3d}\) and
every \(x_n\) compatible with \(z^i\), while the boundary conditions hold for
every \(z^i\in\mathbb R^{3d}\):
\begin{subequations}\label{eq:stitched_hjb_system}
\begin{equation}\label{eq:stitched_hjb}
0
=
\partial_tV_n^{i,k}(t,z^i)
+\min_{v\in\mathbb R^d}
\widehat{\mathscr H}_n^{i,k}\left(
t,x_n,\iota,v,
\nabla V_n^{i,k}(t,z^i);\phi_n^{-i,k}
\right),
\end{equation}
\begin{align}
\phi_n^{i,k}
\left(t,z^i;\bar\pi_n,\pi_n^i\right)
&=
\operatorname*{argmin}_{v\in\mathbb R^d}
\widehat{\mathscr H}_n^{i,k}\left(
t,x_n,\iota,v,
\nabla V_n^{i,k}(t,z^i);\phi_n^{-i,k}
\right),
\label{eq:stitched_hjb_argmin}\\
V_n^{i,k}(T,z^i)
&=
\frac{1}{2}(\delta^i+\bar x)^\top Q_T(\delta^i+\bar x),
\qquad k=1,\ldots,m,
\label{eq:stitched_hjb_terminal}\\
V_n^{i,0}(T_{\mathrm{ann}},z^i)
&=
\sum_{k=1}^m\pi_n^{i,k}
V_n^{i,k}(T_{\mathrm{ann}},z^i).
\label{eq:stitched_hjb_value_matching}
\end{align}
\end{subequations}
Here \eqref{eq:stitched_hjb}--\eqref{eq:stitched_hjb_argmin} hold for
\(k=0\) before the announcement and for \(k=1,\ldots,m\) after it.
Then \(\phi_n\) is a Markov subgame-perfect Nash equilibrium. Its continuation
value before the announcement satisfies
\[
V_n^i(s,x_n,\iota,0,\mathbf{s}^0_{[0,s]})
=V_n^{i,0}\left(s,z^i(x_n^i,\bar x_n,\iota)\right).
\]
After scenario \(k\) is announced, it satisfies
\[
V_n^i(s,x_n,\iota,\xi^k,\mathbf{s}^0_{[0,s]})
=V_n^{i,k}\left(s,z^i(x_n^i,\bar x_n,\iota)\right).
\]
\end{theorem}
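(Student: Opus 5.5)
Fix a trader \(i\), a continuation game \((t,x_n,\iota,a,\mathbf{s}^0_{[0,t]})\) and an admissible unilateral continuation deviation \(\Psi_n^i\), with the opponents keeping the feedbacks \(\phi_n^{-i}\) lifted via \eqref{eq:markov_feedback_lift}. The plan is a standard verification argument, applied pathwise and stitched at \(T_{\mathrm{ann}}\). The basic tool is a chain rule for \(V_n^{i,k}(\tau,Z_n^i(\tau))\) along the absolutely continuous reduced-state path generated by \((\Psi_n^i,\phi_n^{-i})\).

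\emph{Step 1: deterministic inequality after the announcement.} Start in scenario \(k\ge1\) and fix an outcome \(\omega\) of the deviation's randomness (it may depend on \(S^0\), but the state equations are ODEs driven by \(u_n^i\)). Then \(\tau\mapsto Z_n^i(\tau)\) is absolutely continuous with \(\dot Z_n^i=\widehat b_n^{i,k}(\tau,X_n,I,u_n^i)\), because the opponents' rates are exactly \(U_n^{-i,k}(\tau,X_n,I)\). On each interval between switching times of \(g^k\), \(V_n^{i,k}\) is \(C^1\), so
\[
\frac{d}{d\tau}V_n^{i,k}(\tau,Z_n^i)=\partial_\tau V_n^{i,k}+\nabla V_n^{i,k}{}^\top\widehat b_n^{i,k}
\ge -\mathcal L\bigl(X_n^i,u_n^i;I,\varpi_nU_n^{-i,k},g^k\bigr).
\]
The inequality uses \eqref{eq:stitched_hjb}: the minimum over \(v\) is at most the value at \(v=u_n^i(\tau)\). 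Here \(X_n(\tau)\) is compatible with \(Z_n^i(\tau)\) by construction. Continuity of \(V_n^{i,k}\) across switching times lets me integrate over \([s,T]\) (\(s=\max(t,T_{\mathrm{ann}})\)) and use \eqref{eq:stitched_hjb_terminal}. This gives that \(V_n^{i,k}(s,Z_n^i(s))\) is at most the realized remaining cost, with equality when \(u_n^i=\phi_n^{i,k}\), by \eqref{eq:stitched_hjb_argmin}.

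Integrability is needed to pass the chain rule through the integral. Absolute continuity of \(\tau\mapsto V(\tau,Z(\tau))\) follows from local Lipschitz continuity of \(C^1\) functions on the compact range of the path. The admissibility \(L^2\) bound on rates makes \(\mathcal L\) integrable, so expectations of both sides are finite. Taking conditional expectations under \(\mathbb P_n^i\) given \(S^0|_{[0,t]}\) and \(A(t)\) yields the post-announcement claim and the identification \(V_n^i=V_n^{i,k}\).

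\emph{Step 2: pre-announcement and stitching.} For \(t<T_{\mathrm{ann}}\), the same argument on \([t,T_{\mathrm{ann}}]\) with \(k=0\) and \(g^0\equiv0\) bounds the pre-announcement cost plus \(V_n^{i,0}(T_{\mathrm{ann}},Z_n^i(T_{\mathrm{ann}}))\) from below by \(V_n^{i,0}(t,z^i)\). The delicate point is combining this with Step 1 at \(T_{\mathrm{ann}}\). The deviation's state \(Z_n^i(T_{\mathrm{ann}})\) is measurable with respect to the \(S^0\)-history and the deviation's internal choices, but it does not depend on \(\Xi\). This holds because \(A\equiv0\) before the announcement and \(S^0\) is independent of \(\Xi\) under \(\mathbb P_n^i\). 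Conditioning on \(\mathcal F_{T_{\mathrm{ann}}-}\), the scenario is therefore distributed according to \(\pi_n^i\) independently of the state. Summing the Step 1 bounds with weights \(\pi_n^{i,k}\) and applying \eqref{eq:stitched_hjb_value_matching} gives
\[
\mathbb E\bigl[\text{post-ann.\ cost}\mid\mathcal F_{T_{\mathrm{ann}}-}\bigr]\ge\sum_k\pi_n^{i,k}V_n^{i,k}(T_{\mathrm{ann}},Z_n^i(T_{\mathrm{ann}}))=V_n^{i,0}(T_{\mathrm{ann}},Z_n^i(T_{\mathrm{ann}})).
\]
The tower property then finishes the argument. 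Equality for \(\phi_n^i\) again follows from the argmin condition, which proves \eqref{eq:subgame_perfect_ne} and the value formulas.

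\emph{Main obstacle.} I expect the hardest step to be the measure-theoretic justification in Step 2. A deviation may use the \(S^0\)-history as randomization and may react after \(T_{\mathrm{ann}}\) to the full revealed history. The continuation conditional on each scenario must still be a legitimate admissible path to which Step 1 applies pathwise. To handle this, I would first note that Step 1 is a purely pathwise inequality, valid for every \(\omega\) and so requiring no optimality of the deviation. Then only the independence of \(\Xi\) from \((S^0,\text{pre-}T_{\mathrm{ann}}\text{ states})\) is needed, which I would verify from the nonanticipativity \eqref{eq:finite_feedback_nonanticipativity} and uniqueness of strong solutions. I would also check that the opponents' feedback rates enter only through \(U_n^{-i,k}\), which depends on \(x_n\). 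The HJB is assumed for every compatible \(x_n\), so individual opponent inventories off the equilibrium path cause no gap.
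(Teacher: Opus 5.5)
Your proposal is correct and follows essentially the same route as the paper. Both arguments derive a pathwise chain-rule inequality from the HJB in each scenario, integrate it using the terminal condition, stitch at $T_{\mathrm{ann}}$ via value matching and the independence of $\Xi$ from $S^0$, and get equality for the candidate from the argmin condition. The only difference is cosmetic: the paper fixes the full $S^0$ path, so each scenario's path is deterministic and the $\pi_n^{i,k}$-weighted sum is a finite sum of reals, which avoids the integrability question for $V_n^{i,k}(T_{\mathrm{ann}},Z_n^i(T_{\mathrm{ann}}))$ that your conditioning on $\mathcal F_{T_{\mathrm{ann}}-}$ would need to address.
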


The traders' information restriction is encoded in both the candidate
functions and the HJB system. For each \(k=0,\ldots,m\), \(V_n^{i,k}\) and
\(\phi_n^{i,k}\) depend on the current state only through the reduced state
\(z^i\). The minimum and argmin in
\eqref{eq:stitched_hjb}--\eqref{eq:stitched_hjb_argmin} range over all
\(v\in\mathbb R^d\). Because the Hamiltonian depends on \((x_n,\iota)\), its
minimizer could a priori be a general function of these variables.
Equation~\eqref{eq:stitched_hjb_argmin} instead requires the minimizer to be
the same for every \(x_n\) compatible with a given \(z^i\), so it depends on
the current state only through \(z^i\). Equation~\eqref{eq:stitched_hjb}
imposes the analogous restriction on the minimum value. Since the Hessian with
respect to \(v\) is \(2\Lambda\succ0\), the minimizer is unique.
Note that the theorem also shows that \(\phi_n\) remains a
subgame-perfect Nash equilibrium under the larger information structure where
each trader observes every other
trader's inventory. The proof is given in
Appendix~\ref{subsec:proof_stitched_hjb_verification}.

\begin{remark}[Closed-Loop and Open-Loop Stitching]
The coupled HJB system \eqref{eq:stitched_hjb_system} yields a natural
backward-induction procedure. First solve the post-announcement HJBs for
\(V_n^{i,k}\), \(k=1,\ldots,m\). Their \(\pi_n^i\)-weighted values at
\(T_{\mathrm{ann}}\) then provide, through
\eqref{eq:stitched_hjb_value_matching}, the terminal condition for the
pre-announcement HJB for \(V_n^{i,0}\). The \(m\) post-announcement value
functions \(V_n^{i,k}\) can thus be computed in parallel before solving the
pre-announcement problem.
\end{remark}

\subsection{Quadratic Solution and Backward Construction}
\label{sec:fixed_interval_lq_game}
\label{sec:full_procedure}

We now explicitly solve the coupled HJB system in
Theorem~\ref{thm:stitched_hjb_verification} using the quadratic ansatz
\begin{equation}\label{eq:quadratic_value_ansatz}
\begin{aligned}
V_n^{i,0}(t,z^i)
&=
\frac{1}{2}(z^i)^\top H_n(t)z^i
+(r_n^{i,0}(t))^\top z^i+c_n^{i,0}(t),\\
V_n^{i,k}(t,z^i)
&=
\frac{1}{2}(z^i)^\top H_n(t)z^i
+(r_n^k(t))^\top z^i+c_n^k(t),
\qquad k=1,\ldots,m.
\end{aligned}
\end{equation}
In this ansatz, \(H_n\) is common across traders and post-announcement
scenarios and is also the pre-announcement quadratic coefficient. The pairs
\((r_n^k,c_n^k)\) are scenario-specific after the announcement, whereas
\((r_n^{i,0},c_n^{i,0})\) are trader-specific before it.

To simplify notation, we first write the dynamics of \(Z_n^i\) in the
vectorized form
\begin{equation}\label{eq:fixed_interval_reduced_dynamics}
\dot Z_n^i=A_zZ_n^i+B_nu_n^i+B_n^{-}U_n^{-i}+B_gg,
\end{equation}
where
\[
A_z:=\begin{bmatrix}0&0&0\\0&0&0\\0&0&-R\end{bmatrix},
\quad
B_n:=\begin{bmatrix}\frac{n-1}{n}\operatorname{Id}_d\\
\frac{1}{n}\operatorname{Id}_d\\ \varpi_n\Gamma\end{bmatrix},
\quad
B_n^{-}:=\begin{bmatrix}-\frac{1}{n}\operatorname{Id}_d\\
\frac{1}{n}\operatorname{Id}_d\\ \varpi_n\Gamma\end{bmatrix},
\quad
B_g:=\begin{bmatrix}0\\0\\\Gamma\end{bmatrix}.
\]
With
\(E_x:=[\operatorname{Id}_d\ \operatorname{Id}_d\ 0]\) and
\(E_\iota:=[0\ 0\ \operatorname{Id}_d]\), one has
\(X_n^i=E_xZ_n^i\) and \(I=E_\iota Z_n^i\). Consequently, the inventory
penalty in the running cost \(\mathcal L\) from
\eqref{eq:reduced_running_cost} and the terminal inventory penalty in
\eqref{eq:reformulated_objective} are represented in the reduced state by
\(Q_z\) and \(H_T\), respectively, where
\begin{equation}\label{eq:fixed_interval_final_boundary}
Q_z:=E_x^\top QE_x,
\qquad
H_T:=E_x^\top Q_TE_x.
\end{equation}

Additionally, we introduce some notation required to compactly state the
ODE system satisfied by the coefficients in \eqref{eq:quadratic_value_ansatz}.
Set
\begin{equation}\label{eq:fixed_interval_a_d_def}
\vartheta_n:=\chi_u\varpi_n,
\qquad
a_n:=2-\vartheta_n,
\qquad
d_n:=2+\vartheta_n(n-1).
\end{equation}
Assumption~\ref{ass:standing_model} gives \(0\leq\vartheta_n\leq1\), so
\(1\leq a_n\leq2\) and \(d_n\geq2\). Define
\begin{equation}\label{eq:fixed_interval_D_def}
\begin{aligned}
D_n&:=\operatorname{diag}\left(
\tfrac{1}{a_n}\operatorname{Id}_d,
\tfrac{1}{d_n}\operatorname{Id}_d,
\tfrac{1}{d_n}\operatorname{Id}_d\right),
&
F_n&:=\operatorname{diag}\left(
\tfrac{1}{a_n}\operatorname{Id}_d,
-\tfrac{n-1}{d_n}\operatorname{Id}_d,
-\tfrac{n-1}{d_n}\operatorname{Id}_d\right).
\end{aligned}
\end{equation}
For \(H\in\mathbb S^{3d}\), define
\begin{equation}\label{eq:fixed_interval_matrix_maps_def}
\Theta_n(H):=E_\iota+B_n^\top H,
\qquad
M_n(H):=A_z+B_n^{-}\Lambda^{-1}\Theta_n(H)F_n,
\end{equation}
\begin{subequations}\label{eq:pre_ann_matrix_maps_def}
\begin{align}
\mathsf A_n^{\mathrm c}(H)
&:=M_n(H)^\top-\tfrac{1}{d_n}
\bigl((n-1)HB_n^{-}+2D_n^\top\Theta_n(H)^\top\bigr)
\Lambda^{-1}B_n^\top,\\
\mathsf A_n^{\mathrm d}(H)
&:=M_n(H)^\top+\tfrac{1}{a_n}
\bigl(HB_n^{-}-2D_n^\top\Theta_n(H)^\top\bigr)
\Lambda^{-1}B_n^\top.
\end{align}
\end{subequations}
For \(r\in\mathbb R^{3d}\) and \(g\in\mathbb R^d\), set
\begin{equation}\label{eq:post_ann_q_def}
q_n(r,g):=B_n^\top r+\chi_g\Lambda g.
\end{equation}

We now state the ODE system for the coefficients in
\eqref{eq:quadratic_value_ansatz}.
Theorem~\ref{thm:full_horizon_closed_loop_equilibrium} below then verifies
that these indeed give the value functions and also gives the
corresponding equilibrium feedbacks. Time arguments are suppressed where no
ambiguity arises.

\begin{itemize}
\item \textbf{Quadratic coefficient.}
On \([0,T]\), \(H_n\) solves the non-standard Riccati ODE
\begin{equation}\label{eq:fixed_interval_riccati_P}
\begin{aligned}
\dot H_n={}&-Q_z-M_n(H_n)^\top H_n-H_nM_n(H_n)\\[-2pt]
&+2D_n^\top\Theta_n(H_n)^\top\Lambda^{-1}\Theta_n(H_n)D_n,
\\[-2pt]
H_n(T)={}&H_T.
\end{aligned}
\end{equation}

\item \textbf{Linear coefficients.}
\begin{subequations}\label{eq:full_horizon_linear_coefficients}
On \([T_{\mathrm{ann}},T]\), \(k=1,\ldots,m\), \(r_n^k\) solves the linear ODE
\begin{equation}\label{eq:post_ann_common_r}
\begin{aligned}
\dot r_n^k={}&-\mathsf A_n^{\mathrm c}(H_n)r_n^k-H_nB_gg^k
\\[-2pt]
&+\tfrac{\chi_g}{d_n}
\bigl((n-1)H_nB_n^{-}+2D_n^\top\Theta_n(H_n)^\top\bigr)g^k,
\\[-2pt]
r_n^k(T)={}&0,
\end{aligned}
\end{equation}
On \([0,T_{\mathrm{ann}}]\), the pre-announcement affine coefficients solve
the coupled system
\begin{equation}\label{eq:pre_ann_coupled_r_ode}
\begin{aligned}
\dot r_n^{i,0}
={}&-\mathsf A_n^{\mathrm d}(H_n)r_n^{i,0}
+\tfrac{1}{n}
\bigl(\mathsf A_n^{\mathrm d}(H_n)-\mathsf A_n^{\mathrm c}(H_n)\bigr)
\sum_{j=1}^n r_n^{j,0},
\\[-2pt]
r_n^{i,0}(T_{\mathrm{ann}})
={}&\sum_{k=1}^m\pi_n^{i,k}r_n^k(T_{\mathrm{ann}}),
\qquad i=1,\ldots,n.
\end{aligned}
\end{equation}
Because \eqref{eq:pre_ann_coupled_r_ode} is coupled through
\(\sum_jr_n^{j,0}\), solving it directly appears to require the full belief
profile \((\pi_n^1,\ldots,\pi_n^n)\), which is not available to any trader.
To eliminate this apparent full-profile dependence, set
\begin{equation}\label{eq:pre_ann_affine_components}
r_n^0=\bar r_n^0:=\tfrac{1}{n}\sum_{j=1}^n r_n^{j,0},
\qquad
\widetilde r_n^{i,0}:=r_n^{i,0}-r_n^0
\end{equation}
The bar emphasizes that \(r_n^0\) is the population-average affine
coefficient. We use the unbarred notation in the sequel to keep the notation
uniform across pre- and post-announcement indices.
Averaging \eqref{eq:pre_ann_coupled_r_ode} and using
\(n^{-1}\sum_i\pi_n^i=\bar\pi_n\) gives
\begin{equation}\label{eq:pre_ann_common_r_ode}
\dot r_n^0
=-\mathsf A_n^{\mathrm c}(H_n)r_n^0,
\qquad
r_n^0(T_{\mathrm{ann}})
=\sum_{k=1}^m\bar\pi_n^kr_n^k(T_{\mathrm{ann}}).
\end{equation}
Also, subtracting the average equation from trader \(i\)'s equation gives
\begin{equation}\label{eq:pre_ann_deviation_r_ode}
\dot{\widetilde r}_n^{i,0}
=-\mathsf A_n^{\mathrm d}(H_n)\widetilde r_n^{i,0},
\qquad
\widetilde r_n^{i,0}(T_{\mathrm{ann}})
=\sum_{k=1}^m(\pi_n^{i,k}-\bar\pi_n^k)r_n^k(T_{\mathrm{ann}}).
\end{equation}
The ODE for \(r_n^0\) depends only on \(\bar\pi_n\), whereas the ODE
for \(\widetilde r_n^{i,0}\) depends only on
\(\pi_n^i-\bar\pi_n\). Hence trader \(i\) can compute
\(r_n^{i,0}\) from \((\bar\pi_n,\pi_n^i)\).
\end{subequations}

\item \textbf{Constant coefficients.}
\begin{subequations}\label{eq:full_horizon_constant_coefficients}
On \([T_{\mathrm{ann}},T]\), \(k=1,\ldots,m\), \(c_n^k\) solves
\begin{equation}\label{eq:post_ann_common_alpha}
\begin{aligned}
\dot c_n^k={}&-\left(
B_gg^k-\tfrac{n-1}{d_n}B_n^{-}\Lambda^{-1}q_n(r_n^k,g^k)
\right)^\top r_n^k
\\[-2pt]
&+\tfrac{1}{d_n^2}q_n(r_n^k,g^k)^\top
\Lambda^{-1}q_n(r_n^k,g^k),
\\[-2pt]
c_n^k(T)={}&0,
\end{aligned}
\end{equation}
On \([0,T_{\mathrm{ann}}]\), \(i=1,\ldots,n\), \(c_n^{i,0}\) solves
\begin{equation}\label{eq:pre_ann_constant_ode}
\begin{aligned}
\dot c_n^{i,0}
={}&-\left[
B_n^{-}\Lambda^{-1}
\left(
\tfrac{1}{a_n}B_n^\top\widetilde r_n^{i,0}
-\tfrac{n-1}{d_n}B_n^\top r_n^0
\right)
\right]^\top r_n^{i,0}
\\[-2pt]
&+\left(
\tfrac{1}{a_n}B_n^\top\widetilde r_n^{i,0}
+\tfrac{1}{d_n}B_n^\top r_n^0
\right)^\top
\Lambda^{-1}
\left(
\tfrac{1}{a_n}B_n^\top\widetilde r_n^{i,0}
+\tfrac{1}{d_n}B_n^\top r_n^0
\right),
\\[-2pt]
c_n^{i,0}(T_{\mathrm{ann}})
={}&\sum_{k=1}^m\pi_n^{i,k}c_n^k(T_{\mathrm{ann}}).
\end{aligned}
\end{equation}
\end{subequations}
\end{itemize}

\begin{theorem}[Explicit Quadratic Equilibrium]
\label{thm:full_horizon_closed_loop_equilibrium}
Suppose that the Riccati equation
\eqref{eq:fixed_interval_riccati_P} admits a solution
\(H_n\in C^1([0,T];\mathbb S^{3d})\), and construct the remaining
coefficients from \eqref{eq:full_horizon_linear_coefficients} and
\eqref{eq:full_horizon_constant_coefficients}. Define the pre-announcement
feedback on \([0,T_{\mathrm{ann}}]\) and, for \(k=1,\ldots,m\), the
scenario-\(k\) feedback on \([T_{\mathrm{ann}},T]\) by
\begin{equation}\label{eq:full_horizon_markov_feedback}
\begin{aligned}
\phi_n^{i,0}(t,z^i;\bar\pi_n,\pi_n^i)
={}&-\Lambda^{-1}\Bigl[
\Theta_n(H_n(t))D_nz^i
+\tfrac{1}{d_n}B_n^\top r_n^0(t)
+\tfrac{1}{a_n}B_n^\top\widetilde r_n^{i,0}(t)
\Bigr],\\[4pt]
\phi_n^{i,k}(t,z^i;\bar\pi_n,\pi_n^i)
={}&-\Lambda^{-1}\left[
\Theta_n(H_n(t))D_nz^i
+\tfrac{1}{d_n}
B_n^\top r_n^k(t)+\tfrac{1}{d_n}\chi_g\Lambda g^k(t)
\right].
\end{aligned}
\end{equation}
The value functions in \eqref{eq:quadratic_value_ansatz} and the feedbacks in
\eqref{eq:full_horizon_markov_feedback} satisfy
\eqref{eq:stitched_hjb_system}. The Markov feedback family \(\phi_n\) defined
by \eqref{eq:full_horizon_markov_feedback} is admissible in every continuation
game. Consequently, \(\phi_n\) is a subgame-perfect Nash equilibrium.
\end{theorem}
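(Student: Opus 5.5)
The plan is to reduce the statement to Theorem~\ref{thm:stitched_hjb_verification}. This requires three checks: (i) the quadratic candidates \eqref{eq:quadratic_value_ansatz}, built from the coefficients of \eqref{eq:fixed_interval_riccati_P}, \eqref{eq:full_horizon_linear_coefficients} and \eqref{eq:full_horizon_constant_coefficients}, have the required regularity and satisfy the boundary conditions \eqref{eq:stitched_hjb_terminal}--\eqref{eq:stitched_hjb_value_matching}; (ii) the feedbacks \eqref{eq:full_horizon_markov_feedback} are the minimizers \eqref{eq:stitched_hjb_argmin} and the HJB \eqref{eq:stitched_hjb} holds; (iii) the profile is admissible in every continuation game.

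\textbf{Regularity and boundary conditions.} Given \(H_n\in C^1\), the linear ODEs for \(r_n^k\), \(r_n^0\), \(\widetilde r_n^{i,0}\) and for \(c\) have continuous coefficients and piecewise continuous forcing \(g^k\). Their solutions therefore exist globally, are continuous, and are \(C^1\) between the switching times \(T_{\mathrm{TWAP}}, T_{\mathrm{impl}}\). The terminal conditions follow from \(H_n(T)=H_T=E_x^\top Q_TE_x\), \(r_n^k(T)=0\) and \(c_n^k(T)=0\). Value matching at \(T_{\mathrm{ann}}\) holds for three reasons: the quadratic part is common across scenarios, \(\sum_k\pi_n^{i,k}=1\), and \(r_n^0+\widetilde r_n^{i,0}\) at \(T_{\mathrm{ann}}\) equals \(\sum_k\pi_n^{i,k}r_n^k(T_{\mathrm{ann}})\) by \eqref{eq:pre_ann_common_r_ode}--\eqref{eq:pre_ann_deviation_r_ode}, and \(c_n^{i,0}\) is matched directly.

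\textbf{HJB and minimizers.} Substituting the opponents' affine feedbacks into \(U_n^{-i,k}\), I will first check that \(U_n^{-i,k}\) depends on \(x_n\) only through \(z^i\). Because the feedbacks are affine in \(z^j=(x^j-\bar x,\bar x,\iota)\), the sum over \(j\neq i\) involves \(\sum_{j\ne i}(x^j-\bar x)=-(x^i-\bar x)=-\delta^i\). Similarly, the opponents' belief-deviation terms sum to \(-\tfrac{1}{a_n}B_n^\top\widetilde r_n^{i,0}\), since \(\sum_j\widetilde r_n^{j,0}=0\). So the Hamiltonian is a function of \((t,z^i,v)\) only, and compatibility is automatic. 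Next, the first-order condition \(2\Lambda v+\Lambda\vartheta_nU^{-i}+\chi_g\Lambda g+\iota+B_n^\top\nabla V=0\) is solved for \(v\), with \(\nabla V=H_nz^i+r\). Under the symmetric ansatz, this is a fixed-point condition for the affine coefficients. Decomposing \(z^i\) into its deviation block (weight \(1/a_n\), arising from \(2-\vartheta_n\)) and its common blocks (weight \(1/d_n\), arising from \(2+\vartheta_n(n-1)\)) is what produces \(D_n\) and \(F_n\). I will verify that the stated feedback solves this condition exactly; the matrices \(D_n,F_n\) were designed for this. Plugging the minimizer back in, the HJB becomes a polynomial identity in \(z^i\) of degree two. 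Matching the quadratic, linear and constant parts then gives \eqref{eq:fixed_interval_riccati_P}, the \(r\)-equations with \(\mathsf A_n^{\mathrm c},\mathsf A_n^{\mathrm d}\), and the \(c\)-equations. The pre-announcement linear part should first yield the coupled system \eqref{eq:pre_ann_coupled_r_ode}, which is equivalent to the decomposed equations. The main obstacle is this bookkeeping. One must show that after the closed-loop substitution the quadratic term symmetrizes into the stated form: the drift matrix \(M_n(H)\) collects the opponents' response through \(B_n^-\Lambda^{-1}\Theta_n F_n\), and the cost cross term gives \(2D_n^\top\Theta_n^\top\Lambda^{-1}\Theta_nD_n\). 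I would do this by first writing the closed-loop Hamiltonian value as \(\min_v=\mathcal L(\cdot,v^*)+p^\top b(v^*)\) and using the first-order condition to eliminate \(v^*\) linearly. Strict convexity (Hessian \(2\Lambda\succ0\)) gives uniqueness of the argmin.

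\textbf{Admissibility.} The closed-loop system under the profile, or under any continuation start \((t,x_n,\iota,a)\), is a linear ODE in \((X_n,I)\). Its coefficients are bounded and piecewise continuous, and it is deterministic given the scenario. Hence there is a unique global solution, and the induced rates are bounded, so they are square integrable. Theorem~\ref{thm:stitched_hjb_verification} then yields the subgame-perfect equilibrium.
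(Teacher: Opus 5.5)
Your proposal is correct and follows essentially the same route as the paper. You reduce to Theorem~\ref{thm:stitched_hjb_verification}, impose the simultaneous first-order conditions (the $1/a_n$ versus $1/d_n$ split yields $D_n,F_n$, and $\sum_j\delta^j=0$ makes $U_n^{-i,k}$ depend only on $z^i$), match the quadratic, linear and constant terms, and argue admissibility from the linear closed loop. The only difference is cosmetic: the paper derives the feedback by first solving the summed first-order conditions for the aggregate rate $U_n$, whereas you verify the stated feedback directly, which uses $\sum_j\widetilde r_n^{j,0}\equiv0$; this holds because the terminal data in \eqref{eq:pre_ann_deviation_r_ode} sum to zero and that equation is linear and homogeneous.
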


The proof is given in
Appendix~\ref{subsec:proof_full_horizon_closed_loop_equilibrium}.
Note that the Riccati equation has dimension independent of the number of traders. This
scalable structure is related to the decomposition of discrete-time LQ
deep-structured games in
\citet{arabneydi2020explicitsequentialequilibrialq}.

\begin{remark}[State Decomposition]\label{rem:full_procedure_decomposition}
For the equilibrium trajectories, let \(X_n^{i,0}\) and \(\bar X_n^0\)
denote the pre-announcement inventories and let \(X_n^{i,k}\) and
\(\bar X_n^k\) denote their continuations in scenario \(k\). Define
\(\delta_n^{i,0}:=X_n^{i,0}-\bar X_n^0\) before the announcement and
\(\delta_n^{i,k}:=X_n^{i,k}-\bar X_n^k\) in scenario \(k\) after the
announcement. Continuity of
the individual and average inventories gives
\(\delta_n^{i,k}(T_{\mathrm{ann}})=\delta_n^{i,0}(T_{\mathrm{ann}})\).
For the generic coordinates \((\delta,\bar x,\iota)\), write the block columns
of the feedback matrix as
\begin{equation}\label{eq:fixed_interval_theta_partition}
\Theta_n(H_n(t))
=
\begin{bmatrix}
[\Theta_n(H_n(t))]_{\delta}&
[\Theta_n(H_n(t))]_{\bar x}&
[\Theta_n(H_n(t))]_{\iota}
\end{bmatrix}.
\end{equation}
In scenario \(k\), define the common feedback signal
\begin{equation}\label{eq:post_ann_common_feedback_signal}
\mathcal K_n^k(t)
:=
[\Theta_n(H_n(t))]_{\bar x}\bar X_n^k(t)
+[\Theta_n(H_n(t))]_{\iota}I_n^k(t)
+B_n^\top r_n^k(t)+\chi_g\Lambda g^k(t).
\end{equation}
Using \eqref{eq:full_horizon_markov_feedback}, the individual feedback and
aggregate opportunist flow in scenario \(k\) are
\begin{align}
u_n^{i,k}(t)
&=
-\Lambda^{-1}
\left(
\frac1{a_n}[\Theta_n(H_n(t))]_{\delta}\delta_n^{i,k}(t)
+\frac1{d_n}\mathcal K_n^k(t)
\right),
\label{eq:post_ann_feedback_decomposition}\\
U_n^{k}(t)
&=
-\frac{n}{d_n}\Lambda^{-1}\mathcal K_n^k(t).
\label{eq:post_ann_total_flow_decomposition}
\end{align}
Therefore the common state \((\bar X_n^k,I_n^k)\) satisfies
\begin{align}
\begin{bmatrix}
\dot{\bar X}_n^k(t)\\
\dot I_n^k(t)
\end{bmatrix}
={}&
\begin{bmatrix}
-\frac1{d_n}\Lambda^{-1}[\Theta_n(H_n(t))]_{\bar x}
&
-\frac1{d_n}\Lambda^{-1}[\Theta_n(H_n(t))]_{\iota}
\\[5pt]
-\frac{n\varpi_n}{d_n}\Gamma\Lambda^{-1}[\Theta_n(H_n(t))]_{\bar x}
&
-R-\frac{n\varpi_n}{d_n}\Gamma\Lambda^{-1}[\Theta_n(H_n(t))]_{\iota}
\end{bmatrix}
\begin{bmatrix}
\bar X_n^k(t)\\
I_n^k(t)
\end{bmatrix}
\notag\\
&-
\frac1{d_n}
\begin{bmatrix}
\Lambda^{-1}\\
n\varpi_n\Gamma\Lambda^{-1}
\end{bmatrix}
B_n^\top r_n^k(t)
+
\begin{bmatrix}
-\frac{\chi_g}{d_n}\operatorname{Id}_d\\[4pt]
\left(1-\frac{n\varpi_n\chi_g}{d_n}\right)\Gamma
\end{bmatrix}
g^k(t).
\label{eq:post_ann_common_subsystem}
\end{align}
Trader \(i\)'s post-announcement inventory deviation satisfies
\begin{equation}\label{eq:post_ann_delta_dynamics}
\dot\delta_n^{i,k}(t)
=
-\frac1{a_n}\Lambda^{-1}
[\Theta_n(H_n(t))]_{\delta}\delta_n^{i,k}(t),
\qquad
\delta_n^{i,k}(T_{\mathrm{ann}})
=\delta_n^{i,0}(T_{\mathrm{ann}}).
\end{equation}
Since neither the ODE nor its initial condition depends on \(k\), uniqueness
implies that \(\delta_n^{i,k}\) is identical across scenarios. The scenario
only affects the common state through \(r_n^k\) and \(g^k\).

Before the announcement the individual and aggregate
controls are
\begin{align*}
u_n^{i,0}(t)
&=
-\Lambda^{-1}
\Biggl(
\frac1{a_n}
\Bigl(
[\Theta_n(H_n(t))]_{\delta}\delta_n^{i,0}(t)
+B_n^\top\widetilde r_n^{i,0}(t)
\Bigr)
\\
&\qquad
+\frac1{d_n}
\Bigl(
[\Theta_n(H_n(t))]_{\bar x}\bar X_n^0(t)
+[\Theta_n(H_n(t))]_{\iota}I_n^0(t)
+B_n^\top r_n^0(t)
\Bigr)
\Biggr),\\
U_n^{0}(t)
&=
-\frac{n}{d_n}\Lambda^{-1}
\Bigl(
[\Theta_n(H_n(t))]_{\bar x}\bar X_n^0(t)
+[\Theta_n(H_n(t))]_{\iota}I_n^0(t)
+B_n^\top r_n^0(t)
\Bigr).
\end{align*}
The common state \((\bar X_n^0,I_n^0)\)
satisfies
\begin{align}
\frac{d}{dt}
\begin{bmatrix}
\bar X_n^0(t)\\
I_n^0(t)
\end{bmatrix}
={}&
\begin{bmatrix}
-\frac1{d_n}\Lambda^{-1}[\Theta_n(H_n(t))]_{\bar x}
&
-\frac1{d_n}\Lambda^{-1}[\Theta_n(H_n(t))]_{\iota}
\\[5pt]
-\frac{n\varpi_n}{d_n}\Gamma\Lambda^{-1}[\Theta_n(H_n(t))]_{\bar x}
&
-R-\frac{n\varpi_n}{d_n}\Gamma\Lambda^{-1}[\Theta_n(H_n(t))]_{\iota}
\end{bmatrix}
\begin{bmatrix}
\bar X_n^0(t)\\
I_n^0(t)
\end{bmatrix}
\notag\\
&-
\frac1{d_n}
\begin{bmatrix}
\Lambda^{-1}\\
n\varpi_n\Gamma\Lambda^{-1}
\end{bmatrix}
B_n^\top r_n^0(t).
\label{eq:pre_ann_common_subsystem}
\end{align}
The pre-announcement deviation satisfies
\begin{equation}\label{eq:pre_ann_delta_dynamics}
\dot\delta_n^{i,0}(t)
=
-\frac1{a_n}\Lambda^{-1}
\left(
[\Theta_n(H_n(t))]_{\delta}\delta_n^{i,0}(t)
+B_n^\top\widetilde r_n^{i,0}(t)
\right),
\qquad
\delta_n^{i,0}(0)
=
x_0^i-\frac1n\sum_{j=1}^n x_0^j.
\end{equation}
The common-state and individual-deviation systems decouple both before and
after the announcement, as shown in
\eqref{eq:post_ann_common_subsystem}--\eqref{eq:post_ann_delta_dynamics} and
\eqref{eq:pre_ann_common_subsystem}--\eqref{eq:pre_ann_delta_dynamics}.
Unlike \eqref{eq:post_ann_delta_dynamics},
\eqref{eq:pre_ann_delta_dynamics} is forced by the centered affine coefficient
\(\widetilde r_n^{i,0}\) through 
\(B_n^\top\widetilde r_n^{i,0}\). Its terminal value encodes trader \(i\)'s
belief deviation, and its dynamics are given by
\eqref{eq:pre_ann_deviation_r_ode}.
\end{remark}

\subsection{Global Solvability}\label{sec:solvability_structure}

Theorem~\ref{thm:full_horizon_closed_loop_equilibrium} constructs a
subgame-perfect closed-loop Nash equilibrium conditional on the Riccati
equation admitting a solution on \([0,T]\). We now show that
Assumption~\ref{ass:standing_model} guarantees this full-horizon solvability a
priori.

\begin{theorem}[Uniform Global Solvability]
\label{thm:global_riccati_solvability}
Under Assumption~\ref{ass:standing_model}, for every \(n\ge2\), let
\(H_n\) be the maximal backward solution of
\eqref{eq:fixed_interval_riccati_P}, with the corresponding
\(n\)-player coefficients, and with terminal matrix \(H_T\) from
\eqref{eq:fixed_interval_final_boundary}.
Then \(H_n\) exists uniquely on the whole interval \([0,T]\).
More precisely, for \(t\in[0,T]\), define
\[
G_Q(t):=Q_T+(T-t)Q,
\qquad
G_R(t)
:=
\int_t^T
e^{-R^\top(s-t)}\Lambda^{-1}e^{-R(s-t)}\,ds,
\]
and set
\begin{align}
\overline H(t)
&:=
\begin{bmatrix}
G_Q(t)&G_Q(t)&0\\
G_Q(t)&G_Q(t)&0\\
0&0&0
\end{bmatrix},
\label{eq:global_riccati_upper_bound_matrix}\\
\underline H(t)
&:=
\left(
2\operatorname{tr}(G_Q(t))
+\frac12\operatorname{tr}(G_R(t))
\right)\operatorname{Id}_{3d}.
\label{eq:global_riccati_lower_bound_matrix}
\end{align}
Then, uniformly over \(n\ge2\),
\begin{equation}
\label{eq:global_riccati_two_sided_bound}
-\underline H(t)
\preceq
H_n(t)
\preceq
\overline H(t),
\qquad
0\le t\le T.
\end{equation}
The post-announcement systems for \(r_n^k\) and \(c_n^k\), the
pre-announcement systems for \(r_n^0\) and
\(\widetilde r_n^{i,0}\), and the scalar system for \(c_n^{i,0}\) also admit
unique finite solutions. Hence the construction in
Theorem~\ref{thm:full_horizon_closed_loop_equilibrium} is well defined.
\end{theorem}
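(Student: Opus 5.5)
The plan is to obtain the two-sided bound \eqref{eq:global_riccati_two_sided_bound} on the maximal interval of existence $(t_*,T]$ of the backward solution $H_n$. The bound then gives a contradiction with blow-up: local Lipschitz continuity of the right-hand side of \eqref{eq:fixed_interval_riccati_P} in $H$ yields local existence and uniqueness, and a solution that stays bounded on $(t_*,T]$ extends past $t_*$, so $t_*<0$. Once $H_n$ is continuous on $[0,T]$, the coefficient equations \eqref{eq:full_horizon_linear_coefficients} and \eqref{eq:full_horizon_constant_coefficients} are linear ODEs with bounded, piecewise-continuous coefficients. The $g^k$ are piecewise constant, so these equations have unique finite solutions. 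In particular, \eqref{eq:pre_ann_coupled_r_ode} is equivalent to the decoupled pair \eqref{eq:pre_ann_common_r_ode}--\eqref{eq:pre_ann_deviation_r_ode}, and the $c$-equations are explicit integrals.

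To get the bounds, I would not manipulate the Riccati equation directly. Instead I would use its control interpretation via Theorem~\ref{thm:full_horizon_closed_loop_equilibrium} on $[s,T]$ for $s>t_*$, with $r\equiv0$, $c\equiv0$ and $g\equiv0$. This is the homogeneous game: a single post-announcement scenario with zero indexer flow. In it, $\frac12 z^\top H_n(s)z$ is trader $i$'s equilibrium value from the reduced state $z$, with compatible inventory profiles chosen symmetrically among the opponents.

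\textbf{Upper bound.} Trader $i$ deviates to $v\equiv0$. Its cost is then $\frac12 x^\top G_Q(s)x$ with $x=\delta+\bar x$, which is exactly $\frac12 z^\top\overline H(s)z$. Because it trades nothing, it pays no execution cost regardless of $I$ and of the opponents' flows. Subgame perfection then gives $H_n(s)\preceq\overline H(s)$.

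\textbf{Lower bound.} I would bound from below the cost of any policy, in particular the equilibrium one. Write the cost as
\[
\int u^\top\Lambda u+u^\top\bigl(I+\Lambda\vartheta_n U^{-i}\bigr)\,dt+\text{inventory terms}.
\]
The impact is $I(t)=e^{-R(t-s)}\iota+\int_s^t e^{-R(t-r)}\Gamma\varpi_n(u+U^{-i})\,dr$. The self-impact part is $\varpi_n\mathcal C(u)\ge0$ by Assumption~\ref{ass:standing_model}. The initial-impact cross term $u^\top e^{-R(t-s)}\iota$ is absorbed by completing the square against half of $u^\top\Lambda u$, giving at least $-\frac12\iota^\top G_R(s)\iota$ up to a constant. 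The remaining cross terms involve the opponents' flows, through both $\Lambda\vartheta_nU^{-i}$ and the opponent-generated impact.

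Handling those cross terms is the main obstacle. They are indefinite, and the opponents' flows depend on $H_n$ itself. My first attempt is to exploit symmetry. Sum the traders' costs, or consider the symmetric state $\delta=0$ and the pure-deviation state $\bar x=0,\iota=0$ separately. On a symmetric profile $u^j\equiv u$ the cross terms become $\vartheta_n(n-1)u^\top\Lambda u\ge0$ plus $\varpi_n(n-1)\mathcal C(u)\ge0$, so they are controlled. The condition $\vartheta_n\le1$ keeps the aggregate quadratic form in the profile, $\Lambda\otimes\bigl((1-\vartheta_n)\mathrm{Id}+\vartheta_n\mathbf 1\mathbf 1^\top\bigr)$, positive semidefinite. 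This gives a lower bound for the average value. Combining it with the upper bound, I would write $H_n=\overline H-(\overline H-H_n)$ and use $\overline H-H_n\succeq0$. The cross blocks are then controlled by the diagonal blocks via $|z^\top Hz'|\le$ sums of diagonal quadratic forms, which is where the trace factors $2\operatorname{tr}G_Q$ and $\frac12\operatorname{tr}G_R$ come from (crudely, $G\preceq\operatorname{tr}(G)\,\mathrm{Id}$). This yields $H_n\succeq-\underline H$. I expect that the precise bookkeeping of the deviation block $\delta$ versus the common block will need care. Both bounds are independent of $n$, which gives the uniformity.
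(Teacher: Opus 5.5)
Your proposal is correct and follows the paper's proof essentially step for step. The steps are: the continuation criterion; the control interpretation in the game with \(g\equiv0\); the zero-trading deviation giving \(H_n\preceq\overline H\); summing all traders' costs, using \(\vartheta_n\le1\) and \(\mathcal C\ge0\), separately at \(\bar x=\iota=0\) and at \(\delta=0\), which gives \([H_n]_{\delta\delta}\succeq0\) and a lower bound on the common block; and finally \(W_n:=\overline H-H_n\succeq0\) with \(W_n\preceq\operatorname{tr}(W_n)\operatorname{Id}_{3d}\) to control the cross blocks.

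The one bookkeeping point is the constant. To obtain the \(\frac12\operatorname{tr}G_R\) in \(\underline H\), complete the square against the full aggregate coefficient \(\alpha_nU_n^\top\Lambda U_n\), where \(\alpha_n:=\vartheta_n+(1-\vartheta_n)/n\) so that \(n\alpha_n=1+(n-1)\vartheta_n\ge1\); on your symmetric profile this means completing against \((1+(n-1)\vartheta_n)u^\top\Lambda u\). This yields \([H_n]_{(\bar x,\iota),(\bar x,\iota)}\succeq-\frac12\operatorname{diag}(0,G_R)\). If you complete against only half of \(u^\top\Lambda u\), as in your preliminary single-trader estimate, you get \(\operatorname{tr}G_R\) instead of \(\frac12\operatorname{tr}G_R\).
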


The proof is given in
Appendix~\ref{subsec:proof_global_riccati_solvability}.

\begin{remark}[Impacted Terminal Marks and Solvability]
\label{rem:impacted_terminal_mark_solvability}
The global-solvability proof relies on marking terminal inventory to the
fundamental price. Marking it instead to \(S(T)=S^0(T)+I(T)\) adds
\(\iota^\top x_n^i-I(T)^\top X_n^i(T)\) to the continuation objective and changes
the terminal matrix in \eqref{eq:fixed_interval_final_boundary} to
\[
H_T^{\mathrm{imp}}
=
\begin{bmatrix}
Q_T&Q_T&-\operatorname{Id}_d\\
Q_T&Q_T&-\operatorname{Id}_d\\
-\operatorname{Id}_d&-\operatorname{Id}_d&0
\end{bmatrix}.
\]
If the Riccati equation with terminal condition
\(H_n(T)=H_T^{\mathrm{imp}}\) has a solution on \([0,T]\), then the
construction in Theorem~\ref{thm:full_horizon_closed_loop_equilibrium} still yields
a subgame-perfect closed-loop Nash equilibrium for the game where wealth is marked
to terminal impacted price. What fails is the a priori
argument for global existence. Indeed, one can construct parameter values for which the modified
Riccati solution blows up in finite time. We omit the construction because it is
not needed for the subsequent analysis.

Both \citet{NeumanVoss2023} and
\citet{campbell2026optimalexecutionntraders} mark terminal inventory at the
fundamental price to retain convexity of each player's optimization problem,
making their first-order conditions sufficient. We use the same valuation
convention, but for a different reason: it provides the a priori bounds that
prevent Riccati blow-up and guarantee full-horizon existence of the value
functions and the closed-loop Nash equilibrium.
\end{remark}

\section{Mean-Field Limit}\label{sec:mean_field_limit}

The finite-player closed-loop strategies constructed in
Section~\ref{sec:solution} are attractive because they respond to the realized
state. They can, however, be difficult to implement when an opportunist knows
only that the number of relevant players is large, rather than the exact value
of \(n\), and cannot observe the current average inventory \(\bar X_n\); see
Remark~\ref{rem:average_inventory_proxy}. A mean-field strategy avoids both
requirements. We assume that every opportunist starts with zero inventory,
which is natural when they participate only to exploit the rebalance. The equilibrium mean paths can then be computed ex
ante for every announcement scenario and need not be observed during execution.
The mean-field feedback therefore depends on the trader's own inventory and
belief and on the announced scenario, but not on the current population-average
inventory.

We define a mean-field game motivated by the \(n\to\infty\) limit of the
finite-player model under the mean-field scaling \(\varpi_n=\varpi/n\), construct
its Markov equilibrium, and then prove that the finite-player equilibria
converge to this equilibrium. Such convergence is not automatic:
\citet{Lacker2020ClosedLoop} shows that
subsequential limits of closed-loop Nash equilibria need only be weak
mean-field equilibria and may contain endogenous common randomness, so
convergence to a strong mean-field equilibrium is not automatic. In our LQ
model, if
\(\bar\pi_n\to\bar\pi\), the finite-player equilibria constructed in
Section~\ref{sec:full_procedure} converge uniformly to the Markov mean-field
equilibrium of Theorem~\ref{thm:mf_solution}, and the lifted mean-field
feedbacks form approximate Nash equilibria by
Theorem~\ref{thm:mf_approximate_nash}.

\subsection{Definition of the Mean-Field Game}

Let \(\Pi\) be a probability distribution on \(\Delta_m\). A representative
player of type \(\pi\in\Delta_m\) evaluates the announcement under the
subjective probability
\[
\mathbb P^\pi(\Xi=\xi^k)=\pi^k,
\qquad k=1,\ldots,m.
\]
The \emph{population-average belief} is
\[
\bar\pi:=\int_{\Delta_m}\pi\,\Pi(d\pi).
\]
It is the population analogue of the finite-population average belief
\(\bar\pi_n\) in \eqref{eq:finite_population_average_prior}.

As in the finite-player game, the objective and state dynamics can be written
without explicit dependence on the fundamental price. We thus seek Markov feedbacks
that ignore \(S^0\), with one feedback before the announcement and one for each
post-announcement scenario. Except for the randomness coming from the
announcement, the game is fully deterministic.

\begin{definition}[Mean-Field Equilibrium]\label{def:mf_equilibrium}
A candidate mean flow is a collection
\[
\bar u^0\in L^2([0,T_{\mathrm{ann}}],\mathbb R^d),
\qquad
\bar u^k\in L^2([T_{\mathrm{ann}},T],\mathbb R^d),
\quad k=1,\ldots,m.
\]
Its impact components satisfy
\begin{equation}\label{eq:mf_component_impact_dynamics}
\begin{aligned}
\dot I^0(t)
&=-RI^0(t)+\varpi\Gamma\bar u^0(t),
& I^0(0)&=0,
\\
\dot I^k(t)
&=-RI^k(t)+\Gamma\bigl(\varpi\bar u^k(t)+g^k(t)\bigr),
& I^k(T_{\mathrm{ann}})&=I^0(T_{\mathrm{ann}}),
\quad k=1,\ldots,m.
\end{aligned}
\end{equation}
For a type \(\pi\), an admissible Markov policy \(\phi^\pi\) is a collection of
Borel maps
\[
\phi^{\pi,0}:[0,T_{\mathrm{ann}}]\times\mathbb R^d\longrightarrow\mathbb R^d,
\qquad
\phi^{\pi,k}:[T_{\mathrm{ann}},T]\times\mathbb R^d\longrightarrow\mathbb R^d,
\quad k=1,\ldots,m,
\]
such that the rates
\[
\begin{aligned}
u^{\pi,0}(t)&:=\phi^{\pi,0}(t,X^{\pi,0}(t)),\\
u^{\pi,k}(t)&:=\phi^{\pi,k}(t,X^{\pi,k}(t)),
\quad k=1,\ldots,m,
\end{aligned}
\]
generate unique solutions of
\(\dot X^{\pi,0}=u^{\pi,0}\) and
\(\dot X^{\pi,k}=u^{\pi,k}\), \(k=1,\ldots,m\), satisfying
\[
X^{\pi,0}(0)=0,
\qquad
X^{\pi,k}(T_{\mathrm{ann}})=X^{\pi,0}(T_{\mathrm{ann}}),
\quad k=1,\ldots,m,
\]
and are square-integrable on their respective intervals. The type-\(\pi\)
objective is
\begin{equation}\label{eq:mf_type_objective}
\begin{aligned}
J^\pi(\phi^\pi;\bar u)
={}&
\int_0^{T_{\mathrm{ann}}}
\mathcal L\bigl(
X^{\pi,0}(t),u^{\pi,0}(t);
I^0(t),\varpi\bar u^0(t),0
\bigr)\,dt
\\
&+
\sum_{k=1}^m\pi^k
\int_{T_{\mathrm{ann}}}^T
\mathcal L\bigl(
X^{\pi,k}(t),u^{\pi,k}(t);
I^k(t),\varpi\bar u^k(t),g^k(t)
\bigr)\,dt
\\
&+
\frac{1}{2}\sum_{k=1}^m\pi^k
(X^{\pi,k}(T))^\top Q_T X^{\pi,k}(T).
\end{aligned}
\end{equation}
A family \(\phi=(\phi^{\pi})_{\pi\in\Delta_m}\) of admissible Markov
policies is a mean-field equilibrium if the following two conditions hold.
First, the componentwise averages
\begin{equation}\label{eq:mf_average_consistency}
\begin{aligned}
\bar u^0(t)
&=
\int_{\Delta_m}
\phi^{\pi,0}(t,X^{\pi,0}(t))\,\Pi(d\pi),
\\
\bar u^k(t)
&=
\int_{\Delta_m}
\phi^{\pi,k}(t,X^{\pi,k}(t))\,\Pi(d\pi),
\quad k=1,\ldots,m,
\end{aligned}
\end{equation}
are well defined, hold for a.e. time on their respective intervals, and form a
candidate mean flow. Second, for \(\Pi\)-a.e. \(\pi\), the policy
\(\phi^{\pi}\) is a best response to this mean flow:
\[
\phi^{\pi}
\in
\operatorname*{argmin}_{\psi^\pi}
J^\pi(\psi^\pi;\bar u),
\]
where the minimum is taken over admissible Markov policies and the mean flow
\(\bar u\) and its impact components are held fixed. The second condition also
requires, for every \(k=1,\ldots,m\), that \(\phi^{\pi,k}\) solve the
post-announcement control problem from every announcement-time inventory
\(x\in\mathbb R^d\), with indexer flow \(g^k\), mean flow \(\bar u^k\), and
impact component \(I^k\), regardless of whether \(\pi^k=0\).
\end{definition}

We impose \(X^{\pi,0}(0)=0\) for every type and \(I^0(0)=0\). Thus the
population heterogeneity represented by \(\Pi\) concerns beliefs rather than
initial inventories. This differs from the liquidation game of
\citet{NeumanVoss2023}, where agents have heterogeneous deterministic initial
positions and mean-field consistency is imposed through the limiting empirical
average of their trading rates.

Our model combines two features: heterogeneous beliefs and mean-field
consistency obtained by averaging over a distribution of belief types.
\citet{Casgrain2020} also allow agents to disagree about the dynamics of the
asset price and its latent factors, but divide the population into finitely
many positive-mass subpopulations, each sharing the same beliefs and objective.
The papers \citep{Lacker2019,Lacker2020} instead formulate
mean-field games with a general distribution of types, such as risk
tolerances, relative-performance preferences, and market parameters. Both
\citet{Casgrain2020} and \citet{Lacker2019,Lacker2020} retain stochastic
asset-price dynamics in their mean-field games. By contrast, \(S^0\) drops out
of our mean-field objective and state dynamics, so the common announcement
\(\Xi\) is the only remaining source of uncertainty and the dynamics are
deterministic conditional on its realization.

\subsection{Construction of the Mean-Field Equilibrium}

As in the finite-player game, we seek value functions for a type-\(\pi\)
player of the form
\begin{equation}\label{eq:mf_quadratic_value_ansatz}
\begin{aligned}
V^{\pi,0}(t,x)
&=\frac12x^\top P(t)x+(\ell^{\pi,0}(t))^\top x+c^{\pi,0}(t),
\\
V^k(t,x)
&=\frac12x^\top P(t)x+(\ell^k(t))^\top x+c^k(t),
\qquad k=1,\ldots,m.
\end{aligned}
\end{equation}
The quadratic coefficient is common across types, intervals, and scenarios.
The affine and constant coefficients depend on the scenario after the
announcement and on the type before the announcement.

To state the coefficient equations and the corresponding optimal controls,
define the type-average pre-announcement affine coefficient by
\[
\ell^0(t)=\bar\ell^0(t)
:=
\int_{\Delta_m}\ell^{\pi,0}(t)\,\Pi(d\pi).
\]
The bar emphasizes that this coefficient is a type average. We use the
unbarred notation in the sequel to keep the notation uniform across
pre- and post-announcement indices.
Let \(\bar X^0\) and \(I^0\) denote the equilibrium mean-inventory and
impact-state paths before the announcement, and let \(\bar X^k\) and \(I^k\)
denote their counterparts in post-announcement scenario \(k\). Set
\begin{equation}\label{eq:mf_augmented_common_state_paths}
Y^k(t):=(\bar X^k(t),I^k(t),\ell^k(t))^\top,
\qquad k=0,\ldots,m.
\end{equation}

We call \(Y^0,\ldots,Y^m\) the augmented mean-field common-state paths:
their \((\bar X,I)\) components are the mean-field analogue of the
finite-player common state, augmented by the affine coefficient \(\ell^k\).
These paths and the remaining equilibrium coefficients are determined as
follows.
Theorem~\ref{thm:mf_solution} below makes this construction rigorous and also
states the corresponding equilibrium feedbacks.

\begin{itemize}
\item \textbf{Quadratic Coefficient.}

On \([0,T]\), \(P\) solves
\begin{equation}\label{eq:mf_riccati}
\dot P(t)
=
- Q+\frac12P(t)\Lambda^{-1}P(t),
\qquad
P(T)=Q_T.
\end{equation}
Since \(\Lambda\succ0\) and \(Q,Q_T\succeq0\), standard finite-horizon LQR
theory gives a unique continuous symmetric nonnegative solution on
\([0,T]\).

\item \textbf{Augmented Mean-Field Common-State System.}

The augmented mean-field common-state paths \(Y^0,\ldots,Y^m\) satisfy
\begin{subequations}\label{eq:mf_consistency_system}
\begin{equation}\label{eq:mf_y_system}
\begin{aligned}
\dot Y^0(t)&=\mathcal A(t)Y^0(t),
&&t\in[0,T_{\mathrm{ann}}],
\\
\dot Y^k(t)&=\mathcal A(t)Y^k(t)+b^k(t),
&&t\in[T_{\mathrm{ann}},T],
\quad k=1,\ldots,m.
\end{aligned}
\end{equation}
with
\begin{equation}\label{eq:mf_pre_consistency_boundary}
\bar X^0(0)=0,
\qquad
I^0(0)=0,
\qquad
\ell^0(T_{\mathrm{ann}})
=\sum_{k=1}^m\bar\pi^k\ell^k(T_{\mathrm{ann}}),
\end{equation}
and, for \(k=1,\ldots,m\),
\begin{equation}\label{eq:mf_post_consistency_boundary}
\bar X^k(T_{\mathrm{ann}})=\bar X^0(T_{\mathrm{ann}}),
\qquad
I^k(T_{\mathrm{ann}})=I^0(T_{\mathrm{ann}}),
\qquad
\ell^k(T)=0.
\end{equation}
\end{subequations}
Here
\begin{equation}\label{eq:mf_A_matrix}
\mathcal A(t)
=
\frac{1}{2+\varpi\chi_u}
\begin{pmatrix}
-\Lambda^{-1}P(t)&-\Lambda^{-1}&-\Lambda^{-1}\\
-\varpi\Gamma\Lambda^{-1}P(t)&
-(2+\varpi\chi_u)R-\varpi\Gamma\Lambda^{-1}&
-\varpi\Gamma\Lambda^{-1}\\
-\frac{\varpi\chi_u}{2}P(t)\Lambda^{-1}P(t)&
P(t)\Lambda^{-1}&P(t)\Lambda^{-1}
\end{pmatrix},
\end{equation}
and \(b^0(t):=0\), while, for \(k=1,\ldots,m\),
\begin{equation}\label{eq:mf_b_vector}
b^k(t)
=
\frac{1}{2+\varpi\chi_u}
\begin{pmatrix}
-\chi_g g^k(t)\\
(2+\varpi\chi_u-\varpi\chi_g)\Gamma g^k(t)\\
\chi_g P(t)g^k(t)
\end{pmatrix}.
\end{equation}
Note that the conditions for \(\bar X^0\) and \(I^0\) are imposed at \(t=0\), whereas
the conditions for \(\ell^k\), \(k=1,\ldots,m\), are imposed at \(t=T\). The conditions at
\(T_{\mathrm{ann}}\) match the pre- and post-announcement mean inventories
and impact states and link \(\ell^0\) to the post-announcement affine
coefficients. Thus, \eqref{eq:mf_consistency_system} is a coupled
boundary-value problem rather than an initial-value problem, and its
solvability is not immediate.

The equilibrium mean flows are
\begin{subequations}\label{eq:mf_mu_closed_form}
\begin{align}
\bar u^0(t)
&=
-\frac{1}{2+\varpi\chi_u}\Lambda^{-1}
\left(P(t)\bar X^0(t)+I^0(t)+\ell^0(t)\right),
\label{eq:mf_pre_mu_closed_form}\\
\bar u^k(t)
&=
-\frac{1}{2+\varpi\chi_u}\Lambda^{-1}
\left(P(t)\bar X^k(t)+I^k(t)+\ell^k(t)+\Lambda\chi_g g^k(t)\right).
\label{eq:mf_post_mu_closed_form}
\end{align}
\end{subequations}

\item \textbf{Type-Specific Affine Coefficient.}

For \(\pi\in\Delta_m\), \(\ell^{\pi,0}\) solves on
\([0,T_{\mathrm{ann}}]\)
\begin{equation}\label{eq:mf_pre_ell}
\begin{aligned}
\dot\ell^{\pi,0}(t)
={}&
\frac12P(t)\Lambda^{-1}
\left(I^0(t)+\Lambda\varpi\chi_u\bar u^0(t)+\ell^{\pi,0}(t)\right),
\\
\ell^{\pi,0}(T_{\mathrm{ann}})
={}&\sum_{k=1}^m\pi^k\ell^k(T_{\mathrm{ann}}).
\end{aligned}
\end{equation}

\item \textbf{Constant Coefficients.}

For \(k=1,\ldots,m\), \(c^k\) solves on
\([T_{\mathrm{ann}},T]\)
\begin{equation}\label{eq:mf_post_alpha}
\begin{aligned}
\dot c^k(t)
={}&
\frac14
\left(I^k(t)+\Lambda(\varpi\chi_u\bar u^k(t)+\chi_g g^k(t))+\ell^k(t)\right)^\top
\\
&\qquad
\Lambda^{-1}
\left(I^k(t)+\Lambda(\varpi\chi_u\bar u^k(t)+\chi_g g^k(t))+\ell^k(t)\right),
\\
c^k(T)={}&0.
\end{aligned}
\end{equation}
For \(\pi\in\Delta_m\), \(c^{\pi,0}\) solves on
\([0,T_{\mathrm{ann}}]\)
\begin{equation}\label{eq:mf_pre_alpha}
\begin{aligned}
\dot c^{\pi,0}(t)
={}&
\frac14
\left(I^0(t)+\Lambda\varpi\chi_u\bar u^0(t)+\ell^{\pi,0}(t)\right)^\top
\\
&\qquad
\Lambda^{-1}
\left(I^0(t)+\Lambda\varpi\chi_u\bar u^0(t)+\ell^{\pi,0}(t)\right),
\\
c^{\pi,0}(T_{\mathrm{ann}})
={}&\sum_{k=1}^m\pi^kc^k(T_{\mathrm{ann}}).
\end{aligned}
\end{equation}
\end{itemize}

\begin{theorem}[Explicit Mean-Field Equilibrium]\label{thm:mf_solution}
Under Assumption~\ref{ass:standing_model}, let \(P\) be the solution of
\eqref{eq:mf_riccati}. Suppose that the augmented mean-field common-state
boundary-value problem
\eqref{eq:mf_consistency_system} admits a solution, and fix one such solution
\((Y^0,Y^1,\ldots,Y^m)\). Let \(\ell^{\pi,0}\), \(c^k\), and
\(c^{\pi,0}\) be determined by \eqref{eq:mf_pre_ell},
\eqref{eq:mf_post_alpha}, and \eqref{eq:mf_pre_alpha}.
Define \(\phi^{\pi,0}\) on \([0,T_{\mathrm{ann}}]\) and
\(\phi^{k}\) on \([T_{\mathrm{ann}},T]\) by
\begin{subequations}\label{eq:mf_equilibrium_feedbacks}
\begin{align}
\phi^{\pi,0}(t,x)
&=
-\frac12\Lambda^{-1}
\left(P(t)x+\ell^{\pi,0}(t)+I^0(t)+\Lambda\varpi\chi_u\bar u^0(t)\right),
\label{eq:mf_pre_feedback}\\
\phi^{k}(t,x)
&=
-\frac12\Lambda^{-1}
\left(P(t)x+\ell^k(t)+I^k(t)
+\Lambda(\varpi\chi_u\bar u^k(t)+\chi_g g^k(t))\right).
\label{eq:mf_post_feedback}
\end{align}
\end{subequations}
For \(k=1,\ldots,m\), set \(\phi^{\pi,k}:=\phi^{k}\). The feedback
family is a mean-field equilibrium with mean flow
\eqref{eq:mf_mu_closed_form} and value functions
\eqref{eq:mf_quadratic_value_ansatz}. The mean inventories
\(\bar X^0,\ldots,\bar X^m\) are the type averages of the induced inventories,
and \(\ell^0\) is the type average of \(\ell^{\pi,0}\).
\end{theorem}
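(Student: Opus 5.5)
The proof is a fixed-point verification in two halves. First, freeze the mean flow \(\bar u\) given by \eqref{eq:mf_mu_closed_form} and its impact components \(I^0,\dots,I^m\), all built from the fixed solution \((Y^0,\dots,Y^m)\). Show that \eqref{eq:mf_equilibrium_feedbacks} is an optimal policy for each type \(\pi\). Second, show that averaging the induced controls over \(\Pi\) reproduces \(\bar u\), so that the consistency condition \eqref{eq:mf_average_consistency} holds.

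\textbf{Best response.} Once \(\bar u\) is fixed, the type-\(\pi\) problem is a deterministic LQ problem in \(x\in\mathbb R^d\) with a single switch at \(T_{\mathrm{ann}}\). Write \(w(t):=I(t)+\Lambda(\varpi\chi_u\bar u(t)+\chi_g g(t))\). The running cost is then \(u^\top\Lambda u+w^\top u+\frac12 x^\top Qx\). The Hamiltonian \(u^\top\Lambda u+w^\top u+\frac12x^\top Qx+p^\top u\) is minimized at \(u=-\frac12\Lambda^{-1}(p+w)\), with minimum value \(\frac12x^\top Qx-\frac14(p+w)^\top\Lambda^{-1}(p+w)\).

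I will insert \(p=Px+\ell\) from the ansatz \eqref{eq:mf_quadratic_value_ansatz} and match coefficients:
\begin{itemize}
\item the quadratic terms give \eqref{eq:mf_riccati};
\item the linear terms give \(\dot\ell=\frac12P\Lambda^{-1}(\ell+w)\);
\item the constant terms give \(\dot c=\frac14(\ell+w)^\top\Lambda^{-1}(\ell+w)\).
\end{itemize}
The \(\ell\)-equation is exactly \eqref{eq:mf_pre_ell} before announcement. The constant equations are \eqref{eq:mf_post_alpha} and \eqref{eq:mf_pre_alpha}.

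After announcement, the \(\ell^k\)-equation should coincide with the third block row of \(\mathcal A Y^k+b^k\). To check this, substitute \eqref{eq:mf_post_mu_closed_form} into \(w\). This gives \(\ell+w=\frac{1}{2+\varpi\chi_u}\bigl(2(I+\ell)-\varpi\chi_uP\bar X+(2-\varpi\chi_u+\varpi\chi_u)\chi_g\Lambda g\bigr)\). Multiplying by \(\frac12P\Lambda^{-1}\) reproduces the third block row of \eqref{eq:mf_A_matrix} together with the \(\chi_gPg\) entry of \eqref{eq:mf_b_vector}. This is a routine but essential algebraic check.

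The boundary conditions are handled as follows:
\begin{itemize}
\item \(\ell^k(T)=0\) and \(c^k(T)=0\) give \(V^k(T,x)=\frac12x^\top Q_Tx\);
\item the \(\pi\)-weighted conditions at \(T_{\mathrm{ann}}\) give \(V^{\pi,0}(T_{\mathrm{ann}},\cdot)=\sum_k\pi^kV^k(T_{\mathrm{ann}},\cdot)\).
\end{itemize}
Verification is then the standard argument. Take any admissible policy. For each post-announcement scenario, differentiate \(V^k\) along the induced path and use the fact that the Hamiltonian dominates its minimum. Weight the scenarios by \(\pi^k\), then apply the same step before announcement. This yields \(J^\pi(\psi^\pi;\bar u)\ge V^{\pi,0}(0,0)\), with equality for \(\phi^\pi\). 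The feedbacks are affine with continuous coefficients, so they are admissible. The same argument started from an arbitrary announcement-time inventory \(x\) gives optimality in the post-announcement problem regardless of \(\pi^k\).

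\textbf{Consistency, the main obstacle.} Let \(\hat X^{\pi,k}\) be the inventories induced by \(\phi^\pi\). I must show three things:
\begin{itemize}
\item \(\int\hat X^{\pi,k}\,\Pi(d\pi)=\bar X^k\);
\item \(\int\ell^{\pi,0}\,\Pi(d\pi)=\ell^0\), the given component of \(Y^0\);
\item \(\int\phi^{\pi,k}(t,\hat X^{\pi,k})\,\Pi(d\pi)=\bar u^k\).
\end{itemize}
Start with \(\int\ell^{\pi,0}\,d\Pi\). The equation \eqref{eq:mf_pre_ell} is linear in \(\ell^{\pi,0}\) with forcing independent of \(\pi\), and its terminal value is affine in \(\pi\). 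So \(\ell^{\pi,0}\) is affine in \(\pi\), and its \(\Pi\)-average solves the same ODE with terminal value \(\sum_k\bar\pi^k\ell^k(T_{\mathrm{ann}})\). The third row of \(\mathcal AY^0\) is the same ODE after \(\bar u^0\) is substituted. This equation is linear in \(\ell\) with given forcing \((\bar X^0,I^0)\), hence uniquely solvable, so the average equals \(\ell^0\).

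Next, \(\hat X^{\pi,k}\) solves a linear ODE with zero initial data whose forcing is affine in \(\pi\), so \(\hat X^{\pi,k}\) is affine in \(\pi\) too. Let \(m^k:=\int\hat X^{\pi,k}d\Pi\). Then \(\dot m^k=-\frac12\Lambda^{-1}(Pm^k+\ell^k+w^k)\). Substituting \eqref{eq:mf_mu_closed_form} for \(\bar u^k\), I expect this to be equivalent to \(\dot m^k=-\frac{1}{2+\varpi\chi_u}\Lambda^{-1}(P\bar X^k+I^k+\ell^k+\chi_g\Lambda g^k)+\frac12\Lambda^{-1}P(\bar X^k-m^k)\). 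Its difference from the first row of \(\mathcal AY^k+b^k\) is \(e:=m^k-\bar X^k\), which satisfies the homogeneous equation \(\dot e=-\frac12\Lambda^{-1}Pe\). Moreover \(e(0)=0\), and the stitching at \(T_{\mathrm{ann}}\) carries this over, so \(e\equiv0\).

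Averaging the feedback then gives \(-\frac12\Lambda^{-1}(P\bar X^k+\ell^k+w^k)\), and solving for \(\bar u^k\) returns \eqref{eq:mf_mu_closed_form}. Finally, the second rows of \(\mathcal A\) and \(b^k\) are exactly \eqref{eq:mf_component_impact_dynamics} with this \(\bar u^k\). Hence \(I^k\) is the impact generated by the mean flow, and the fixed point closes.
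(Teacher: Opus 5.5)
Your proposal is correct and follows the paper's route: HJB verification with the quadratic ansatz, coefficient matching, \(\pi\)-weighted stitching at \(T_{\mathrm{ann}}\), and then averaging the affine feedbacks over \(\Pi\) to close the fixed point. The one difference is the direction of the consistency step. The paper averages the feedbacks to derive \eqref{eq:mf_consistency_system} as a necessary condition. You instead start from a given solution \((Y^0,\ldots,Y^m)\) and check sufficiency directly: that \(\ell^k\) from \(Y^k\) solves the HJB affine equation, that \(\int\ell^{\pi,0}\,\Pi(d\pi)=\ell^0\), and that \(m^k-\bar X^k\) solves a homogeneous linear ODE with zero data. That is precisely the direction the theorem asserts.
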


The proof is given in Appendix~\ref{subsec:proof_mf_solution}.

\begin{remark}[State Decomposition]
\label{rem:mf_deviation_policy}
For the equilibrium trajectories, let \(X^{\pi,0}\) and \(\bar X^0\)
denote the type-\(\pi\) and mean pre-announcement inventories, respectively,
and let \(X^{\pi,k}\) and \(\bar X^k\) denote their continuations in
scenario \(k\). Define
\(\delta^{\pi,0}:=X^{\pi,0}-\bar X^0\) before the announcement and
\(\delta^{\pi,k}:=X^{\pi,k}-\bar X^k\) in scenario \(k\) after the
announcement. Continuity of the individual and mean inventories gives
\[
\delta^{\pi,k}(T_{\mathrm{ann}})
=\delta^{\pi,0}(T_{\mathrm{ann}}).
\]
Using \eqref{eq:mf_post_feedback} and
\eqref{eq:mf_post_mu_closed_form}, the individual feedback in scenario \(k\)
is
\begin{equation}\label{eq:mf_post_feedback_decomposition}
u^{\pi,k}(t)
=
\bar u^k(t)-\frac12\Lambda^{-1}P(t)\delta^{\pi,k}(t).
\end{equation}
The post-announcement deviation satisfies
\begin{equation}\label{eq:mf_post_deviation_dynamics}
\dot\delta^{\pi,k}(t)
=
-\frac12\Lambda^{-1}P(t)\delta^{\pi,k}(t),
\qquad
\delta^{\pi,k}(T_{\mathrm{ann}})
=\delta^{\pi,0}(T_{\mathrm{ann}}).
\end{equation}
Since neither the ODE nor its initial condition depends on \(k\), uniqueness
implies that \(\delta^{\pi,k}\) is identical across scenarios. The equilibrium
mean-inventory path \(\bar X^k\) around which \(\delta^{\pi,k}\) is centered
may nevertheless differ across scenarios because the augmented mean-field
common-state system
\eqref{eq:mf_y_system} is driven by the scenario-specific indexer flow \(g^k\).

Before the announcement, analogously to
\eqref{eq:pre_ann_affine_components}, define the centered affine coefficient
\[
\widetilde\ell^{\pi,0}:=\ell^{\pi,0}-\ell^0.
\]
Subtracting the type average of \eqref{eq:mf_pre_ell} from that equation gives
\begin{equation}\label{eq:mf_centered_affine_dynamics}
\dot{\widetilde\ell}^{\pi,0}(t)
=
\frac12P(t)\Lambda^{-1}\widetilde\ell^{\pi,0}(t),
\qquad
\widetilde\ell^{\pi,0}(T_{\mathrm{ann}})
=
\sum_{k=1}^m
(\pi^k-\bar\pi^k)\ell^k(T_{\mathrm{ann}}).
\end{equation}
Using \eqref{eq:mf_pre_feedback} and
\eqref{eq:mf_pre_mu_closed_form}, the individual feedback before the
announcement is
\begin{equation}\label{eq:mf_pre_feedback_decomposition}
u^{\pi,0}(t)
=
\bar u^0(t)-\frac12\Lambda^{-1}
\left(
P(t)\delta^{\pi,0}(t)+\widetilde\ell^{\pi,0}(t)
\right).
\end{equation}
The pre-announcement deviation satisfies
\begin{equation}\label{eq:mf_pre_deviation_dynamics}
\dot\delta^{\pi,0}(t)
=
-\frac12\Lambda^{-1}
\left(
P(t)\delta^{\pi,0}(t)+\widetilde\ell^{\pi,0}(t)
\right),
\qquad
\delta^{\pi,0}(0)=0.
\end{equation}
The dynamics of \(Y^0\) and \(\delta^{\pi,0}\), and likewise those of \(Y^k\)
and \(\delta^{\pi,k}\), decouple completely by
\eqref{eq:mf_consistency_system}, \eqref{eq:mf_post_deviation_dynamics}, and
\eqref{eq:mf_pre_deviation_dynamics}. Unlike
\eqref{eq:mf_post_deviation_dynamics},
\eqref{eq:mf_pre_deviation_dynamics} is forced by the centered affine
coefficient \(\widetilde\ell^{\pi,0}\), whose terminal value encodes the
belief deviation \(\pi-\bar\pi\) and whose dynamics are given by
\eqref{eq:mf_centered_affine_dynamics}. Consequently, a type-\(\pi\) player
can compute the equilibrium feedback from \((\bar\pi,\pi)\), without knowing
the full distribution \(\Pi\). This parallels the finite-player construction,
where trader \(i\) needs only \((\bar\pi_n,\pi_n^i)\), rather than the other
traders' individual beliefs.
\end{remark}

\subsection{Solvability of the Mean-Field System}

The equilibrium construction in Theorem~\ref{thm:mf_solution} is conditional
on solvability of the augmented mean-field common-state boundary-value problem
\eqref{eq:mf_consistency_system}. We now show that this problem is solvable
under Assumption~\ref{ass:standing_model} and show how it can be solved by
propagating the linear ODEs and solving a finite-dimensional linear system for
\(\ell^0(0)\). For
\(0\le s,t\le T\), let
\(\mathcal E(t,s)\in\mathbb R^{3d\times3d}\) be the state-transition matrix
of the homogeneous system
\(\dot{Y}(t)=\mathcal A(t)Y(t)\). Thus, for each fixed \(s\),
\[
\frac{\partial}{\partial t}\mathcal E(t,s)
=
\mathcal A(t)\mathcal E(t,s),
\qquad
\mathcal E(s,s)=\operatorname{Id}_{3d}.
\]
Following the coordinate order of \(Y^k\), use the block indices
\(X\), \(I\), and \(\ell\) for average inventory, the impact state, and the
affine coordinate, respectively. The \(d\times d\) block decomposition is
\[
\mathcal E(t,s)
=
\begin{pmatrix}
\mathcal E_{XX}(t,s)&\mathcal E_{XI}(t,s)&\mathcal E_{X\ell}(t,s)\\
\mathcal E_{IX}(t,s)&\mathcal E_{II}(t,s)&\mathcal E_{I\ell}(t,s)\\
\mathcal E_{\ell X}(t,s)&\mathcal E_{\ell I}(t,s)&\mathcal E_{\ell\ell}(t,s)
\end{pmatrix}.
\]
Define
\[
\mathcal E^{\mathrm{pre}}:=\mathcal E(T_{\mathrm{ann}},0),
\qquad
\mathcal E^{\mathrm{post}}:=\mathcal E(T,T_{\mathrm{ann}})
\]
and, for \(k=1,\ldots,m\),
\begin{equation}\label{eq:mf_post_forcing_ell}
\mathcal D_\ell^k
:=
\int_{T_{\mathrm{ann}}}^T
\begin{bmatrix}
\mathcal E_{\ell X}(T,s)&\mathcal E_{\ell I}(T,s)&\mathcal E_{\ell\ell}(T,s)
\end{bmatrix}
b^k(s)\,ds.
\end{equation}
Finally, set
\begin{equation}\label{eq:mf_matching_matrix}
\mathcal M:=\mathcal E_{\ell\ell}(T,0).
\end{equation}

The following lemma expresses the solution in terms of these transition blocks
and identifies the invertibility conditions that ensure uniqueness.

\begin{lemma}
\label{lem:mf_state_transition_representation}
Under Assumption~\ref{ass:standing_model}, if \(\mathcal M\) is invertible,
the initial affine coefficient is
\begin{equation}\label{eq:mf_pre_matching}
\ell^0(0)
=
-\mathcal M^{-1}\sum_{k=1}^m\bar\pi^k\mathcal D_\ell^k.
\end{equation}
The pre-announcement solution is then
\begin{equation}\label{eq:mf_pre_forward_solution}
Y^0(t)
=
\mathcal E(t,0)
\begin{pmatrix}
0\\
0\\
\ell^0(0)
\end{pmatrix},
\qquad
t\in[0,T_{\mathrm{ann}}].
\end{equation}
In particular,
\[
\begin{aligned}
\bar X^0(T_{\mathrm{ann}})
=\mathcal E^{\mathrm{pre}}_{X\ell}\ell^0(0),
\qquad
I^0(T_{\mathrm{ann}})
=\mathcal E^{\mathrm{pre}}_{I\ell}\ell^0(0),
\qquad
\ell^0(T_{\mathrm{ann}})
=\mathcal E^{\mathrm{pre}}_{\ell\ell}\ell^0(0).
\end{aligned}
\]
If \(\mathcal E^{\mathrm{post}}_{\ell\ell}\) is invertible, then, using these
announcement values, the post-announcement initial affine coefficient for
scenario \(k\) is
\begin{equation}\label{eq:mf_post_transition_ell}
\ell^k(T_{\mathrm{ann}})
=
-(\mathcal E^{\mathrm{post}}_{\ell\ell})^{-1}
\left(\mathcal E^{\mathrm{post}}_{\ell X}\bar X^0(T_{\mathrm{ann}})+
\mathcal E^{\mathrm{post}}_{\ell I}I^0(T_{\mathrm{ann}})+\mathcal D_\ell^k\right),
\qquad k=1,\ldots,m.
\end{equation}
Finally, the post-announcement solution in scenario \(k\) is
\begin{equation}\label{eq:mf_post_forward_solution}
Y^k(t)
=
\mathcal E(t,T_{\mathrm{ann}})
\begin{pmatrix}
\bar X^0(T_{\mathrm{ann}})\\
I^0(T_{\mathrm{ann}})\\
\ell^k(T_{\mathrm{ann}})
\end{pmatrix}
+
\int_{T_{\mathrm{ann}}}^t
\mathcal E(t,s)b^k(s)\,ds,
\qquad
t\in[T_{\mathrm{ann}},T].
\end{equation}
If both \(\mathcal M\) and \(\mathcal E^{\mathrm{post}}_{\ell\ell}\) are
invertible, these formulas determine a unique solution of the mean-field
boundary value problem.
\end{lemma}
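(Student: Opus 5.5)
The lemma is a statement about a linear two-point boundary-value problem, so I would prove it by variation of constants: express every path through the state-transition matrix \(\mathcal E\), then read off the boundary conditions. Write \(e_\ell\) for the injection of the \(\ell\)-block. The boundary conditions \eqref{eq:mf_pre_consistency_boundary} fix \(\bar X^0(0)=0\) and \(I^0(0)=0\) and leave \(\ell^0(0)\) free. Since \(b^0=0\), the homogeneous equation gives \eqref{eq:mf_pre_forward_solution} with \(Y^0(t)=\mathcal E(t,0)e_\ell\ell^0(0)\). Evaluating at \(T_{\mathrm{ann}}\) yields the three announcement values \(\mathcal E^{\mathrm{pre}}_{X\ell}\ell^0(0)\), \(\mathcal E^{\mathrm{pre}}_{I\ell}\ell^0(0)\) and \(\mathcal E^{\mathrm{pre}}_{\ell\ell}\ell^0(0)\). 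Conversely, any solution of the BVP must have this form, by uniqueness for linear ODEs with continuous coefficients. The coefficients are continuous because \(P\) is continuous and \(g^k\) is piecewise constant and hence integrable.

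For each scenario \(k\), the post-announcement initial state is \((\bar X^0(T_{\mathrm{ann}}),I^0(T_{\mathrm{ann}}),\ell^k(T_{\mathrm{ann}}))\) by \eqref{eq:mf_post_consistency_boundary}. The variation-of-constants formula then gives \eqref{eq:mf_post_forward_solution}. Taking the \(\ell\)-row at \(t=T\) and imposing \(\ell^k(T)=0\) gives
\[
0=\mathcal E^{\mathrm{post}}_{\ell X}\bar X^0(T_{\mathrm{ann}})+\mathcal E^{\mathrm{post}}_{\ell I}I^0(T_{\mathrm{ann}})+\mathcal E^{\mathrm{post}}_{\ell\ell}\ell^k(T_{\mathrm{ann}})+\mathcal D_\ell^k .
\]
When \(\mathcal E^{\mathrm{post}}_{\ell\ell}\) is invertible, this is exactly \eqref{eq:mf_post_transition_ell}.

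It remains to close the loop through \(\ell^0(T_{\mathrm{ann}})=\sum_k\bar\pi^k\ell^k(T_{\mathrm{ann}})\); this is the step needing care, because \eqref{eq:mf_pre_matching} must come out in terms of \(\mathcal M\) alone, without inverting \(\mathcal E^{\mathrm{post}}_{\ell\ell}\). I would not average \eqref{eq:mf_post_transition_ell}. Instead I would average the post-announcement solutions directly: define \(\bar Y:=\sum_k\bar\pi^kY^k\) on \([T_{\mathrm{ann}},T]\). By linearity, \(\bar Y\) solves \(\dot{\bar Y}=\mathcal A\bar Y+\sum_k\bar\pi^kb^k\). Its initial value is \(Y^0(T_{\mathrm{ann}})\), using \(\sum_k\bar\pi^k=1\) for the \(X\)- and \(I\)-blocks and the matching condition for the \(\ell\)-block. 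Hence \(\bar Y(T)=\mathcal E(T,T_{\mathrm{ann}})\mathcal E(T_{\mathrm{ann}},0)e_\ell\ell^0(0)+\int_{T_{\mathrm{ann}}}^T\mathcal E(T,s)\sum_k\bar\pi^kb^k(s)\,ds\). Using the semigroup property \(\mathcal E(T,T_{\mathrm{ann}})\mathcal E(T_{\mathrm{ann}},0)=\mathcal E(T,0)\) and taking the \(\ell\)-row, the condition \(\sum_k\bar\pi^k\ell^k(T)=0\) becomes \(\mathcal M\ell^0(0)+\sum_k\bar\pi^k\mathcal D^k_\ell=0\). Invertibility of \(\mathcal M\) then gives \eqref{eq:mf_pre_matching}. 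This shows that any solution has this \(\ell^0(0)\).

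Finally, for existence and uniqueness when both matrices are invertible, I would run the construction forward:
\begin{itemize}
\item define \(\ell^0(0)\) by \eqref{eq:mf_pre_matching} and \(Y^0\) by \eqref{eq:mf_pre_forward_solution};
\item define \(\ell^k(T_{\mathrm{ann}})\) by \eqref{eq:mf_post_transition_ell} and \(Y^k\) by \eqref{eq:mf_post_forward_solution}.
\end{itemize}
The ODEs, the \(t=0\) conditions, the \(T_{\mathrm{ann}}\) matching of \(\bar X\) and \(I\), and \(\ell^k(T)=0\) then hold by construction. The one remaining condition is \(\ell^0(T_{\mathrm{ann}})=\sum_k\bar\pi^k\ell^k(T_{\mathrm{ann}})\). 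Averaging \eqref{eq:mf_post_transition_ell} over \(k\) gives \(\sum_k\bar\pi^k\ell^k(T_{\mathrm{ann}})=-(\mathcal E^{\mathrm{post}}_{\ell\ell})^{-1}\bigl(\mathcal E^{\mathrm{post}}_{\ell X}\bar X^0(T_{\mathrm{ann}})+\mathcal E^{\mathrm{post}}_{\ell I}I^0(T_{\mathrm{ann}})+\sum_k\bar\pi^k\mathcal D^k_\ell\bigr)\). The semigroup identity \(\mathcal M=\mathcal E^{\mathrm{post}}_{\ell X}\mathcal E^{\mathrm{pre}}_{X\ell}+\mathcal E^{\mathrm{post}}_{\ell I}\mathcal E^{\mathrm{pre}}_{I\ell}+\mathcal E^{\mathrm{post}}_{\ell\ell}\mathcal E^{\mathrm{pre}}_{\ell\ell}\), together with \eqref{eq:mf_pre_matching}, shows that this average equals \(\mathcal E^{\mathrm{pre}}_{\ell\ell}\ell^0(0)=\ell^0(T_{\mathrm{ann}})\). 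Uniqueness follows from the necessity argument above: every solution is forced to take these values.
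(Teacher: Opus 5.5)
Your proposal is correct and follows essentially the same route as the paper: variation of constants on each interval, then averaging the scenario terminal \(\ell\)-identities with weights \(\bar\pi^k\) (you average the paths \(Y^k\) instead, which is the same linear algebra), and using \(\mathcal E^{\mathrm{post}}\mathcal E^{\mathrm{pre}}=\mathcal E(T,0)\) to obtain \(\mathcal M\ell^0(0)=-\sum_k\bar\pi^k\mathcal D_\ell^k\) without inverting \(\mathcal E^{\mathrm{post}}_{\ell\ell}\). Your explicit check that the constructed \(\ell^k(T_{\mathrm{ann}})\) satisfy the matching condition \(\ell^0(T_{\mathrm{ann}})=\sum_k\bar\pi^k\ell^k(T_{\mathrm{ann}})\) is a small addition the paper lacks, since the paper only asserts that the constructed paths satisfy all boundary conditions.
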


The proof is given in
Appendix~\ref{subsec:proof_mf_state_transition_representation}. We now show that the procedure in
Lemma~\ref{lem:mf_state_transition_representation} can be carried out under the
market-parameter assumptions of Subsection~\ref{sec:opportunists}.

\begin{theorem}[Implementability of the Mean-Field Construction]
\label{thm:mf_implementability}
Under Assumption~\ref{ass:standing_model},
\(\mathcal E^{\mathrm{post}}_{\ell\ell}\) and the matching matrix
\(\mathcal M\) in
Lemma~\ref{lem:mf_state_transition_representation} are
invertible. Hence the mean-field construction is implementable and yields a
unique equilibrium within the affine construction of
Theorem~\ref{thm:mf_solution}.
\end{theorem}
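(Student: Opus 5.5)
The plan is to reduce each invertibility claim to a uniqueness statement for a \emph{homogeneous} forward--backward system, and then to rule out nontrivial homogeneous solutions by an energy (monotonicity) identity that uses \(\Lambda\succ0\), \(Q,Q_T\succeq0\) and the no-manipulation condition \(\mathcal C(v)\ge0\) from Assumption~\ref{ass:standing_model}.

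\textbf{Step 1 (reduction).} Fix \(s\in\{0,T_{\mathrm{ann}}\}\) and suppose \(\mathcal E_{\ell\ell}(T,s)w=0\) for some \(w\in\mathbb R^d\). Let \(Y=(\bar X,I,\ell)\) solve \(\dot Y=\mathcal A(t)Y\) on \([s,T]\) with \(Y(s)=(0,0,w)\). Then
\[
\bar X(s)=0,\qquad I(s)=0,\qquad \ell(T)=\mathcal E_{\ell\ell}(T,s)w=0 .
\]
For \(s=T_{\mathrm{ann}}\) this is exactly the claim for \(\mathcal E^{\mathrm{post}}_{\ell\ell}\). For \(s=0\) it is the claim for \(\mathcal M=\mathcal E_{\ell\ell}(T,0)\). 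The matching conditions at \(T_{\mathrm{ann}}\) in \eqref{eq:mf_consistency_system} are automatic here: with \(b\equiv0\) all scenarios coincide, so \(\ell^0(T_{\mathrm{ann}})=\sum_k\bar\pi^k\ell^k(T_{\mathrm{ann}})\) holds trivially. Hence it suffices to show that any such \(Y\) vanishes identically, since then \(w=\ell(s)=0\).

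\textbf{Step 2 (rewrite the homogeneous system in control form).} Read off \eqref{eq:mf_A_matrix} and set
\[
\bar u:=-(2+\varpi\chi_u)^{-1}\Lambda^{-1}(P\bar X+I+\ell).
\]
The first two rows then give \(\dot{\bar X}=\bar u\) and \(\dot I=-RI+\varpi\Gamma\bar u\). For the third row, combine it with the Riccati equation \eqref{eq:mf_riccati} for \(w:=P\bar X+\ell\). A short computation should give
\[
\dot w=Q\bar X-\tfrac12P\Lambda^{-1}P\bar X+P\bar u+\dot\ell .
\]
The relation \(\Lambda(2+\varpi\chi_u)\bar u=-(w+I)\) should then collapse this to
\[
\dot w=Q\bar X-\bigl(I+\Lambda(1+\varpi\chi_u)\bar u\bigr),
\]
which is the adjoint equation of a representative agent facing its own mean flow. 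This identity, including the exact coefficient of the \(\varpi\chi_u\) term, is the computation I must check carefully.

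\textbf{Step 3 (energy identity, the main obstacle).} Differentiate \(w^\top\bar X\) and integrate over \([s,T]\), using \(\bar X(s)=0\) and \(w(T)=Q_T\bar X(T)\). Substituting \(w=-(I+\Lambda(2+\varpi\chi_u)\bar u)\) in the \(w^\top\dot{\bar X}\) term, I expect an identity of the form
\[
0=\bar X(T)^\top Q_T\bar X(T)+\int_s^T\bar X^\top Q\bar X\,dt+c_1\int_s^T\bar u^\top\Lambda\bar u\,dt+c_2\int_s^T\bar u^\top I\,dt+(\text{cross terms}),
\]
with \(c_1,c_2>0\). The delicate part is handling the cross terms \(\bar X^\top I\) and \(\bar X^\top\Lambda\bar u\). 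The term \(\bar X^\top\Lambda\bar u=\tfrac12\frac{d}{dt}(\bar X^\top\Lambda\bar X)\) integrates to a nonnegative boundary term, because \(\bar X(s)=0\).

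If \(\bar X^\top I\) does not cancel directly, I would instead pair \(w\) with \(\bar X\) weighted by a multiple of \(\bar u\). Alternatively, I would treat the problem as the first-order condition of the convex potential
\[
\int\bigl(\tfrac{2+\varpi\chi_u}{2}\bar u^\top\Lambda\bar u+\bar u^\top I+\tfrac12\bar X^\top Q\bar X\bigr)dt+\tfrac12\bar X(T)^\top Q_T\bar X(T).
\]
With \(I(s)=0\) and \(\dot I=-RI+\varpi\Gamma\bar u\), one has \(\int\bar u^\top I\,dt=\varpi\,\mathcal C(\bar u)\ge0\) by Assumption~\ref{ass:standing_model}. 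The potential is therefore strictly convex in \(\bar u\), and the zero-data stationary point must be \(\bar u=0\).

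\textbf{Step 4 (conclusion).} Once Step 3 yields \(\bar u\equiv0\), we get \(\bar X\equiv0\) and \(I\equiv0\). The definition of \(\bar u\) then forces \(\ell\equiv0\), so \(w=\ell(s)=0\). This proves both invertibility claims, and Lemma~\ref{lem:mf_state_transition_representation} then gives the unique affine equilibrium.
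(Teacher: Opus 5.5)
Your reduction in Step~1 and your conclusion in Step~4 match the paper's proof. So does your plan of an energy identity driven by \(\Lambda\succ0\), \(Q,Q_T\succeq0\) and \(\mathcal C\ge0\). The gap is in Step~2, which is where the proof actually lives: the claimed identity \(\dot w=Q\bar X-(I+\Lambda(1+\varpi\chi_u)\bar u)\) is false.

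Your intermediate expression already has the Riccati term with the wrong sign. By \eqref{eq:mf_riccati}, \(\dot w=-Q\bar X+\tfrac12P\Lambda^{-1}P\bar X+P\bar u+\dot\ell\). The third row of \eqref{eq:mf_A_matrix} then gives \(P\bar u+\dot\ell=-\tfrac12P\Lambda^{-1}P\bar X\). The \(I\)- and \(\ell\)-contributions cancel. The \(\bar X\)-entry \(-\tfrac{\varpi\chi_u}{2(2+\varpi\chi_u)}P\Lambda^{-1}P\) combines with \(-\tfrac{1}{2+\varpi\chi_u}P\Lambda^{-1}P\) from \(P\bar u\) to give exactly \(-\tfrac12P\Lambda^{-1}P\). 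Hence \(\dot w=-Q\bar X\) with \(w(T)=Q_T\bar X(T)\), and no \(I\) or \(\bar u\) terms appear.

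Had you used your version, the \(Q\)-integral would enter Step~3 with the wrong sign, and \(\int\bar X^\top I\,dt\) would have no sign, so the identity could not be made sign-definite. The cross terms you were worried about are artifacts of the incorrect adjoint equation. (You also use \(w\) both for the kernel vector and for the costate.)

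With the correct identity, Step~3 closes at once. Integrate \(\tfrac{d}{dt}(w^\top\bar X)=-\bar X^\top Q\bar X+w^\top\bar u\) over \([s,T]\), use \(\bar X(s)=0\), and substitute \(w=-(I+(2+\varpi\chi_u)\Lambda\bar u)\). This gives
\[
0=\bar X(T)^\top Q_T\bar X(T)+\int_s^T\bar X^\top Q\bar X\,dt+\int_s^T\bar u^\top I\,dt+(2+\varpi\chi_u)\int_s^T\bar u^\top\Lambda\bar u\,dt,
\]
where every term is nonnegative; the third equals \(\varpi\,\mathcal C(\bar u\mathbf 1_{[s,T]})\). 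Hence \(\bar u\equiv0\), which is exactly the paper's argument.

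Your fallback through a convex potential does not work as stated, because the homogeneous system is not the stationarity condition of that functional. Differentiating \(\int\bar u^\top I\,dt\) through \(I\) produces the additional anticipating term \(\varpi\int_t^T\Gamma^\top e^{-R^\top(\tau-t)}\bar u(\tau)\,d\tau\). That term is absent from the mean-field system because the representative player takes \(I\) as given. What does work is the monotonicity version, pairing \(w+I+(2+\varpi\chi_u)\Lambda\bar u=0\) with \(\bar u\), and that again requires \(\dot w=-Q\bar X\).
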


The proof is given in
Appendix~\ref{subsec:proof_mf_implementability}.

\subsection{Convergence of the Scaled Finite-Player Game}

We consider finite-player games with \(x_0^i=0\) for every player and
\(\iota_0=0\) and show that the equilibria constructed in
Section~\ref{sec:full_procedure} under the scaling \(\varpi_n=\varpi/n\) converge to
the mean-field equilibrium of Definition~\ref{def:mf_equilibrium},
characterized in Theorem~\ref{thm:mf_solution}.

Recall that an \(n\)-subscript identifies a quantity from the \(n\)-player
equilibrium, while the corresponding symbol without an \(n\)-subscript denotes
its mean-field counterpart. Thus \(\bar\pi_n\) and \(\bar\pi\) denote the
finite-player and mean-field average beliefs, respectively.
\(X_n^{i,k}\), \(I_n^k\), and \(u_n^{i,k}\) denote player \(i\)'s finite-game
inventory, the finite-game impact state, and player \(i\)'s equilibrium trading
rate in scenario \(k\). The population-average inventory is \(\bar X_n^k\). Define the
population-average equilibrium rate by
\(\bar u_n^k:=n^{-1}U_n^k\).
The corresponding scenario components of the mean-field solution
 are \(\bar X^k\), \(I^k\), and \(\bar u^k\),
while \(X^{\pi,k}\) and \(u^{\pi,k}\) denote the type-\(\pi\)
representative player's \(k\)-component inventory and rate.

\begin{theorem}[Convergence to the Mean-Field Equilibrium]
\label{thm:finite_to_mf_convergence}
Under Assumption~\ref{ass:standing_model}, set
\(n_0:=\max\{2,\lceil\varpi\chi_u\rceil\}\). For each \(n\ge n_0\), consider the
finite-player game with population weight
\(\varpi_n=\varpi/n\) and
fixed instantaneous-flow loadings \(\chi_u,\chi_g\),
with \(x_0^i=0\) for every player \(i\) and \(\iota_0=0\). Let the
finite-player equilibrium be the one constructed in
Section~\ref{sec:full_procedure}. Set
\[
\varepsilon_n:=\frac1n+\|\bar\pi_n-\bar\pi\|.
\]
There is a
constant \(C\), independent of \(n\) and of the finite belief profile, such
that, for every \(n\ge n_0\),
\begin{equation}\label{eq:finite_to_mf_common_estimate}
\max\left\{
\sup_{t\in[0,T_{\mathrm{ann}}]}
\|\bar u_n^{0}(t)-\bar u^0(t)\|,
\max_{k=1,\ldots,m}\sup_{t\in[T_{\mathrm{ann}},T]}
\|\bar u_n^{k}(t)-\bar u^k(t)\|
\right\}
\le
C\varepsilon_n.
\end{equation}
Moreover, the player-level rate estimate holds uniformly over the finite
population:
\begin{equation}\label{eq:finite_to_mf_control_estimate}
\max_{1\le i\le n}\;
\max\left\{
\begin{aligned}
&\sup_{t\in[0,T_{\mathrm{ann}}]}
\|u_n^{i,0}(t)-u^{\pi_n^i,0}(t)\|,\\
&\max_{k=1,\ldots,m}\sup_{t\in[T_{\mathrm{ann}},T]}
\|u_n^{i,k}(t)-u^{\pi_n^i,k}(t)\|
\end{aligned}
\right\}
\le
C\varepsilon_n.
\end{equation}
Estimates \eqref{eq:finite_to_mf_common_estimate}--\eqref{eq:finite_to_mf_control_estimate}
also hold for the corresponding inventories and impact states.
\end{theorem}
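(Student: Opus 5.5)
The plan is to compare the finite-player and mean-field equilibria through their linear characterizations rather than through the games themselves. Throughout we use the scaling \(\varpi_n=\varpi/n\), so \(\vartheta_n=\varpi\chi_u/n\). Then \(a_n=2-\varpi\chi_u/n\to2\) and \(d_n=2+\varpi\chi_u(n-1)/n\to2+\varpi\chi_u\). Moreover \(B_n=B_\infty+O(1/n)\) and \(B_n^-=B_\infty^-+O(1/n)\), where \(B_\infty=B_\infty^-+\operatorname{diag}(\operatorname{Id}_d,0,0)\)-type limits have the \(\Gamma\)-row \(\varpi\Gamma/n\) replaced by its aggregated version, and \(n\varpi_n=\varpi\). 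Every coefficient appearing in \eqref{eq:fixed_interval_riccati_P}, \eqref{eq:full_horizon_linear_coefficients} and in the state systems \eqref{eq:post_ann_common_subsystem}--\eqref{eq:pre_ann_delta_dynamics} is therefore a polynomial or rational function of \(1/n\) whose derivatives are bounded for \(n\ge n_0\).

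\textbf{Step 1: the Riccati solution.} By Theorem~\ref{thm:global_riccati_solvability}, \(H_n\) stays in the compact set \(\{-\underline H\preceq H\preceq\overline H\}\) uniformly in \(n\). On that set the right-hand side of \eqref{eq:fixed_interval_riccati_P} is Lipschitz in \(H\) with an \(n\)-independent constant, and it differs from its formal \(n=\infty\) limit by \(O(1/n)\). Gronwall then gives \(\|H_n-H_\infty\|_\infty\le C/n\), where \(H_\infty\) solves the limiting Riccati equation; the same bounds show that \(H_\infty\) exists on \([0,T]\). Write the feedback blocks as \([\Theta_n]_\delta,[\Theta_n]_{\bar x},[\Theta_n]_\iota\). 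I would then identify, by a block computation of the limiting equation, that \(\frac1{a_n}[\Theta_n(H_n)]_\delta\to\frac12P\). This is needed because the deviation feedback in \eqref{eq:post_ann_delta_dynamics} must match \eqref{eq:mf_post_deviation_dynamics}. The point is that the \(\delta\delta\)-block of the limiting equation reduces to \eqref{eq:mf_riccati} with the same terminal value \(Q_T\).

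\textbf{Step 2: common state as a perturbed mean-field boundary-value problem.} Following \eqref{eq:post_ann_common_feedback_signal}, define the finite-player affine coordinate
\[
\ell_n^k:=\mathcal K_n^k-P\bar X_n^k-I_n^k-\chi_g\Lambda g^k
\]
and the analogous \(\ell_n^0\) before announcement. With this choice,
\[
\bar u_n^k=-\tfrac1{d_n}\Lambda^{-1}\bigl(P\bar X_n^k+I_n^k+\ell_n^k+\chi_g\Lambda g^k\bigr),
\]
exactly mirroring \eqref{eq:mf_mu_closed_form}. Differentiate \(\ell_n^k\) using \eqref{eq:fixed_interval_riccati_P}, \eqref{eq:post_ann_common_r}, \eqref{eq:pre_ann_common_r_ode} and the common subsystems \eqref{eq:post_ann_common_subsystem}, \eqref{eq:pre_ann_common_subsystem}. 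The aim is to show that \(Y_n^k:=(\bar X_n^k,I_n^k,\ell_n^k)\) satisfies
\[
\dot Y_n^k=\mathcal A Y_n^k+b^k+\rho_n^k,\qquad \|\rho_n^k(t)\|\le \tfrac{C}{n}\|Y_n^k(t)\|+\tfrac Cn .
\]
The boundary conditions are \(\bar X_n^0(0)=I_n^0(0)=0\) and continuity at \(T_{\mathrm{ann}}\). At the terminal time, \(\ell_n^k(T)=O(1/n)\), because \(H_n(T)=H_T\) yields \([\Theta_n]_{\bar x}(T)=Q_T+O(1/n)=P(T)+O(1/n)\), \([\Theta_n]_\iota(T)=\operatorname{Id}+O(1/n)\), and \(r_n^k(T)=0\). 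At the announcement, \(\ell_n^0(T_{\mathrm{ann}})=\sum_k\bar\pi_n^k\ell_n^k(T_{\mathrm{ann}})\). Comparing with \eqref{eq:mf_consistency_system}, the only discrepancies are \(O(1/n)\) residuals and the replacement of \(\bar\pi\) by \(\bar\pi_n\) in the matching condition.

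The error \(Y_n^k-Y^k\) therefore solves the same linear boundary-value problem with data of size \(C\varepsilon_n\) plus \(\tfrac Cn\|Y_n\|\). Theorem~\ref{thm:mf_implementability} gives invertibility of \(\mathcal M\) and \(\mathcal E^{\mathrm{post}}_{\ell\ell}\). By Lemma~\ref{lem:mf_state_transition_representation}, the solution map of this problem is then bounded. Hence \(\sup\|Y_n^k-Y^k\|\le C\varepsilon_n+\tfrac Cn\sup\|Y_n\|\). An a priori bound \(\sup\|Y_n\|\le C\) follows from the same estimate for \(n\) large, and small \(n\ge n_0\) are absorbed into \(C\). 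This yields \eqref{eq:finite_to_mf_common_estimate} together with the bounds for \(\bar X_n^k\) and \(I_n^k\).

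\textbf{Step 3: individual players.} For the individual estimates I would run the same comparison on the deviation system. The finite-player deviation system consists of \eqref{eq:pre_ann_delta_dynamics}, \eqref{eq:post_ann_delta_dynamics} and the centered coefficient \eqref{eq:pre_ann_deviation_r_ode}, with \(\widetilde\ell_n^{i,0}:=\tfrac{2}{a_n}B_n^\top\widetilde r_n^{i,0}\) in the role of \(\widetilde\ell^{\pi,0}\). It is compared with \eqref{eq:mf_centered_affine_dynamics}, \eqref{eq:mf_pre_deviation_dynamics} and \eqref{eq:mf_post_deviation_dynamics} for \(\pi=\pi_n^i\). The terminal data differ by
\[
\sum_k(\pi_n^{i,k}-\bar\pi_n^k)\bigl(\ell_n^k-\ell^k\bigr)(T_{\mathrm{ann}})+\sum_k(\bar\pi^k-\bar\pi_n^k)\ell^k(T_{\mathrm{ann}}),
\]
which is \(O(\varepsilon_n)\) uniformly in \(i\) since \(\pi_n^i\in\Delta_m\). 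These are terminal/initial-value problems with no matching condition, so Gronwall suffices. Adding the common estimate via \(u=\bar u+(\text{deviation part})\), as in \eqref{eq:post_ann_feedback_decomposition} and \eqref{eq:mf_post_feedback_decomposition}, gives \eqref{eq:finite_to_mf_control_estimate} uniformly over \(i\), and integration gives the inventory bounds.

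The main obstacle is the algebra in Step 2. One must show that the block combination \(\ell_n^k\) closes, up to \(O(1/n)\), into exactly the third row of \(\mathcal A\) in \eqref{eq:mf_A_matrix}, including the term \(-\tfrac{\varpi\chi_u}{2}P\Lambda^{-1}P\). This requires the limiting \(\bar x\bar x\)-, \(\bar x\iota\)- and \(\iota\iota\)-blocks of \(H_\infty\) to combine with \(P\) correctly, not merely to exist. I would first compute the limit Riccati equation blockwise, and check that \([\Theta_\infty]_{\bar x}-P\) and \([\Theta_\infty]_\iota-\operatorname{Id}\) satisfy a linear ODE whose contribution is exactly what \(\ell\) absorbs. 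If that identification fails, the fallback is to keep these blocks as extra coordinates and prove convergence of an enlarged linear system. Uniqueness for the enlarged system would then again come from Theorem~\ref{thm:mf_implementability}.
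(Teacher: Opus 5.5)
Your outline follows the paper's proof step by step. It uses Riccati convergence by Gronwall from the uniform bounds of Theorem~\ref{thm:global_riccati_solvability}, with \([H]_{\delta\delta}=P\). Your auxiliary coordinate \(\ell_n^k=\mathcal K_n^k-P\bar X_n^k-I_n^k-\chi_g\Lambda g^k\) is exactly the one the paper uses. Stability of the boundary-value problem comes from invertibility of \(\mathcal M\) and \(\mathcal E^{\mathrm{post}}_{\ell\ell}\), and the centered coefficients are handled by the same terminal-data comparison.

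The step you flag as the main obstacle is where the proof actually lives, and you leave it open. It needs no information about the common blocks of the limiting \(H\). Your fallback would not be covered by Theorem~\ref{thm:mf_implementability}, which concerns only the \(3d\) system \eqref{eq:mf_y_system}.

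The closure is structural. Put \(w_n^k:=(0,\bar X_n^k,I_n^k)^\top\), \(p_n^k:=H_nw_n^k+r_n^k\) and \(B_\delta:=(\operatorname{Id}_d,0,0)^\top\). Since \(\Theta_n(H_n)w_n^k=I_n^k+B_n^\top H_nw_n^k\), you have \(\ell_n^k=B_n^\top p_n^k-P\bar X_n^k\). Use \eqref{eq:fixed_interval_riccati_P}, \eqref{eq:post_ann_common_r}, \eqref{eq:pre_ann_common_r_ode}, together with \(F_nw_n^k=-\frac{n-1}{d_n}w_n^k\), \(D_nw_n^k=\frac1{d_n}w_n^k\) and \(\dot w_n^k=A_zw_n^k-\frac1{d_n}(nB_n^-+B_\delta)\Lambda^{-1}\mathcal K_n^k+B_gg^k\). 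These give
\[
\dot p_n^k=-Q_zw_n^k-M_n(H_n)^\top p_n^k+\tfrac1{d_n}\bigl(2D_n^\top\Theta_n(H_n)^\top-H_nB_n\bigr)\Lambda^{-1}\mathcal K_n^k .
\]
Apply \(B_n^\top\); since \(E_xB_n=\operatorname{Id}_d\), you get \(B_n^\top Q_zw_n^k=Q\bar X_n^k\). Then subtract \(\frac{d}{dt}(P\bar X_n^k)\) using \eqref{eq:mf_riccati}. The \(Q\bar X_n^k\) terms cancel, and what remains is exactly the third rows of \(\mathcal A\) and \(b^k\) plus the residual
\[
\begin{aligned}
&-\bigl(M_n(H_n)B_n\bigr)^\top p_n^k+\tfrac1{d_n}\bigl[2(\Theta_n(H_n)D_nB_n)^\top-B_n^\top H_nB_n\bigr]\Lambda^{-1}\mathcal K_n^k\\
&\quad+\bigl(\tfrac1{d_n}-\tfrac1{2+\varpi\chi_u}\bigr)P\Lambda^{-1}\mathcal K_n^k .
\end{aligned}
\]
Both matrices \(M_n(H_n)B_n\) and \(2(\Theta_n(H_n)D_nB_n)^\top-B_n^\top H_nB_n\) are \(O(1/n)\) using only \(\sup_n\|H_n\|<\infty\). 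For the first, \(A_zB_n\) and \(B_n^-\) are \(O(1/n)\) while \(F_nB_n=O(1)\). For the second, \(D_nB_n=\frac12B_\delta+O(1/n)\), \(E_\iota B_\delta=0\) and \(B_n=B_\delta+O(1/n)\) reduce it to \(B_\delta^\top H_nB_\delta-B_\delta^\top H_nB_\delta+O(1/n)\).

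The convergence \(H_n\to H\) is needed only in the deviation step. There it gives \([\Theta_n(H_n)]_\delta\to P\) and \(B_n^\top H_nB_\delta\to P\) in the \(\widetilde\ell_n^{i,0}\)-equation. That projected equation is not closed either: it carries the forcing \(-(M_n(H_n)B_n)^\top\widetilde r_n^{i,0}\). This is \(O(1/n)\) once \(\widetilde r_n^{i,0}\) is bounded uniformly in \(i\), which follows from \(\sum_k|\pi_n^{i,k}-\bar\pi_n^k|\le2\).

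Second, your a priori bound is circular as written. The matching discrepancy \(\sum_k(\bar\pi_n^k-\bar\pi^k)\ell_n^k(T_{\mathrm{ann}})\) has size \(\|\bar\pi_n-\bar\pi\|\sup\|Y_n\|\), and \(\|\bar\pi_n-\bar\pi\|\) need not be small. Calling it \(C\varepsilon_n\) in \(\sup\|Y_n-Y\|\le C\varepsilon_n+\frac Cn\sup\|Y_n\|\) therefore presupposes the bound you then extract.

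Do it as the paper does. The constant part of your residual already requires \(\sup_n\|r_n^k\|\le C\). This holds because the coefficients are bounded by Theorem~\ref{thm:global_riccati_solvability} and the terminal data are zero or convex combinations. With that bound, the forward equations for \((\bar X_n^k,I_n^k)\) have bounded coefficients and forcing, and Gronwall gives \(\sup\|Y_n\|\le C\) directly.

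Also, \(\ell_n^k(T)=0\) exactly, not just \(O(1/n)\), since \(\Theta_n(H_T)=[\,Q_T\ \ Q_T\ \ \operatorname{Id}_d\,]\).
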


The proof is given in
Appendix~\ref{subsec:proof_finite_to_mf_convergence}.
We next show that lifting the mean-field policies to the finite-player game
yields an approximate Nash equilibrium.

\begin{theorem}[Mean-Field Strategies Form an Approximate Nash Equilibrium]
\label{thm:mf_approximate_nash}
Under Assumption~\ref{ass:standing_model}, adopt the additional hypotheses and
notation of Theorem~\ref{thm:finite_to_mf_convergence}.
For each \(n\ge n_0\), lift the mean-field feedbacks from
Theorem~\ref{thm:mf_solution} to the finite-player history space by defining
the Borel non-anticipative policies
\begin{equation}\label{eq:mf_lifted_finite_profile}
\begin{aligned}
&\widehat\Phi_n^i
(t,\mathbf{x}^i,\bar{\mathbf{x}},\boldsymbol{\iota},\mathbf{a},\mathbf{s}^0;\bar p,p^i)
\\
&\quad:=
\mathbf 1_{\{t<T_{\mathrm{ann}}\}}
\phi^{p^i,0}(t,\mathbf{x}^i(t))
+
\mathbf 1_{\{t\geq T_{\mathrm{ann}}\}}
\sum_{k=1}^m
\mathbf 1_{\{\mathbf{a}(t)=\xi^k\}}
\phi^{k}(t,\mathbf{x}^i(t)),
\end{aligned}
\end{equation}
for \((\mathbf{x}^i,\bar{\mathbf{x}},\boldsymbol{\iota},\mathbf{a},\mathbf{s}^0)
\in D([0,T];\mathbb R^d)^5\) and
\((\bar p,p^i)\in\Delta_m^2\), and write
\(\widehat\Phi_n
:=(\widehat\Phi_n^1,\ldots,\widehat\Phi_n^n)\).
Then \(\widehat\Phi_n\in\mathcal U_n\) and there is a constant \(C\),
independent of \(n\) and of the finite belief profile, such that
\begin{equation}\label{eq:mf_profile_approximate_nash}
J_n^i(
\widehat\Phi_n^i;\widehat\Phi_n^{-i}
)
\leq
J_n^i(
\Psi_n^i;\widehat\Phi_n^{-i}
)
+
C\varepsilon_n^2
\end{equation}
for every \(i=1,\ldots,n\) and every policy \(\Psi_n^i\) satisfying
\((\Psi_n^i,\widehat\Phi_n^{-i})\in\mathcal U_n\). Thus
\(\widehat\Phi_n\) is a \(C\varepsilon_n^2\)-Nash equilibrium of the finite-player
game.
\end{theorem}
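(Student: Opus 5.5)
Under the lifted profile $\widehat\Phi_n$ the other traders' rates depend only on their own inventories and the announcement. The plan is therefore to reduce trader $i$'s deviation problem to a deterministic LQ control problem in each scenario, and compare it with the mean-field best-response problem.

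\textbf{Step 1: admissibility and reduction.} Each $\phi^{p,k}(t,x)$ is affine in $x$ with continuous (piecewise continuous across the switching times of $g^k$) coefficients. Hence, for any admissible deviation $\Psi_n^i$, the nondeviating traders' inventories solve linear ODEs in their own state alone. They are therefore \emph{unaffected} by trader $i$'s deviation, and they coincide with the mean-field paths $X^{\pi_n^j,k}$, because those paths are generated by exactly the same feedbacks from zero initial inventory. In particular $\widehat\Phi_n\in\mathcal U_n$, since the induced paths are deterministic per scenario and square-integrable.

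By \eqref{eq:reformulated_objective}, trader $i$'s cost under a deviation is the mean-field-type objective with two changes. First, the aggregate of others, $\varpi_nU_n^{-i}=\frac{\varpi}{n}\sum_{j\ne i}u^{\pi_n^j,k}$, replaces $\varpi\bar u^k$. Second, the impact $I$ is driven by $\frac{\varpi}{n}(v+\sum_{j\ne i}u^{\pi_n^j,k})+g^k$, so it contains a self-impact term of order $1/n$. The deviation may also randomize through $S^0$. Since the cost is convex in the control (see Step 3), Jensen's inequality conditional on $\Xi$ lets me reduce to deterministic controls $v^k$ per scenario, with a common pre-announcement control.

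\textbf{Step 2: perturbation of the mean flow.} The empirical average $\frac1n\sum_j u^{\pi_n^j,k}$ differs from $\bar u^k$ by $O(\|\bar\pi_n-\bar\pi\|)$ uniformly in time. This uses Remark~\ref{rem:mf_deviation_policy}: the rates decompose as $\bar u^k$ plus deviation terms that are \emph{linear} in $\pi-\bar\pi$ through $\widetilde\ell^{\pi,0}$. Averaging over the finite population therefore yields a term linear in $\bar\pi_n-\bar\pi$, with coefficients bounded uniformly. Removing trader $i$'s own term and the $1/n$ self-impact contributes $O(1/n)$. Let $\eta_n^k$ denote the resulting perturbation of the coefficient functions (mean flow and impact path) in trader $i$'s problem. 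Then $\sup_t\|\eta_n^k\|\le C\varepsilon_n$, uniformly in $i$ and in the belief profile.

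\textbf{Step 3: quadratic gap.} Write $\mathcal J_n(v)$ for trader $i$'s finite-game cost as a functional of its deterministic control $v$, and $\mathcal J(v)$ for the type-$\pi_n^i$ mean-field cost with $\bar u$ held fixed. The control $\hat v=u^{\pi_n^i,\cdot}$ minimizes $\mathcal J$ by Theorem~\ref{thm:mf_solution}, so $D\mathcal J(\hat v)=0$. The difference $\mathcal J_n-\mathcal J$ consists of the following: terms linear in $v$ with coefficients $O(\varepsilon_n)$ (cross terms with $\eta_n$), quadratic self-impact terms $\frac{\varpi}{n}\mathcal C(v)\ge0$ (by Assumption~\ref{ass:standing_model}), and constants. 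Hence $D\mathcal J_n(\hat v)=O(\varepsilon_n)$ in the $L^2$ dual norm.

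Moreover, $\mathcal J_n$ is strongly convex with a modulus $c>0$ independent of $n$. The $\Lambda$ term gives $\int v^\top\Lambda v$; the inventory penalties are convex; and the self-impact $\frac{\varpi}{n}\mathcal C(v)$ is nonnegative. The cross term $\Lambda\chi_u\varpi_n U_n^{-i}$ does not involve $v$, since others do not react. Strong convexity then gives
\[
\mathcal J_n(\hat v)-\inf_v\mathcal J_n(v)\le \frac{1}{2c}\|D\mathcal J_n(\hat v)\|^2\le C\varepsilon_n^2 .
\]
Summing over scenarios with weights $\pi_n^{i,k}\le1$ preserves the bound.

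\textbf{Main obstacle.} The hardest step is the uniform $O(\varepsilon_n)$ bound on the gradient at $\hat v$. This requires the mean-field coefficients $P,\ell^k,I^k,\bar u^k,\widetilde\ell^{\pi,0}$ to be bounded uniformly over $\pi\in\Delta_m$; this follows from linearity in $\pi$ and Theorem~\ref{thm:mf_implementability}. It also requires handling carefully the $O(1/n)$ self-impact contribution to the first variation, which involves the adjoint of the kernel $e^{-R(t-s)}\Gamma$ applied to $\hat v$, a bounded operation. I would take the strong-convexity constant from $\Lambda$ alone, and use the nonnegativity of $\mathcal C$ to discard the self-impact quadratic term.
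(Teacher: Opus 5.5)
Your proposal is correct and follows essentially the paper's route: the nondeviating traders' rates do not react to the deviation, trader $i$'s cost splits exactly into the type-$\pi_n^i$ mean-field objective plus a term linear in the plan with $O(\varepsilon_n)$ coefficients and the nonnegative self-impact $\frac{\varpi}{n}\mathcal C$, and strong convexity from $\Lambda$ around the mean-field minimizer $\hat v$ gives the $C\varepsilon_n^2$ gap (the paper writes this as an explicit three-term lower bound and then completes the square). The differences are minor. The paper reduces to deterministic plans by conditioning pathwise on $S^0$ instead of using Jensen. It also works throughout in the weighted seminorm $\|v\|_i^2=\int_0^{T_{\mathrm{ann}}}\|v^0\|^2dt+\sum_k\pi_n^{i,k}\int_{T_{\mathrm{ann}}}^T\|v^k\|^2dt$ on the joint plan $(v^0,\ldots,v^m)$; your strong-convexity modulus and gradient bound must be taken in this norm to be uniform in the belief profile, since the shared $v^0$ makes the problem non-separable across scenarios.
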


This approximation concerns the game from the prescribed initial state and is
not subgame perfect. Although the lifted policies respond to each player's own
inventory, they ignore the current average inventory and impact state. Hence
they need not remain approximate best responses after arbitrary off-equilibrium
histories.
The proof is given in
Appendix~\ref{subsec:proof_mf_approximate_nash}.

\begin{remark}[Random and Deterministic Belief Populations]\label{rem:mf_iid_beliefs}
Theorems~\ref{thm:finite_to_mf_convergence} and
\ref{thm:mf_approximate_nash} apply to every fixed belief profile.
If the profile is generated by i.i.d. sampling from \(\Pi\), then
\(\|\bar\pi_n-\bar\pi\|=O_p(n^{-1/2})\). The uniform path approximation
error is therefore \(O_p(n^{-1/2})\), and the lifted profile is an
\(O_p(n^{-1})\)-Nash equilibrium.
Alternatively, deterministic belief profiles satisfying
\(\|\bar\pi_n-\bar\pi\|=O(n^{-1})\) yield a uniform path approximation
error of \(O(n^{-1})\) and an \(O(n^{-2})\)-Nash equilibrium.
\end{remark}

\section{Illustrations}\label{sec:experiments}

We illustrate the model without seeking a complete empirical fit to real-world
index rebalances as the goal here is to showcase the qualitative properties
of the model. 

\subsection{Illustrative Setup}\label{sec:experiments_setup}

\paragraph{Market Parameters.}

We consider an index reconstitution involving \(d=5\) assets, with event times
\(T_{\mathrm{ann}}=10\), \(T_{\mathrm{TWAP}}=19\), \(T_{\mathrm{impl}}=20\), and \(T=22.\)
Time is measured in trading days, with one trading day equal to \(6.5\) trading
hours. Share quantities are
measured in millions of shares, and trading rates in millions of shares per
trading day. We assume that all coefficient matrices are diagonal:
\[
\Lambda=\operatorname{diag}(\lambda_1,\ldots,\lambda_d),
\qquad
\Gamma=\operatorname{diag}(\gamma_1,\ldots,\gamma_d),
\qquad
R=\operatorname{diag}(\rho_1,\ldots,\rho_d),
\]
\[
Q=\operatorname{diag}(q_1,\ldots,q_d),
\qquad
Q_T=\operatorname{diag}(q_{T,1},\ldots,q_{T,d}).
\]

We parameterize \(\Lambda\) and \(\Gamma\) using the estimates reported by
\cite[Table~A.7]{cartea_jaimungal_2016}. The instantaneous-impact coefficient
\(\hat k_a\) in \cite{cartea_jaimungal_2016} corresponds to our \(\lambda_a\), but because they measure trading
rates in shares per hour whereas we measure them in millions of shares per
trading day, we set \(\lambda_a=10^6\hat k_a/6.5\). Their permanent-impact
coefficient \(\hat b_a\) corresponds to our \(\gamma_a\), but because they
measure share quantities in individual shares whereas we measure them in
millions of shares, we set \(\gamma_a=10^6\hat b_a\). Their permanent-impact
model corresponds to \(R=0\) in our parameterization. To obtain transient
impact, we instead set \(\rho_a=2\log 2\) for every asset, giving the impact
state a half-life of half a trading day.

To choose \(q_a\), we suppress the impact state and strategic interactions
and consider an auxiliary scalar infinite-horizon problem with
\(\dot x_a=u_a\) and running cost
\(\lambda_a u_a^2+\frac{1}{2}q_a x_a^2\). Its optimal
feedback is \(u_a=-\sqrt{q_a/(2\lambda_a)}\,x_a\). We set
\(q_a/\lambda_a=0.25\,\mathrm{day}^{-2}\), which gives an inventory half-life
of approximately \(1.96\) trading days in the auxiliary problem. The
finite-horizon illustrations separately include a terminal penalty. We set
\(q_{T,a}/\lambda_a=13{,}000\,\mathrm{day}^{-1}\) to make terminal inventories
negligible.

We set \(f_a\) to \(100\) days of average daily volume (ADV).

Table~\ref{tab:experiment_asset_parameters} summarizes the data and
coefficients for the five selected stocks: Intel (INTC), Cisco (CSCO), eBay
(EBAY), Dow (DOW), and Costco (COST), with \(s_a^0\) denoting asset \(a\)'s
initial price.

\begin{table}[H]
\centering
\caption{Asset-Level Model Inputs and Coefficients}
\label{tab:experiment_asset_parameters}
\scriptsize
\resizebox{\textwidth}{!}{%
\begin{tabular}{clrrrrrrr}
\toprule
Asset & Ticker & \(s_a^0\) & \(\mathrm{ADV}_a\)
& \(\lambda_a\) & \(\gamma_a\) & \(\rho_a\) & \(q_a\) & \(q_{T,a}\) \\
\midrule
1 & INTC &  30.26 & 5.489503 & 0.067692 &  1.09 & 1.386294 & 0.016923 &    880 \\
2 & CSCO &  24.21 & 5.598870 & 0.037538 &  0.63 & 1.386294 & 0.009385 &    488 \\
3 & EBAY &  53.41 & 2.506048 & 0.384615 &  4.50 & 1.386294 & 0.096154 &  5,000 \\
4 & DOW  &  49.48 & 1.453332 & 0.463077 &  6.90 & 1.386294 & 0.115769 &  6,020 \\
5 & COST & 121.37 & 0.540008 & 1.630769 & 25.30 & 1.386294 & 0.407692 & 21,200 \\
\bottomrule
\end{tabular}}
\par\vspace{0.5\baselineskip}
\begin{minipage}{0.98\textwidth}
\footnotesize
\raggedright
\emph{Units.} The initial price \(s_a^0\) is measured in USD per share,
\(\mathrm{ADV}_a\) in millions of shares per trading day. The units of
\(\lambda_a\), \(\gamma_a\), \(\rho_a\),
\(q_a\), and \(q_{T,a}\) are, respectively,
\((\mathrm{USD/share})/(\mathrm{million\ shares/day})\),
\((\mathrm{USD/share})/(\mathrm{million\ shares})\),
\(\mathrm{day}^{-1}\),
\((\mathrm{USD/share})/(\mathrm{million\ shares}\cdot\mathrm{day})\), and
\((\mathrm{USD/share})/(\mathrm{million\ shares})\).
\par
\end{minipage}
\end{table}

We set \(x_0^i=0\) for every opportunist, \(\iota_0=0\), and
\(\chi_u=\chi_g=1\).
Further, we take \(S^0(t)\equiv s^0\), thereby suppressing fluctuations that would
affect realized execution prices and wealth but not the equilibrium strategies
or impact state.
Section~\ref{sec:experiments_finite_player} studies the
atomic-player model under the fixed-market-size scaling \(\varpi_n=1\).
Section~\ref{sec:experiments_mean_field} compares the finite-player model under
\(\varpi_n=30/n\) with its mean-field limit.

\paragraph{Membership Scenarios and Beliefs.}
The index contains exactly two of the five assets. Assets \(1\) and \(2\) are
the pre-rebalance constituents, and every two-asset subset is a possible
post-announcement scenario so that \(m=\binom{5}{2}=10\).

We model the population distribution of beliefs by
\(\Pi=\operatorname{Dirichlet}(\zeta\bar\pi)\) with
concentration parameter \(\zeta=50\). To construct the mean
\(\bar\pi\), we assign asset \(a\) the score \(\varsigma_a=6-a\) and compute
the unnormalized weight of pair \(\{a,a'\}\) as
\(\varsigma_a\varsigma_{a'}\). Normalizing the ten pair weights then gives
\(\bar\pi\). For a finite game with \(n\) opportunists, we draw their beliefs
independently from \(\Pi\).

We set \(h_a^0=0.07f_a\) for \(a=1,2\), corresponding to seven days of
ADV in each pre-rebalance constituent and USD~\(2.112\) billion in assets
under management.

Table~\ref{tab:experiment_scenarios} reports the
corresponding values of \(\bar\pi\). Table~\ref{tab:experiment_expected_flow}
reports the expected indexer flow under \(\bar\pi\), defined by
\(\bar g(t):=\sum_{k=1}^m\bar\pi^k g^k(t)\).

\begin{table}[H]
\centering
\begin{minipage}[t]{0.58\textwidth}
\centering
\captionsetup{justification=centering}
\caption{Post-Announcement Scenarios and Population-Average Belief}
\label{tab:experiment_scenarios}
\small
\begin{tabular}{crr}
\toprule
Scenario & Constituents & \(\bar\pi^k\) \\
\midrule
1  & \(\{1,2\}\) & 23.53\% \\
2  & \(\{1,3\}\) & 17.65\% \\
3  & \(\{1,4\}\) & 11.76\% \\
4  & \(\{1,5\}\) &  5.88\% \\
5  & \(\{2,3\}\) & 14.12\% \\
6  & \(\{2,4\}\) &  9.41\% \\
7  & \(\{2,5\}\) &  4.71\% \\
8  & \(\{3,4\}\) &  7.06\% \\
9  & \(\{3,5\}\) &  3.53\% \\
10 & \(\{4,5\}\) &  2.35\% \\
\bottomrule
\end{tabular}
\end{minipage}\hfill
\begin{minipage}[t]{0.36\textwidth}
\centering
\captionsetup{justification=centering}
\caption{Population-Average Expected Indexer Flow}
\label{tab:experiment_expected_flow}
\small
\begin{tabular}{cr}
\toprule
Asset & \([\bar g(t)]_a\) \\
\midrule
1 & \(-13.892459\) \\
2 & \(-15.644426\) \\
3 & \( 8.638579\) \\
4 & \( 4.487279\) \\
5 & \( 0.953451\) \\
\bottomrule
\end{tabular}
\par\vspace{0.5\baselineskip}
\footnotesize
\raggedright
\end{minipage}
\end{table}

\subsection{Atomic-Player Model}\label{sec:experiments_finite_player}

\begingroup
\renewcommand{\floatpagefraction}{0.85}

We begin with the finite-player model under the fixed-market-size interpretation
\(\varpi_n=1\), with \(n=20\) opportunists whose beliefs
\(\pi_n^i\), \(i=1,\ldots,n\), are i.i.d. samples from \(\Pi\).

\paragraph{Inventory.}

For scenario \(k\), let
\(L_n^k(t):=\sum_{i=1}^n X_n^{i,k}(t)\) denote the aggregate inventory
vector.
Figure~\ref{fig:experiments_aggregate_inventory} shows the aggregate inventory
\(L_n^k(t)\) while
Figure~\ref{fig:experiments_aggregate_trading_rates} shows the corresponding
aggregate trading rates, with separately scaled insets highlighting the short
bursts at \(t=0\) and \(T_{\mathrm{ann}}\). The paths in both figures depend on the sampled beliefs only through
\(\bar\pi_n\). Before the announcement, opportunists in aggregate have short positions in assets \(1\)
and \(2\) and long positions in assets \(3\), \(4\),
and \(5\), reflecting the expected indexer flow under \(\bar\pi_n\). After the
announcement, the aggregate inventory adjusts scenario by scenario to the realized indexer order.
Notably, for every asset added to or removed from the index, the aggregate inventory in that asset
passes through zero during the TWAP window, temporarily taking the opposite
sign, before being flattened. That is, during the indexer trading window the opportunits
sell more than what they held at the start of that time window.
The trading bursts at \(t=0\) and \(T_{\mathrm{ann}}\) in
Figure~\ref{fig:experiments_aggregate_trading_rates} reflect competition to
build positions before other opportunists' trades worsen entry prices.
The scale of \(\Lambda\) modulates the speed of entry by penalizing high
trading rates. Our relatively small choice of \(\Lambda\) permits pronounced
bursts. A larger \(\Lambda\) would make the bursts less pronounced and
inventory accumulation more gradual.

\begin{figure}[!htbp]
\centering
\includegraphics[width=\textwidth]{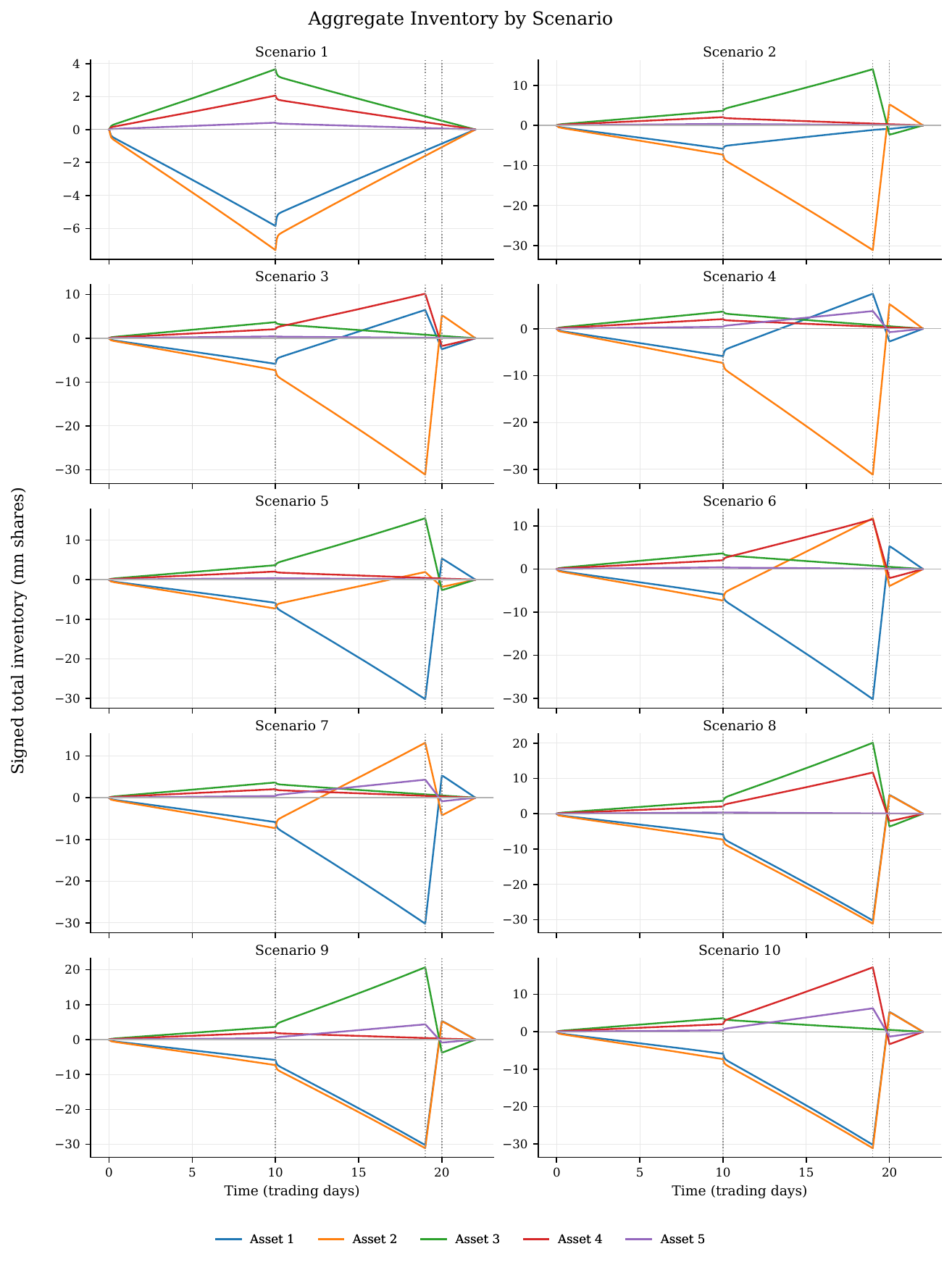}
\caption{Aggregate inventory paths for \(n=20\). The vertical dotted lines mark \(T_{\mathrm{ann}}\), \(T_{\mathrm{TWAP}}\), and \(T_{\mathrm{impl}}\).}
\label{fig:experiments_aggregate_inventory}
\end{figure}

\begin{figure}[!htbp]
\centering
\includegraphics[width=\textwidth]{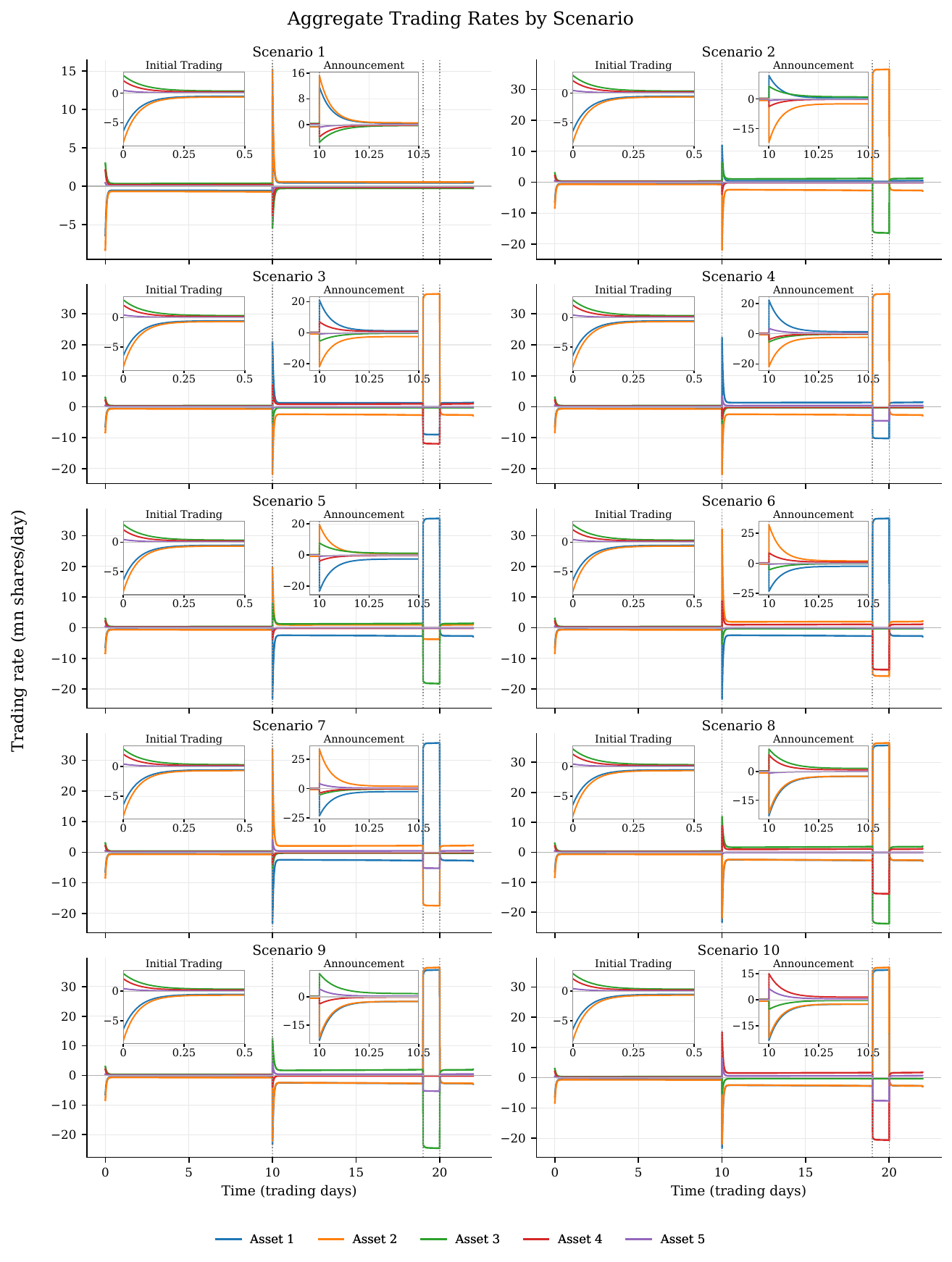}
\caption{Aggregate opportunist trading rates \(U_n^k(t)\) for \(n=20\). The vertical dotted lines mark \(T_{\mathrm{ann}}\), \(T_{\mathrm{TWAP}}\), and \(T_{\mathrm{impl}}\).}
\label{fig:experiments_aggregate_trading_rates}
\end{figure}

Figure~\ref{fig:experiments_inventory_timing} plots the following three
quantities against various model parameters.
The first quantity is absolute inventory at \(T_{\mathrm{ann}}\),
averaged under the population-average belief \(\bar\pi\),
\(\mathsf{L}_n^{\mathrm{ann}}:=\sum_{k=1}^m\bar\pi^k\lVert L_n^k(T_{\mathrm{ann}})\rVert_1\). The second is
the corresponding quantity at \(T_{\mathrm{TWAP}}\),
\(\mathsf{L}_n^{\mathrm{TWAP}}:=\sum_{k=1}^m\bar\pi^k\lVert L_n^k(T_{\mathrm{TWAP}})\rVert_1\). The third,
\(\mathsf{L}_n^{\mathrm{chg}}:=\sum_{k=1}^m\bar\pi^k\lVert
L_n^k(T_{\mathrm{impl}})-\allowbreak L_n^k(T_{\mathrm{TWAP}})\rVert_1\), is the
absolute inventory change during the indexer's TWAP, averaged under
\(\bar\pi\).
It measures net aggregate inventory change during the TWAP, not total trading
volume.

In panel (a), the three quantities are plotted against the number of
opportunists \(n\). Aggregate inventory generally increases with \(n\),
with the largest increases at small \(n\). With multiple
opportunists,
\(\mathsf{L}_n^{\mathrm{chg}}>\mathsf{L}_n^{\mathrm{TWAP}}\), reflecting the
inventory sign reversals during the TWAP in
Figure~\ref{fig:experiments_aggregate_inventory}. Notably, in the case
of a single opportunistic trader there is no crossing through zero: During the
indexer implementation window the opportunist sells less than what he held 
at the start of the window. 

Panels (b)--(d) fix \(n=20\) and the sampled beliefs. Each panel varies one
coefficient while keeping the others at
their baseline values. In panels (b) and (d), the horizontal axis gives the
factor multiplying the corresponding baseline matrix.
In panel (b), increasing the scale of \(\Lambda\) reduces all three inventory
measures only slightly: Higher execution costs make inventory accumulation
before the TWAP more expensive, but also improve execution prices for trades
opposing the indexer as long as aggregate opportunist trading is greater than
aggregate opportunist trading. The slight decline in panel (b) shows
that these effects almost perfectly offset with the first one being slightly stronger.
In panel (c), increasing resilience reduces all three measures,
particularly inventory at \(T_{\mathrm{ann}}\): Faster decay weakens the price
impact carried from early opportunist trades into the TWAP, favoring later
accumulation, while quadratic execution costs limit how quickly positions can
be built thus prohibiting very large positions being built very shortly before \(T_{\mathrm{TWAP}}\).
In panel (d), the inventory measures generally increase with the
scale of \(\Gamma\), with smaller changes at larger scales: Stronger anticipated
price movements generated by the indexer's trades encourage opportunists to
build larger positions ahead of the TWAP pushing up the price at which they
can unload inventory on the indexer.

\begin{figure}[!htbp]
\centering
\includegraphics[width=\textwidth]{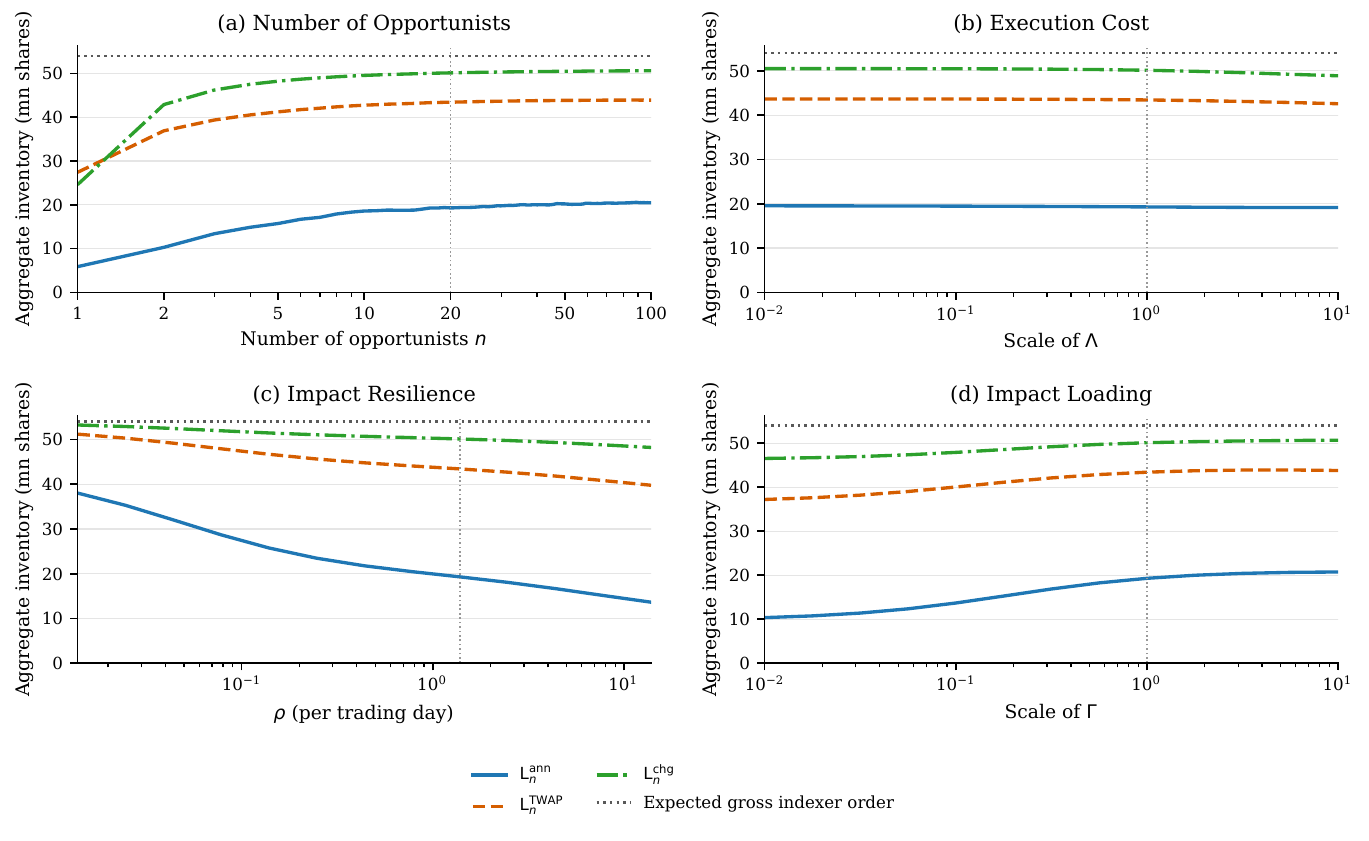}
\caption{Expected absolute aggregate inventory at \(T_{\mathrm{ann}}\) and \(T_{\mathrm{TWAP}}\), denoted by \(\mathsf{L}_n^{\mathrm{ann}}\) and \(\mathsf{L}_n^{\mathrm{TWAP}}\), and the absolute net inventory change \(\mathsf{L}_n^{\mathrm{chg}}\) during the indexer's TWAP. Panels (b)--(d) fix \(n=20\). The vertical dotted lines mark baseline values, and the horizontal dotted lines show the expected gross indexer order.}
\label{fig:experiments_inventory_timing}
\end{figure}

Figure~\ref{fig:experiments_player_asset2} shows the asset \(2\) inventories of
the players whose expected indexer flows are lowest, closest to the expected
flow under \(\bar\pi_n\), and highest for that asset. Even the player with
the highest expected indexer flow expects negative flow for asset \(2\).
However, because this expectation is less negative than the expected flow under
\(\bar\pi_n\), the player carries a positive inventory into the
announcement. Once the scenario is announced, the players' pre-announcement beliefs no longer
enter the continuation problem directly, but their effect persists through the
different inventories carried into the announcement. The continuation paths
converge toward zero from those different positions.

\begin{figure}[!htbp]
\centering
\includegraphics[width=\textwidth]{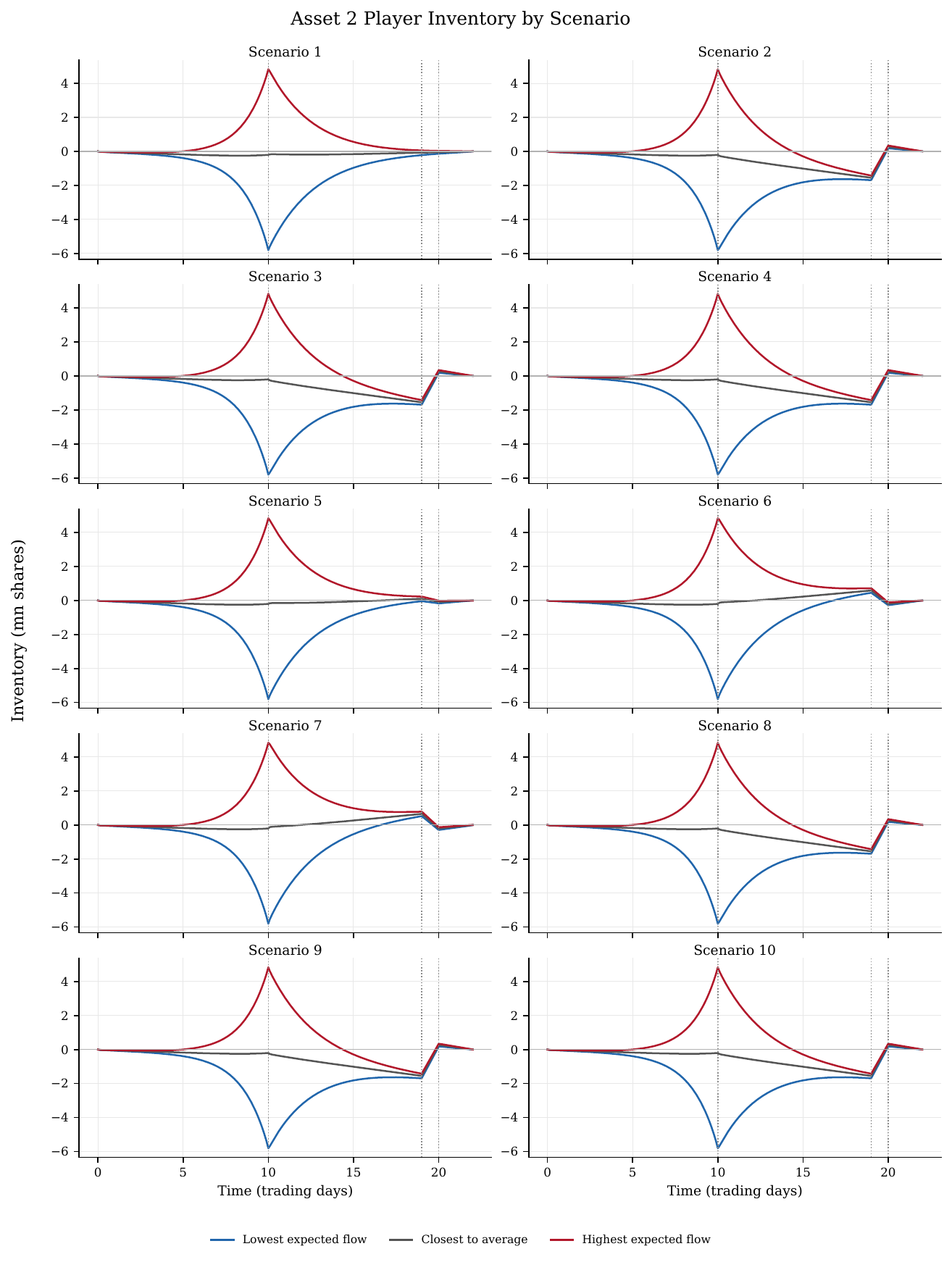}
\caption{Individual inventory paths for asset \(2\).}
\label{fig:experiments_player_asset2}
\end{figure}

\paragraph{Trading Volume.}
The differences in individual inventories can generate trading that cancels
in the aggregate. To isolate the effect of belief dispersion, we replace the
sampled beliefs by
\[
\pi_n^i(\delta)=(1-\delta)\bar\pi_n+\delta\pi_n^i,
\qquad \delta\in[0,1].
\]
The original illustration corresponds to \(\delta=1\), and \(\delta=0\)
gives identical beliefs. Writing \(u_n^{i,k}(t;\delta)\) for the resulting
full-horizon control path in scenario \(k\), including the common
pre-announcement interval, define expected cumulative gross volume and
cumulative absolute aggregate flow by
\begin{align}
G_{n,\delta}(t)
&:=\sum_{k=1}^m\bar\pi_n^k\int_0^t
\sum_{i=1}^n\lVert u_n^{i,k}(s;\delta)\rVert_1\,ds,
\label{eq:experiments_gross_volume}\\
N_{n,\delta}(t)
&:=\sum_{k=1}^m\bar\pi_n^k\int_0^t
\left\lVert\sum_{i=1}^n u_n^{i,k}(s;\delta)\right\rVert_1\,ds.
\label{eq:experiments_net_volume}
\end{align}
The half-difference \(\mathcal H_{n,\delta}(t):=
\frac{1}{2}[G_{n,\delta}(t)-N_{n,\delta}(t)]\) measures contemporaneous
purchases and sales that can be matched within the opportunist population,
counting each matched share once. Figure~\ref{fig:experiments_volume_time}
plots both cumulative quantities.
The shaded gap counts both sides of the offsetting trades.
Belief dispersion generates substantial additional gross trading volume in
this illustration.

\begin{figure}[!htbp]
\centering
\includegraphics[width=\textwidth]{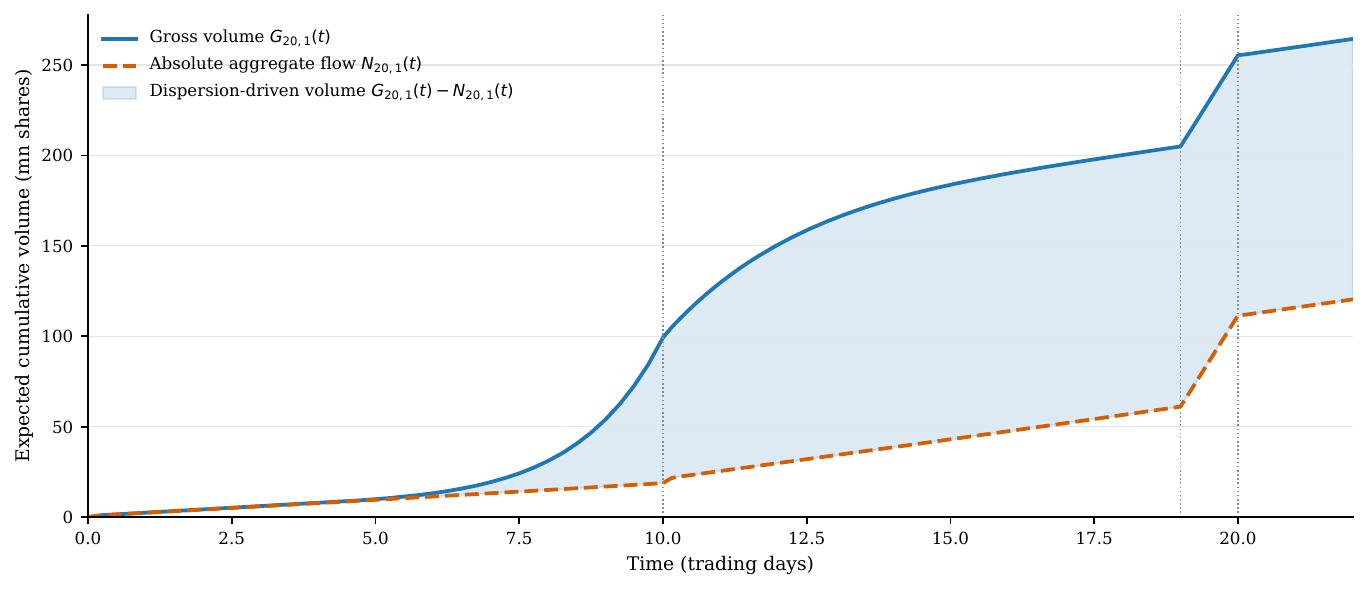}
\caption{Expected cumulative gross opportunist volume and cumulative absolute aggregate flow for \(n=20\). The shaded gap is \(2\mathcal H_{20,1}(t)\).}
\label{fig:experiments_volume_time}
\end{figure}

Varying \(\delta\) preserves the average belief \(\bar\pi_n\) and leaves aggregate
signed trading rates and hence \(L_n^k\) and \(N_{n,\delta}\) unchanged, but
individual rates can move in opposite directions, as shown in
Figure~\ref{fig:experiments_player_asset2}. In the left panel of 
Figure~\ref{fig:experiments_volume_dispersion} we plot total gross volume
\(G_{n,\delta}(T)\) as a function of the dispersion \(
\mathcal D_{n,\delta}
:=\left(\frac{1}{n}\sum_{i=1}^n
\lVert\pi_n^i(\delta)-\bar\pi_n\rVert_2^2\right)^{1/2}
\) for various values of \(n\).
The right panel shows the increase in
gross volume relative to identical beliefs, normalized by the number of opportunists:
\([G_{n,\delta}(T)-G_{n,0}(T)]/n\). Both panels plot against the
root-mean-square belief dispersion \(\mathcal D_{n,\delta}\).
Gross volume increases with belief dispersion,
even though aggregate inventory is unchanged.
The right panel also shows that, at the same positive belief dispersion,
the additional gross volume per opportunist is larger for higher \(n\).
The reason for this is the following: At larger \(n\), each opportunist's
share of aggregate trading is smaller.
Thus, belief differences more readily turn buying less than the average rate into selling,
or vice versa for selling. The resulting opposing trades increase gross volume per 
opportunist while canceling in aggregate.

\begin{figure}[!htbp]
\centering
\includegraphics[width=\textwidth]{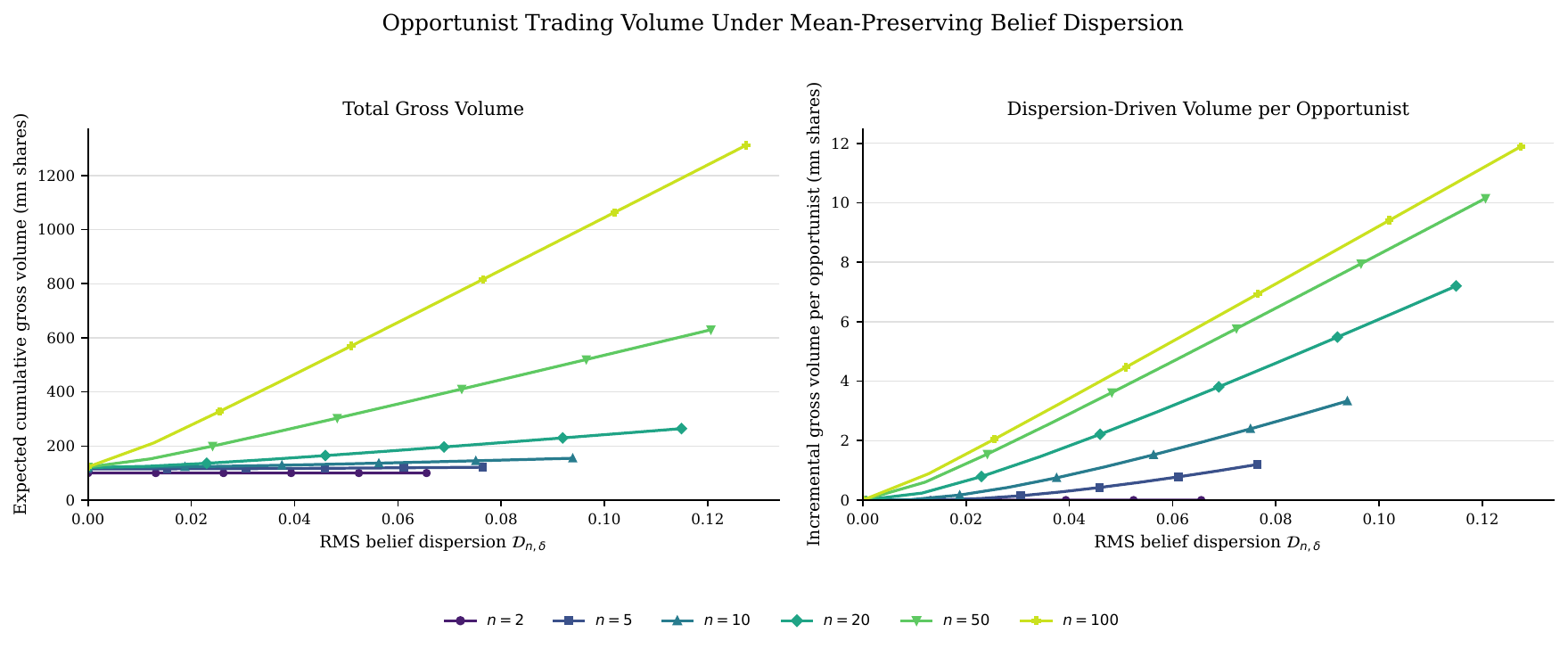}
\caption{Total gross volume and its dispersion-driven increase per opportunist under mean-preserving changes in beliefs. Scenario expectations use the fixed sample-average belief \(\bar\pi_n\) for each \(n\).}
\label{fig:experiments_volume_dispersion}
\end{figure}

\paragraph{Impact.}

Figure~\ref{fig:experiments_impact} shows that the impact state follows the same
qualitative pattern as the aggregate inventory: its pre-announcement path
reflects the expected indexer order under \(\bar\pi_n\), while each
post-announcement path adjusts to the realized indexer order.
Although the impact state is continuous, it adjusts rapidly when trading begins
at \(t=0\) and when the scenario is revealed at \(T_{\mathrm{ann}}\), and more
modestly when the indexer's TWAP starts at \(T_{\mathrm{TWAP}}\) and ends at
\(T_{\mathrm{impl}}\). The rapid initial and announcement-time impact
adjustments reflect the trading bursts in
Figure~\ref{fig:experiments_aggregate_trading_rates}.
If \(\Lambda\) were larger, the trading bursts would be less pronounced and
impact would adjust more gradually.
Once a burst subsides and before the TWAP begins, continuing opportunist
trading largely offsets impact decay in \eqref{eq:transient_impact}, allowing
impact to remain nearly constant while inventories continue to change.

\begin{figure}[!htbp]
\centering
\includegraphics[width=\textwidth]{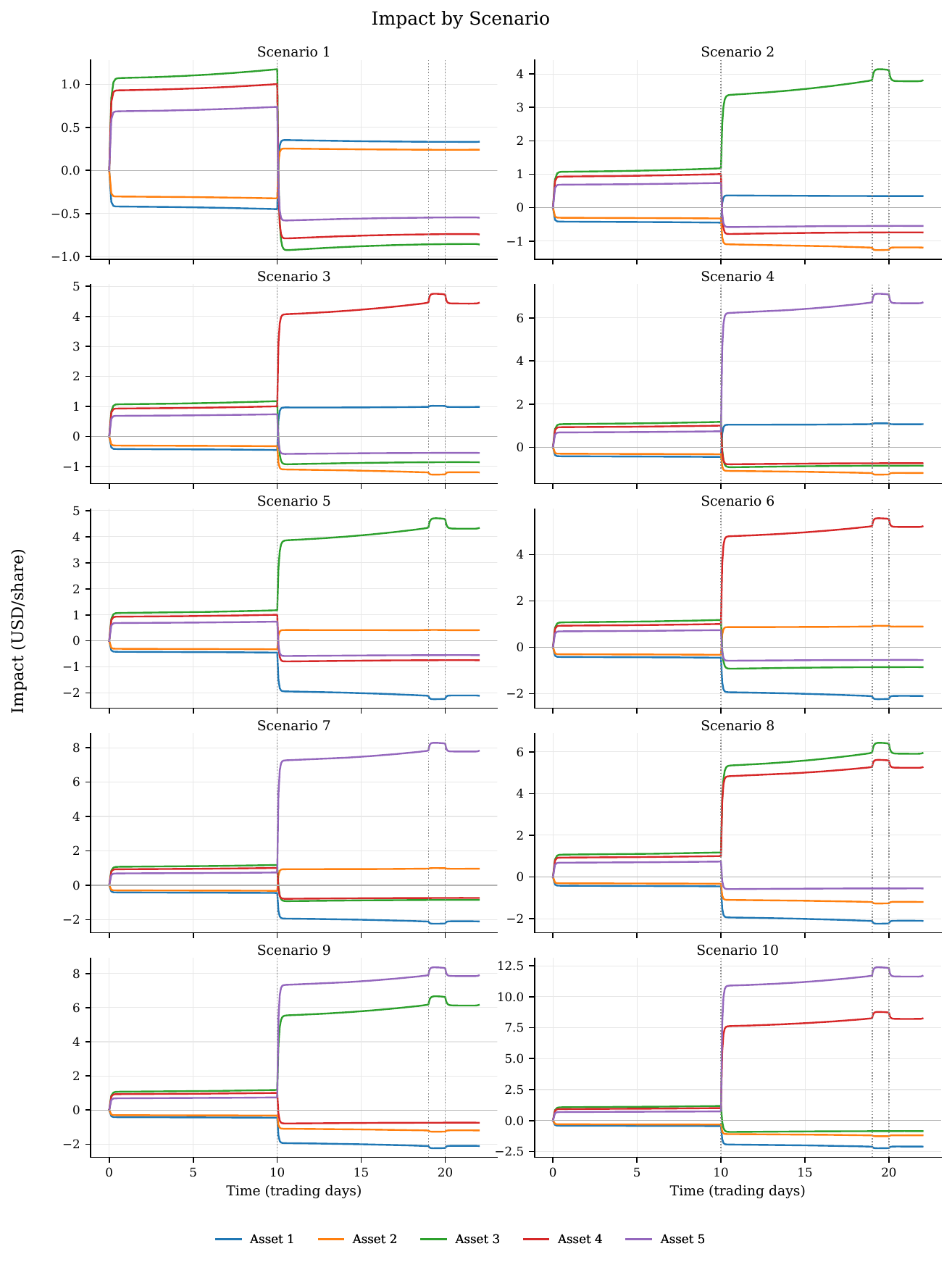}
\caption{Impact-state paths for \(n=20\).}
\label{fig:experiments_impact}
\end{figure}

Figure~\ref{fig:experiments_impact_sensitivity} reports the expected
absolute impact magnitude \(\sum_{k=1}^m\bar\pi^k\lVert I_n^k(t)\rVert_1\)
at \(T_{\mathrm{ann}}\), \(T_{\mathrm{TWAP}}\), \(T_{\mathrm{impl}}\), and
\(T\) for the same four sweeps as the inventory grid.
Over the plotted range in panel (a), the four magnitudes level off.
At larger \(n\), the impact magnitudes at \(T_{\mathrm{TWAP}}\),
\(T_{\mathrm{impl}}\), and \(T\) are nearly equal and remain higher than at
\(T_{\mathrm{ann}}\).
As \(n\) increases, announcement impact generally rises, while the 
implementation-time and TWAP-start impacts shrink and converge to the same value.
In panel (b), impact at \(T_{\mathrm{impl}}\) increases with \(\Lambda\),
while the other three magnitudes change little. The indexer's schedule is
fixed, so the increase mainly reflects weaker offsetting by opportunists
during the TWAP. This is consistent with the smaller TWAP inventory change
in Figure~\ref{fig:experiments_inventory_timing}.
In panels (c) and (d), all four magnitudes decrease with \(\rho\) and
increase with the scale of \(\Gamma\), consistent with faster impact decay
and stronger price displacement from a given net trading rate, respectively.

\begin{figure}[!htbp]
\centering
\includegraphics[width=\textwidth]{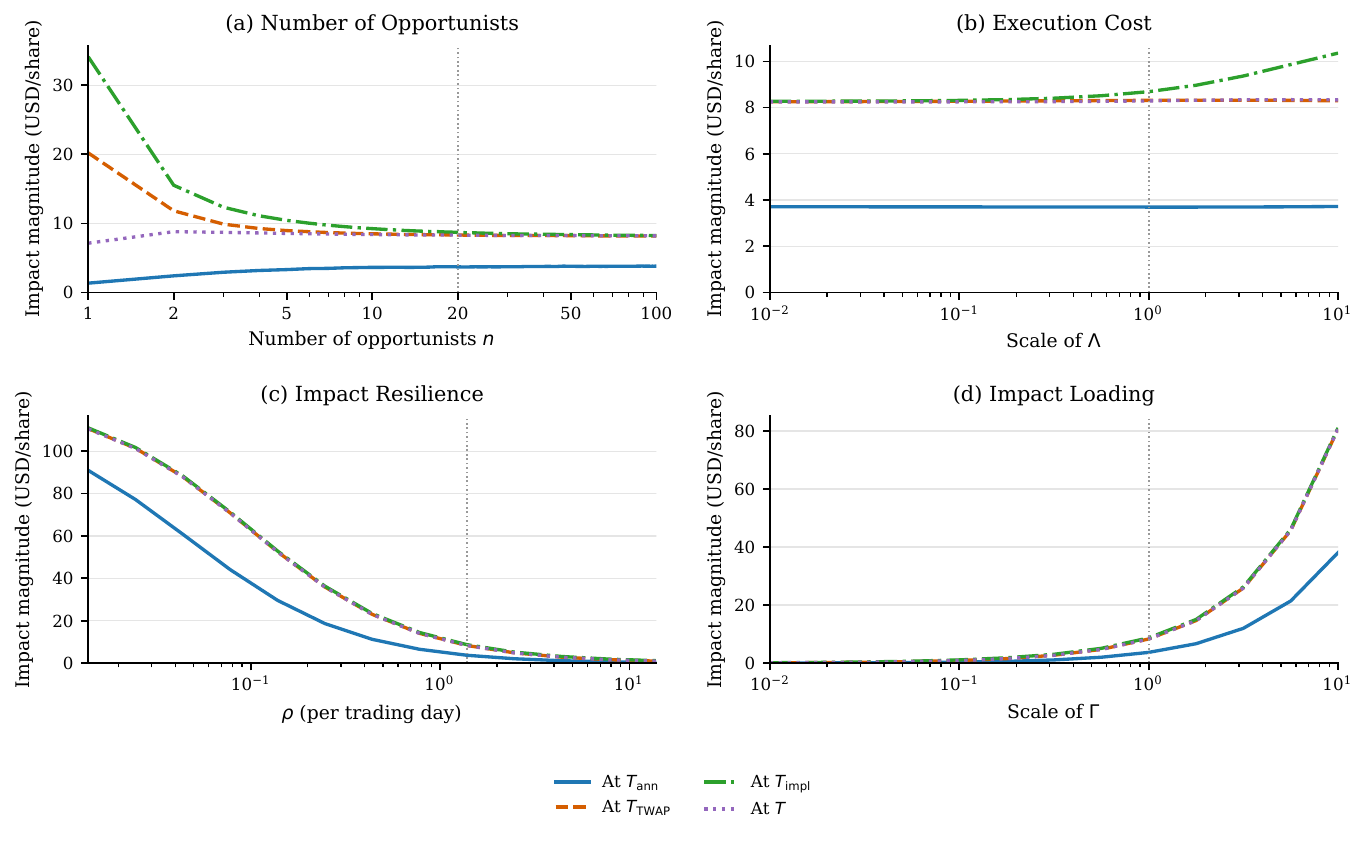}
\caption{Expected absolute impact magnitude at the announcement, TWAP start, implementation time, and terminal time. Panels (b)--(d) fix \(n=20\). The vertical dotted lines mark baseline values.}
\label{fig:experiments_impact_sensitivity}
\end{figure}

\paragraph{Indexer Cost.}

For scenario \(k\), we define the indexer's implementation shortfall relative
to the fundamental price by
\begin{equation}\label{eq:experiments_indexer_cost}
C_{n,\mathrm{idx}}^k
:=
\int_{T_{\mathrm{TWAP}}}^{T_{\mathrm{impl}}}
(g^k(t))^\top
\left[
I_n^k(t)+\Lambda\bigl(U_n^k(t)+g^k(t)\bigr)
\right]dt.
\end{equation}

Write \(\bar C_{n,\mathrm{idx}}:=\sum_{k=1}^m\bar\pi^k
C_{n,\mathrm{idx}}^k\), and let \(\bar C_{0,\mathrm{idx}}\) denote the
corresponding cost without opportunists. For the same initial portfolio and
fixed rebalance orders, \(\bar C_{0,\mathrm{idx}}-\bar C_{n,\mathrm{idx}}\)
equals the increase in expected indexer wealth at \(T_{\mathrm{impl}}\)
relative to the no-opportunist case, including cash and valuing stocks at
the fundamental price \(s^0\).

For the common resilience \(R=\rho\operatorname{Id}_d\), define
\(w_\rho(r):=\int_0^r e^{-\rho s}\,ds\). Splitting the opportunists' impact
at \(T_{\mathrm{TWAP}}\) in \eqref{eq:transient_impact} gives
\begin{equation}\label{eq:experiments_savings_decomposition}
\bar C_{0,\mathrm{idx}}-\bar C_{n,\mathrm{idx}}
=-\mathsf P_n+\mathsf B_n+\mathsf E_n,
\end{equation}
where
\begin{align}
\mathsf P_n
&:=\frac{w_\rho(\Delta_{\mathrm{TWAP}})}{\Delta_{\mathrm{TWAP}}}
\sum_{k=1}^m\bar\pi^k(\Delta h^k)^\top I_n^k(T_{\mathrm{TWAP}}),
\label{eq:experiments_savings_pre_twap}\\
\mathsf B_n
&:=-\frac{1}{\Delta_{\mathrm{TWAP}}}
\sum_{k=1}^m\bar\pi^k
\int_{T_{\mathrm{TWAP}}}^{T_{\mathrm{impl}}}
w_\rho(T_{\mathrm{impl}}-t)(\Delta h^k)^\top\Gamma U_n^k(t)\,dt,
\label{eq:experiments_savings_during_twap}\\
\mathsf E_n
&:=-\frac{1}{\Delta_{\mathrm{TWAP}}}
\sum_{k=1}^m\bar\pi^k(\Delta h^k)^\top\Lambda
\bigl[L_n^k(T_{\mathrm{impl}})-L_n^k(T_{\mathrm{TWAP}})\bigr].
\label{eq:experiments_savings_execution}
\end{align}
All three quantities are positive in the plotted sweeps.
The cost \(\mathsf P_n\) comes from price displacement caused by opportunists
before the TWAP that persists during the indexer's execution.
The benefit \(\mathsf B_n\) is the reduction in the indexer's impact cost
from opposing trades during the TWAP, and \(\mathsf E_n\) is the saving in the
instantaneous execution-cost term. For a stock the indexer buys, earlier
opportunist purchases contribute to \(\mathsf P_n\), while opportunist sales
during the TWAP contribute to \(\mathsf B_n\) and \(\mathsf E_n\).
The trading directions reverse for an indexer sale.

Figure~\ref{fig:experiments_indexer_cost}
plots total expected cost savings and the signed contributions
\(-\mathsf P_n\), \(\mathsf B_n\), and \(\mathsf E_n\) in
\eqref{eq:experiments_savings_decomposition} against the number of
opportunists, the scale of \(\Lambda\), resilience \(\rho\), and the scale
of \(\Gamma\).
Each comparison uses the no-opportunist benchmark at the same parameter values.
In panel (a), savings rise mainly at small \(n\), as additional opportunists
offset more of the indexer's trading and leave less adverse impact at the
TWAP start. In panel (b), larger \(\Lambda\) increases the execution-cost
saving \(\mathsf E_n\) despite the slight reduction in opposing trading.
The increase in \(\mathsf E_n\) outweighs the small decline in net
transient-impact savings \(\mathsf B_n-\mathsf P_n\).
In panel (c), savings peak at intermediate resilience, where much of the
adverse impact from earlier trading has decayed but opposing trades during
the TWAP still provide substantial benefits. In contrast, at small
resilience, impact is close to permanent and opportunists can be harmful
when \(\Lambda\) is small relative to \(\Gamma\), as in our
parameterization. Their net effect on the indexer's cost through the
mid-price is then adverse, \(\mathsf B_n-\mathsf P_n<0\), and the
execution-cost saving \(\mathsf E_n\) from opposing the indexer's trades
is too small to compensate. When \(\Lambda\) is larger (not shown here), the execution-cost
saving can outweigh the adverse net price effect, making opportunists
beneficial even at low resilience.
This dependence on execution costs relative to permanent impact is
consistent with \citet[Proposition~2]{PegoraroSammonShim2026}, whose
discrete-time model establishes cost reductions for an index fund trading
entirely at implementation when instantaneous execution costs are
sufficiently large relative to permanent impact.
At the other extreme, large resilience reduces the TWAP impact benefit
\(\mathsf B_n\), as the indexer's own price impact also decays rapidly.
In panel (d), larger \(\Gamma\) amplifies the benefit \(\mathsf B_n\) of
opposing trades during the TWAP more than the adverse impact cost
\(\mathsf P_n\) of earlier trading.

\begin{figure}[!htbp]
\centering
\includegraphics[width=\textwidth]{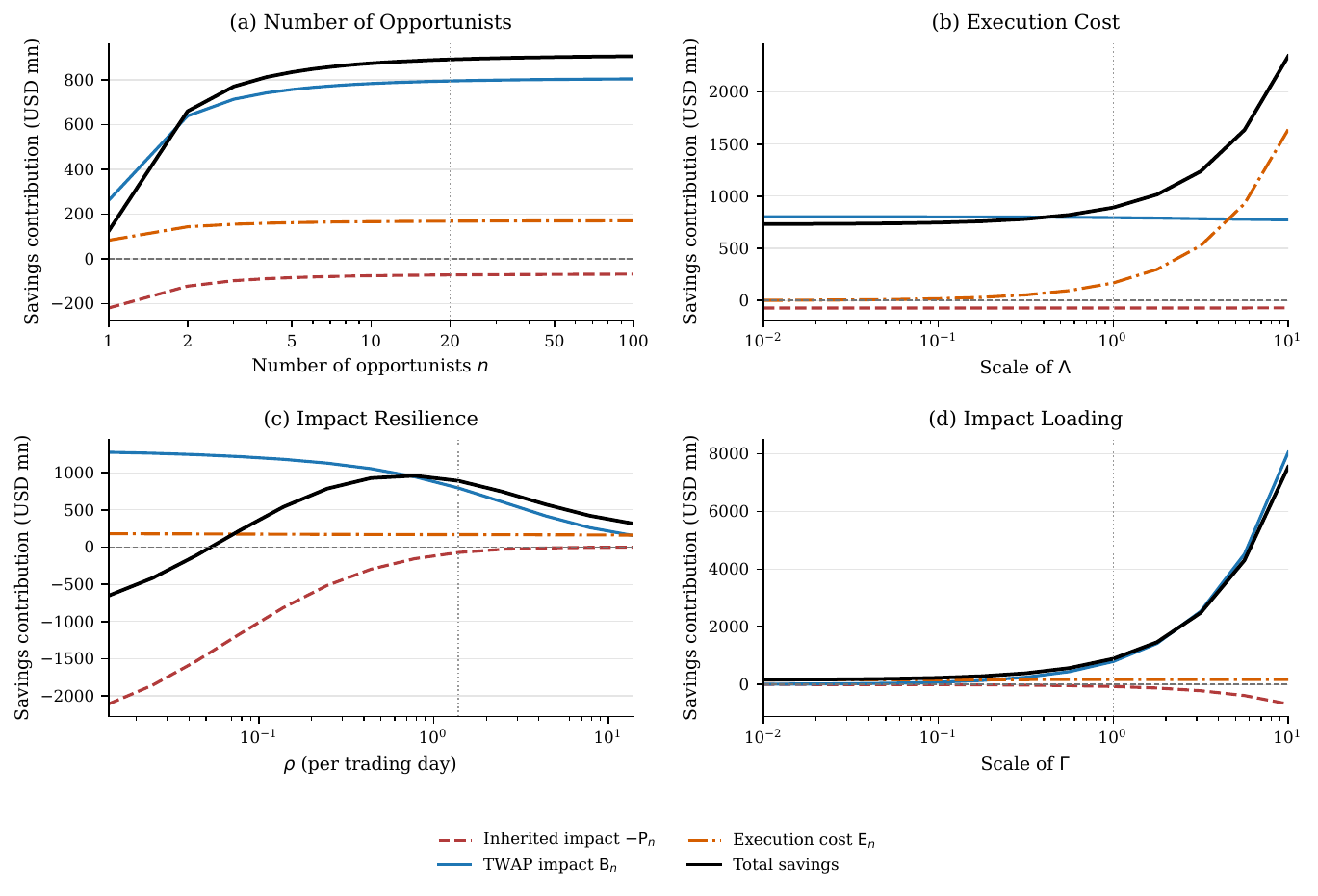}
\caption{Expected indexer cost savings (black) and the signed components of \eqref{eq:experiments_savings_decomposition}, in USD million. Panels (b)--(d) fix \(n=20\). Horizontal dashed lines mark zero, and vertical dotted lines mark baseline parameter values.}
\label{fig:experiments_indexer_cost}
\end{figure}

To examine timing sensitivity, we fix \(n=20\), retain the baseline
announcement time and TWAP duration, and vary
\(T_{\mathrm{impl}}-T_{\mathrm{ann}}\) and \(T-T_{\mathrm{impl}}\).
Figure~\ref{fig:experiments_indexer_cost_timing} reports
\(\bar C_{0,\mathrm{idx}}-\bar C_{20,\mathrm{idx}}\) in USD million.
Savings increase in both timing dimensions. The marginal benefit of extending either
interval is largest when the other interval is short. More time before
 \(T_{\mathrm{impl}}\) lets the opportunists adjust further ahead
of the indexer's TWAP. More time after \(T_{\mathrm{impl}}\) lets them take
larger positions against the indexer's flow during the TWAP, carry those
positions through \(T_{\mathrm{impl}}\), and unwind them only after the indexer
has finished. In both cases, aggregate opportunist flow offsets more of the
indexer order during the TWAP.

\begin{figure}[!htbp]
\centering
\includegraphics[width=0.97\textwidth]{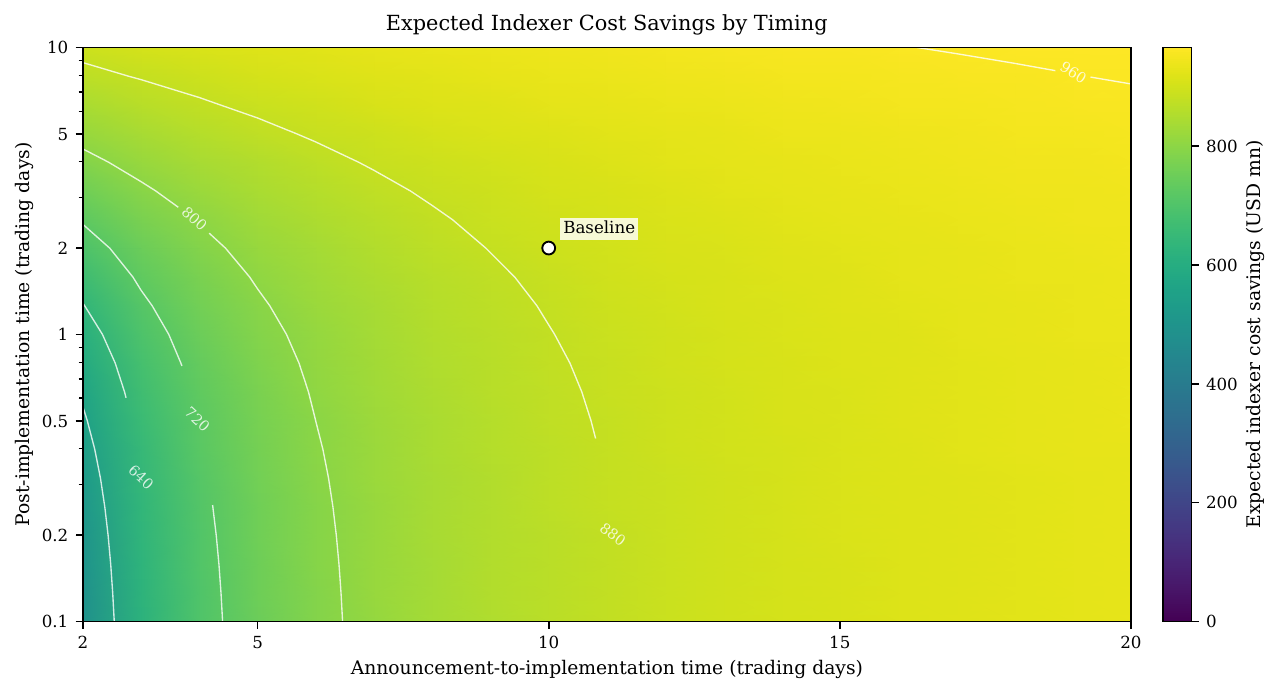}
\caption{Timing sensitivity of expected indexer cost savings for \(n=20\), in USD million. The marker denotes baseline timing.}
\label{fig:experiments_indexer_cost_timing}
\end{figure}

\paragraph{Player Wealth.}

An opportunist's terminal trading wealth is terminal cash plus terminal inventory
marked at the fundamental price \(s^0\). We average this quantity across
players and post-announcement scenarios under \(\bar\pi\) to obtain mean
wealth \(\bar W_n\). The running and terminal
inventory penalties affect equilibrium strategies but are not deducted from
this financial-wealth measure.
\begin{samepage}
Under \(\varpi_n=\chi_u=\chi_g=1\) and \(x_0^i=0\),
\eqref{eq:execution_price} gives
\begin{equation}\label{eq:experiments_player_wealth_decomposition}
n\bar W_n=\mathsf G_n+\mathsf E_n-\mathsf K_n,
\end{equation}
where
\begin{align}
\mathsf G_n
&:=-\sum_{k=1}^m\bar\pi^k\int_0^T
(U_n^k(t))^\top I_n^k(t)\,dt,
\label{eq:experiments_player_wealth_midprice}\\
\mathsf K_n
&:=\sum_{k=1}^m\bar\pi^k\int_0^T
(U_n^k(t))^\top\Lambda U_n^k(t)\,dt.
\label{eq:experiments_player_wealth_execution}
\end{align}
\end{samepage}
The midprice contribution \(\mathsf G_n\) includes gains from trading
against the indexer's impact and costs from the opportunists' own impact.
The term \(\mathsf K_n\) is the instantaneous execution cost generated by
aggregate opportunist trading over the entire horizon. Subtracting it from
the benefit \(\mathsf E_n\) of opposing the indexer's flow gives the net
execution contribution.

Figure~\ref{fig:experiments_player_wealth} plots total wealth \(n\bar W_n\)
and its signed contributions \(\mathsf G_n\), \(\mathsf E_n\), and
\(-\mathsf K_n\) for the four parameter sweeps.
Panel (a) also shows mean wealth \(\bar W_n\). Both mean and total wealth
decline with \(n\): more opportunists offset a larger fraction of the fixed
indexer orders, reducing the price displacement from which the group profits.
In panel (b), total wealth increases with the scale of \(\Lambda\).
Larger \(\Lambda\) makes position building before \(T_{\mathrm{TWAP}}\) and
unwinding after \(T_{\mathrm{impl}}\) more costly, but improves execution
prices for sales during the TWAP when the aggregate opportunist selling
rate is smaller than the indexer's buying rate. The latter benefit
prevails, increasing the net execution contribution.
In panel (c), total wealth decreases with resilience. Opportunists carry
smaller positions into the TWAP, as shown in
Figure~\ref{fig:experiments_inventory_timing}(c), and faster impact decay
reduces the favorable price displacements from which they profit, lowering
\(\mathsf G_n\).
In panel (d), total wealth increases with the scale of \(\Gamma\).
For a stock bought by the indexer, opportunists' purchases before the TWAP
generate a larger price displacement, raising the prices at which they
subsequently sell. During the TWAP, larger \(\Gamma\) benefits opportunists
as long as the indexer's buying rate exceeds their aggregate selling rate.
Although, higher prices also make covering short positions after \(T_{\mathrm{impl}}\)
more costly, the gains from selling during the TWAP at a higher price outweigh the
additional acquisition and unwinding costs.

\begin{figure}[!htbp]
\centering
\includegraphics[width=\textwidth]{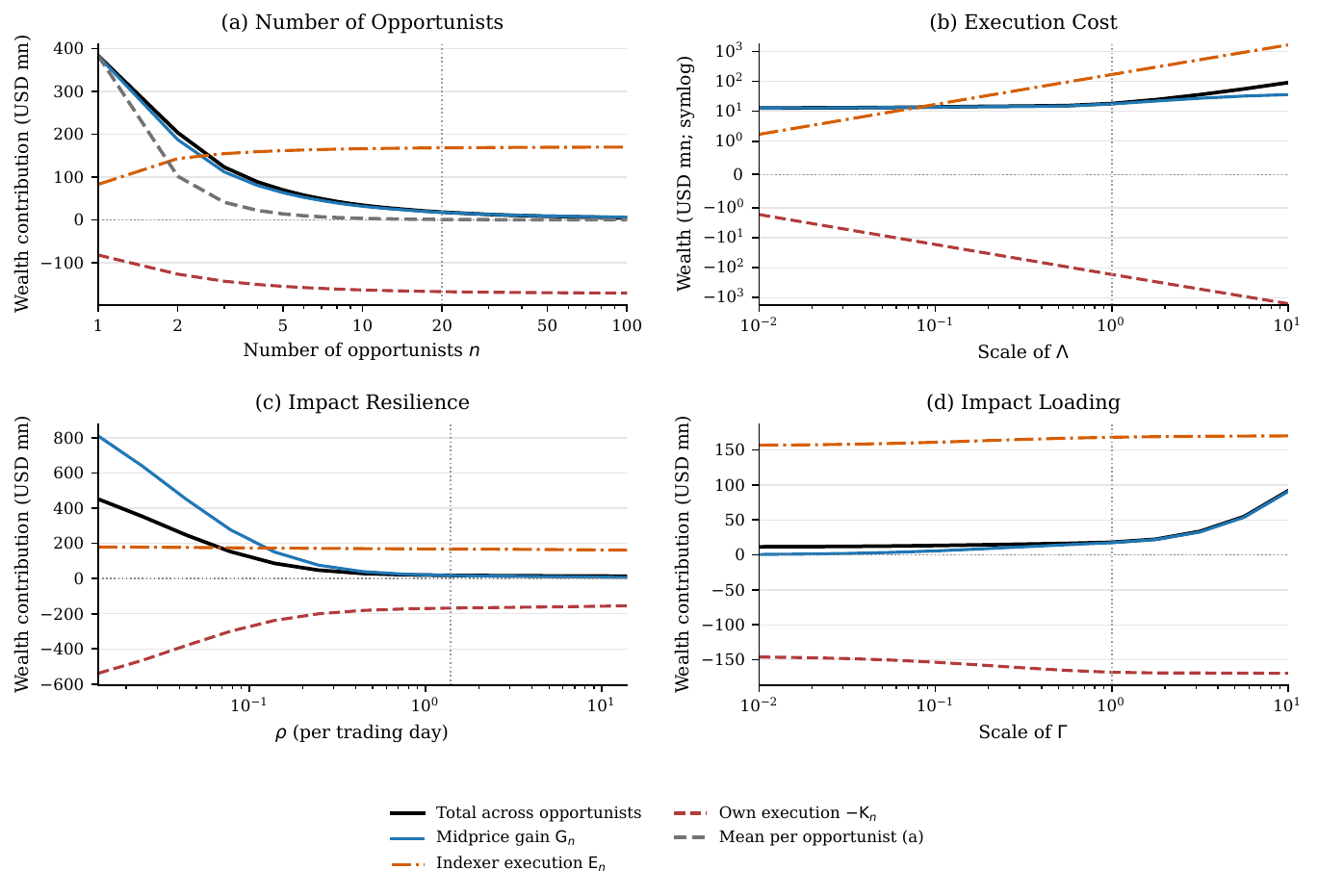}
\caption{Total terminal trading wealth and its signed contributions \(\mathsf G_n\), \(\mathsf E_n\), and \(-\mathsf K_n\), in USD million. Panel (a) also shows mean wealth per opportunist. Panels (b)--(d) fix \(n=20\). Vertical axes are linear except in panel (b), which uses a symmetric logarithmic scale with a linear region from \(-1\) to \(1\) USD million. The vertical dotted lines mark baseline values.}
\label{fig:experiments_player_wealth}
\end{figure}

\FloatBarrier
\endgroup

\subsection{Mean-Field Limit}\label{sec:experiments_mean_field}

We compare the finite-player model with \(\varpi_n=30/n\)
and its mean-field limit. The restriction \(\chi_u\varpi_n\leq1\) in
Assumption~\ref{ass:standing_model} requires \(n\geq30\) because \(\chi_u=1\).
The game with \(n\) players uses the first \(n\)
draws from a single sequence of independent beliefs with law \(\Pi\).
Figures~\ref{fig:experiments_mf_inventory_scenario2} and
\ref{fig:experiments_mf_impact_scenario2} compare the finite-player and
mean-field inventory and impact paths in Scenario \(2\) for
\(n=30,60,150,300\). The paths are close to their mean-field limits at
the larger population sizes. The midprice reacts
almost immediately when trading starts and when the membership scenario
is announced. The legend reports
\(\bar\pi_n^k\) for each finite-player curve and \(\bar\pi^k\) for the
mean-field curve.
\begin{figure}[H]
\centering
\includegraphics[width=\textwidth]{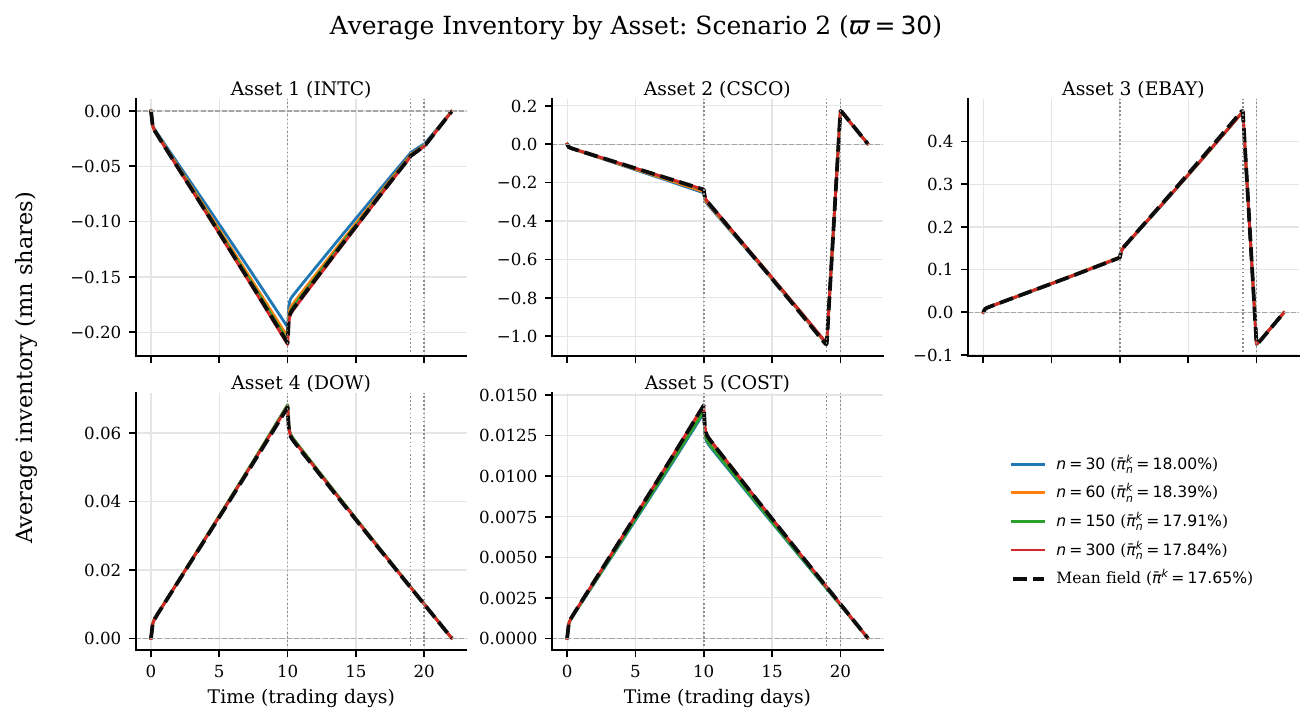}
\caption{Mean-field convergence of sample-average inventory in
Scenario \(2\) under \(\varpi_n=30/n\).}
\label{fig:experiments_mf_inventory_scenario2}
\end{figure}

\begin{figure}[H]
\centering
\includegraphics[width=\textwidth]{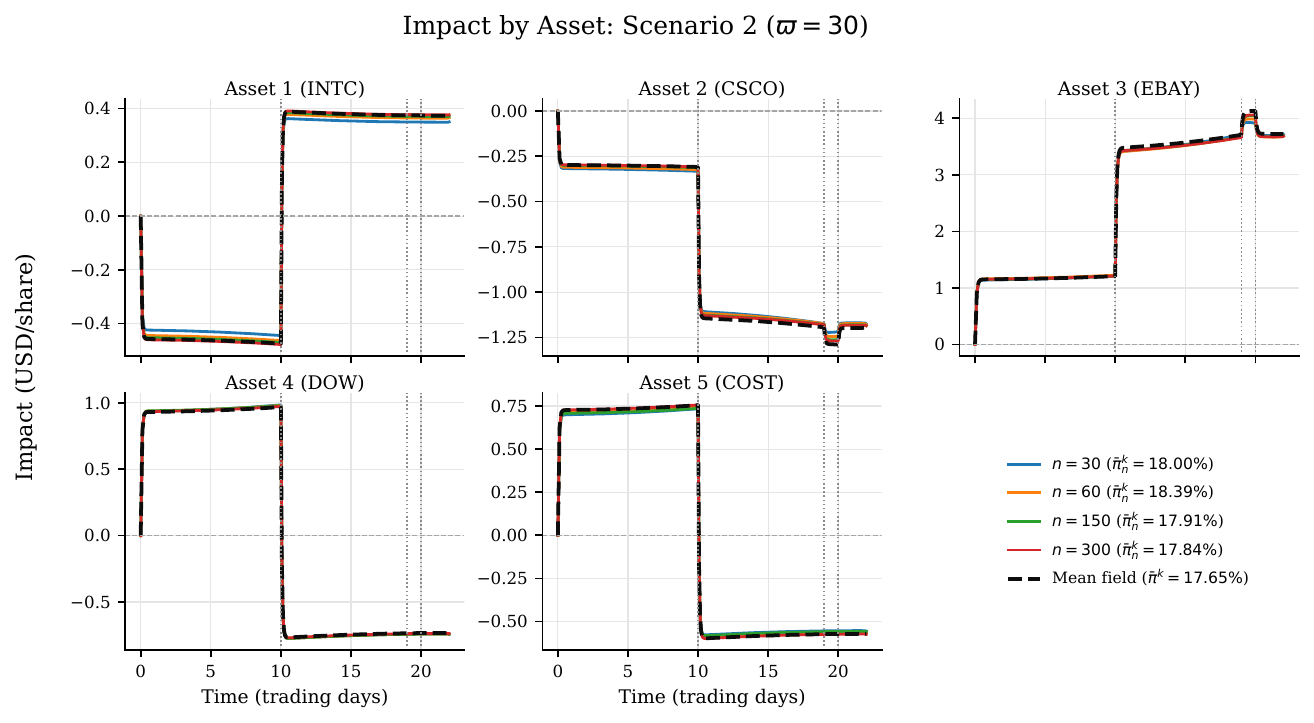}
\caption{Mean-field convergence of the impact state in Scenario \(2\)
under \(\varpi_n=30/n\).}
\label{fig:experiments_mf_impact_scenario2}
\end{figure}

\FloatBarrier

\section{Conclusion}\label{sec:conclusion}

We have constructed a subgame-perfect feedback equilibrium for strategic
trading before and after an index announcement, with global solvability under
the model's standing assumptions. The equilibrium separates the effects of
average membership beliefs on aggregate inventories and price impact from
the effects of disagreement on individual positions. The mean-field limit
provides tractable policies with quantitative finite-player error bounds.
Our numerical illustrations show how competition and impact decay determine
whether anticipatory trading raises or lowers the indexer's execution costs.

Future work could allow traders to update their beliefs about index membership
dynamically as new information arrives before the announcement. Another
extension is to let traders infer other players' beliefs from observed prices
and trading activity, including learning the population-average belief that
is taken as known here.

\FloatBarrier

\section*{Acknowledgements}
J. Blanchet gratefully acknowledges support from ONR under award N00014-24-1-2655, and the National Science Foundation (NSF) under grants 2312204 and 2403007.

\bibliographystyle{plainnat}
\bibliography{refs}

@article{micheli2023,
author = {Micheli, Alessandro and Muhle-Karbe, Johannes and Neuman, Eyal},
title = {Closed-Loop {Nash} Competition for Liquidity},
journal = {Mathematical Finance},
volume = {33},
number = {4},
pages = {1082--1118},
doi = {10.1111/mafi.12409},
url = {https://onlinelibrary.wiley.com/doi/abs/10.1111/mafi.12409},
eprint = {https://onlinelibrary.wiley.com/doi/pdf/10.1111/mafi.12409},
year = {2023}
}

@article{lee_ready_1991,
author = {Lee, Charles M. C. and Ready, Mark J.},
title = {Inferring Trade Direction from Intraday Data},
journal = {The Journal of Finance},
volume = {46},
number = {2},
pages = {733--746},
year = {1991},
doi = {10.1111/j.1540-6261.1991.tb02683.x},
url = {https://doi.org/10.1111/j.1540-6261.1991.tb02683.x}
}

@article{campbell_ramadorai_schwartz_2009,
author = {Campbell, John Y. and Ramadorai, Tarun and Schwartz, Allie},
title = {Caught on Tape: Institutional Trading, Stock Returns, and Earnings Announcements},
journal = {Journal of Financial Economics},
volume = {92},
number = {1},
pages = {66--91},
year = {2009},
doi = {10.1016/j.jfineco.2008.03.006},
url = {https://doi.org/10.1016/j.jfineco.2008.03.006}
}

@article{boehmer_jones_zhang_zhang_2021,
author = {Boehmer, Ekkehart and Jones, Charles M. and Zhang, Xiaoyan and Zhang, Xinran},
title = {Tracking Retail Investor Activity},
journal = {The Journal of Finance},
volume = {76},
number = {5},
pages = {2249--2305},
year = {2021},
doi = {10.1111/jofi.13033},
url = {https://doi.org/10.1111/jofi.13033}
}

@misc{CarmonaYang2008PredatoryTrading,
  author = {Ren{\'e} A. Carmona and Z. Joseph Yang},
  title  = {Predatory Trading: A Game on Volatility and Liquidity},
  year   = {2008},
  url    = {https://carmona.scholar.princeton.edu/document/186},
  note   = {Preprint, first version October 2008, revised September 2011}
}

@phdthesis{schoeneborn2008trade,
  author = {Sch{\"o}neborn, Torsten},
  title  = {Trade Execution in Illiquid Markets: Optimal Stochastic Control and Multi-Agent Equilibria},
  school = {Technische Universit{\"a}t Berlin},
  year   = {2008},
  url = {https://d-nb.info/989488934/34}
}

@unpublished{schoeneborn2009liquidation,
  author = {Sch{\"o}neborn, Torsten and Schied, Alexander},
  title  = {Liquidation in the Face of Adversity: Stealth vs. Sunshine Trading},
  year   = {2009},
  month  = mar,
  note   = {EFA 2008 Athens Meetings Paper},
  doi    = {10.2139/ssrn.1007014},
  url    = {https://ssrn.com/abstract=1007014}
}

@article{schied_zhang_2019,
author = {Schied, Alexander and Zhang, Tao},
title = {A Market Impact Game Under Transient Price Impact},
journal = {Mathematics of Operations Research},
volume = {44},
number = {1},
pages = {102--121},
year = {2019},
doi = {10.1287/moor.2017.0916},
URL = {https://doi.org/10.1287/moor.2017.0916},
eprint = {https://doi.org/10.1287/moor.2017.0916}
}

@article{schied_strehle_zhang_2017,
author = {Schied, Alexander and Strehle, Elias and Zhang, Tao},
title = {High-Frequency Limit of {Nash} Equilibria in a Market Impact Game with Transient Price Impact},
journal = {SIAM Journal on Financial Mathematics},
volume = {8},
number = {1},
pages = {589--634},
year = {2017},
doi = {10.1137/16M107030X},
URL = {https://doi.org/10.1137/16M107030X},
eprint = {https://doi.org/10.1137/16M107030X}
}

@article{luo_schied_2019,
author = {Luo, Xiangge and Schied, Alexander},
title = {{Nash} Equilibrium for Risk-Averse Investors in a Market Impact Game with Transient Price Impact},
journal = {Market Microstructure and Liquidity},
volume = {5},
number = {1--4},
pages = {2050001},
year = {2019},
doi = {10.1142/S238262662050001X},
URL = {https://doi.org/10.1142/S238262662050001X},
eprint = {https://doi.org/10.1142/S238262662050001X}
}

@article{strehle_2017,
author = {Strehle, Elias},
title = {Optimal Execution in a Multiplayer Model of Transient Price Impact},
journal = {Market Microstructure and Liquidity},
volume = {3},
number = {3--4},
pages = {1850007},
year = {2017},
doi = {10.1142/S2382626618500077},
URL = {https://doi.org/10.1142/S2382626618500077},
eprint = {https://doi.org/10.1142/S2382626618500077}
}

@article{schied_zhang_2017,
author = {Schied, Alexander and Zhang, Tao},
title = {A State-Constrained Differential Game Arising in Optimal Portfolio Liquidation},
journal = {Mathematical Finance},
volume = {27},
number = {3},
pages = {779--802},
doi = {10.1111/mafi.12108},
url = {https://onlinelibrary.wiley.com/doi/abs/10.1111/mafi.12108},
eprint = {https://onlinelibrary.wiley.com/doi/pdf/10.1111/mafi.12108},
year = {2017}
}

@article{campbell2026optimalexecutionntraders,
  author = {Campbell, Steven and Nutz, Marcel},
  title = {Optimal Execution Among {$N$} Traders With Transient Price Impact},
  journal = {Mathematical Finance},
  pages = {1--43},
  year = {2026},
  doi = {10.1111/mafi.70037},
  url = {https://doi.org/10.1111/mafi.70037},
  eprint = {2501.09638},
  archivePrefix = {arXiv}
}

@article{NeumanVoss2023,
author = {Neuman, Eyal and Vo{\ss}, Moritz},
title = {Trading with the Crowd},
journal = {Mathematical Finance},
volume = {33},
number = {3},
pages = {548--617},
year = {2023},
doi = {10.1111/mafi.12390},
url = {https://doi.org/10.1111/mafi.12390}
}

@unpublished{JiangWuYao2025,
author = {Jiang, Wenxi and Wu, Siyuan and Yao, Chen},
title = {How Index Funds Reshape Intraday Market Dynamics},
year = {2025},
note = {Available at SSRN},
doi = {10.2139/ssrn.3493513},
url = {https://ssrn.com/abstract=3493513}
}

@unpublished{Li2021,
author = {Li, Sida},
title = {Should Passive Investors Actively Manage Their Trades?},
year = {2021},
month = nov,
note = {Available at SSRN},
doi = {10.2139/ssrn.3967799},
url = {https://ssrn.com/abstract=3967799}
}

@article{Ohnishi02102020,
author = {Masamitsu Ohnishi and Makoto Shimoshimizu},
title = {Optimal and Equilibrium Execution Strategies with Generalized Price Impact},
journal = {Quantitative Finance},
volume = {20},
number = {10},
pages = {1625--1644},
year = {2020},
publisher = {Routledge},
doi = {10.1080/14697688.2020.1749294},
URL = {https://doi.org/10.1080/14697688.2020.1749294},
eprint = {https://doi.org/10.1080/14697688.2020.1749294}
}

@article{Ohnishi03072024,
author = {Masamitsu Ohnishi and Makoto Shimoshimizu},
title = {Trade Execution Games in a {Markovian} Environment},
journal = {Applied Mathematical Finance},
volume = {31},
number = {4},
pages = {239--277},
year = {2024},
publisher = {Routledge},
doi = {10.1080/1350486X.2025.2479498},
URL = {https://doi.org/10.1080/1350486X.2025.2479498},
eprint = {https://doi.org/10.1080/1350486X.2025.2479498}
}

@article{Huang04032019,
author = {Xuancheng Huang and Sebastian Jaimungal and Mojtaba Nourian},
title = {Mean-Field Game Strategies for Optimal Execution},
journal = {Applied Mathematical Finance},
volume = {26},
number = {2},
pages = {153--185},
year = {2019},
publisher = {Routledge},
doi = {10.1080/1350486X.2019.1603183},
URL = {https://doi.org/10.1080/1350486X.2019.1603183},
eprint = {https://doi.org/10.1080/1350486X.2019.1603183}
}

@article{cartea_jaimungal_2016,
author = {Cartea, {\'A}lvaro and Jaimungal, Sebastian},
title = {Incorporating Order-Flow into Optimal Execution},
journal = {Mathematics and Financial Economics},
volume = {10},
number = {3},
pages = {339--364},
year = {2016},
doi = {10.1007/s11579-016-0162-z},
URL = {https://doi.org/10.1007/s11579-016-0162-z}
}

@misc{arabneydi2020explicitsequentialequilibrialq,
      title={Explicit Sequential Equilibria in {LQ} Deep Structured Games and Weighted Mean-Field Games},
      author={Jalal Arabneydi and Amir G. Aghdam and Roland P. Malham{\'e}},
      year={2020},
      eprint={1912.03931v4},
      archivePrefix={arXiv},
      primaryClass={math.OC},
      url={https://arxiv.org/abs/1912.03931v4},
  doi = {10.48550/arXiv.1912.03931},
  note = {Version 4, revised October 2020}
}

@article{carlin2007,
author = {Carlin, Bruce Ian and Lobo, Miguel Sousa and Viswanathan, S.},
title = {Episodic Liquidity Crises: Cooperative and Predatory Trading},
journal = {The Journal of Finance},
volume = {62},
number = {5},
pages = {2235--2274},
doi = {10.1111/j.1540-6261.2007.01274.x},
url = {https://onlinelibrary.wiley.com/doi/abs/10.1111/j.1540-6261.2007.01274.x},
eprint = {https://onlinelibrary.wiley.com/doi/pdf/10.1111/j.1540-6261.2007.01274.x},
year = {2007}
}

@article{brunnermeier_2005,
author = {Brunnermeier, Markus K. and Pedersen, Lasse Heje},
title = {Predatory Trading},
journal = {The Journal of Finance},
volume = {60},
number = {4},
pages = {1825--1863},
doi = {10.1111/j.1540-6261.2005.00781.x},
url = {https://onlinelibrary.wiley.com/doi/abs/10.1111/j.1540-6261.2005.00781.x},
eprint = {https://onlinelibrary.wiley.com/doi/pdf/10.1111/j.1540-6261.2005.00781.x},
year = {2005}
}

@article{jaber2026,
author = {{Abi Jaber}, Eduardo and Neuman, Eyal and Tuschmann, Sturmius},
title = {Optimal Portfolio Choice With {Cross-Impact} Propagators},
journal = {Mathematical Finance},
pages = {1--23},
year = {2026},
doi = {10.1111/mafi.70025},
url = {https://onlinelibrary.wiley.com/doi/abs/10.1111/mafi.70025},
eprint = {https://onlinelibrary.wiley.com/doi/pdf/10.1111/mafi.70025}
}

@unpublished{capponi_cont_2020,
author = {Capponi, Francesco and Cont, Rama},
title = {Multi-Asset Market Impact and Order Flow Commonality},
year = {2020},
month = oct,
note = {Available at SSRN 3706390},
doi = {10.2139/ssrn.3706390},
url = {https://ssrn.com/abstract=3706390}
}

@article{cont_cucuringu_zhang_2023,
author = {Cont, Rama and Cucuringu, Mihai and Zhang, Chao},
title = {{Cross-Impact} of Order Flow Imbalance in Equity Markets},
journal = {Quantitative Finance},
volume = {23},
number = {10},
pages = {1373--1393},
year = {2023},
doi = {10.1080/14697688.2023.2236159},
url = {https://www.tandfonline.com/doi/abs/10.1080/14697688.2023.2236159}
}

@article{lecoz_mastromatteo_challet_benzaquen_2024,
author = {{Le Coz}, Victor and Mastromatteo, Iacopo and Challet, Damien and Benzaquen, Michael},
title = {When Is {Cross Impact} Relevant?},
journal = {Quantitative Finance},
volume = {24},
number = {2},
pages = {265--279},
year = {2024},
doi = {10.1080/14697688.2024.2302827},
url = {https://www.tandfonline.com/doi/abs/10.1080/14697688.2024.2302827}
}

@article{alfonsi2016,
author = {Alfonsi, Aur\'{e}lien and Kl\"{o}ck, Florian and Schied, Alexander},
title = {Multivariate Transient Price Impact and Matrix-Valued Positive Definite Functions},
journal = {Mathematics of Operations Research},
volume = {41},
number = {3},
pages = {914--934},
year = {2016},
doi = {10.1287/moor.2015.0761},
URL = {https://doi.org/10.1287/moor.2015.0761},
eprint = {https://doi.org/10.1287/moor.2015.0761}
}

@article{huberman_stanzl,
 ISSN = {00129682, 14680262},
 URL = {http://www.jstor.org/stable/3598784},
 author = {Gur Huberman and Werner Stanzl},
 journal = {Econometrica},
 number = {4},
 pages = {1247--1275},
 publisher = {[Wiley, Econometric Society]},
 title = {Price Manipulation and Quasi-Arbitrage},
 urldate = {2026-07-02},
 volume = {72},
 year = {2004},
  doi = {10.1111/j.1468-0262.2004.00531.x}
}

@article{gatheral_2012,
author = {Gatheral, Jim and Schied, Alexander and Slynko, Alla},
title = {Transient Linear Price Impact and {Fredholm} Integral Equations},
journal = {Mathematical Finance},
volume = {22},
number = {3},
pages = {445--474},
doi = {10.1111/j.1467-9965.2011.00478.x},
url = {https://onlinelibrary.wiley.com/doi/abs/10.1111/j.1467-9965.2011.00478.x},
eprint = {https://onlinelibrary.wiley.com/doi/pdf/10.1111/j.1467-9965.2011.00478.x},
year = {2012}
}

@article{OBIZHAEVA20131,
title = {Optimal Trading Strategy and Supply/Demand Dynamics},
journal = {Journal of Financial Markets},
volume = {16},
number = {1},
pages = {1--32},
year = {2013},
issn = {1386-4181},
doi = {10.1016/j.finmar.2012.09.001},
url = {https://www.sciencedirect.com/science/article/pii/S1386418112000328},
author = {Anna A. Obizhaeva and Jiang Wang}
}

@article{Schneider02012019,
author = {Schneider, Michael and Lillo, Fabrizio},
title = {Cross-Impact and No-Dynamic-Arbitrage},
journal = {Quantitative Finance},
volume = {19},
number = {1},
pages = {137--154},
year = {2019},
publisher = {Routledge},
doi = {10.1080/14697688.2018.1467033},
URL = {https://doi.org/10.1080/14697688.2018.1467033},
eprint = {https://doi.org/10.1080/14697688.2018.1467033}
}

@article{Lacker2020ClosedLoop,
  author = {Lacker, Daniel},
  title = {On the Convergence of Closed-Loop {Nash} Equilibria to the {Mean Field Game} Limit},
  journal = {The Annals of Applied Probability},
  volume = {30},
  number = {4},
  pages = {1693--1761},
  year = {2020},
  doi = {10.1214/19-AAP1541},
  url = {https://doi.org/10.1214/19-AAP1541}
}

@article{Lacker2020,
  title = {Many-Player Games of Optimal Consumption and Investment under Relative Performance Criteria},
  volume = {14},
  ISSN = {1862-9660},
  url = {http://dx.doi.org/10.1007/s11579-019-00255-9},
  DOI = {10.1007/s11579-019-00255-9},
  number = {2},
  journal = {Mathematics and Financial Economics},
  publisher = {Springer Science and Business Media LLC},
  author = {Lacker,  Daniel and Soret,  Agathe},
  year = {2020},
  month = mar,
  pages = {263--281}
}

@article{Lacker2019,
  title = {Mean Field and {$n$}-Agent Games for Optimal Investment under Relative Performance Criteria},
  volume = {29},
  ISSN = {1467-9965},
  url = {http://dx.doi.org/10.1111/mafi.12206},
  DOI = {10.1111/mafi.12206},
  number = {4},
  journal = {Mathematical Finance},
  publisher = {Wiley},
  author = {Lacker,  Daniel and Zariphopoulou,  Thaleia},
  year = {2019},
  month = oct,
  pages = {1003--1038}
}

@article{Casgrain2020,
  title = {Mean-Field Games with Differing Beliefs for Algorithmic Trading},
  volume = {30},
  ISSN = {1467-9965},
  url = {http://dx.doi.org/10.1111/mafi.12237},
  DOI = {10.1111/mafi.12237},
  number = {3},
  journal = {Mathematical Finance},
  publisher = {Wiley},
  author = {Casgrain,  Philippe and Jaimungal,  Sebastian},
  year = {2020},
  month = jul,
  pages = {995--1034}
}

@misc{chen2024periodictradingactivitiesfinancial,
      title={Periodic Trading Activities in Financial Markets: Mean-Field Liquidation Game with Major-Minor Players},
      author={Yufan Chen and Lan Wu and Renyuan Xu and Ruixun Zhang},
      year={2024},
      eprint={2408.09505},
      archivePrefix={arXiv},
      primaryClass={q-fin.MF},
      url={https://arxiv.org/abs/2408.09505},
  doi = {10.48550/arXiv.2408.09505}
}

@article{Shleifer1986,
  author = {Shleifer, Andrei},
  title = {Do Demand Curves for Stocks Slope Down?},
  journal = {The Journal of Finance},
  volume = {41},
  number = {3},
  pages = {579--590},
  year = {1986},
  doi = {10.1111/j.1540-6261.1986.tb04518.x},
  url = {https://doi.org/10.1111/j.1540-6261.1986.tb04518.x}
}

@article{HarrisGurel1986,
  author = {Harris, Lawrence and Gurel, Eitan},
  title = {Price and Volume Effects Associated with Changes in the {S\&P} 500 List: New Evidence for the Existence of Price Pressures},
  journal = {The Journal of Finance},
  volume = {41},
  number = {4},
  pages = {815--829},
  year = {1986},
  doi = {10.1111/j.1540-6261.1986.tb04550.x},
  url = {https://doi.org/10.1111/j.1540-6261.1986.tb04550.x}
}

@article{Petajisto2011,
  author = {Petajisto, Antti},
  title = {The Index Premium and Its Hidden Cost for Index Funds},
  journal = {Journal of Empirical Finance},
  volume = {18},
  number = {2},
  pages = {271--288},
  year = {2011},
  doi = {10.1016/j.jempfin.2010.10.002},
  url = {https://doi.org/10.1016/j.jempfin.2010.10.002}
}

@article{GreenwoodSammon2025,
  author = {Greenwood, Robin and Sammon, Marco},
  title = {The Disappearing Index Effect},
  journal = {The Journal of Finance},
  volume = {80},
  number = {2},
  pages = {657--698},
  year = {2025},
  doi = {10.1111/jofi.13410},
  url = {https://doi.org/10.1111/jofi.13410}
}

@article{ChincoSammon2024,
  author = {Chinco, Alex and Sammon, Marco},
  title = {The Passive Ownership Share Is Double What You Think It Is},
  journal = {Journal of Financial Economics},
  volume = {157},
  pages = {103860},
  year = {2024},
  doi = {10.1016/j.jfineco.2024.103860},
  url = {https://doi.org/10.1016/j.jfineco.2024.103860}
}

@misc{MicheliNeuman2022,
  author = {Micheli, Alessandro and Neuman, Eyal},
  title = {Evidence of Crowding on {Russell} 3000 Reconstitution Events},
  year = {2022},
  eprint = {2006.07456v2},
  archivePrefix = {arXiv},
  primaryClass = {q-fin.TR},
  url = {https://arxiv.org/abs/2006.07456v2},
  doi = {10.48550/arXiv.2006.07456},
  note = {Version 2, revised September 2022}
}

@unpublished{PegoraroSammonShim2026,
  author = {Pegoraro, Stefano and Sammon, Marco and Shim, John J.},
  title = {Optimal Index-Linked Rebalancing with Anticipatory Trading},
  year = {2026},
  month = jun,
  note = {Working paper, SSRN 6772502},
  doi = {10.2139/ssrn.6772502},
  url = {https://ssrn.com/abstract=6772502}
}

@misc{ChengWangXu2024,
  author = {Cheng, Xue and Wang, Meng and Xu, Ziyi},
  title = {Mean Field Game of High-Frequency Anticipatory Trading},
  year = {2024},
  eprint = {2404.18200},
  archivePrefix = {arXiv},
  primaryClass = {q-fin.MF},
  doi = {10.48550/arXiv.2404.18200},
  url = {https://arxiv.org/abs/2404.18200}
}

@article{DrapeauLuoSchiedXiong2021,
  author = {Drapeau, Samuel and Luo, Peng and Schied, Alexander and Xiong, Dewen},
  title = {An {FBSDE} Approach to Market Impact Games with Stochastic Parameters},
  journal = {Probability, Uncertainty and Quantitative Risk},
  volume = {6},
  number = {3},
  pages = {237--260},
  year = {2021},
  doi = {10.3934/puqr.2021012},
  url = {https://doi.org/10.3934/puqr.2021012}
}

@article{ChoiLarsenSeppi2019,
  author = {Choi, Jin Hyuk and Larsen, Kasper and Seppi, Duane J.},
  title = {Information and Trading Targets in a Dynamic Market Equilibrium},
  journal = {Journal of Financial Economics},
  volume = {132},
  number = {3},
  pages = {22--49},
  year = {2019},
  doi = {10.1016/j.jfineco.2018.11.003},
  url = {https://doi.org/10.1016/j.jfineco.2018.11.003}
}

@article{ChoiLarsenSeppi2021Benchmarks,
  author = {Choi, Jin Hyuk and Larsen, Kasper and Seppi, Duane J.},
  title = {Equilibrium Effects of Intraday Order-Splitting Benchmarks},
  journal = {Mathematics and Financial Economics},
  volume = {15},
  number = {2},
  pages = {315--352},
  year = {2021},
  doi = {10.1007/s11579-020-00278-7},
  url = {https://doi.org/10.1007/s11579-020-00278-7}
}

@article{ChenChoiLarsenSeppi2022,
  author = {Chen, Xiao and Choi, Jin Hyuk and Larsen, Kasper and Seppi, Duane J.},
  title = {Learning About Latent Dynamic Trading Demand},
  journal = {Mathematics and Financial Economics},
  volume = {16},
  pages = {615--658},
  year = {2022},
  doi = {10.1007/s11579-022-00317-5},
  url = {https://doi.org/10.1007/s11579-022-00317-5},
  number = {4}
}

@article{ChenChoiLarsenSeppi2023,
  author = {Chen, Xiao and Choi, Jin Hyuk and Larsen, Kasper and Seppi, Duane J.},
  title = {Price Impact in {Nash} Equilibria},
  journal = {Finance and Stochastics},
  volume = {27},
  pages = {305--340},
  year = {2023},
  doi = {10.1007/s00780-023-00499-w},
  url = {https://doi.org/10.1007/s00780-023-00499-w},
  number = {2}
}

@article{LackerLeFlem2023,
  author = {Lacker, Daniel and {Le Flem}, Luc},
  title = {Closed-Loop Convergence for Mean Field Games with Common Noise},
  journal = {The Annals of Applied Probability},
  volume = {33},
  number = {4},
  pages = {2681--2733},
  year = {2023},
  doi = {10.1214/22-AAP1876},
  url = {https://doi.org/10.1214/22-AAP1876}
}

@article{LauriereTangpi2022,
  author = {Lauri{\`e}re, Mathieu and Tangpi, Ludovic},
  title = {Convergence of Large Population Games to Mean Field Games with Interaction Through the Controls},
  journal = {SIAM Journal on Mathematical Analysis},
  volume = {54},
  number = {3},
  pages = {3535--3574},
  year = {2022},
  doi = {10.1137/22M1469328},
  url = {https://doi.org/10.1137/22M1469328}
}

@unpublished{Nathan2026,
  author = {Nathan, Daniel},
  title = {Anticipated Orders and the Measurement of Stock Demand Elasticity: Evidence from Scheduled Index Rebalances},
  year = {2026},
  month = aug,
  note = {Working paper, SSRN 7332418},
  doi = {10.2139/ssrn.7332418},
  url = {https://ssrn.com/abstract=7332418}
}

@misc{Tasitsiomi2025,
  author = {Tasitsiomi, Iro},
  title = {On the Hidden Costs of Passive Investing},
  year = {2025},
  note = {Version 3, revised October 2025},
  eprint = {2506.21775},
  archivePrefix = {arXiv},
  primaryClass = {q-fin.TR},
  doi = {10.48550/arXiv.2506.21775},
  url = {https://arxiv.org/abs/2506.21775}
}

@article{BessembinderCarrionTuttleVenkataraman2016,
  author = {Bessembinder, Hendrik and Carrion, Allen and Tuttle, Laura and Venkataraman, Kumar},
  title = {Liquidity, Resiliency and Market Quality around Predictable Trades: Theory and Evidence},
  journal = {Journal of Financial Economics},
  volume = {121},
  number = {1},
  pages = {142--166},
  year = {2016},
  doi = {10.1016/j.jfineco.2016.02.011},
  url = {https://doi.org/10.1016/j.jfineco.2016.02.011}
}

@article{CordoniLillo2024,
  author = {Cordoni, Francesco and Lillo, Fabrizio},
  title = {Instabilities in Multi-Asset and Multi-Agent Market Impact Games},
  journal = {Annals of Operations Research},
  volume = {336},
  number = {1--2},
  pages = {505--539},
  year = {2024},
  doi = {10.1007/s10479-022-05066-8},
  url = {https://doi.org/10.1007/s10479-022-05066-8}
}

@article{CarteaJaimungalSanchezBetancourt2026,
  author = {Cartea, {\'A}lvaro and Jaimungal, Sebastian and S{\'a}nchez-Betancourt, Leandro},
  title = {{Nash} Equilibrium between Brokers and Traders},
  journal = {Finance and Stochastics},
  volume = {30},
  number = {3},
  pages = {951--982},
  year = {2026},
  doi = {10.1007/s00780-026-00595-7},
  url = {https://doi.org/10.1007/s00780-026-00595-7}
}

@misc{AbiJaberNeumanVoss2023,
  author = {{Abi Jaber}, Eduardo and Neuman, Eyal and Vo{\ss}, Moritz},
  title = {Equilibrium in Functional Stochastic Games with Mean-Field Interaction},
  year = {2023},
  note = {arXiv:2306.05433, version 3, revised July 2026},
  eprint = {2306.05433},
  archivePrefix = {arXiv},
  primaryClass = {math.OC},
  doi = {10.48550/arXiv.2306.05433},
  url = {https://arxiv.org/abs/2306.05433}
}

@misc{LeclereRosenbaum2026,
  author = {Lecl{\`e}re, Joseph and Rosenbaum, Mathieu},
  title = {Mean-Field Equilibrium of Heterogeneous Agents under Market Impact},
  year = {2026},
  eprint = {2609.03115},
  archivePrefix = {arXiv},
  primaryClass = {q-fin.TR},
  doi = {10.48550/arXiv.2609.03115},
  url = {https://arxiv.org/abs/2609.03115}
}

@article{Herzing2026,
  author = {Herzing, Tobias J.},
  title = {Dealing with Information-Uncertainty: Insights from a Predatory Trading Game},
  journal = {Finance Research Letters},
  volume = {91},
  pages = {109499},
  year = {2026},
  doi = {10.1016/j.frl.2026.109499},
  url = {https://doi.org/10.1016/j.frl.2026.109499}
}

@misc{CampbellNutzRandomization,
  author = {Campbell, Steven and Nutz, Marcel},
  title = {Randomization in Optimal Execution Games},
  year = {2026},
  note = {arXiv:2503.08833, version 2, revised May 2026},
  eprint = {2503.08833},
  archivePrefix = {arXiv},
  primaryClass = {q-fin.TR},
  doi = {10.48550/arXiv.2503.08833},
  url = {https://arxiv.org/abs/2503.08833}
}

@misc{NutzProsperi2025,
  author = {Nutz, Marcel and Prosperi, Alessandro},
  title = {High-Frequency Analysis of a Trading Game with Transient Price Impact},
  year = {2025},
  note = {arXiv:2512.11765},
  eprint = {2512.11765},
  archivePrefix = {arXiv},
  primaryClass = {q-fin.TR},
  doi = {10.48550/arXiv.2512.11765},
  url = {https://arxiv.org/abs/2512.11765}
}

\clearpage
\appendix
\section{Proofs of Section~\ref{sec:solution}}\label{sec:proofs_solution}

\subsection{Fundamental-Price Stochastic Integral}
\label{subsec:proof_fundamental_price_stochastic_integral}

\begin{lemma}[Fundamental-Price Stochastic Integral]
\label{lem:fundamental_price_stochastic_integral}
Under Assumption~\ref{ass:standing_model}, for every admissible profile and
trader \(i\), the process
\[
\mathcal N_n^i(0):=0,
\qquad
\mathcal N_n^i(t)
:=
\int_{(0,t]}(X_n^i(s-))^\top\,dS^0(s),
\qquad 0<t\leq T,
\]
is a uniformly integrable martingale under \(\mathbb P_n^i\).
\end{lemma}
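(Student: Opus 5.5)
The plan is to identify \(\mathcal N_n^i\) as a stochastic integral of a predictable integrand against a square-integrable martingale in a suitable filtration. Then I would upgrade it from a local martingale to a uniformly integrable martingale with a Burkholder--Davis--Gundy (BDG) estimate, using Cauchy--Schwarz to split the integrand from the quadratic variation of \(S^0\).

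\emph{Step 1 (filtration).} Let \(\mathbb F=(\mathcal F_t)\) be the filtration generated by \(S^0\) and \(\Xi\), i.e.\ \(\mathcal F_t:=\sigma(S^0(s),s\le t)\vee\sigma(\Xi)\), suitably completed and made right-continuous. Since \(A(t)=\Xi\mathbf 1_{\{t\ge T_{\mathrm{ann}}\}}\), the announcement history up to \(t\) is \(\mathcal F_t\)-measurable. Hence, by Definition~\ref{def:admissible_feedback_policies}, the unique strong solution \((X_n,I)\) is \(\mathbb F\)-adapted: the rates are nonanticipative Borel functionals of the histories of \((X_n^i,\bar X_n,S,A)\), and the histories of \(S^0\) and \(A\) generate everything else.

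Next I would check that \(S^0\) remains a \(\mathbb P_n^i\)-martingale in \(\mathbb F\). This is an initial enlargement by the independent variable \(\Xi\) (Assumption~\ref{ass:standing_model}). Take \(s\le t\), \(B\in\sigma(S^0(r),r\le s)\) and \(C\in\sigma(\Xi)\). Independence gives
\[
\mathbb E_n^i[(S^0(t)-S^0(s))\mathbf 1_B\mathbf 1_C]=\mathbb E_n^i[(S^0(t)-S^0(s))\mathbf 1_B]\,\mathbb P_n^i(C)=0 .
\]
A \(\pi\)-\(\lambda\) argument then extends this to all of \(\mathcal F_s\), and passing to the usual augmentation is standard for càdlàg martingales.

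\emph{Step 2 (local martingale).} \(X_n^i\) is continuous and adapted, so \(X_n^i(s-)=X_n^i(s)\) is predictable and locally bounded; stopping at \(\tau_N:=\inf\{t:\|X_n^i(t)\|\ge N\}\) makes it bounded. Each coordinate \(S^{0}_a\) is a square-integrable \(\mathbb F\)-martingale. Therefore \(\mathcal N_n^i=\sum_a\int X_{n,a}^i(s-)\,dS^0_a(s)\) is a local martingale, and on \([0,\tau_N]\) an \(L^2\)-martingale.

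\emph{Step 3 (uniform integrability) — the main point.} The integrand is only square-integrable in \(L^2(dt\otimes d\mathbb P)\) through \(u_n^i\), not bounded. So \(\mathbb E\int\|X\|^2\,d[S^0]\) need not be finite, and I would avoid the \(L^2\) isometry. Instead I would apply the BDG inequality with exponent \(1\):
\[
\mathbb E_n^i\Bigl[\sup_{t\le T}|\mathcal N_n^i(t)|\Bigr]\le C\,\mathbb E_n^i\Bigl[\Bigl(\sum_{a}\int_0^T |X_{n,a}^i(s)|^2\,d[S^0_a](s)\Bigr)^{1/2}\Bigr]\le C\,\mathbb E_n^i\Bigl[X^*\,\bigl(\textstyle\sum_a[S^0_a](T)\bigr)^{1/2}\Bigr],
\]
where \(X^*:=\sup_{t\le T}\|X_n^i(t)\|\). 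For the first factor, \(X^*\le\|x_0^i\|+\sqrt T\bigl(\int_0^T\|u_n^i\|^2dt\bigr)^{1/2}\), which lies in \(L^2(\mathbb P_n^i)\) by admissibility. For the second, \(\mathbb E_n^i[[S^0_a](T)]=\mathbb E_n^i[(S^0_a(T)-S^0_a(0))^2]<\infty\). Cauchy--Schwarz then gives \(\mathbb E_n^i[\sup_t|\mathcal N_n^i(t)|]<\infty\). A local martingale dominated by an integrable random variable is of class (D), hence a uniformly integrable martingale by dominated convergence along the localizing sequence; in particular \(\mathbb E_n^i[\mathcal N_n^i(T)]=0\). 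I expect Step 1 — verifying adaptedness of the strong solution and the martingale property in the enlarged filtration — to be the only delicate point; Step 3 is routine once BDG is used in the \(L^1\) form rather than the \(L^2\) isometry.
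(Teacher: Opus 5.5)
Your proof is correct, but it bounds the supremum of $\mathcal N_n^i$ differently from the paper. The paper never uses quadratic variation. It applies the integration-by-parts identity \eqref{eq:fundamental_price_integration_by_parts} up to time $t$, which holds because $X_n^i$ is continuous and of finite variation, so $[X_n^i,S^0]=0$. This gives the pathwise bound $\sup_{t\le T}|\mathcal N_n^i(t)|\le 2\sup_{t\le T}\|S^0(t)\|\bigl(\|x_0^i\|+\int_0^T\|u_n^i(s)\|\,ds\bigr)$. The paper then controls $\sup\|S^0\|$ in $L^2$ by Doob's maximal inequality and the second factor by Cauchy--Schwarz and admissibility.

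You instead use the $p=1$ (Davis) BDG inequality and pair $X^*\in L^2$ with $[S^0](T)^{1/2}\in L^2$. Both arguments end with the same Cauchy--Schwarz split and the same class-(D) conclusion. The paper's route is more elementary: it exploits $dX_n^i=u_n^i\,dt$ and reuses an identity already needed in the main text. Yours does not need finite variation of $X_n^i$, so it works for any predictable integrand with square-integrable running supremum, at the cost of invoking BDG.

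On the filtration, you take the initial enlargement $\sigma(S^0(s),s\le t)\vee\sigma(\Xi)$ and check the martingale property explicitly. The paper uses the smaller filtration generated by $(S^0,A)$. Either works, and since $\mathcal N_n^i$ is adapted to the smaller filtration, the martingale property passes down to it by the tower property.

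One small correction: $\sum_a\int|X_{n,a}^i|^2\,d[S^0_a]$ is not the quadratic variation of $\mathcal N_n^i$ when the coordinates of $S^0$ have nonzero covariation. In fact $[\mathcal N_n^i](T)=\sum_{a,b}\int X_{n,a}^iX_{n,b}^i\,d[S^0_a,S^0_b]$. Either apply BDG to each $\int X_{n,a}^i\,dS^0_a$ separately and sum, or use Kunita--Watanabe to get $[\mathcal N_n^i](T)\le (X^*)^2\sum_a[S^0_a](T)$. Your final estimate is unaffected.
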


\begin{proof}
Fix an admissible profile and trader \(i\), and evaluate all processes under
that profile. Independence of \(S^0\) and \(\Xi\) ensures that \(S^0\) remains
a martingale in the filtration generated by \((S^0,A)\). The strong solution
makes \(X_n^i\) continuous and adapted to this filtration, so
\(\mathcal N_n^i\) is a local martingale starting from zero. Applying
\eqref{eq:fundamental_price_integration_by_parts} up to time \(t\) and using
\eqref{eq:inventory_dynamics} gives
\[
\sup_{t\leq T}|\mathcal N_n^i(t)|
\leq
2\sup_{t\leq T}\lVert S^0(t)\rVert
\left(\lVert x_0^i\rVert+\int_0^T\lVert u_n^i(s)\rVert\,ds\right).
\]
The price supremum is square-integrable under \(\mathbb P_n^i\) by Doob's
\(L^2\) maximal inequality. The other factor is square-integrable by
Cauchy--Schwarz and admissibility. Hence \(\mathcal N_n^i\) has an integrable
absolute supremum. Localization and dominated convergence therefore show that
\(\mathcal N_n^i\) is a true martingale. Domination by the integrable absolute
supremum also gives uniform integrability.
\end{proof}

\subsection{Proof of Theorem~\ref{thm:stitched_hjb_verification}}
\label{subsec:proof_stitched_hjb_verification}

\begin{proof}
We first verify the best-response inequality for a continuation game starting
at or after the announcement, so \(s\geq T_{\mathrm{ann}}\) and \(a=\xi^k\).
Let trader \(i\) choose an admissible unilateral continuation deviation
\(\Psi_n^i\), which induces a potentially stochastic rate \(u_n^i(t)\) through
its dependence on the observed history of \(S^0\). To make this rate
deterministic, fix a full path \(\mathbf{s}^0\) extending \(\mathbf{s}^0_{[0,s]}\) and write
\(u_n^i(t;\mathbf{s}^0)\) for the resulting rate. We suppress the same path argument in
the other induced processes.
Equation \eqref{eq:stitched_hjb} and the chain rule
give outside of the switching times of \(g^k\)
\[
\begin{aligned}
&\mathcal L\left(
X_n^i(t),u_n^i(t;\mathbf{s}^0);
I(t),\varpi_nU_n^{-i}(t),g^k(t)
\right)
+\frac{d}{dt}V_n^{i,k}\bigl(t,Z_n^i(t)\bigr)
\\
&\quad=
\partial_tV_n^{i,k}\bigl(t,Z_n^i(t)\bigr)
+
\widehat{\mathscr H}_n^{i,k}\left(
t,X_n(t),I(t),u_n^i(t;\mathbf{s}^0),
\nabla V_n^{i,k}\bigl(t,Z_n^i(t)\bigr);
\phi_n^{-i,k}
\right)
\\
&\quad=
\widehat{\mathscr H}_n^{i,k}\left(
t,X_n(t),I(t),u_n^i(t;\mathbf{s}^0),
\nabla V_n^{i,k}\bigl(t,Z_n^i(t)\bigr);
\phi_n^{-i,k}
\right)
\\
&\qquad-
\min_{v\in\mathbb R^d}
\widehat{\mathscr H}_n^{i,k}\left(
t,X_n(t),I(t),v,
\nabla V_n^{i,k}\bigl(t,Z_n^i(t)\bigr);
\phi_n^{-i,k}
\right)
\geq 0.
\end{aligned}
\]
Integrating interval by interval over the intervals where \(g^k\) is constant
and using \eqref{eq:stitched_hjb_terminal} gives
\begin{equation}\label{eq:post_announcement_pathwise_bound}
\begin{aligned}
V_n^{i,k}\left(s,z^i(x_n^i,\bar x_n,\iota)\right)
\leq{}&
\int_s^T
\mathcal L\left(
X_n^i(t),u_n^i(t;\mathbf{s}^0);
I(t),\varpi_nU_n^{-i}(t),g^k(t)
\right)\,dt
\\
&+
\frac{1}{2}X_n^i(T)^\top Q_TX_n^i(T).
\end{aligned}
\end{equation}
Taking the conditional expectation under \(\mathbb P_n^i\) given
\(S^0|_{[0,s]}=\mathbf{s}^0_{[0,s]}\) and \(A(s)=\xi^k\) gives
\[
J_n^i\left(
\Psi_n^i;\phi_n^{-i}
\mid s,x_n,\iota,\xi^k,\mathbf{s}^0_{[0,s]}
\right)
\geq
V_n^{i,k}\left(s,z^i(x_n^i,\bar x_n,\iota)\right).
\]

We next verify the best-response inequality for a continuation game starting
before the announcement, so \(s<T_{\mathrm{ann}}\) and \(a=0\). Applying the
same HJB argument on \([s,T_{\mathrm{ann}}]\), followed by
the value-matching condition \eqref{eq:stitched_hjb_value_matching} at the
state \(Z_n^i(T_{\mathrm{ann}})\) and
the post-announcement bound \eqref{eq:post_announcement_pathwise_bound} in each
scenario, gives
\begin{equation}\label{eq:pre_announcement_pathwise_bound}
\begin{aligned}
V_n^{i,0}\left(s,z^i(x_n^i,\bar x_n,\iota)\right)
&\leq
\int_s^{T_{\mathrm{ann}}}
\mathcal L\left(
X_n^i(t),u_n^i(t;\mathbf{s}^0);
I(t),\varpi_nU_n^{-i}(t),0
\right)\,dt
\\
&+
\sum_{k=1}^m\pi_n^{i,k}
V_n^{i,k}\bigl(
T_{\mathrm{ann}},Z_n^i(T_{\mathrm{ann}})
\bigr)\\
&\leq
\int_s^{T_{\mathrm{ann}}}
\mathcal L\left(
X_n^i(t),u_n^i(t;\mathbf{s}^0);
I(t),\varpi_nU_n^{-i}(t),0
\right)\,dt
\\
&+
\sum_{k=1}^m\pi_n^{i,k}
\Bigg[
\int_{T_{\mathrm{ann}}}^T
\mathcal L\left(
X_n^i(t),u_n^i(t;\mathbf{s}^0);
I(t),\varpi_nU_n^{-i}(t),g^k(t)
\right)\,dt
\\
&\hspace{105pt}
+\frac{1}{2}X_n^i(T)^\top Q_TX_n^i(T)
\Bigg]
\\
&=
\mathbb E_n^i\Bigg[
\int_s^T
\mathcal L\left(
X_n^i(t),u_n^i(t;\mathbf{s}^0);
I(t),\varpi_nU_n^{-i}(t),g(t)
\right)\,dt
\\
&\hspace{48pt}
+\frac{1}{2}X_n^i(T)^\top Q_TX_n^i(T)
\,\Bigm|\,
S^0|_{[0,T]}=\mathbf{s}^0,\ A(s)=0
\Bigg].
\end{aligned}
\end{equation}
The final equality in \eqref{eq:pre_announcement_pathwise_bound} is the
conditional expectation over \(\Xi\) with the full path of \(S^0\) fixed at
\(\mathbf{s}^0\). Independence of \(S^0\) and \(\Xi\) leaves the conditional probability of
scenario \(k\) equal to \(\pi_n^{i,k}\). Integrating
\eqref{eq:pre_announcement_pathwise_bound} with respect to trader \(i\)'s
conditional future law of \(S^0\) given
\(S^0|_{[0,s]}=\mathbf{s}^0_{[0,s]}\) gives
\[
J_n^i\left(
\Psi_n^i;\phi_n^{-i}
\mid s,x_n,\iota,0,\mathbf{s}^0_{[0,s]}
\right)
\geq
V_n^{i,0}\left(s,z^i(x_n^i,\bar x_n,\iota)\right).
\]

For \(\Psi_n^i=\phi_n^i\), \eqref{eq:stitched_hjb_argmin} makes all inequalities
equalities. Thus the candidate attains the lower bounds above, whereas every
admissible deviation has weakly larger cost. Since the trader and
continuation game were arbitrary, \eqref{eq:subgame_perfect_ne} holds, and the
asserted continuation-value identities follow.
\end{proof}

\subsection{Proof of Theorem~\ref{thm:full_horizon_closed_loop_equilibrium}}
\label{subsec:proof_full_horizon_closed_loop_equilibrium}

\begin{proof}
We show that the feedbacks and value functions stated in
Theorem~\ref{thm:full_horizon_closed_loop_equilibrium} satisfy the conditions
of the verification Theorem~\ref{thm:stitched_hjb_verification}.
Fix either the pre-announcement interval, denoted by \(k=0\), or the
post-announcement interval together with a scenario
\(k\in\{1,\ldots,m\}\). Suppress time and the fixed index \(k\), and write
\[
V_n^i(z^i)
=\frac12(z^i)^\top H_nz^i+(r_n^i)^\top z^i+c_n^i.
\]
For \(k=0\), \(r_n^i=r_n^{i,0}\) and \(c_n^i=c_n^{i,0}\). For \(k\geq1\),
\(r_n^i=r_n^k\) and \(c_n^i=c_n^k\), so these coefficients are independent
of \(i\). Also write \(g:=g^k\), where \(g^0\equiv0\), and
\(U_n^{-i}:=U_n^{-i,k}\). We first perform the HJB calculation for generic
\(r_n^i,c_n^i,g\) and specialize afterward. For \(k\geq1\), the calculation
applies on each interval between jump times of \(g^k\). No derivative of
\(g^k\) appears, so the coefficient solutions can be pasted continuously
across these times.

Set \(q_n^i:=q_n(r_n^i,g)\). Substituting
\(\nabla V_n^i=H_nz^i+r_n^i\) in \eqref{eq:stitched_hamiltonian} yields:
\begin{equation}\label{eq:vectorized_quadratic_hamiltonian}
\begin{aligned}
&\widehat{\mathscr H}_n^{i,k}
\left(t,x_n,\iota,v,H_nz^i+r_n^i;\phi_n^{-i,k}\right)
\\
&\quad={}\frac12(z^i)^\top Q_zz^i
+\left(A_zz^i+B_n^{-}U_n^{-i}+B_gg\right)^\top(H_nz^i+r_n^i)
\\
&\quad+v^\top\Lambda v
+\left(\Theta_n(H_n)z^i+q_n^i
+\vartheta_n\Lambda U_n^{-i}\right)^\top v.
\end{aligned}
\end{equation}
Its Hessian in \(v\) is \(2\Lambda\succ0\). Hence the simultaneous
first-order conditions are
\begin{equation}\label{eq:vectorized_quadratic_foc}
2\Lambda\phi_n^i+\Theta_n(H_n)z^i+q_n^i
+\vartheta_n\Lambda U_n^{-i}=0,
\qquad i=1,\ldots,n.
\end{equation}
Let \(U_n:=\sum_{j=1}^n\phi_n^j\). Since
\(U_n^{-i}=U_n-\phi_n^i\), solving
\eqref{eq:vectorized_quadratic_foc} for \(\phi_n^i\) gives
\begin{equation}\label{eq:vectorized_quadratic_rate_given_aggregate}
\phi_n^i
=-\frac1{a_n}\Lambda^{-1}
\left(\Theta_n(H_n)z^i+q_n^i+\vartheta_n\Lambda U_n\right).
\end{equation}
Summing this equation over \(i\) gives
\[
U_n
=-\frac1{a_n}\Lambda^{-1}
\left(
\Theta_n(H_n)\sum_{j=1}^nz^j+\sum_{j=1}^nq_n^j
\right)
-\frac{n\vartheta_n}{a_n}U_n.
\]
Using \(d_n=a_n+n\vartheta_n\) and solving for \(U_n\) gives
\begin{equation}\label{eq:vectorized_quadratic_aggregate_rate}
U_n
=-\frac1{d_n}\Lambda^{-1}
\left(
\Theta_n(H_n)\sum_{j=1}^nz^j+
\sum_{j=1}^nq_n^j
\right).
\end{equation}
Since \(\sum_{j=1}^n\delta^j=0\), the definitions of \(D_n\) and \(F_n\)
give
\begin{equation}\label{eq:vectorized_quadratic_state_identities}
\frac1{a_n}z^i-\frac{\vartheta_n}{a_nd_n}\sum_{j=1}^nz^j=D_nz^i,
\qquad
\frac1{a_n}z^i-\frac2{a_nd_n}\sum_{j=1}^nz^j=F_nz^i.
\end{equation}
Substituting \eqref{eq:vectorized_quadratic_aggregate_rate} into
\eqref{eq:vectorized_quadratic_rate_given_aggregate}, applying the first identity in
\eqref{eq:vectorized_quadratic_state_identities}, and then using
\(U_n^{-i}=U_n-\phi_n^i\) with the second identity in
\eqref{eq:vectorized_quadratic_state_identities} gives
\begin{subequations}\label{eq:vectorized_quadratic_feedbacks}
\begin{align}
\phi_n^i
&=-\Lambda^{-1}\left[
\Theta_n(H_n)D_nz^i+\frac1{a_n}q_n^i
-\frac{\vartheta_n}{a_nd_n}\sum_{j=1}^nq_n^j
\right],
\label{eq:vectorized_quadratic_individual_feedback}
\\
U_n^{-i}
&=\Lambda^{-1}\left[
\Theta_n(H_n)F_nz^i+\frac1{a_n}q_n^i
-\frac2{a_nd_n}\sum_{j=1}^nq_n^j
\right].
\label{eq:vectorized_quadratic_opponent_feedback}
\end{align}
\end{subequations}
Substituting \eqref{eq:vectorized_quadratic_feedbacks} into the HJB and using
\eqref{eq:vectorized_quadratic_foc} gives
\begin{equation}\label{eq:vectorized_quadratic_hjb_identity}
\begin{aligned}
0={}&\frac12(z^i)^\top\dot H_nz^i+(z^i)^\top\dot r_n^i+\dot c_n^i
+\frac12(z^i)^\top Q_zz^i
\\
&+\left(A_zz^i+B_n^{-}U_n^{-i}+B_gg\right)^\top(H_nz^i+r_n^i)
-(\phi_n^i)^\top\Lambda\phi_n^i,
\end{aligned}
\end{equation}
Grouping the terms that are quadratic in \(z^i\) gives the Riccati equation
\eqref{eq:fixed_interval_riccati_P}.

After the announcement \(r_n^i=r_n^k\), \(c_n^i=c_n^k\), and \(g=g^k\)
for every trader. The individual formula in
\eqref{eq:vectorized_quadratic_individual_feedback} becomes \(\phi_n^{i,k}\) in
\eqref{eq:full_horizon_markov_feedback}, while
\[
U_n^{-i,k}
=\Lambda^{-1}\left[
\Theta_n(H_n)F_nz^i-\frac{n-1}{d_n}q_n(r_n^k,g^k)\right].
\]
Matching the linear and constant coefficients in
\eqref{eq:vectorized_quadratic_hjb_identity}, expanding
\(q_n(r_n^k,g^k)=B_n^\top r_n^k+\chi_g\Lambda g^k\), and using
\eqref{eq:pre_ann_matrix_maps_def} gives
\eqref{eq:post_ann_common_r} and \eqref{eq:post_ann_common_alpha}.

Before the announcement, \(g=0\). Grouping the terms linear in \(z^i\) in
\eqref{eq:vectorized_quadratic_hjb_identity} and using
\eqref{eq:vectorized_quadratic_feedbacks} along with the shorthands in
\eqref{eq:pre_ann_matrix_maps_def}, gives the differential equation in
\eqref{eq:pre_ann_coupled_r_ode}. Matching the terms linear in \(z^i\) in
\eqref{eq:stitched_hjb_value_matching} gives its boundary condition. As shown by
\eqref{eq:pre_ann_common_r_ode} and
\eqref{eq:pre_ann_deviation_r_ode}, trader \(i\) can compute \(r_n^{i,0}\)
using only \((\bar\pi_n,\pi_n^i)\) by solving the ODEs for \(r_n^0\)
and \(\widetilde r_n^{i,0}\).
Equations~\eqref{eq:post_ann_q_def} and
\eqref{eq:pre_ann_affine_components} give
\[
q_n^i=B_n^\top\bigl(r_n^0+\widetilde r_n^{i,0}\bigr),
\qquad
\sum_{j=1}^nq_n^j=nB_n^\top r_n^0.
\]
Substituting these identities into
\eqref{eq:vectorized_quadratic_individual_feedback} gives the
pre-announcement feedback in
\eqref{eq:full_horizon_markov_feedback}. Grouping the constant terms in
\eqref{eq:vectorized_quadratic_hjb_identity} gives
\eqref{eq:pre_ann_constant_ode}.

The coefficient boundary conditions establish
\eqref{eq:stitched_hjb_terminal} and
\eqref{eq:stitched_hjb_value_matching}. Since \(H_n\in C^1([0,T])\), the
remaining coefficient ODEs have finite solutions that are continuous and
\(C^1\) between switching times. In every continuation game, the affine
feedbacks induce a unique linear closed-loop system with square-integrable
rates and are therefore admissible. The claim follows from
Theorem~\ref{thm:stitched_hjb_verification}.
\end{proof}

\subsection{Proof of Theorem~\ref{thm:global_riccati_solvability}}
\label{subsec:proof_global_riccati_solvability}

\begin{proof}
Fix \(n\geq2\).  The right-hand side of
\eqref{eq:fixed_interval_riccati_P} is polynomial in the entries of \(H_n\),
so there is a unique local symmetric solution backward from \(H_T\).  Let
\((\underline t_n,T]\) be its maximal backward interval.  If
\(\underline t_n>0\), the continuation criterion gives
\(\|H_n(t)\|\to\infty\) as
\(t\downarrow\underline t_n\). Hence it suffices to prove the finite bound
\eqref{eq:global_riccati_two_sided_bound} on \((\underline t_n,T]\), which
rules out such blow-up.

The calculations in the proofs of
Theorems~\ref{thm:stitched_hjb_verification} and
\ref{thm:full_horizon_closed_loop_equilibrium} show that, whenever \(H_n\)
exists on \([t,T]\), \(\frac12z^\top H_nz\) is the equilibrium value
function of the \(n\)-player game without indexer trading, obtained by setting
\(g^k\equiv0\) for every \(k\). In this game the announcement scenarios are
identical, so no stitching is required and the affine and constant
coefficients vanish. Since \(H_n\) is bounded on \([t,T]\), the induced
feedbacks are admissible. Theorem~\ref{thm:stitched_hjb_verification} then
yields
\[
V_n^i(t,x_n,\iota)=\frac12z^\top H_n(t)z,
\qquad z=(\delta,\bar x,\iota)^\top.
\]

Fix \(t\in(\underline t_n,T]\). Trader \(i\) may
deviate by choosing \(u_n^i\equiv0\) on \([t,T]\). Its inventory stays at
\(\bar x+\delta\). Because \(u_n^i\equiv0\), the execution-price term
vanishes, irrespective of the opponents' controls and the impact state. Hence
\[
V_n^i(t,x_n,\iota)
\leq\frac12(\bar x+\delta)^\top G_Q(t)(\bar x+\delta)
\]
for every \(z\), which implies
\begin{equation}
\label{eq:uniform_solvability_upper_bound_proof}
H_n(t)\preceq\overline H(t).
\end{equation}

For the lower bound, fix \(\delta,\bar x,\iota\in\mathbb R^d\) and choose
\[
X_n^i(t)=\bar x+\delta,
\qquad
X_n^j(t)=\bar x-\frac{\delta}{n-1},
\quad j\ne i
\]
so that the average of the \(X^j_n(t)\) is \(\bar x\).  Let \(V_n^j(t)\) be trader \(j\)'s equilibrium
value from this profile and impact state \(\iota\).  Summing the objectives,
dropping the nonnegative inventory penalties, and using
\(\sum_{i\ne j}u_n^i=U_n-u_n^j\) gives
\[
\begin{aligned}
\sum_{j=1}^nV_n^j(t)
\geq{}&\int_t^T\Bigg[
I(s)^\top U_n(s)
+\sum_{j=1}^n(u_n^j(s))^\top\Lambda u_n^j(s)
\\
&\qquad
+\vartheta_n\sum_{j=1}^n
(u_n^j(s))^\top\Lambda(U_n(s)-u_n^j(s))
\Bigg]ds.
\end{aligned}
\]
The execution-cost terms satisfy
\[
\begin{aligned}
\sum_j(u_n^j)^\top\Lambda
\left(u_n^j+\vartheta_n(U_n-u_n^j)\right)
&=\vartheta_nU_n^\top\Lambda U_n
+(1-\vartheta_n)\sum_j(u_n^j)^\top\Lambda u_n^j
\\
&\geq
\left(\vartheta_n+\frac{1-\vartheta_n}{n}\right)
U_n^\top\Lambda U_n.
\end{aligned}
\]
The inequality uses \(0\leq\vartheta_n\leq1\) from
Assumption~\ref{ass:standing_model} and
\(\sum_j(u_n^j)^\top\Lambda u_n^j\geq n^{-1}U_n^\top\Lambda U_n\).  Set
\(\alpha_n:=\vartheta_n+(1-\vartheta_n)/n\).  Then
\begin{equation}
\label{eq:uniform_solvability_aggregate_execution_bound}
\sum_{j=1}^nV_n^j(t)
\geq
\int_t^T\left(
I(s)^\top U_n(s)+\alpha_nU_n(s)^\top\Lambda U_n(s)
\right)ds.
\end{equation}

Because \(g\equiv0\),
\[
I(s)=e^{-R(s-t)}\iota
+\varpi_n\int_t^se^{-R(s-t')}\Gamma U_n(t')dt'.
\]
Therefore
\begin{equation}\label{eq:uniform_solvability_impact_cost_decomposition}
\begin{aligned}
\int_t^TI(s)^\top U_n(s)ds
={}&\int_t^TU_n(s)^\top e^{-R(s-t)}\iota ds
\\
&+\varpi_n\int_t^TU_n(s)^\top
\left(\int_t^se^{-R(s-t')}\Gamma U_n(t')dt'\right)ds.
\end{aligned}
\end{equation}
The second term on the right-hand side of
\eqref{eq:uniform_solvability_impact_cost_decomposition} is nonnegative because
\(\varpi_n>0\) and \(\mathcal C(v)\geq0\) under
Assumption~\ref{ass:standing_model}. Substituting
\eqref{eq:uniform_solvability_impact_cost_decomposition} into
\eqref{eq:uniform_solvability_aggregate_execution_bound}, dropping the
nonnegative second term, and completing the square pointwise in \(U_n(s)\)
gives
\begin{equation}\label{eq:uniform_solvability_aggregate_value_bound}
\begin{aligned}
\sum_{j=1}^nV_n^j(t)
&\geq
\int_t^T\left[
U_n(s)^\top e^{-R(s-t)}\iota
+\alpha_nU_n(s)^\top\Lambda U_n(s)
\right]ds
\\
&\geq
-\frac1{4\alpha_n}\int_t^T
\iota^\top e^{-R^\top(s-t)}\Lambda^{-1}e^{-R(s-t)}\iota ds
\\
&=-\frac1{4\alpha_n}\iota^\top G_R(t)\iota.
\end{aligned}
\end{equation}

Let \(P_n(t):=[H_n(t)]_{\delta\delta}\), and partition \(H_n(t)\) according
to the deviation coordinate \(\delta\) and the common-state coordinates
\((\bar x,\iota)\):
\[
H_n(t)=
\begin{bmatrix}
P_n(t)&[H_n(t)]_{\delta,(\bar x,\iota)}\\
[H_n(t)]_{(\bar x,\iota),\delta}
&[H_n(t)]_{(\bar x,\iota),(\bar x,\iota)}
\end{bmatrix}.
\]
For the inventory profile above,
\(z^i=(\delta,\bar x,\iota)^\top\) and
\(z^j=(-\delta/(n-1),\bar x,\iota)^\top\) for \(j\ne i\).  Since the
deviations sum to zero, the cross terms cancel:
\begin{equation}
\label{eq:uniform_solvability_aggregate_block_identity}
\begin{aligned}
\sum_{j=1}^nV_n^j(t)
={}&\frac{n}{2(n-1)}\delta^\top P_n(t)\delta
+\frac n2
\begin{bmatrix}\bar x\\\iota\end{bmatrix}^{\!\top}
[H_n(t)]_{(\bar x,\iota),(\bar x,\iota)}
\begin{bmatrix}\bar x\\\iota\end{bmatrix}.
\end{aligned}
\end{equation}
Setting \(\bar x=\iota=0\) in
\eqref{eq:uniform_solvability_aggregate_value_bound} and using
\eqref{eq:uniform_solvability_aggregate_block_identity} gives, for every
\(\delta\in\mathbb R^d\),
\[
0\leq\sum_{j=1}^nV_n^j(t)
=\frac{n}{2(n-1)}\delta^\top P_n(t)\delta.
\]
Hence
\begin{equation}
\label{eq:uniform_solvability_deviation_block_psd}
P_n(t)\succeq0.
\end{equation}
Setting \(\delta=0\) in
\eqref{eq:uniform_solvability_aggregate_value_bound} and using
\eqref{eq:uniform_solvability_aggregate_block_identity} gives, for every
\((\bar x,\iota)\in\mathbb R^{2d}\),
\[
\frac n2
\begin{bmatrix}\bar x\\\iota\end{bmatrix}^{\!\top}
[H_n(t)]_{(\bar x,\iota),(\bar x,\iota)}
\begin{bmatrix}\bar x\\\iota\end{bmatrix}
\geq-\frac1{4\alpha_n}\iota^\top G_R(t)\iota.
\]
This gives the first matrix inequality below, while
\(n\alpha_n=1+(n-1)\vartheta_n\geq1\) and \(G_R(t)\succeq0\) give the second:
\begin{equation}\label{eq:uniform_solvability_common_block_lower_bound}
\bigl[H_n(t)\bigr]_{(\bar x,\iota),(\bar x,\iota)}
\succeq
-\frac1{2n\alpha_n}
\begin{bmatrix}0&0\\0&G_R(t)\end{bmatrix}
\succeq
-\frac12\begin{bmatrix}0&0\\0&G_R(t)\end{bmatrix}.
\end{equation}

Taking traces in \eqref{eq:uniform_solvability_deviation_block_psd} and
\eqref{eq:uniform_solvability_common_block_lower_bound} gives
\begin{equation}\label{eq:uniform_solvability_trace_lower_bound}
\operatorname{tr}(H_n(t))
=\operatorname{tr}(P_n(t))
+\operatorname{tr}\!\left(
\bigl[H_n(t)\bigr]_{(\bar x,\iota),(\bar x,\iota)}
\right)
\geq-\frac12\operatorname{tr}(G_R(t)).
\end{equation}
Set \(W_n(t):=\overline H(t)-H_n(t)\). By
\eqref{eq:uniform_solvability_upper_bound_proof}, \(W_n(t)\succeq0\). Since
\(\operatorname{tr}(\overline H(t))=2\operatorname{tr}(G_Q(t))\),
\eqref{eq:uniform_solvability_trace_lower_bound} gives
\begin{equation}\label{eq:uniform_solvability_w_trace_upper_bound}
\operatorname{tr}(W_n(t))
=\operatorname{tr}(\overline H(t))-\operatorname{tr}(H_n(t))
\leq
2\operatorname{tr}(G_Q(t))
+\frac12\operatorname{tr}(G_R(t)).
\end{equation}
Positive semidefiniteness of \(W_n(t)\) and
\eqref{eq:uniform_solvability_w_trace_upper_bound} imply
\[
W_n(t)
\preceq\operatorname{tr}(W_n(t))\operatorname{Id}_{3d}
\preceq
\left(
2\operatorname{tr}(G_Q(t))
+\frac12\operatorname{tr}(G_R(t))
\right)\operatorname{Id}_{3d}.
\]
Since \(\overline H(t)\succeq0\),
\[
\begin{aligned}
H_n(t)
&=\overline H(t)-W_n(t)
\succeq-W_n(t)
\\
&\succeq
-\left(
2\operatorname{tr}(G_Q(t))
+\frac12\operatorname{tr}(G_R(t))
\right)\operatorname{Id}_{3d}
=-\underline H(t).
\end{aligned}
\]
Together with \eqref{eq:uniform_solvability_upper_bound_proof}, this proves
\eqref{eq:global_riccati_two_sided_bound} on the maximal interval.

The functions \(G_Q\) and \(G_R\) are continuous on \([0,T]\), so the bound
is finite there and uniform over \(n\geq2\).  A finite \(\underline t_n\)
would contradict the continuation criterion.  Hence \(\underline t_n=0\),
and \(H_n\) exists uniquely on \([0,T]\).

Finally, boundedness of \(H_n\) makes the equations for \(r_n^k\),
\(r_n^0\), and \(\widetilde r_n^{i,0}\) linear terminal-value problems
with square-integrable forcing. They have unique finite solutions. The
equations for \(c_n^k\) and \(c_n^{i,0}\) then have integrable right-hand
sides and likewise have unique finite solutions. The construction in
Theorem~\ref{thm:full_horizon_closed_loop_equilibrium} is therefore well
defined.
\end{proof}

\section{Proofs of Section~\ref{sec:mean_field_limit}}
\label{sec:proofs_mean_field_limit}

\subsection{Proof of Theorem~\ref{thm:mf_solution}}
\label{subsec:proof_mf_solution}

\begin{proof}
Fix a candidate mean flow and its impact components from
\eqref{eq:mf_component_impact_dynamics}. We first solve the type-\(\pi\)
control problem \eqref{eq:mf_type_objective} and then verify the consistency
condition \eqref{eq:mf_average_consistency}.

In a post-announcement scenario \(k\ge1\), the player's HJB equation on
\([T_{\mathrm{ann}},T]\) is
\[
\begin{aligned}
0
={}&\frac{\partial}{\partial t}V^k(t,x)
+\frac12 x^\top Q x
\\
&+\inf_{u\in\mathbb R^d}
\Bigl\{
u^\top\Lambda u
+\bigl(I^k(t)+\Lambda(\varpi\chi_u\bar u^k(t)+\chi_g g^k(t))
+V_x^k(t,x)\bigr)^\top u
\Bigr\},
\end{aligned}
\]
with terminal condition \(V^k(T,x)=\frac12x^\top Q_T x\). Insert the
post-announcement ansatz in \eqref{eq:mf_quadratic_value_ansatz}.
The first-order condition is
\[
2\Lambda u+I^k(t)+\Lambda(\varpi\chi_u\bar u^k(t)+\chi_g g^k(t))
+P(t)x+\ell^k(t)=0.
\]
Solving this for \(u\) gives the optimal feedback
\begin{equation}\label{eq:mf_post_feedback_derived}
\phi^{k}(t,x)
=
-\frac12\Lambda^{-1}
\left(P(t)x+\ell^k(t)+I^k(t)+
\Lambda(\varpi\chi_u\bar u^k(t)+\chi_g g^k(t))\right).
\end{equation}
Substituting the minimizer back into the HJB and collecting the quadratic,
linear, and constant terms gives
\begin{subequations}\label{eq:mf_post_value_system_derived}
\begin{align}
\dot P(t)
&=
- Q+\frac12P(t)\Lambda^{-1}P(t),
\label{eq:mf_post_riccati_derived}\\
\dot \ell^k(t)
&=
\frac12P(t)\Lambda^{-1}
\left(I^k(t)+\Lambda(\varpi\chi_u\bar u^k(t)+\chi_g g^k(t))+\ell^k(t)\right),
\label{eq:mf_post_ell_derived}\\
\dot c^k(t)
&=
\frac14
\left(I^k(t)+\Lambda(\varpi\chi_u\bar u^k(t)+\chi_g g^k(t))+\ell^k(t)\right)^\top
\\
&\qquad
{}\Lambda^{-1}
\left(I^k(t)+\Lambda(\varpi\chi_u\bar u^k(t)+\chi_g g^k(t))+\ell^k(t)\right)
\label{eq:mf_post_alpha_derived}
\end{align}
\end{subequations}
with terminal conditions
\begin{subequations}\label{eq:mf_post_terminal_conditions_derived}
\begin{align}
P(T)&=Q_T,
\label{eq:mf_P_terminal_derived}\\
\ell^k(T)&=0,
\label{eq:mf_post_ell_terminal_derived}\\
c^k(T)&=0.
\label{eq:mf_post_alpha_terminal_derived}
\end{align}
\end{subequations}

Before the announcement, the type-\(\pi\) player has continuation value
\[
\begin{aligned}
V^{\pi,0}(T_{\mathrm{ann}},x)
&=
\sum_{k=1}^m\pi^k V^k(T_{\mathrm{ann}},x)\\
&=
\frac12x^\top P(T_{\mathrm{ann}})x
+\left(\sum_{k=1}^m\pi^k\ell^k(T_{\mathrm{ann}})\right)^\top x
+\sum_{k=1}^m\pi^kc^k(T_{\mathrm{ann}}),
\end{aligned}
\]
so the pre-announcement boundary conditions are
\begin{subequations}\label{eq:mf_pre_boundary_derived}
\begin{align}
\ell^{\pi,0}(T_{\mathrm{ann}})
&=
\sum_{k=1}^m\pi^k\ell^k(T_{\mathrm{ann}}),
\label{eq:mf_pre_ell_boundary_derived}\\
c^{\pi,0}(T_{\mathrm{ann}})
&=
\sum_{k=1}^m\pi^kc^k(T_{\mathrm{ann}}).
\label{eq:mf_pre_alpha_boundary_derived}
\end{align}
\end{subequations}
The same HJB calculation applies before the announcement. Its quadratic
equation is again \eqref{eq:mf_post_riccati_derived}, so the ODE for \(P\) is independent of
\(\pi\) and common to the pre- and post-announcement problems. Thus it can be
solved once on \([0,T]\). The type-dependent affine and scalar equations on the
pre-announcement interval are
\begin{subequations}\label{eq:mf_pre_value_system_derived}
\begin{align}
\dot\ell^{\pi,0}(t)
&=
\frac12P(t)\Lambda^{-1}
\left(I^0(t)+\Lambda\varpi\chi_u\bar u^0(t)+\ell^{\pi,0}(t)\right),
\label{eq:mf_pre_ell_derived}\\
\dot c^{\pi,0}(t)
&=
\frac14
\left(I^0(t)+\Lambda\varpi\chi_u\bar u^0(t)+\ell^{\pi,0}(t)\right)^\top
\Lambda^{-1}
\left(I^0(t)+\Lambda\varpi\chi_u\bar u^0(t)+\ell^{\pi,0}(t)\right)
\label{eq:mf_pre_alpha_derived}
\end{align}
\end{subequations}
with the boundary conditions in \eqref{eq:mf_pre_boundary_derived}. The
first-order condition gives the pre-announcement feedback
\begin{equation}\label{eq:mf_pre_feedback_derived}
\phi^{\pi,0}(t,x)
=
-\frac12\Lambda^{-1}
\left(P(t)x+\ell^{\pi,0}(t)+I^0(t)+\Lambda\varpi\chi_u\bar u^0(t)\right).
\end{equation}

We now impose the mean-field consistency condition
\eqref{eq:mf_average_consistency}. Averaging
\eqref{eq:mf_pre_feedback_derived} and
\eqref{eq:mf_post_feedback_derived} over types and solving for the mean flows
gives
\begin{subequations}\label{eq:mf_mu_closed_form_derived}
\begin{align}
\bar u^0
&=-\frac{1}{2+\varpi\chi_u}\Lambda^{-1}
\left(P\bar X^0+I^0+\ell^0\right),
\label{eq:mf_pre_mu_closed_form_derived}\\
\bar u^k
&=-\frac{1}{2+\varpi\chi_u}\Lambda^{-1}
\left(P\bar X^k+I^k+\ell^k+\Lambda\chi_g g^k\right),
\qquad k=1,\ldots,m.
\label{eq:mf_post_mu_closed_form_derived}
\end{align}
\end{subequations}
Averaging the inventory dynamics gives
\begin{subequations}\label{eq:mf_mean_inventory_dynamics_derived}
\begin{align}
\dot{\bar X}^0(t)&=\bar u^0(t),
&\bar X^0(0)&=0,
\label{eq:mf_pre_mean_inventory_dynamics_derived}\\
\dot{\bar X}^k(t)&=\bar u^k(t),
&\bar X^k(T_{\mathrm{ann}})&=\bar X^0(T_{\mathrm{ann}}),
\quad k=1,\ldots,m.
\label{eq:mf_post_mean_inventory_dynamics_derived}
\end{align}
\end{subequations}
Averaging \eqref{eq:mf_pre_ell_derived} and
\eqref{eq:mf_pre_ell_boundary_derived}, and using
\(\bar\pi=\int_{\Delta_m}\pi\,\Pi(d\pi)\), gives
\eqref{eq:mf_pre_common_affine_derived} and
\eqref{eq:mf_pre_common_affine_boundary_derived}:
\begin{subequations}\label{eq:mf_pre_common_affine_system_derived}
\begin{align}
\dot \ell^0(t)
&=
\frac12P(t)\Lambda^{-1}
\left(I^0(t)+\Lambda\varpi\chi_u\bar u^0(t)+\ell^0(t)\right),
\label{eq:mf_pre_common_affine_derived}\\
\ell^0(T_{\mathrm{ann}})
&=
\sum_{k=1}^m\bar\pi^k\ell^k(T_{\mathrm{ann}}),
\label{eq:mf_pre_common_affine_boundary_derived}
\end{align}
\end{subequations}
Substituting
\eqref{eq:mf_mu_closed_form_derived} into
\eqref{eq:mf_mean_inventory_dynamics_derived},
\eqref{eq:mf_component_impact_dynamics},
\eqref{eq:mf_pre_common_affine_system_derived}, and
\eqref{eq:mf_post_ell_derived}, and collecting terms,
gives the linear ODE in \eqref{eq:mf_y_system} with \(\mathcal A\) and \(b^k\) defined in
\eqref{eq:mf_A_matrix} and \eqref{eq:mf_b_vector}.
The boundary conditions in
\eqref{eq:mf_mean_inventory_dynamics_derived},
\eqref{eq:mf_component_impact_dynamics},
\eqref{eq:mf_pre_common_affine_boundary_derived}, and
\eqref{eq:mf_post_ell_terminal_derived} establish the boundary conditions of the ODE in
\eqref{eq:mf_consistency_system}.

The equilibrium feedbacks induce rates that are square-integrable uniformly
in \(\pi\), and their type averages \(\bar u^0,\ldots,\bar u^m\) are therefore
square-integrable. Together with the optimality established by the HJB
calculation and the mean-field consistency established by averaging the
feedbacks, this proves that the feedback family satisfies
Definition~\ref{def:mf_equilibrium}.
\end{proof}

\subsection{Proof of Lemma~\ref{lem:mf_state_transition_representation}}
\label{subsec:proof_mf_state_transition_representation}

\begin{proof}
For the linear ODE \(\dot Y(t)=\mathcal A(t)Y(t)+b(t)\), the solution with
initial value \(Y(s)\) is
\begin{equation}\label{eq:mf_linear_solution_formula}
Y(t)
=
\mathcal E(t,s)Y(s)
+
\int_s^t\mathcal E(t,s')b(s')\,ds'.
\end{equation}
On the pre-announcement interval, \(b^0=0\) and
\(\bar X^0(0)=I^0(0)=0\). Hence the whole pre-announcement path is determined by
the single unknown \(\ell^0(0)\). Applying the formula with \(s=0\)
gives the
pre-announcement path and, at \(T_{\mathrm{ann}}\),
\[
\bar X^0(T_{\mathrm{ann}})
=
\mathcal E^{\mathrm{pre}}_{X\ell}\ell^0(0),
\qquad
I^0(T_{\mathrm{ann}})
=
\mathcal E^{\mathrm{pre}}_{I\ell}\ell^0(0),
\qquad
\ell^0(T_{\mathrm{ann}})
=
\mathcal E^{\mathrm{pre}}_{\ell\ell}\ell^0(0).
\]
For a fixed post-announcement scenario \(k\), applying the same formula on
\([T_{\mathrm{ann}},T]\) with initial state \((\bar X^0(T_{\mathrm{ann}}),I^0(T_{\mathrm{ann}}),\ell^k(T_{\mathrm{ann}}))\) gives the
post-announcement path. Its terminal \(\ell\)-block is
\[
\ell^k(T)
=
\mathcal E^{\mathrm{post}}_{\ell X}\bar X^0(T_{\mathrm{ann}})+
\mathcal E^{\mathrm{post}}_{\ell I}I^0(T_{\mathrm{ann}})+
\mathcal E^{\mathrm{post}}_{\ell\ell}\ell^k(T_{\mathrm{ann}})+
\mathcal D_\ell^k.
\]
The boundary condition \(\ell^k(T)=0\) therefore implies
\begin{equation}\label{eq:simplification_proof_zero_cond}
0
=
\mathcal E^{\mathrm{post}}_{\ell X}\bar X^0(T_{\mathrm{ann}})
+\mathcal E^{\mathrm{post}}_{\ell I}I^0(T_{\mathrm{ann}})
+\mathcal E^{\mathrm{post}}_{\ell\ell}\ell^k(T_{\mathrm{ann}})
+\mathcal D_\ell^k.
\end{equation}
Averaging these identities with weights \(\bar\pi^k\) and using the announcement matching condition
\(\ell^0(T_{\mathrm{ann}})=\sum_k\bar\pi^k\ell^k(T_{\mathrm{ann}})\) gives
\[
0
=
\mathcal E^{\mathrm{post}}_{\ell X}\bar X^0(T_{\mathrm{ann}})
+\mathcal E^{\mathrm{post}}_{\ell I}I^0(T_{\mathrm{ann}})
+\mathcal E^{\mathrm{post}}_{\ell\ell}\ell^0(T_{\mathrm{ann}})
+\sum_{k=1}^m\bar\pi^k\mathcal D_\ell^k.
\]
Using
\[
\bar X^0(T_{\mathrm{ann}})
=
\mathcal E^{\mathrm{pre}}_{X\ell}\ell^0(0),
\qquad
I^0(T_{\mathrm{ann}})
=
\mathcal E^{\mathrm{pre}}_{I\ell}\ell^0(0),
\qquad
\ell^0(T_{\mathrm{ann}})
=
\mathcal E^{\mathrm{pre}}_{\ell\ell}\ell^0(0),
\]
and the identity
\(\mathcal E^{\mathrm{post}}\mathcal E^{\mathrm{pre}}
=\mathcal E(T,0)\), we obtain
\begin{equation}\label{eq:mf_matching_equation}
\mathcal M\ell^0(0)
=
-\sum_{k=1}^m\bar\pi^k\mathcal D_\ell^k.
\end{equation}
Since \(\mathcal M\) is invertible, this determines \(\ell^0(0)\) and hence
the pre-announcement path. If \(\mathcal E^{\mathrm{post}}_{\ell\ell}\) is invertible, each
scenario-specific terminal condition \eqref{eq:simplification_proof_zero_cond} determines
\[
\ell^k(T_{\mathrm{ann}})
=
-(\mathcal E^{\mathrm{post}}_{\ell\ell})^{-1}
\left(\mathcal E^{\mathrm{post}}_{\ell X}\bar X^0(T_{\mathrm{ann}})+
\mathcal E^{\mathrm{post}}_{\ell I}I^0(T_{\mathrm{ann}})+\mathcal D_\ell^k\right).
\]
The solution formula \eqref{eq:mf_linear_solution_formula} then gives all
post-announcement paths. These paths satisfy the ODEs and all boundary
conditions. Uniqueness follows because the invertibility of \(\mathcal M\)
fixes \(\ell^0(0)\), the invertibility of
\(\mathcal E^{\mathrm{post}}_{\ell\ell}\) fixes each \(\ell^k(T_{\mathrm{ann}})\), and the resulting linear
initial-value problems have unique solutions.
\end{proof}

\subsection{Proof of Theorem~\ref{thm:mf_implementability}}
\label{subsec:proof_mf_implementability}

\begin{proof}
We first prove that the post-announcement block
\(\mathcal E^{\mathrm{post}}_{\ell\ell}\) is invertible. Recall from
Lemma~\ref{lem:mf_state_transition_representation} that
\(\mathcal E^{\mathrm{post}}=\mathcal E(T,T_{\mathrm{ann}})\), and that
\(\mathcal E^{\mathrm{post}}_{\ell\ell}\) denotes its \(\ell\ell\)-block. The matrix
\(\mathcal E(t,s)\) is the fundamental solution of the homogeneous ODE
\begin{equation}\label{eq:mf_homogeneous_transition_ode}
\dot{Y}(t)=\mathcal A(t)Y(t),
\qquad
Y(t)=(\bar X(t),I(t),\ell(t))^\top
\end{equation}
with \(\mathcal{A}(t)\) defined in \eqref{eq:mf_A_matrix}.
Let \(\mathfrak q\in\ker(\mathcal E^{\mathrm{post}}_{\ell\ell})\). Solve
\eqref{eq:mf_homogeneous_transition_ode} on
\([T_{\mathrm{ann}},T]\) with initial condition
\[
\bar X(T_{\mathrm{ann}})=0,
\qquad
I(T_{\mathrm{ann}})=0,
\qquad
\ell(T_{\mathrm{ann}})=\mathfrak q.
\]
By definition of \(\mathcal E^{\mathrm{post}}\) and the assumption
\(\mathfrak q\in\ker(\mathcal E^{\mathrm{post}}_{\ell\ell})\), the terminal affine coefficient is
\[
\begin{aligned}
\ell(T)
={}&
\mathcal E^{\mathrm{post}}_{\ell X}\bar X(T_{\mathrm{ann}})
+
\mathcal E^{\mathrm{post}}_{\ell I}I(T_{\mathrm{ann}})
+
\mathcal E^{\mathrm{post}}_{\ell\ell}\ell(T_{\mathrm{ann}})
=\mathcal E^{\mathrm{post}}_{\ell\ell}\mathfrak q
=0.
\end{aligned}
\]
The first row of \eqref{eq:mf_homogeneous_transition_ode} is
\begin{equation}\label{eq:mf_homogeneous_mean_inventory_ode}
\dot{\bar X}(t)
=
-\frac{1}{2+\varpi\chi_u}\Lambda^{-1}
\left(P(t)\bar X(t)+I(t)+\ell(t)\right).
\end{equation}
Denote the right-hand side of
\eqref{eq:mf_homogeneous_mean_inventory_ode} by \(\bar u(t)\) and define the
costate \(p(t):=P(t)\bar X(t)+\ell(t)\). Then
\eqref{eq:mf_homogeneous_mean_inventory_ode} can be reformulated as
\begin{equation}\label{eq:mf_homogeneous_first_row_identity}
p(t)+I(t)+(2+\varpi\chi_u)\Lambda\bar u(t)=0.
\end{equation}
The costate process \(p(t)\) satisfies
\[
\dot p(t)=- Q\bar X(t),
\qquad
p(T)=Q_T\bar X(T).
\]
Indeed, the terminal condition follows from \(P(T)=Q_T\) and \(\ell(T)=0\). The ODE
follows by differentiating \(p(t)=P(t)\bar X(t)+\ell(t)\) and using
\eqref{eq:mf_riccati} together with the first and third rows of
\eqref{eq:mf_homogeneous_transition_ode}.
The impact row of \eqref{eq:mf_homogeneous_transition_ode} gives
\begin{equation}\label{eq:mf_homogeneous_impact_row}
\dot I(t)=-RI(t)+\varpi\Gamma\bar u(t).
\end{equation}
Multiplying \eqref{eq:mf_homogeneous_first_row_identity} by
\(\bar u(t)^\top\) and
integrating over \([T_{\mathrm{ann}},T]\) gives
\begin{equation}\label{eq:mf_homogeneous_energy_identity}
0
=
\int_{T_{\mathrm{ann}}}^T\bar u(t)^\top p(t)\,dt
+
\int_{T_{\mathrm{ann}}}^T\bar u(t)^\top I(t)\,dt
+
(2+\varpi\chi_u)\int_{T_{\mathrm{ann}}}^T
\bar u(t)^\top\Lambda\bar u(t)\,dt.
\end{equation}
Integration by parts gives
\[
\begin{aligned}
\int_{T_{\mathrm{ann}}}^T\bar u(t)^\top p(t)\,dt
={}&\bar X(T)^\top p(T)
-\bar X(T_{\mathrm{ann}})^\top p(T_{\mathrm{ann}})-\int_{T_{\mathrm{ann}}}^T\bar X(t)^\top\dot p(t)\,dt\\
={}&\bar X(T)^\top Q_T\bar X(T)
+\int_{T_{\mathrm{ann}}}^T\bar X(t)^\top Q\bar X(t)\,dt.
\end{aligned}
\]
All three summands on the right-hand side of
\eqref{eq:mf_homogeneous_energy_identity} are nonnegative. The first is
nonnegative by the integration-by-parts identity since
\(Q,Q_T\succeq0\). The second is nonnegative because
\eqref{eq:mf_homogeneous_impact_row}, together with
\(I(T_{\mathrm{ann}})=0\), identifies it with \(\varpi\) times the transient-impact quadratic
form of \(\bar u\) on \([T_{\mathrm{ann}},T]\), which is nonnegative by
\(\varpi>0\) and the
kernel-positivity requirement in Assumption~\ref{ass:standing_model}. The third
is nonnegative by \(2+\varpi\chi_u>0\) and \(\Lambda\succ0\). Hence
\eqref{eq:mf_homogeneous_energy_identity} implies \(\bar u\equiv0\). Since
\(\bar X(T_{\mathrm{ann}})=0\) and
\(\dot{\bar X}(t)=\bar u(t)\), this implies \(\bar X\equiv0\). The impact row
\eqref{eq:mf_homogeneous_impact_row} with \(I(T_{\mathrm{ann}})=0\) then
gives \(I\equiv0\). The identity \(p+I+(2+\varpi\chi_u)\Lambda\bar u=0\) from
\eqref{eq:mf_homogeneous_first_row_identity} gives
\(p\equiv0\). Evaluating at \(T_{\mathrm{ann}}\) therefore yields
\[
0=p(T_{\mathrm{ann}})
=P(T_{\mathrm{ann}})\bar X(T_{\mathrm{ann}})+\ell(T_{\mathrm{ann}})
=\ell(T_{\mathrm{ann}})=\mathfrak q
\]
and so \(\mathfrak q=0\). Hence
\(\ker(\mathcal E^{\mathrm{post}}_{\ell\ell})=\{0\}\) and
\(\mathcal E^{\mathrm{post}}_{\ell\ell}\) is invertible.

It remains to prove that the matching matrix \(\mathcal M\)
from \eqref{eq:mf_matching_matrix} is invertible.
Let \(\mathfrak q\in\ker(\mathcal M)\). Run
\eqref{eq:mf_homogeneous_transition_ode} on \([0,T_{\mathrm{ann}}]\) from
\[
\bar X(0)=0,
\qquad
I(0)=0,
\qquad
\ell(0)=\mathfrak q.
\]
By definition of \(\mathcal E^{\mathrm{pre}}\), the state at \(T_{\mathrm{ann}}\) is
\[
\bar X(T_{\mathrm{ann}})=\mathcal E^{\mathrm{pre}}_{X\ell}\mathfrak q,
\qquad
I(T_{\mathrm{ann}})=\mathcal E^{\mathrm{pre}}_{I\ell}\mathfrak q,
\qquad
\ell(T_{\mathrm{ann}})=\mathcal E^{\mathrm{pre}}_{\ell\ell}\mathfrak q.
\]
Using
\(\mathcal E(T,0)=\mathcal E(T,T_{\mathrm{ann}})
\mathcal E(T_{\mathrm{ann}},0)\) and
\eqref{eq:mf_matching_matrix}, the assumption
\(\mathfrak q\in\ker(\mathcal M)\) gives
\[
\mathcal E^{\mathrm{post}}_{\ell X}\bar X(T_{\mathrm{ann}})
+
\mathcal E^{\mathrm{post}}_{\ell I}I(T_{\mathrm{ann}})
+
\mathcal E^{\mathrm{post}}_{\ell\ell}\ell(T_{\mathrm{ann}})
=\mathcal M\mathfrak q
=0.
\]
By definition of \(\mathcal E^{\mathrm{post}}\), this implies that the solution to
\eqref{eq:mf_homogeneous_transition_ode} on
\([T_{\mathrm{ann}},T]\), starting from
\((\bar X(T_{\mathrm{ann}}),I(T_{\mathrm{ann}}),\ell(T_{\mathrm{ann}}))\), has terminal
affine coefficient \(\ell(T)=0\). Equivalently, running the homogeneous ODE over
the combined interval \([0,T]\) with \(\bar X(0)=0\), \(I(0)=0\), and
\(\ell(0)=\mathfrak q\)
produces a solution satisfying \(\ell(T)=0\). The same energy argument used for
\(\mathcal E^{\mathrm{post}}_{\ell\ell}\), now on \([0,T]\), gives
\(\bar u\equiv0\), hence \(\bar X\equiv0\), \(I\equiv0\), and
\(p\equiv0\).
Since \(p\equiv0\), \(\bar X(0)=0\), and \(\ell(0)=\mathfrak q\), evaluating
\(p=P\bar X+\ell\) at \(t=0\) gives
\(0=p(0)=\mathfrak q\). Thus
\(\ker(\mathcal M)=\{0\}\), and \(\mathcal M\) is invertible.
\end{proof}

\subsection{Proof of Theorem~\ref{thm:finite_to_mf_convergence}}
\label{subsec:proof_finite_to_mf_convergence}

\begin{proof}
The proof has four steps.

\emph{Step 1: Finite Riccati Coefficients.}
Recall that \(H_n:[0,T]\to\mathbb S^{3d}\) is the \(3d\times3d\) finite-player
Riccati coefficient from \eqref{eq:fixed_interval_riccati_P}. In contrast, the
mean-field Riccati coefficient \(P:[0,T]\to\mathbb S^d\) from
\eqref{eq:mf_riccati} is \(d\times d\). Thus \(H_n\) and \(P\) cannot be
compared as whole matrices. The goal of this step is to show that \(H_n\)
converges to the solution \(H:[0,T]\to\mathbb S^{3d}\) of the limiting
three-block equation, and then to identify
\([H]_{\delta\delta}\) with \(P\).

The Riccati equation for \(H_n\) can be written as
\begin{equation}\label{eq:finite_riccati_vector_field_ode}
0=\dot H_n(t)+\mathscr F_n(H_n(t)),
\qquad
H_n(T)=H_T
\end{equation}
where the vector field \(\mathscr F_n(H_0)\) is given by
\[
\mathscr F_n(H_0)
:=
Q_z
+M_n(H_0)^\top H_0
+H_0 M_n(H_0)
-2D_n^\top
\Theta_n(H_0)^\top\Lambda^{-1}\Theta_n(H_0)D_n.
\]

We now take the limit of \(\mathscr F_n\) term by term. All estimates below are uniform
for \(H_0\) in bounded subsets of \(\mathbb S^{3d}\). First, from
\eqref{eq:fixed_interval_D_def},
\[
D_n
=
\operatorname{diag}
\left(
\frac1{2-\varpi\chi_u/n}\operatorname{Id}_d,
\frac1{2+\varpi\chi_u(n-1)/n}\operatorname{Id}_d,
\frac1{2+\varpi\chi_u(n-1)/n}\operatorname{Id}_d
\right).
\]
Hence
\[
D_n
=
D+O\!\left(\frac1n\right),
\qquad
D:=
\operatorname{diag}
\left(\frac12\operatorname{Id}_d,\frac{1}{2+\varpi\chi_u}\operatorname{Id}_d,
\frac{1}{2+\varpi\chi_u}\operatorname{Id}_d\right).
\]
Next, from \eqref{eq:fixed_interval_matrix_maps_def},
\[
\Theta_n(H_0)
=
\Theta(H_0)+O\!\left(\frac1n\right),
\qquad
\Theta(H_0):=E_\iota+B_\delta^\top H_0,
\qquad
B_\delta:=
\begin{bmatrix}
\operatorname{Id}_d\\0\\0
\end{bmatrix}.
\]
Finally, from \eqref{eq:fixed_interval_matrix_maps_def},
\[
M_n(H_0)
=
M(H_0)+O\!\left(\frac1n\right),
\]
where
\[
\begin{aligned}
M(H_0)&:=
A_z+\frac{1}{2+\varpi\chi_u} B_{\mathrm{com}}\Lambda^{-1}\Theta(H_0)E_{\mathrm{com}},
\\
B_{\mathrm{com}}&:=
\begin{bmatrix}
\operatorname{Id}_d\\-\operatorname{Id}_d\\-\varpi\Gamma
\end{bmatrix},
\\
E_{\mathrm{com}}&:=
\operatorname{diag}(0,\operatorname{Id}_d,\operatorname{Id}_d).
\end{aligned}
\]
Indeed,
\[
B_n^{-}=-\frac1n B_{\mathrm{com}},
\qquad
\frac1n F_n
=
-\frac{1}{2+\varpi\chi_u} E_{\mathrm{com}}+O\!\left(\frac1n\right),
\]
so the order-\(n\) \(\bar x\)- and \(\iota\)-blocks of \(F_n\) offset the
\(1/n\)-scale of \(B_n^{-}\).
Combining these three estimates gives, for every compact
\(\mathfrak C\subset\mathbb S^{3d}\), a constant
\(C_{\mathfrak C}<\infty\), independent of \(n\), such that
\begin{equation}\label{eq:finite_riccati_vector_field_convergence}
\sup_{H_0\in\mathfrak C}
\|\mathscr F_n(H_0)-\mathscr F(H_0)\|
\le
\frac{C_{\mathfrak C}}{n},
\end{equation}
where \(\|\cdot\|\) is any fixed matrix norm.
Here,
\[
\mathscr F(H_0)
:=
Q_z
+M(H_0)^\top H_0
+H_0 M(H_0)
-2D^\top
\Theta(H_0)^\top\Lambda^{-1}
\Theta(H_0)D.
\]

Since \(\mathscr F\) is locally Lipschitz, let \(H\) be the unique
maximal backward solution of
\begin{equation}\label{eq:limiting_three_block_riccati}
0=\dot{H}(t)+\mathscr F(H(t)),
\qquad
H(T)=H_T.
\end{equation}
Write its maximal interval as \((\underline t,T]\).
Theorem~\ref{thm:global_riccati_solvability} gives a constant \(C_0\), independent
of \(n\), such that
\begin{equation}\label{eq:finite_riccati_uniform_bound}
\sup_{n\ge n_0}\sup_{t\in[0,T]}\lVert H_n(t)\rVert\le C_0.
\end{equation}
For \(s\in(\underline t,T)\), define
\[
\varrho_s
:=
1+\max\left\{
C_0,\,
\sup_{t\in[s,T]}\lVert H(t)\rVert
\right\},
\qquad
\mathbb B_s
:=
\left\{H_0\in\mathbb S^{3d}:\lVert H_0\rVert\le \varrho_s\right\}.
\]
The compact set \(\mathbb B_s\) contains \(H(t)\) and \(H_n(t)\)
for every \(n\ge n_0\) and \(t\in[s,T]\). Choose \(L_s<\infty\) such that
\[
\lVert \mathscr F(H_0)-\mathscr F(H_1)\rVert
\le
L_s\lVert H_0-H_1\rVert,
\qquad
H_0,H_1\in\mathbb B_s,
\]
and, by \eqref{eq:finite_riccati_vector_field_convergence}, choose
\(c_s<\infty\) such that
\[
\sup_{H_0\in\mathbb B_s}
\lVert \mathscr F_n(H_0)-\mathscr F(H_0)\rVert
\le
\frac{c_s}{n},
\qquad n\ge n_0.
\]
Since
\(H_n(T)=H(T)\), for \(t\in[s,T]\),
\[
\begin{aligned}
\lVert H_n(t)-H(t)\rVert
&\le
\int_t^T
\left\lVert
\mathscr F_n(H_n(t'))-\mathscr F(H_n(t'))
\right\rVert\,dt'
\\
&\quad+
\int_t^T
\left\lVert
\mathscr F(H_n(t'))-\mathscr F(H(t'))
\right\rVert\,dt'
\\
&\le
\frac{c_s}{n}(T-t)
+L_s\int_t^T\lVert H_n(t')-H(t')\rVert\,dt'.
\end{aligned}
\]
Gronwall therefore gives a constant \(C_s<\infty\), independent of
\(n\), such that
\begin{equation}\label{eq:finite_riccati_local_convergence}
\sup_{t\in[s,T]}\lVert H_n(t)-H(t)\rVert
\le\frac{C_s}{n}.
\end{equation}
Combining \eqref{eq:finite_riccati_uniform_bound} and
\eqref{eq:finite_riccati_local_convergence}, and taking \(n\) sufficiently
large, shows that
\begin{equation}\label{eq:limiting_riccati_uniform_bound}
\sup_{t\in[s,T]}\lVert H(t)\rVert\le C_0+1.
\end{equation}
The bound in \eqref{eq:limiting_riccati_uniform_bound} does not depend on
\(s\), so \(H\) cannot blow up as \(t\downarrow\underline t\).
The continuation criterion therefore shows that \(H\)
can be extended to the full interval \([0,T]\) and, together with all \(H_n\), remains in one compact
set. On this set, \(\mathscr F\) is Lipschitz and
\eqref{eq:finite_riccati_vector_field_convergence} is uniform. The preceding
Gronwall argument with \(s=0\) yields
\begin{equation}\label{eq:finite_riccati_limit_to_three_block}
\sup_{t\in[0,T]}\|H_n(t)-H(t)\|
\le
\frac{C}{n}.
\end{equation}

Reading off the \((\delta,\delta)\)-block of
\eqref{eq:limiting_three_block_riccati} gives
\begin{equation}\label{eq:limiting_riccati_deviation_block}
\frac{d}{dt}[H]_{\delta\delta}
=- Q
+\frac12[H]_{\delta\delta}\Lambda^{-1}
[H]_{\delta\delta},
\qquad
[H(T)]_{\delta\delta}=Q_T.
\end{equation}
This is the mean-field Riccati equation \eqref{eq:mf_riccati}, so uniqueness
gives \([H]_{\delta\delta}=P\).

Set \(P_n(t):=[H_n(t)]_{\delta\delta}\). Taking the
\((\delta,\delta)\)-block in
\eqref{eq:finite_riccati_limit_to_three_block} gives
\begin{equation}\label{eq:finite_riccati_deviation_block_convergence}
\sup_{t\in[0,T]}
\lVert P_n(t)-P(t)\rVert
\le\frac{C}{n}.
\end{equation}
Step 4 also requires convergence of the deviation-feedback block, which
determines the coefficient in the individual-deviation equation. Indeed,
\[
\bigl[\Theta_n(H_n)\bigr]_\delta
=
P_n
+\frac1n
\left(
[H_n]_{\bar x\delta}-P_n
\right)
+\frac{\varpi}{n}\Gamma^\top[H_n]_{\iota\delta}.
\]
The uniform bound \eqref{eq:finite_riccati_uniform_bound} makes every block
in the last two terms uniformly bounded, so those terms are \(O(n^{-1})\).
Together with \eqref{eq:finite_riccati_deviation_block_convergence}, this gives
\begin{equation}\label{eq:finite_theta_delta_convergence}
\sup_{t\in[0,T]}
\left\lVert
\bigl[\Theta_n(H_n(t))\bigr]_\delta-P(t)
\right\rVert
\le
\frac{C}{n}.
\end{equation}

\emph{Step 2: Augmented Mean-Field Common-State System Approximation.}
The goal of this step is to augment the finite-player common state
\((\bar X_n^k,I_n^k)\) with an auxiliary affine coordinate \(\ell_n^k\).
We show that the vector field of the resulting three-block system
\(Y_n^k:=(\bar X_n^k,I_n^k,\ell_n^k)^\top\) differs by \(O(n^{-1})\)
from the mean-field vector field in \eqref{eq:mf_y_system} on bounded sets.

Under the scaling \(\varpi_n=\varpi/n\), the finite common-state
systems \eqref{eq:pre_ann_common_subsystem} and
\eqref{eq:post_ann_common_subsystem} can be written in the same form on
\([0,T_{\mathrm{ann}}]\) for \(k=0\) and on
\([T_{\mathrm{ann}},T]\) for \(k=1,\ldots,m\), as
\begin{subequations}\label{eq:finite_common_original_system}
\begin{align}
\dot{\bar X}_n^k
&=
-\frac1{d_n}\Lambda^{-1}
\left(
[\Theta_n(H_n)]_{\bar x}\bar X_n^k
+[\Theta_n(H_n)]_{\iota}I_n^k
+B_n^\top r_n^k
+\chi_g\Lambda g^k
\right),
\label{eq:finite_common_original_X}\\
\dot I_n^k
&=
-RI_n^k
-\frac{\varpi}{d_n}\Gamma\Lambda^{-1}
\left(
[\Theta_n(H_n)]_{\bar x}\bar X_n^k
+[\Theta_n(H_n)]_{\iota}I_n^k
+B_n^\top r_n^k
+\chi_g\Lambda g^k
\right)
+\Gamma g^k.
\label{eq:finite_common_original_I}
\end{align}
\end{subequations}
To put the finite equations in the form as their mean-field counterparts in
\eqref{eq:mf_y_system}, introduce the
\(d\)-dimensional auxiliary affine coefficient
\begin{equation}\label{eq:finite_common_auxiliary_ell_definition}
\ell_n^k
:=
\left(
[\Theta_n(H_n)]_{\bar x}-P
\right)\bar X_n^k
+
\left(
[\Theta_n(H_n)]_{\iota}-\operatorname{Id}_d
\right)I_n^k
+B_n^\top r_n^k,
\qquad k=0,\ldots,m.
\end{equation}
Substituting
\eqref{eq:finite_common_auxiliary_ell_definition} into
\eqref{eq:finite_common_original_system} gives
\begin{subequations}\label{eq:finite_common_auxiliary_ell_state_system}
\begin{align}
\dot{\bar X}_n^k
&=
-\frac1{d_n}\Lambda^{-1}
\left(
P\bar X_n^k+I_n^k+\ell_n^k+\chi_g\Lambda g^k
\right),
\label{eq:finite_common_auxiliary_ell_X}\\
\dot I_n^k
&=
-RI_n^k
-\frac{\varpi}{d_n}\Gamma\Lambda^{-1}
\left(
P\bar X_n^k+I_n^k+\ell_n^k+\chi_g\Lambda g^k
\right)
+\Gamma g^k.
\label{eq:finite_common_auxiliary_ell_I}
\end{align}
\end{subequations}
These equations coincide with the \((\bar X,I)\)-components of
\eqref{eq:mf_y_system}, except that \((2+\varpi\chi_u)\) is replaced by \(d_n\).
To close the finite system, it remains to derive the equation for
\(\ell_n^k\).

Differentiating \eqref{eq:finite_common_auxiliary_ell_definition} and using
\eqref{eq:finite_common_auxiliary_ell_state_system},
\eqref{eq:mf_riccati}, \eqref{eq:fixed_interval_riccati_P},
\eqref{eq:post_ann_common_r}, and
\eqref{eq:pre_ann_common_r_ode} gives
\begin{equation}\label{eq:finite_common_auxiliary_ell_equation}
\begin{aligned}
\dot\ell_n^k(t)
={}&
-\frac{\varpi\chi_u}{2(2+\varpi\chi_u)}
P(t)\Lambda^{-1}P(t)\bar X_n^k(t)
\\
&\quad
+\frac{1}{2+\varpi\chi_u} P(t)\Lambda^{-1}
\left(
I_n^k(t)+\ell_n^k(t)+\chi_g\Lambda g^k(t)
\right)
\\
&\quad
+\upsilon_n^k
\left(
t,\bar X_n^k(t),I_n^k(t),\ell_n^k(t)
\right).
\end{aligned}
\end{equation}
For \(t\in[0,T_{\mathrm{ann}}]\) when \(k=0\) and
\(t\in[T_{\mathrm{ann}},T]\) when \(k=1,\ldots,m\), and for
\(\bar x,\iota,\lambda\in\mathbb R^d\), the residual \(\upsilon_n^k\) is given by
\begin{equation}\label{eq:finite_common_auxiliary_ell_residual_identity}
\begin{aligned}
\upsilon_n^k(t,\bar x,\iota,\lambda)
:={}&
-\left(
M_n(H_n(t))B_n
\right)^\top
\left[
H_n(t)
\begin{pmatrix}
0\\
\bar x\\
\iota
\end{pmatrix}
+r_n^k(t)
\right]
\\
&\quad
+\frac1{d_n}
\left[
2\left(
\Theta_n(H_n(t))D_nB_n
\right)^\top
-B_n^\top H_n(t)B_n
\right]
\\
&\qquad {}\times
\Lambda^{-1}
\left(
P(t)\bar x+\iota+\lambda+\chi_g\Lambda g^k(t)
\right)
\\
&\quad
+\left(
\frac1{d_n}-\frac{1}{2+\varpi\chi_u}
\right)
P(t)\Lambda^{-1}
\left(
P(t)\bar x+\iota+\lambda+\chi_g\Lambda g^k(t)
\right).
\end{aligned}
\end{equation}

With this definition,
\eqref{eq:finite_common_auxiliary_ell_equation} mirrors the equation for
\(\ell^k\) in the mean-field system \eqref{eq:mf_y_system}, apart from the
remainder \(\upsilon_n^k\), which we now show is \(O(n^{-1})\), uniformly on
bounded sets of \((\bar x,\iota,\lambda)\). First,
\begin{equation}\label{eq:finite_common_affine_uniform_bound}
\max\left\{
\sup_{t\in[0,T_{\mathrm{ann}}]}\lVert r_n^0(t)\rVert,
\max_{k=1,\ldots,m}\sup_{t\in[T_{\mathrm{ann}},T]}
\lVert r_n^k(t)\rVert
\right\}
\leq
C.
\end{equation}
Indeed, \eqref{eq:finite_riccati_uniform_bound} implies that, after the
announcement, these linear equations have uniformly bounded coefficients and
forcing, and have zero terminal data. Before the announcement, the terminal
condition is a convex combination of the resulting bounded values, and the
average affine equation has the same boundedness properties.
Further, the matrix coefficients appearing in
\eqref{eq:finite_common_auxiliary_ell_residual_identity} satisfy, uniformly in
\(t\in[0,T]\),
\begin{equation}\label{eq:finite_common_residual_matrix_bounds}
\begin{aligned}
M_n(H_n(t))B_n
&=O(n^{-1}),
\\
2\left(
\Theta_n(H_n(t))D_nB_n
\right)^\top
-B_n^\top H_n(t)B_n
&=O(n^{-1}).
\end{aligned}
\end{equation}
Indeed, the \(O(n^{-1})\) estimates from Step~1 give
\[
M_n(H_n(t))B_n
=
M(H(t))B_\delta+O(n^{-1}),
\]
where \(A_zB_\delta=E_{\mathrm{com}}B_\delta=0\) gives
\[
\begin{aligned}
M(H(t))B_\delta
&=
A_zB_\delta
+\frac{1}{2+\varpi\chi_u}
B_{\mathrm{com}}\Lambda^{-1}
\Theta(H(t))E_{\mathrm{com}}B_\delta
=0.
\end{aligned}
\]
Likewise, the estimates from Step~1 also give
\[
\begin{aligned}
2\left(
\Theta_n(H_n(t))D_nB_n
\right)^\top
-B_n^\top H_n(t)B_n
&=
2\left(
\Theta(H(t))DB_\delta
\right)^\top
-B_\delta^\top H(t)B_\delta
+O(n^{-1})\\
&=O(n^{-1}).
\end{aligned}
\]
Here \(DB_\delta=\frac12B_\delta\),
\(E_\iota B_\delta=0\), and
\(B_\delta^\top H(t)B_\delta=P(t)\) give
\[
\begin{aligned}
2\left(
\Theta(H(t))DB_\delta
\right)^\top
-B_\delta^\top H(t)B_\delta
&=
\left(
B_\delta^\top H(t)B_\delta
\right)^\top
-B_\delta^\top H(t)B_\delta
\\
&=P(t)-P(t)
=0.
\end{aligned}
\]
Together with \(d_n^{-1}-(2+\varpi\chi_u)^{-1}=O(n^{-1})\),
\eqref{eq:finite_common_affine_uniform_bound}, and
\eqref{eq:finite_riccati_uniform_bound}, this shows that
\(\upsilon_n^k=O(n^{-1})\), uniformly in time and on compact subsets of
\(\mathbb R^{3d}\). Since \(\upsilon_n^k(t,\cdot)\) is affine, its linear and
constant coefficients are \(O(n^{-1})\) as well. Hence
\eqref{eq:finite_common_auxiliary_ell_state_system} and
\eqref{eq:finite_common_auxiliary_ell_equation} can be written as
\begin{equation}\label{eq:finite_common_vector_system}
\dot{Y}_n^k
=
\mathcal A_n(t)Y_n^k
+b_n^k(t),
\end{equation}
for a matrix \(\mathcal A_n\) and vectors \(b_n^k\). Above we have shown that their coefficients satisfy
\begin{equation}\label{eq:finite_common_coefficient_convergence}
\sup_{t\in[0,T]}
\left\|
\mathcal A_n(t)-\mathcal A(t)
\right\|
+
\max\left\{
\begin{aligned}
&\sup_{t\in[0,T_{\mathrm{ann}}]}
\left\|b_n^0(t)-b^0(t)\right\|,\\
&\max_{k=1,\ldots,m}\sup_{t\in[T_{\mathrm{ann}},T]}
\left\|b_n^k(t)-b^k(t)\right\|
\end{aligned}
\right\}
\leq
\frac{C}{n}.
\end{equation}

We next recall the boundary and matching conditions. For the mean-field
system, for \(k=1,\ldots,m\), they are
\begin{equation}\label{eq:mf_common_boundary_matching_conditions}
\begin{aligned}
\bar X^0(0)&=I^0(0)=0,
&
\begin{pmatrix}
\bar X^k(T_{\mathrm{ann}})\\
I^k(T_{\mathrm{ann}})
\end{pmatrix}
&=
\begin{pmatrix}
\bar X^0(T_{\mathrm{ann}})\\
I^0(T_{\mathrm{ann}})
\end{pmatrix},
\\
\ell^k(T)&=0,
&
\ell^0(T_{\mathrm{ann}})
&=
\sum_{k'=1}^m
\bar\pi^{k'}\ell^{k'}(T_{\mathrm{ann}}).
\end{aligned}
\end{equation}
The finite boundary conditions and
\eqref{eq:finite_common_auxiliary_ell_definition} give, for
\(k=1,\ldots,m\),
\begin{equation}\label{eq:finite_common_auxiliary_ell_matching}
\begin{aligned}
\bar X_n^0(0)&=I_n^0(0)=0,
&
\begin{pmatrix}
\bar X_n^k(T_{\mathrm{ann}})\\
I_n^k(T_{\mathrm{ann}})
\end{pmatrix}
&=
\begin{pmatrix}
\bar X_n^0(T_{\mathrm{ann}})\\
I_n^0(T_{\mathrm{ann}})
\end{pmatrix},
\\
\ell_n^k(T)&=0,
&
\ell_n^0(T_{\mathrm{ann}})
&=
\sum_{k'=1}^m
\bar\pi_n^{k'}\ell_n^{k'}(T_{\mathrm{ann}}).
\end{aligned}
\end{equation}

In summary, \eqref{eq:finite_common_coefficient_convergence} implies that the
finite and mean-field ODE vector fields are \(O(n^{-1})\) apart on bounded
sets, while \eqref{eq:mf_common_boundary_matching_conditions} and
\eqref{eq:finite_common_auxiliary_ell_matching} imply that their boundary and
matching conditions are \(O(\|\bar\pi_n-\bar\pi\|)\) apart.

\emph{Step 3: Stability of the Augmented Mean-Field Common-State
Boundary-Value Problem.}
The goal of this step is to show that the \(O(n^{-1})\) vector-field error and
the \(O(\|\bar\pi_n-\bar\pi\|)\) matching error imply that the corresponding
solutions are \(O(n^{-1}+\|\bar\pi_n-\bar\pi\|)\) apart.
Unlike for an initial-value problem, this does not follow directly from
Gronwall's inequality: \((\bar X,I)\) have initial conditions, the
post-announcement affine coefficients have terminal conditions, and the
pre- and post-announcement paths are coupled through the matching conditions
at \(T_{\mathrm{ann}}\).

The uniform bounds \eqref{eq:finite_riccati_uniform_bound} and
\eqref{eq:finite_common_affine_uniform_bound} imply that
\eqref{eq:finite_common_original_system} has uniformly bounded coefficients
and forcing. Gronwall's inequality and
\eqref{eq:finite_common_auxiliary_ell_definition} therefore give
\[
\max\left\{
\sup_{t\in[0,T_{\mathrm{ann}}]}\|Y_n^0(t)\|,
\max_{k=1,\ldots,m}\sup_{t\in[T_{\mathrm{ann}},T]}\|Y_n^k(t)\|
\right\}
\leq
C.
\]

Set
\[
e_n^k
:=
Y_n^k-Y^k
=:
\begin{pmatrix}
e_{n,X}^k\\
e_{n,I}^k\\
e_{n,\ell}^k
\end{pmatrix}.
\]
Subtracting the finite and mean-field ODEs gives
\[
\dot e_n^k
=
\mathcal Ae_n^k+\Delta b_n^k,
\qquad
\Delta b_n^k
:=
\left(
\mathcal A_n-\mathcal A
\right)Y_n^k
+
b_n^k-b^k.
\]
By \eqref{eq:finite_common_coefficient_convergence} and the uniform bound on
\(Y_n^k\), \(\Delta b_n^k=O(n^{-1})\) uniformly on the corresponding time
intervals.

For \(k=1,\ldots,m\), the boundary and matching conditions for the error
system are
\[
\begin{aligned}
e_{n,X}^0(0)&=0,
&
e_{n,I}^0(0)&=0,
\\
e_{n,X}^k(T_{\mathrm{ann}})
&=e_{n,X}^0(T_{\mathrm{ann}}),
&
e_{n,I}^k(T_{\mathrm{ann}})
&=e_{n,I}^0(T_{\mathrm{ann}}),
\\
e_{n,\ell}^k(T)&=0,
&
e_{n,\ell}^0(T_{\mathrm{ann}})
&=
\sum_{k'=1}^m
\bar\pi^{k'} e_{n,\ell}^{k'}(T_{\mathrm{ann}})
+
e_{n,\pi},
\end{aligned}
\]
where
\[
e_{n,\pi}
:=
\sum_{k'=1}^m
\left(
\bar\pi_n^{k'}-\bar\pi^{k'}
\right)
\ell_n^{k'}(T_{\mathrm{ann}}).
\]
The uniform bound on \(Y_n^k\) gives
\[
\|e_{n,\pi}\|
\leq
C\|\bar\pi_n-\bar\pi\|.
\]

Set \(q_n:=e_{n,\ell}^0(0)\). Variation of constants on the
pre-announcement interval gives
\[
e_n^0(T_{\mathrm{ann}})
=
\mathcal E^{\mathrm{pre}}
\begin{pmatrix}
0\\0\\q_n
\end{pmatrix}
+\mathcal R_n^0,
\qquad
\mathcal R_n^0
:=
\int_0^{T_{\mathrm{ann}}}
\mathcal E(T_{\mathrm{ann}},s)\Delta b_n^0(s)\,ds.
\]
For each \(k=1,\ldots,m\), variation of constants on the
post-announcement interval and \(e_{n,\ell}^k(T)=0\) give
\begin{equation}\label{eq:finite_common_post_error_terminal_identity}
\begin{aligned}
0
={}&
\mathcal E^{\mathrm{post}}_{\ell X}e_{n,X}^0(T_{\mathrm{ann}})
+\mathcal E^{\mathrm{post}}_{\ell I}e_{n,I}^0(T_{\mathrm{ann}})
+\mathcal E^{\mathrm{post}}_{\ell\ell}e_{n,\ell}^k(T_{\mathrm{ann}})
+\mathcal R_{n,\ell}^k,
\\
\mathcal R_{n,\ell}^k
:={}&
\int_{T_{\mathrm{ann}}}^T
\begin{bmatrix}
\mathcal E_{\ell X}(T,s)&
\mathcal E_{\ell I}(T,s)&
\mathcal E_{\ell\ell}(T,s)
\end{bmatrix}
\Delta b_n^k(s)\,ds.
\end{aligned}
\end{equation}
Averaging the post-announcement terminal identities with weights
\(\bar\pi^k\) and using the announcement-time matching condition yields
\[
\begin{aligned}
0
={}&
\mathcal E^{\mathrm{post}}_{\ell X}e_{n,X}^0(T_{\mathrm{ann}})
+\mathcal E^{\mathrm{post}}_{\ell I}e_{n,I}^0(T_{\mathrm{ann}})
+
\mathcal E^{\mathrm{post}}_{\ell\ell}
\left(e_{n,\ell}^0(T_{\mathrm{ann}})-e_{n,\pi}\right)
+\sum_{k=1}^m\bar\pi^k\mathcal R_{n,\ell}^k.
\end{aligned}
\]
Substituting the pre-announcement representation and using
\(\mathcal E^{\mathrm{post}}\mathcal E^{\mathrm{pre}}=\mathcal E(T,0)\)
therefore gives
\[
\mathcal Mq_n
=
\mathcal E^{\mathrm{post}}_{\ell\ell}e_{n,\pi}
+\zeta_n,
\]
where
\[
\zeta_n
:=
-
\begin{bmatrix}
\mathcal E^{\mathrm{post}}_{\ell X}&
\mathcal E^{\mathrm{post}}_{\ell I}&
\mathcal E^{\mathrm{post}}_{\ell\ell}
\end{bmatrix}
\mathcal R_n^0
-
\sum_{k=1}^m\bar\pi^k\mathcal R_{n,\ell}^k.
\]
The bound on \(\Delta b_n^k\) and boundedness of \(\mathcal E\) imply
\[
\|\mathcal R_n^0\|
+\max_{1\leq k\leq m}\|\mathcal R_{n,\ell}^k\|
\leq\frac{C}{n},
\qquad
\|\zeta_n\|\leq\frac{C}{n}.
\]
Since \(\mathcal M\) is invertible by
Theorem~\ref{thm:mf_implementability},
\(\|q_n\|\leq C\varepsilon_n\). The pre-announcement error starts from
\((0,0,q_n)^\top\), so variation of constants and boundedness of
\(\mathcal E\) give
\[
\sup_{t\in[0,T_{\mathrm{ann}}]}\|e_n^0(t)\|
\leq C\varepsilon_n.
\]

Solving \eqref{eq:finite_common_post_error_terminal_identity} for
\(e_{n,\ell}^k(T_{\mathrm{ann}})\) gives
\[
e_{n,\ell}^k(T_{\mathrm{ann}})
=
-\left(\mathcal E^{\mathrm{post}}_{\ell\ell}\right)^{-1}
\left(
\mathcal E^{\mathrm{post}}_{\ell X}e_{n,X}^0(T_{\mathrm{ann}})
+\mathcal E^{\mathrm{post}}_{\ell I}e_{n,I}^0(T_{\mathrm{ann}})
+\mathcal R_{n,\ell}^k
\right)
\]
where we again used that by Theorem~\ref{thm:mf_implementability},
\(\mathcal E^{\mathrm{post}}_{\ell\ell}\) is invertible. The right-hand side
is \(O(\varepsilon_n)\), since \(e_n^0(T_{\mathrm{ann}})\) and
\(\mathcal R_{n,\ell}^k\) are both \(O(\varepsilon_n)\). Variation of constants
and boundedness of \(\mathcal E\) on each post-announcement interval yield
\begin{equation}\label{eq:finite_common_state_convergence}
\max\left\{
\begin{aligned}
&\sup_{t\in[0,T_{\mathrm{ann}}]}
\left\|Y_n^0(t)-Y^0(t)\right\|,\\
&\max_{k=1,\ldots,m}\sup_{t\in[T_{\mathrm{ann}},T]}
\left\|Y_n^k(t)-Y^k(t)\right\|
\end{aligned}
\right\}
\leq
C\varepsilon_n.
\end{equation}

Since \(\dot{\bar X}_n^k=\bar u_n^k\) and
\(\dot{\bar X}^k=\bar u^k\), taking the first components of the finite and
mean-field ODEs and using
\eqref{eq:finite_common_coefficient_convergence} and
\eqref{eq:finite_common_state_convergence} gives
\begin{equation}\label{eq:finite_common_flow_convergence}
\max\left\{
\begin{aligned}
&\sup_{t\in[0,T_{\mathrm{ann}}]}
\|\bar u_n^{0}(t)-\bar u^0(t)\|,\\
&\max_{k=1,\ldots,m}\sup_{t\in[T_{\mathrm{ann}},T]}
\|\bar u_n^{k}(t)-\bar u^k(t)\|
\end{aligned}
\right\}
\leq
C\varepsilon_n.
\end{equation}
Equation~\eqref{eq:finite_common_flow_convergence} proves
\eqref{eq:finite_to_mf_common_estimate}, while the \((\bar X,I)\)-components
of \eqref{eq:finite_common_state_convergence} give the corresponding mean
inventory and impact-state estimates.

\emph{Step 4: Player-Level Convergence.}
In this step, we prove that each finite-player inventory deviation \(\delta^{i,k}_n(t)\) is
\(O(\varepsilon_n)\)-close, uniformly in \(i\), to the mean-field deviation \(\delta^{\pi_i,k}(t)\)
of type \(\pi_n^i\), both before and after the announcement. Combining the
common-state and deviation estimates then gives the player-level inventory
and rate estimates.

Set
\[
\widetilde\ell_n^{i,0}(t)
:=B_n^\top\widetilde r_n^{i,0}(t).
\]
By \eqref{eq:pre_ann_delta_dynamics} and
\eqref{eq:mf_pre_deviation_dynamics}, the pre-announcement deviations satisfy
\[
\dot\delta_n^{i,0}(t)
=
-\frac1{a_n}\Lambda^{-1}
\left(
[\Theta_n(H_n(t))]_{\delta}\delta_n^{i,0}(t)
+\widetilde\ell_n^{i,0}(t)
\right),
\qquad
\delta_n^{i,0}(0)=0,
\]
and
\[
\dot\delta^{\pi_n^i,0}(t)
=
-\frac12\Lambda^{-1}
\left(
P(t)\delta^{\pi_n^i,0}(t)
+\widetilde\ell^{\pi_n^i,0}(t)
\right),
\qquad
\delta^{\pi_n^i,0}(0)=0.
\]
By \eqref{eq:post_ann_delta_dynamics} and
\eqref{eq:mf_post_deviation_dynamics}, the post-announcement deviations
satisfy, for \(k=1,\ldots,m\),
\[
\dot\delta_n^{i,k}(t)
=
-\frac1{a_n}\Lambda^{-1}
[\Theta_n(H_n(t))]_{\delta}\delta_n^{i,k}(t),
\qquad
\delta_n^{i,k}(T_{\mathrm{ann}})
=\delta_n^{i,0}(T_{\mathrm{ann}}),
\]
and
\[
\dot\delta^{\pi_n^i,k}(t)
=
-\frac12\Lambda^{-1}P(t)\delta^{\pi_n^i,k}(t),
\qquad
\delta^{\pi_n^i,k}(T_{\mathrm{ann}})
=\delta^{\pi_n^i,0}(T_{\mathrm{ann}}).
\]
The pre-announcement initial conditions agree, and the post-announcement
initial difference equals the pre-announcement terminal difference. Moreover,
\eqref{eq:finite_theta_delta_convergence} and
\(a_n=2-\varpi\chi_u/n\) give
\begin{equation}\label{eq:finite_deviation_coefficient_convergence}
\sup_{t\in[0,T]}
\left\|
\frac1{a_n}[\Theta_n(H_n(t))]_{\delta}
-\frac12P(t)
\right\|
\leq
\frac{C}{n}.
\end{equation}
Thus the linear state coefficients in both pairs of deviation equations are
\(O(n^{-1})\) apart. The post-announcement equations have no affine forcing.
For the pre-announcement equations, it remains to prove that
\(\widetilde\ell_n^{i,0}-\widetilde\ell^{\pi_n^i,0}\) is
\(O(\varepsilon_n)\), uniformly in \(i\) and time. Once established, this
estimate and \(a_n^{-1}-1/2=O(n^{-1})\) make the pre-announcement vector
fields \(O(\varepsilon_n)\) apart. Gronwall then gives the pre-announcement
deviation estimate. The inherited terminal difference and the
post-announcement coefficient estimate give the post-announcement deviation
estimate by a second application of Gronwall.

We now turn to showing that \(\widetilde\ell_n^{i,0}-\widetilde\ell^{\pi_n^i,0}\) is
\(O(\varepsilon_n)\). Applying \(B_n^\top\) to \eqref{eq:pre_ann_deviation_r_ode} gives
\[
\begin{aligned}
\dot{\widetilde\ell}_n^{i,0}
={}&
-\left(M_n(H_n)B_n\right)^\top\widetilde r_n^{i,0}
+\frac1{a_n}
\left[
2\left(\Theta_n(H_n)D_nB_n\right)^\top
-B_n^\top H_nB_n^-
\right]
\Lambda^{-1}\widetilde\ell_n^{i,0}.
\end{aligned}
\]
Its mean-field counterpart satisfies
\[
\dot{\widetilde\ell}^{\pi_n^i,0}(t)
=
\frac12P(t)\Lambda^{-1}\widetilde\ell^{\pi_n^i,0}(t)
\]
by \eqref{eq:mf_centered_affine_dynamics}. The mean-field equation has no
additive forcing. We compare the two terminal-value problems in three parts:
their linear coefficients, the finite-only additive forcing, and their
terminal conditions at \(T_{\mathrm{ann}}\) and show that each of these is either \(O(n^{-1})\) or \(O(\varepsilon_n)\).

We first bound the difference in the linear coefficients. Since \(B_n=B_\delta+O(n^{-1})\),
\eqref{eq:finite_riccati_limit_to_three_block} and
\(B_\delta^\top HB_\delta=P\) give, uniformly in time,
\[
B_n^\top H_nB_\delta
=B_\delta^\top HB_\delta+O(n^{-1})
=P+O(n^{-1}).
\]
Using \(B_n-B_n^-=B_\delta\) and
\eqref{eq:finite_common_residual_matrix_bounds} then gives
\[
\begin{aligned}
2\left(\Theta_n(H_n)D_nB_n\right)^\top-B_n^\top H_nB_n^-
&=
\left[
2\left(\Theta_n(H_n)D_nB_n\right)^\top-B_n^\top H_nB_n
\right]
+B_n^\top H_nB_\delta\\
&=P+O(n^{-1}).
\end{aligned}
\]
Since \(a_n^{-1}-1/2=O(n^{-1})\), the linear coefficients satisfy
\[
\frac1{a_n}
\left[
2\left(\Theta_n(H_n)D_nB_n\right)^\top-B_n^\top H_nB_n^-
\right]\Lambda^{-1}
=\frac12P\Lambda^{-1}+O(n^{-1}).
\]

We next bound the finite-only additive forcing. For this purpose, we
first obtain a uniform bound for \(\widetilde r_n^{i,0}\). Since
\(\pi_n^i,\bar\pi_n\in\Delta_m\),
\(
\sum_{k=1}^m
|\pi_n^{i,k}-\bar\pi_n^k|
\leq 2
\).
The terminal condition in \eqref{eq:pre_ann_deviation_r_ode} and
\eqref{eq:finite_common_affine_uniform_bound} therefore give, uniformly in
\(n\), \(i\), and the finite belief profile,
\[
\begin{aligned}
\left\|\widetilde r_n^{i,0}(T_{\mathrm{ann}})\right\|
&=
\left\|
\sum_{k=1}^m
(\pi_n^{i,k}-\bar\pi_n^k)r_n^k(T_{\mathrm{ann}})
\right\|
\leq
\left(
\sum_{k=1}^m
|\pi_n^{i,k}-\bar\pi_n^k|
\right)
\max_{1\leq k\leq m}
\left\|r_n^k(T_{\mathrm{ann}})\right\|
\\
&\leq
2\max_{1\leq k\leq m}
\left\|r_n^k(T_{\mathrm{ann}})\right\|
\leq C.
\end{aligned}
\]
The coefficients in \eqref{eq:pre_ann_deviation_r_ode} are uniformly bounded
by the matrix definitions and \eqref{eq:finite_riccati_uniform_bound}.
Gronwall consequently gives
\begin{equation}\label{eq:finite_centered_r_uniform_bound}
\max_{1\leq i\leq n}
\sup_{t\in[0,T_{\mathrm{ann}}]}
\|\widetilde r_n^{i,0}(t)\|
\leq C.
\end{equation}
The first estimate in \eqref{eq:finite_common_residual_matrix_bounds} and
\eqref{eq:finite_centered_r_uniform_bound} now yield, uniformly in \(i\) and
time,
\[
\left(M_n(H_n)B_n\right)^\top\widetilde r_n^{i,0}
=O(n^{-1}).
\]
Thus the finite-only additive forcing is \(O(n^{-1})\).

Let \(\rho_n^i\)
denote the residual formed by the forcing and the linear-coefficient error.
The finite ODE can then be written as
\begin{equation}\label{eq:finite_centered_projection_ode}
\dot{\widetilde\ell}_n^{i,0}(t)
=
\frac12P(t)\Lambda^{-1}\widetilde\ell_n^{i,0}(t)+\rho_n^i(t),
\qquad
\max_{1\le i\le n}
\sup_{t\in[0,T_{\mathrm{ann}}]}
\|\rho_n^i(t)\|
\leq
\frac{C}{n}.
\end{equation}
It remains to compare the terminal conditions at
\(T_{\mathrm{ann}}\). Their comparison uses the post-announcement common
auxiliary coordinates \(\ell_n^k\) and \(\ell^k\), whose convergence was
established in Step~3: For each \(k=1,\ldots,m\),
\eqref{eq:finite_common_auxiliary_ell_matching} gives
\[
\bar X_n^k(T_{\mathrm{ann}})=\bar X_n^0(T_{\mathrm{ann}}),
\qquad
I_n^k(T_{\mathrm{ann}})=I_n^0(T_{\mathrm{ann}}).
\]
Thus \eqref{eq:finite_common_auxiliary_ell_definition} reads
\[
\begin{aligned}
\ell_n^k(T_{\mathrm{ann}})
={}&
\left(
[\Theta_n(H_n(T_{\mathrm{ann}}))]_{\bar x}-P(T_{\mathrm{ann}})
\right)\bar X_n^0(T_{\mathrm{ann}})
\\
&+
\left(
[\Theta_n(H_n(T_{\mathrm{ann}}))]_{\iota}-\operatorname{Id}_d
\right)I_n^0(T_{\mathrm{ann}})
+B_n^\top r_n^k(T_{\mathrm{ann}}).
\end{aligned}
\]
The first two terms are independent of \(k\). The terminal condition in
\eqref{eq:pre_ann_deviation_r_ode} and
\(\sum_k(\pi_n^{i,k}-\bar\pi_n^k)=0\) therefore give
\[
\begin{aligned}
\widetilde\ell_n^{i,0}(T_{\mathrm{ann}})
&=
\sum_{k=1}^m
(\pi_n^{i,k}-\bar\pi_n^k)
B_n^\top r_n^k(T_{\mathrm{ann}})
=
\sum_{k=1}^m
(\pi_n^{i,k}-\bar\pi_n^k)\ell_n^k(T_{\mathrm{ann}}).
\end{aligned}
\]
The mean-field terminal condition in
\eqref{eq:mf_centered_affine_dynamics} is
\[
\widetilde\ell^{\pi_n^i,0}(T_{\mathrm{ann}})
=
\sum_{k=1}^m
(\pi_n^{i,k}-\bar\pi^k)\ell^k(T_{\mathrm{ann}}).
\]
Subtracting the two terminal conditions gives
\begin{equation}\label{eq:finite_centered_terminal_error_identity}
\begin{aligned}
&\widetilde\ell_n^{i,0}(T_{\mathrm{ann}})
-\widetilde\ell^{\pi_n^i,0}(T_{\mathrm{ann}})
\\
&\quad=
\sum_{k=1}^m
(\pi_n^{i,k}-\bar\pi_n^k)
\left(
\ell_n^k(T_{\mathrm{ann}})
-\ell^k(T_{\mathrm{ann}})
\right)
+
\sum_{k=1}^m
(\bar\pi^k-\bar\pi_n^k)
\ell^k(T_{\mathrm{ann}}).
\end{aligned}
\end{equation}
The bound
\(\sum_{k=1}^m|\pi_n^{i,k}-\bar\pi_n^k|\leq2\) gives
\[
\left\|
\sum_{k=1}^m
(\pi_n^{i,k}-\bar\pi_n^k)
(\ell_n^k(T_{\mathrm{ann}})-\ell^k(T_{\mathrm{ann}}))
\right\|
\leq
2\max_{1\leq k\leq m}
\|\ell_n^k(T_{\mathrm{ann}})-\ell^k(T_{\mathrm{ann}})\|.
\]
The \(\ell_n^k-\ell^k\) component of
\eqref{eq:finite_common_state_convergence}, the boundedness of the
mean-field affine coefficients, and the fixed dimension \(m\) therefore
give
\begin{equation}\label{eq:finite_centered_terminal_error_bound}
\max_{1\leq i\leq n}
\left\|
\widetilde\ell_n^{i,0}(T_{\mathrm{ann}})
-\widetilde\ell^{\pi_n^i,0}(T_{\mathrm{ann}})
\right\|
\leq
C\varepsilon_n,
\end{equation}
where \(C\) is independent of \(n\) and the finite belief profile.
Combining the vector-field estimate in
\eqref{eq:finite_centered_projection_ode} with the terminal-condition
estimate \eqref{eq:finite_centered_terminal_error_bound}, backward Gronwall
gives
\begin{equation}\label{eq:finite_centered_projection_convergence}
\max_{1\le i\le n}
\sup_{t\in[0,T_{\mathrm{ann}}]}
\left\|
\widetilde\ell_n^{i,0}(t)-\widetilde\ell^{\pi_n^i,0}(t)
\right\|
\leq
C\varepsilon_n.
\end{equation}

Together with \(a_n^{-1}-1/2=O(n^{-1})\),
\eqref{eq:finite_deviation_coefficient_convergence} and
\eqref{eq:finite_centered_projection_convergence} allow Gronwall to be
applied before and after the announcement, giving
\begin{equation}\label{eq:finite_deviation_state_convergence}
\begin{aligned}
&\max_{1\le i\le n}
\sup_{t\in[0,T_{\mathrm{ann}}]}
\left\|
\delta_n^{i,0}(t)-\delta^{\pi_n^i,0}(t)
\right\|
\\
&\quad+
\max_{1\le i\le n}
\max_{1\le k\le m}
\sup_{t\in[T_{\mathrm{ann}},T]}
\left\|
\delta_n^{i,k}(t)-\delta^{\pi_n^i,k}(t)
\right\|
\leq
C\varepsilon_n.
\end{aligned}
\end{equation}
Since
\(X_n^{i,k}-X^{\pi_n^i,k}
=(\bar X_n^k-\bar X^k)
+(\delta_n^{i,k}-\delta^{\pi_n^i,k})\),
\eqref{eq:finite_common_state_convergence} and
\eqref{eq:finite_deviation_state_convergence} give
\(X_n^{i,k}-X^{\pi_n^i,k}=O(\varepsilon_n)\), uniformly over players,
scenarios, and the corresponding time intervals.

Before the announcement, the finite and mean-field feedbacks satisfy
\[
\begin{aligned}
u_n^{i,0}
&=\bar u_n^0
-\frac1{a_n}\Lambda^{-1}
\left(
[\Theta_n(H_n)]_\delta\delta_n^{i,0}
+\widetilde\ell_n^{i,0}
\right),
\\
u^{\pi_n^i,0}
&=\bar u^0
-\frac12\Lambda^{-1}
\left(
P\delta^{\pi_n^i,0}
+\widetilde\ell^{\pi_n^i,0}
\right).
\end{aligned}
\]
After the announcement, for \(k=1,\ldots,m\), they satisfy
\[
\begin{aligned}
u_n^{i,k}
&=\bar u_n^k
-\frac1{a_n}\Lambda^{-1}
[\Theta_n(H_n)]_\delta\delta_n^{i,k},
\\
u^{\pi_n^i,k}
&=\bar u^k
-\frac12\Lambda^{-1}P\delta^{\pi_n^i,k}.
\end{aligned}
\]
Subtracting the corresponding formulas, every term is
\(O(\varepsilon_n)\) by the common-flow, coefficient, centered-affine, and
deviation estimates established above. This gives
\eqref{eq:finite_to_mf_control_estimate}.
\end{proof}

\subsection{Proof of Theorem~\ref{thm:mf_approximate_nash}}
\label{subsec:proof_mf_approximate_nash}

\begin{proof}
The mean-field feedbacks \eqref{eq:mf_pre_feedback} and
\eqref{eq:mf_post_feedback} are affine in the player's inventory, with
uniformly bounded deterministic coefficients. The boundary condition for
\(\ell^{\pi,0}\) is affine in \(\pi\), and its ODE is linear. Consequently,
\(u^{\pi,q}(t)\) is affine in \(\pi\) for \(q=0,\ldots,m\), with coefficients
uniformly bounded in \(q\) and time. Hence the lifted
pre-announcement feedback is jointly Borel in \((\pi,x)\). The lifted policies
are non-anticipative and globally Lipschitz in own inventory. Conditional on
each scenario, the inventory equations therefore have unique solutions with
square-integrable rates, which in turn uniquely determine the impact state.
Thus \(\widehat\Phi_n\in\mathcal U_n\).

For the Nash estimate, fix player \(i\) and retain the opponents' policies
\(\widehat\Phi_n^{-i}\). For \(j\ne i\),
\eqref{eq:mf_lifted_finite_profile} shows that \(\widehat\Phi_n^j\) depends only
on time, the announced scenario, player \(j\)'s own inventory, and the fixed
mean-field coefficients determined by \((\bar\pi,\pi_n^j)\). Under any deviation
by player \(i\), player \(j\)'s inventory therefore solves the same autonomous
ODE with the same initial condition. Hence \(\widehat\Phi_n^j\) induces the
rates \(u^{\pi_n^j,0}\) before the announcement and \(u^{\pi_n^j,k}\) after
scenario \(k\) is announced, independently of player \(i\)'s deviation.

Now let \(\Psi_n^i\) be an arbitrary admissible deviation. By independence of
\(S^0\) and \(\Xi\), the conditioning argument used in the proof of
Theorem~\ref{thm:stitched_hjb_verification} allows us to condition on \(S^0\)
and integrate at the end. It therefore suffices to consider deviations that are deterministic
\(L^2\)-rate paths \(v=(v^0,v^1,\ldots,v^m)\), where \(v^0\) is common before the
announcement and \(v^k\) is used after scenario \(k\) is announced. Define the seminorm
\[
\|v\|_i^2
:=
\int_0^{T_{\mathrm{ann}}}\|v^0(t)\|^2\,dt
+
\sum_{k=1}^m\pi_n^{i,k}
\int_{T_{\mathrm{ann}}}^T\|v^k(t)\|^2\,dt.
\]

Define
\(e_{n,i}^k\) as the difference, along scenario \(k\), between the
\(\varpi/n\)-scaled aggregate rate of player \(i\)'s opponents and the
mean-field flow \(\varpi\bar u\):
\[
e_{n,i}^{k}(t)
:=
\begin{cases}
\displaystyle
\frac{\varpi}{n}\sum_{j\ne i}u^{\pi_n^j,0}(t)-\varpi\bar u^0(t),
&0\leq t<T_{\mathrm{ann}},\\[6pt]
\displaystyle
\frac{\varpi}{n}\sum_{j\ne i}u^{\pi_n^j,k}(t)-\varpi\bar u^k(t),
&T_{\mathrm{ann}}\leq t\leq T.
\end{cases}
\]
As noted at the start of the proof, the map
\(\pi\mapsto u^{\pi,q}(t)\) is affine for \(q=0,\ldots,m\), with uniformly
bounded coefficients. Let \(A_u^q(t)\in\mathbb R^{d\times m}\) denote its
linear coefficient which is uniformly bounded. The consistency condition
\eqref{eq:mf_average_consistency} gives
\[
\bar u^q(t)
=
\int_{\Delta_m}u^{\pi,q}(t)\,\Pi(d\pi).
\]
Consequently,
\[
\frac{\varpi}{n}\sum_{j\ne i}u^{\pi_n^j,q}(t)-\varpi\bar u^q(t)
=
\varpi A_u^q(t)(\bar\pi_n-\bar\pi)
-\frac{\varpi}{n} u^{\pi_n^i,q}(t).
\]
Uniform boundedness of \(A_u^q\) and the mean-field rates therefore gives
\begin{equation}\label{eq:mf_profile_opponent_flow_bound}
\max_{1\leq i\leq n}
\max_{1\leq k\leq m}
\sup_{t\in[0,T]}
\|e_{n,i}^{k}(t)\|
\leq
C\left(
\|\bar\pi_n-\bar\pi\|+\frac1n
\right)
=C\varepsilon_n.
\end{equation}
Set
\[
F_{n,i}^k(t)
:=
\int_0^t e^{-R(t-s)}\Gamma e_{n,i}^k(s)\,ds
+\chi_u\Lambda e_{n,i}^k(t).
\]
Equation~\eqref{eq:mf_profile_opponent_flow_bound} and boundedness of the
convolution kernel give
\[
\max_{1\leq i\leq n}
\max_{1\leq k\leq m}
\sup_{t\in[0,T]}
\|F_{n,i}^k(t)\|
\leq
C\varepsilon_n.
\]

Write \(J^{\pi_n^i}(v;\bar u)\) and
\(J_n^i(v;\widehat\Phi_n^{-i})\) for the objectives induced by the
deterministic plan. Variation of constants in the impact equation and
\eqref{eq:reduced_running_cost} give the exact decomposition
\[
\begin{aligned}
J_n^i(v;\widehat\Phi_n^{-i})
={}&
J^{\pi_n^i}(v;\bar u)
+
\sum_{k=1}^m\pi_n^{i,k}
\left[
\int_0^{T_{\mathrm{ann}}}
\left(F_{n,i}^k(t)\right)^\top v^0(t)\,dt
+
\int_{T_{\mathrm{ann}}}^T
\left(F_{n,i}^k(t)\right)^\top v^k(t)\,dt
\right]
\\
&+
\frac{\varpi}{n}
\sum_{k=1}^m\pi_n^{i,k}
\mathcal C\left(
\mathbf 1_{[0,T_{\mathrm{ann}})}v^0
+\mathbf 1_{[T_{\mathrm{ann}},T]}v^k
\right).
\end{aligned}
\]

Define the rate plan induced by \(\widehat\Phi_n^i\) in
\eqref{eq:mf_lifted_finite_profile} by
\[
v^{i,*}
:=
\left(
u^{\pi_n^i,0},
u^{\pi_n^i,1},\ldots,u^{\pi_n^i,m}
\right)
\]
and set \(\Delta v:=v-v^{i,*}\). Applying the
decomposition to \(v\) and \(v^{i,*}\) and subtracting gives
\[
\begin{aligned}
&J_n^i(v;\widehat\Phi_n^{-i})
-J_n^i(v^{i,*};\widehat\Phi_n^{-i})
\\
&\quad=
J^{\pi_n^i}(v;\bar u)
-J^{\pi_n^i}(v^{i,*};\bar u)
\\
&\qquad+
\sum_{k=1}^m\pi_n^{i,k}
\left[
\int_0^{T_{\mathrm{ann}}}
\left(F_{n,i}^k(t)\right)^\top\Delta v^0(t)\,dt
+
\int_{T_{\mathrm{ann}}}^T
\left(F_{n,i}^k(t)\right)^\top\Delta v^k(t)\,dt
\right]
\\
&\qquad+
\frac{\varpi}{n}
\sum_{k=1}^m\pi_n^{i,k}
\left[
\mathcal C\left(
\mathbf 1_{[0,T_{\mathrm{ann}})}v^0
+\mathbf 1_{[T_{\mathrm{ann}},T]}v^k
\right)
-\mathcal C\left(
\mathbf 1_{[0,T_{\mathrm{ann}})}u^{\pi_n^i,0}
+\mathbf 1_{[T_{\mathrm{ann}},T]}u^{\pi_n^i,k}
\right)
\right].
\end{aligned}
\]
We bound each of the three terms on the right-hand side from below.

First, since every deterministic scenario-dependent rate plan is admissible,
the type-\(\pi\) HJB verification in the proof of
Theorem~\ref{thm:mf_solution} implies that \(v^{i,*}\) minimizes the
quadratic functional \(J^{\pi_n^i}(\,\cdot\,;\bar u)\) over deterministic
scenario-dependent plans. Its first variation vanishes at \(v^{i,*}\). Since
\(\Lambda\succ0\) and \(Q,Q_T\succeq0\),
\[
J^{\pi_n^i}(v;\bar u)
-J^{\pi_n^i}(v^{i,*};\bar u)
\geq
\lambda_{\min}(\Lambda)\|\Delta v\|_i^2.
\]
Second, the bound on \(F_{n,i}^k\) and Cauchy--Schwarz imply
\[
\sum_{k=1}^m\pi_n^{i,k}
\left[
\int_0^{T_{\mathrm{ann}}}
\left(F_{n,i}^k(t)\right)^\top\Delta v^0(t)\,dt
+
\int_{T_{\mathrm{ann}}}^T
\left(F_{n,i}^k(t)\right)^\top\Delta v^k(t)\,dt
\right]
\geq
-C\varepsilon_n\|\Delta v\|_i.
\]
Third, for \(w,h\in L^2([0,T],\mathbb R^d)\), the definition of
\(\mathcal C\) gives
\[
\begin{aligned}
\mathcal C(w+h)-\mathcal C(w)
={}&
\mathcal C(h)
+
\int_0^T
w(t)^\top
\left(
\int_0^t e^{-R(t-s)}\Gamma h(s)\,ds
\right)dt
\\
&+
\int_0^T
h(t)^\top
\left(
\int_0^t e^{-R(t-s)}\Gamma w(s)\,ds
\right)dt.
\end{aligned}
\]
For scenario \(k\), apply this identity with
\[
\begin{aligned}
w
&=
\mathbf 1_{[0,T_{\mathrm{ann}})}u^{\pi_n^i,0}
+\mathbf 1_{[T_{\mathrm{ann}},T]}u^{\pi_n^i,k},
\\
h
&=
\mathbf 1_{[0,T_{\mathrm{ann}})}\Delta v^0
+\mathbf 1_{[T_{\mathrm{ann}},T]}\Delta v^k.
\end{aligned}
\]
Assumption~\ref{ass:standing_model} gives \(\mathcal C(h)\geq0\). The two
remaining integrals are bounded using the transient-impact kernel and the
uniform bound on the mean-field rates. Cauchy--Schwarz therefore gives
\[
\sum_{k=1}^m\pi_n^{i,k}
\left[
\mathcal C\left(
\mathbf 1_{[0,T_{\mathrm{ann}})}v^0
+\mathbf 1_{[T_{\mathrm{ann}},T]}v^k
\right)
-\mathcal C\left(
\mathbf 1_{[0,T_{\mathrm{ann}})}u^{\pi_n^i,0}
+\mathbf 1_{[T_{\mathrm{ann}},T]}u^{\pi_n^i,k}
\right)
\right]
\geq
-C\|\Delta v\|_i.
\]
Substituting these three bounds and using \(n^{-1}\leq\varepsilon_n\) yields
the first inequality below for some \(C_1>0\). Completing the square yields
the second with \(C_2:=C_1^2/(4\lambda_{\min}(\Lambda))\):
\[
\begin{aligned}
&J_n^i(v;\widehat\Phi_n^{-i})
-J_n^i(v^{i,*};\widehat\Phi_n^{-i})
\geq \lambda_{\min}(\Lambda)\|\Delta v\|_i^2-C_1\varepsilon_n\|\Delta v\|_i
\geq -C_2\varepsilon_n^2.
\end{aligned}
\]
The bound is uniform over deterministic scenario-dependent plans. Integrating it over
the law of \(S^0\) gives
\eqref{eq:mf_profile_approximate_nash} for every admissible deviation.
\end{proof}

\end{document}